\newcommand{\Hmm}[1]{\leavevmode{\marginpar{\tiny%
$\hbox to 0mm{\hspace*{-0.5mm}$\leftarrow$\hss}%
\vcenter{\vrule depth 0.1mm height 0.1mm width \the\marginparwidth}%
\hbox to 0mm{\hss$\rightarrow$\hspace*{-0.5mm}}$\\\relax\raggedright #1}}}
\newcommand{\N}{{\mathbb{N}}}
\newcommand{\R}{{\mathbb{R}}}
\newcommand{\C}{{\mathbb{C}}}
\newcommand{\Z}{{\mathbb{Z}}}
\newcommand{\f}{\frac}
\newcommand{\ol}{\overline}
\newcommand{\wti}{\widetilde  }
\newcommand{\Oh}{O}
\newcommand{\oh}{o}
\newcommand{\beq}{\begin{equation}}
\newcommand{\eeq}{\end{equation}}
\newcommand{\bdm}{\begin{displaymath}}
\newcommand{\edm}{\end{displaymath}}
\newcommand{\ba}{\begin{align}}
\newcommand{\ea}{\end{align}}
\newcommand{\bpf}{\begin{proof}}
\newcommand{\epf}{\end{proof}}
\newcommand{\la}{\langle}
\newcommand{\ra}{\rangle}
\newcommand{\vphi}{\varphi}
\newcommand{\supp}{\mathrm{supp}\, }               
\newcommand{\dist}{\mathrm{dist}}               
\newcommand{\veps}{\varepsilon}
\newcommand{\re}{\mathrm{Re}}
\newcommand{\im}{\mathrm{Im}}
\newcommand{\sgn}[1]{{\mathrm{sgn}(#1)}}
\newcommand{\id}{\mathbf{1}}                
\newcommand{\dav}{{d_{\mathrm{av}}}}
\newcommand{\calM}{\mathcal{M}}
\newtheorem{theorem}{Theorem}
\newtheorem{proposition}[theorem]{Proposition}
\newtheorem{lemma}[theorem]{Lemma}
\newtheorem{corollary}[theorem]{Corollary}
\theoremstyle{definition}
\newtheorem{definition}[theorem]{Definition}
\newtheorem{remark}[theorem]{Remark}
\newtheorem{remarks}[theorem]{Remarks}
\newcounter{theoremi}[theorem]
\newcommand{\itemthm}{\refstepcounter{theoremi} {\rm(\roman{theoremi})}{~}}
\numberwithin{theorem}{section}
\numberwithin{equation}{section}
\newcounter{smalllist}
\newcounter{smallenum}
\newenvironment{SE}{\begin{list}{{\rm\arabic{smallenum})}}{%
\setlength{\topsep}{0mm}\setlength{\parsep}{0mm}\setlength{\itemsep}{0mm}%
\setlength{\labelwidth}{2em}\setlength{\leftmargin}{2em}\usecounter{smallenum}%
}}{\end{list}}
\newcounter{assumptions}
\newenvironment{assumptions}{\begin{list}{\textbf{A\rm\arabic{assumptions}})}{%
\setlength{\topsep}{0mm}\setlength{\parsep}{0mm}\setlength{\itemsep}{0mm}%
\setlength{\labelwidth}{2em}\setlength{\leftmargin}{2em}\usecounter{assumptions}%
}}{\end{list}}
\begin{document}

\title[]{Discrete diffraction managed solitons: Threshold phenomena and rapid decay for general nonlinearities}
\author[M.-R.~Choi, D.~Hundertmark, Y.-R.~Lee]{Mi-Ran Choi, Dirk Hundertmark, Young-Ran~Lee}
\address{Department of Mathematics, Sogang University, 35 Baekbeom-ro (sinsu-dong),
    Mapo-gu, Seoul, 121-742, South Korea.}%
\email{rani9030@sogang.ac.kr}
\address{Department of Mathematics, Institute for Analysis, Karlsruhe Institute of Technology, 76128 Karlsruhe, Germany.}%
\email{dirk.hundertmark@kit.edu}%
\address{Department of Mathematics, Sogang University,  35 Baekbeom-ro (sinsu-dong),
    Mapo-gu, Seoul, 121-742, South Korea.}%
\email{younglee@sogang.ac.kr}

\thanks{\copyright 2016 by the authors. Faithful reproduction of this article,
        in its entirety, by any means is permitted for non-commercial purposes}
\date{\today, version general-diffraction-8-1.tex}


\begin{abstract}
We prove a threshold phenomenon for the existence/non-existence of energy minimizing solitary solutions of the diffraction management equation for strictly positive and zero average diffraction. Our methods allow for a large class of nonlinearities, they are, for example, allowed to change sign, and the weakest possible condition, it only has to be locally integrable, on the local diffraction profile. The solutions are found as minimizers of a nonlinear and nonlocal variational problem which is translation invariant. There exists a critical threshold $\lambda_{\mathrm{cr}}$ such that minimizers  for this variational problem exist if their power is bigger than
$\lambda_{\mathrm{cr}}$ and no minimizers exist with power less than the critical threshold.
We also give simple criteria for the finiteness and strict positivity of the
critical threshold.
Our proof of existence of minimizers is rather direct and avoids the use of Lions' concentration compactness argument.

Furthermore, we give precise quantitative lower bounds on the exponential decay rate of the diffraction management solitons, which confirm the physical heuristic prediction for the asymptotic decay rate. Moreover, for ground state solutions, these bounds give a quantitative lower bound for the divergence of the exponential decay rate in the limit of vanishing average diffraction.
For zero average diffraction, we prove quantitative bounds which show that the solitons decay much faster than exponentially. Our results considerably extend and strengthen the results of \cite{HuLee 2012-existence} and \cite{HuLee 2012}.

\end{abstract}

\maketitle
{\hypersetup{linkcolor=black}
\tableofcontents}

\section{Introduction}\label{introduction}

We study the existence and properties of solutions of the  diffraction managed non-linear discrete Schr\"odinger equation
\beq\label{eq:GT}
     \omega \varphi (x)
    = -d_{\text{av}} (\Delta \varphi) (x)
    	- \int_\R T_r^{-1}\big[ P(T_r \varphi(x))\big]\mu( dr),
 \eeq
on $l^2(\Z)$, where $\mu$ is a finite measure with compact support
and $\omega$ a constant.
Here, $\Delta f(x)= f(x+1)-2f(x) +f(x-1)$ is the discrete Laplacian on $\Z$,  $T_r=e^{ir\Delta}$ is the solution operator of the free discrete Schr\"{o}dinger equation in one dimension, the average diffraction $d_{\text{av}}$ is either positive or zero, and $P$ is the nonlinear term.
Previously, either only very simple pure power nonlinearities  $P$ together with simple measures $\mu$, which correspond to piecewise constant local diffraction profiles $d_0$, or the specific  third order nonlinearity $P(z)=|z|^2z,\ z \in \C$ and general probability measures $\mu$ have been studied, see the discussion in Appendix \ref{sec:connection-nonlinear-optics}.  We will extend this to a large class of nonlinearities.

Discrete nonlinear dispersive equations such as the discrete nonlinear Schr\"odinger equation  \eqref{eq:GT} arise in the context of nonlinear optics \cite{AART,AALR, ESMBA98, MPAES, WY},
the study of dynamics of biological molecules \cite{Davydov73, ELS85}, localized modes in anharmonic crystal in condensed matter physics, \cite{BS75, TKS88}.
Here the discrete models arise as phenomenological models or as tight binding approximations, see, for example, \cite{MKSP, PS}.

In the application to nonlinear optics, which is our main motivation for studying solutions of \eqref{eq:GT},
$\dav$ is the average diffraction along an array of waveguides and $\mu$ will be a probability measure with compact support related to the local periodic diffraction $d_0$ along the waveguide. Since we can treat \emph{arbitrary} probability measures $\mu$ with compact support, our results hold for \emph{any local diffraction profile} $d_0$ which is locally integrable. In particular, $\mu=\delta_0$, the Dirac mass at zero, is allowed, so our results include the well-known discrete NLS. We will discuss this more thoroughly in  Appendix \ref{sec:connection-nonlinear-optics}.

To get the weak formulation of \eqref{eq:GT}, let $\la f,g \ra\coloneq \sum_{x\in\Z} \ol{f(x)}g(x)$ be the usual scalar product in $l^2(\Z)$  and take the scalar product of \eqref{eq:GT} with $h\in l^2(\Z)$ to see that since   $-\la \Delta\varphi, h\ra= \la D_+\varphi, D_+h \ra$, where the forward difference operator $D_+$ is defined by
$$
(D_+ f)(x):=f(x+1)-f(x)
$$
for any $x\in \Z$ and using the unicity of $T_r$ one has
 \begin{align*}
 	\la \int_\R T_r^{-1}\big[ P(T_r \varphi)\big]\mu( dr), h\ra
 	=
 		\int_\R \la P(T_r \varphi) , T_r h\ra\, \mu( dr)
 \end{align*}
and therefore the weak formulation of \eqref{eq:GT} is given by
\begin{align}\label{eq:GT-weak}
	\omega \la \varphi, h\ra = \dav\la D_+\varphi, D_+ h\ra
		- \int_\R \la P(T_r \varphi), T_rh \ra\mu( dr)
\end{align}
for all $h\in l^2(\Z)$.

The diffraction management equation \eqref{eq:GT}, or better, its weak form \eqref{eq:GT-weak}, has a variational structure. We assume that $P$ is an odd nonlinearity of the form
 	\begin{align}\label{eq:P}
 		P(z)= p(|z|)z
 	\end{align}
for $z\in\C$. To use this, let $V$ be a differentiable function with $V'(a)=P(a)$ for $a\ge 0$, for example,
\begin{align}\label{eq:V}
	V(a)= \int_0^a P(s)\, ds \quad \text{ for } a\ge 0.
\end{align}
Then the constrained minimization problem associated with \eqref{eq:GT} is given by
 \beq\label{eq:min}
    E^{\dav}_\lambda := \inf \{H(\vphi) : \vphi \in l^2(\Z),\ \|\vphi\|_{l^2(\Z)}^2=\lambda\},
 \eeq
where $\lambda>0$ and the Hamiltonian, or the energy, takes the form
 \beq\label{eq:energy}
    H(\vphi):=\f{d_{\text{av}}}{2}\| D_+ \vphi\|^2_{\l^2(\Z)}-N(\vphi) ,
 \eeq
with the \emph{nonlocal} nonlinear `potential'
\begin{align} \label{eq:N(f)}
   N(\vphi):=\int_{\R}\sum_{x\in \Z} V(|T_r \vphi(x)|)\mu (dr).
 \end{align}
It turns out that any minimizer of \eqref{eq:min}, that is,  any $\varphi\in l^2(\Z)$ with $\|\varphi\|_{l^2(\Z)}^2=\lambda$ such that $E^{\dav}_\lambda = H(\varphi)$, will be a solution of corresponding Euler--Lagrange equation \eqref{eq:GT}. Thus we are led to study the minimization problem \eqref{eq:min} and to investigate the properties of its solution. An obstacle for the existence proof is the invariance of the Hamiltonian under shifts so the variational problem is invariant under a non-compact group. Hence there is a potential \emph{loss of compactness}, since minimizing sequences can easily converge weakly to zero.

While it is possible to formulate conditions directly on the nonlinearity $P$ in
\eqref{eq:GT},  we find it more convenient to use conditions on the nonlinear potential $V$ related to it by  \eqref{eq:V}.
Our main assumptions on the nonlinear potential $V:\R_+\to\R $ are
\begin{assumptions}
\item \label{ass:A1}  $V$ is continuous on $\R_+=[0,\infty)$ and differentiable on $(0,\infty)$ with $V(0)=0$. There exist $2 < \gamma_1\le \gamma_2 <\infty$ such that
	\begin{align} \label{eq:q'bound}
		|V'(a)|\lesssim a^{\gamma_1-1} + a^{\gamma_2-1} \quad\text{ for all } a>0.
	\end{align}
\item \label{ass:A2} $V$ is continuous on $\R_+$ and differentiable on $(0,\infty)$ with $V(0)=0$. There exists $\gamma_0>2$ such that
	\begin{align*}
		V'(a)a \ge \gamma_0 V(a)\quad\text{ for all } a>0.
	\end{align*}
\item \label{ass:A3} There exists $a_0>0$ such that $V(a_0)>0$.
\end{assumptions}
\vspace{2pt}

The three assumptions above are our main requirements on the nonlinear potential. They are enough to prove a threshold phenomenon: solutions exist at least for large enough power $\lambda=\|\varphi\|_2^2$.
In order to guarantee the existence of solutions for arbitrarily small
$\lambda$, we need to strengthen assumption \ref{ass:A3} to
\vspace{2pt}

\begin{assumptions}
\setcounter{assumptions}{3}
\item \label{ass:A4} If $\dav>0$ we assume that there exist $\veps>0 $
	 and $2\le \kappa < 6$ such that
	\begin{align*}
		V(a)\gtrsim a^{\kappa} \quad\text{ for all } 0< a\le\veps.
	\end{align*}
 If $\dav = 0$ we assume that $V(a)>0$ for all $0<a\le \veps$.
%
\end{assumptions}
\begin{remarks}
 	\itemthm An integration shows that \ref{ass:A1} implies
		\begin{align}\label{eq:Vbound}
		|V(a)|\lesssim a^{\gamma_1} + a^{\gamma_2}.
	\end{align}
	Much more important for us is the fact that \ref{ass:A1} allows us to control the \emph{nonlocal} nonlinearity $N$ under \emph{splitting}, see Lemma \ref{lem:V-splitting} and the discussion in  section \ref{subsec:splitting}. \\
  \itemthm Examples of nonlinearities obeying assumptions \ref{ass:A1} through \ref{ass:A3} are given by
  \begin{align*}
  	V(a) = \sum_{j=1}^J c_j a^{s_j}
  \end{align*}  	
  with $c_j\ge 0$, $2<s_j<\infty$, and $J\in\N$, but our assumptions also allow nonlinear potentials which can become negative, for example,
  \begin{align*}
  	V(a) = -a^4 +a^6\quad \text{for } a\ge 0
  \end{align*}
  is allowed. It certainly fulfillls \ref{ass:A1}. Since
  \begin{align*}
  	V'(a)a = -4a^4 +6 a^6 = 4(-a^4+a^6) +2a^6 \ge 4V(a),
  \end{align*}
  it also obeys \ref{ass:A2}. Moreover,  $V(a_0)>0$ for all large enough $a_0$, so \ref{ass:A3} holds.

  If we did not assume \ref{ass:A3}, then the nonlinearities could also be strictly negative for all $a>0$, for example, $V(a)= -a^4-a^6$ obeys \ref{ass:A1} and because of
  \begin{align*}
  	V'(a)a = -4a^4 -6 a^6 = 6(-\frac{4}{6}a^4 - a^6) \ge 6 V(a)
  \end{align*}
  also \ref{ass:A2}, but then the critical threshold $\lambda_{\mathrm{cr}}$ given in Theorem \ref{thm:threshold-phenomena} would be infinite. The threshold is finite if and only if, for some $f\in l^2(\Z) $ we have $N(f)>0$, see part \ref{thm:threshold-phenomena-4} of Theorem \ref{thm:threshold-phenomena} below.
\end{remarks}

Concerning the existence and nonexistence of solutions, we have
 \begin{theorem}[Threshold phenomenon for existence/non-existence] \label{thm:threshold-intro}
 	Assume that $V$ obeys assumptions \ref{ass:A1} through \ref{ass:A3}
 	and that $\dav\ge 0$. 	\\
    \itemthm 	
		There exists a threshold $0\le\lambda_\mathrm{cr}<\infty$ such that $E^{\dav}_\lambda =0$ for $0\le \lambda \le \lambda_{\mathrm{cr}}$ and $-\infty<E^{\dav}_\lambda<0$ for  $\lambda> \lambda_{\mathrm{cr}}$. \\
	\itemthm
		If $\dav>0$ and $0<\lambda<\lambda_{\mathrm{cr}}$, then no minimizer for the constrained  minimization problem  \eqref{eq:min} exists.
		If $\gamma_1\ge 6$, then $\lambda_{\mathrm{cr}}>0$. \\
	\itemthm
		If $\dav\ge 0$ and $\lambda>\lambda_{\mathrm{cr}}$, then any minimizing sequence for  \eqref{eq:min} is up to translations relatively compact in $l^2(\Z)$, in particular,  there exists a minimizer for \eqref{eq:min}.
		This minimizer is also a solution of the diffraction management equation \eqref{eq:GT} for some Lagrange multiplier $\omega< 2E^{\dav}_\lambda/\lambda<0$. \\
	\itemthm
		If $V$ obeys, in addition, \ref{ass:A4},  then $\lambda_{\mathrm{cr}}=0$.
\end{theorem}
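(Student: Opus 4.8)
The plan is to establish first that $-\infty<E^{\dav}_\lambda\le0$ for all $\lambda\ge0$ and then to treat the four assertions; write $E_\lambda:=E^{\dav}_\lambda$. Since $|V(a)|\lesssim a^{\gamma_1}+a^{\gamma_2}$ by \eqref{eq:Vbound} and, $T_r$ being unitary on $l^2(\Z)$, $\sum_x|T_r\vphi(x)|^{\gamma}\le\|T_r\vphi\|_\infty^{\gamma-2}\|\vphi\|_{l^2}^2=\lambda^{\gamma/2}$ for $\gamma\ge2$, one has $|N(\vphi)|\lesssim|\mu|(\lambda^{\gamma_1/2}+\lambda^{\gamma_2/2})$, hence $E_\lambda>-\infty$; spreading the mass, e.g. $\vphi_n:=\sqrt{\lambda/n}\,\ind_{[1,n]}$, gives $\|D_+\vphi_n\|^2=2\lambda/n\to0$ and, using that the free discrete Schr\"odinger kernel $e^{ir\Delta}\delta_0$ decays super-exponentially in space (uniformly for $r$ in the compact set $\supp\mu$), $\|T_r\vphi_n\|_\infty\le\sqrt{\lambda/n}\,\|e^{ir\Delta}\delta_0\|_{l^1}\to0$, so $N(\vphi_n)\to0$ and $E_\lambda\le H(\vphi_n)\to0$; also $E_0=0$. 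For (i), set $\lambda_{\mathrm{cr}}:=\inf\{\lambda>0:E_\lambda<0\}$. The set $\{E_\lambda<0\}$ is upward closed: if $H(\vphi)<0$ with $\|\vphi\|^2=\lambda_0$ and $\lambda>\lambda_0$, attach a far-away bump $g_n$ of mass $\lambda-\lambda_0$ spread over $n$ sites; then $\|D_+(\vphi+\tau_m g_n)\|^2\to\|D_+\vphi\|^2+\|D_+g_n\|^2$ as $m\to\infty$ (the cross term vanishes since $\tau_m D_+g_n\rightharpoonup0$) and $N(\vphi+\tau_m g_n)\to N(\vphi)+N(g_n)$ by the splitting Lemma~\ref{lem:V-splitting}, so letting $m\to\infty$ then $n\to\infty$ (so $H(g_n)\to0$) gives $E_\lambda\le H(\vphi)<0$. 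Also $E_{\lambda_{\mathrm{cr}}}=0$: if $\lambda_{\mathrm{cr}}>0$ and $H(\vphi)<0$ with $\|\vphi\|^2=\lambda_{\mathrm{cr}}$, then $\psi_s:=\sqrt{1-s}\,\vphi$ has mass $(1-s)\lambda_{\mathrm{cr}}<\lambda_{\mathrm{cr}}$ and, by continuity of $V$ and dominated convergence (bounding $V(\sqrt{1-s}\,a)$ by $a^{\gamma_1}+a^{\gamma_2}$), $H(\psi_s)\to H(\vphi)<0$ as $s\downarrow0$, contradicting the definition of $\lambda_{\mathrm{cr}}$. Finally $\lambda_{\mathrm{cr}}<\infty$: for a wide plateau $f_{A,L}:=\sqrt A\,\ind_{[1,L]}$ one has $\|D_+f_{A,L}\|^2=2A$ and, since constant sequences lie in $\ker\Delta$, $T_rf_{A,L}$ differs from $f_{A,L}$ only by terms concentrated (with fast-decaying tails) near the two edges, whence $N(f_{A,L})\ge c|\mu|(L\,V(\sqrt A)-C_A)$ with $C_A$ independent of $L$; by \ref{ass:A2} the map $a\mapsto V(a)a^{-\gamma_0}$ is non-decreasing, so with \ref{ass:A3}, $V(\sqrt A)>0$ whenever $\sqrt A\ge a_0$, and for such a fixed $A$, $H(f_{A,L})\le\dav A-c|\mu|(L\,V(\sqrt A)-C_A)\to-\infty$ as $L\to\infty$. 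This proves (i).

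For the non-existence in (ii): if $\dav>0$, $0<\lambda<\lambda_{\mathrm{cr}}$ and $\vphi$ minimized \eqref{eq:min}, then $H(\vphi)=E_\lambda=0$, i.e. $\tfrac{\dav}{2}\|D_+\vphi\|^2=N(\vphi)$, with $\|D_+\vphi\|^2>0$ since $\vphi\neq0$; but $t\mapsto H(\sqrt t\,\vphi)$ is differentiable with derivative at $t=1$ equal to $\tfrac{\dav}{2}\|D_+\vphi\|^2-\tfrac12\int\sum_x V'(|T_r\vphi(x)|)|T_r\vphi(x)|\,\mu(dr)\le(1-\tfrac{\gamma_0}{2})\tfrac{\dav}{2}\|D_+\vphi\|^2<0$ by \ref{ass:A2}, contradicting $H(\sqrt t\,\vphi)\ge E_{t\lambda}=0=H(\vphi)$ for $t$ slightly larger than $1$ (so $t\lambda<\lambda_{\mathrm{cr}}$). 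If moreover $\gamma_1\ge6$ (and $\dav>0$), the discrete Agmon-type bound $\|T_r\vphi\|_\infty\le\sqrt2\,\|\vphi\|_{l^2}^{1/2}\|D_+\vphi\|_{l^2}^{1/2}$ (valid because $D_+$ commutes with $e^{ir\Delta}$), together with \eqref{eq:Vbound} and $\|D_+\vphi\|\le2\|\vphi\|_{l^2}$, gives $N(\vphi)\lesssim(\lambda^{(\gamma_1-2)/2}+\lambda^{(\gamma_2-2)/2})\|D_+\vphi\|^2$, so $H(\vphi)\ge(\tfrac{\dav}{2}-o_\lambda(1))\|D_+\vphi\|^2\ge0$ for $\lambda$ small, i.e. $\lambda_{\mathrm{cr}}>0$. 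For (iv), given $\lambda>0$: if $\dav=0$, choose $\vphi$ with $\|\vphi\|^2=\lambda$ spread enough that $\|T_r\vphi\|_\infty\le\veps$ on $\supp\mu$; by \ref{ass:A4} then $V(|T_r\vphi(x)|)\ge0$ for all $x,r$ and is positive wherever $T_r\vphi(x)\neq0$ (and $T_r\vphi\neq0$), so $N(\vphi)>0$ and $H(\vphi)=-N(\vphi)<0$. If $\dav>0$, use a slowly varying profile $g_\sigma(x):=\sqrt{\lambda/\sigma}\,\phi(x/\sigma)$ with $\phi$ fixed, $\|\phi\|_{L^2(\R)}=1$, and $\sigma\to\infty$: then $\|D_+g_\sigma\|^2\sim\lambda\sigma^{-2}$, while $g_\sigma$ is locally almost constant so $T_rg_\sigma\approx g_\sigma$ and $\|g_\sigma\|_\infty\to0$, whence by \ref{ass:A4} $N(g_\sigma)\gtrsim\sum_x|g_\sigma(x)|^{\kappa}\sim\lambda^{\kappa/2}\sigma^{1-\kappa/2}$; since $\kappa<6$ gives $1-\kappa/2>-2$, the term $-N(g_\sigma)$ dominates the kinetic term for large $\sigma$, so $H(g_\sigma)<0$. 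In either case $E_\lambda<0$ for every $\lambda>0$, i.e. $\lambda_{\mathrm{cr}}=0$.

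For (iii), fix $\lambda>\lambda_{\mathrm{cr}}$ and a minimizing sequence $(\vphi_n)$. From $N(\vphi_n)\ge-H(\vphi_n)\to-E_\lambda>0$ and the boundedness of $N$, $(\vphi_n)$ is bounded in $l^2(\Z)$ with $\|D_+\vphi_n\|$ bounded, and $\liminf_n\|\vphi_n\|_\infty>0$: otherwise, along a subsequence $\|\vphi_{n_k}\|_\infty\to0$ and then, by Cauchy--Schwarz against the $l^1$-summable kernel together with $\|e^{ir\Delta}\delta_0\|_{l^2}=1$, $\|T_r\vphi_{n_k}\|_\infty\lesssim\lambda^{1/4}\|\vphi_{n_k}\|_\infty^{1/2}\to0$ uniformly on $\supp\mu$, so $N(\vphi_{n_k})\to0\neq-E_\lambda$, a contradiction. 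Choosing $x_n$ with $|\vphi_n(x_n)|\ge c_0>0$ and setting $\widetilde\vphi_n:=\tau_{-x_n}\vphi_n$, any subsequence has a further subsequence with $\widetilde\vphi_n\rightharpoonup\psi$ weakly in $l^2(\Z)$ and $|\psi(0)|\ge c_0$, so $\mu_0:=\|\psi\|^2\in(0,\lambda]$. Writing $\widetilde\vphi_n=\psi+\rho_n$ with $\rho_n\rightharpoonup0$ and $\|\rho_n\|^2\to\lambda-\mu_0$, the splitting Lemma~\ref{lem:V-splitting} (in its Brezis--Lieb form for weakly convergent sequences) and weak convergence of $D_+\rho_n$ give $H(\widetilde\vphi_n)=H(\psi)+H(\rho_n)+o(1)$, so $E_\lambda=H(\psi)+\lim_n H(\rho_n)\ge E_{\mu_0}+E_{\lambda-\mu_0}$. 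The strict binding inequality (addressed below) then forces $\lambda-\mu_0=0$, hence $\widetilde\vphi_n\to\psi$ strongly; since $N$ is continuous along strongly convergent sequences, $H(\psi)=\lim H(\widetilde\vphi_n)=E_\lambda$, so $\psi$ minimizes \eqref{eq:min} and, the argument applying to every subsequence, any minimizing sequence is relatively compact up to translations. As a constrained minimizer of the $C^1$ functional $H$, $\psi$ solves the Euler--Lagrange equation \eqref{eq:GT-weak} for some Lagrange multiplier $\omega$; pairing it with $h=\psi$, and using $\int\sum_x V'(|T_r\psi(x)|)|T_r\psi(x)|\,\mu(dr)\ge\gamma_0 N(\psi)$ from \ref{ass:A2} together with $N(\psi)=\tfrac{\dav}{2}\|D_+\psi\|^2-E_\lambda$, gives $\omega\lambda\le(1-\tfrac{\gamma_0}{2})\dav\|D_+\psi\|^2+\gamma_0 E_\lambda\le\gamma_0 E_\lambda<2E_\lambda$, i.e. $\omega<2E_\lambda/\lambda<0$.

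The main obstacle is the strict binding inequality $E_\lambda<E_\mu+E_{\lambda-\mu}$ for $0<\mu<\lambda$ whenever $E_\lambda<0$ (equivalently, that $\lambda\mapsto E_\lambda/\lambda$ is strictly decreasing where $E_\lambda<0$). The far-apart construction and Lemma~\ref{lem:V-splitting} yield at once the non-strict subadditivity $E_{\mu+\nu}\le E_\mu+E_\nu$, hence that $\lambda\mapsto E_\lambda$ is non-increasing, and the plateau test functions of (i) give in addition $E_\lambda\le C-c'\lambda$ for all large $\lambda$. Upgrading this to a strict inequality for all splittings is the delicate step: for sign-changing nonlinearities the usual amplitude-rescaling argument does not produce the required strict gain, so one must instead extract it from the super-quadratic growth \ref{ass:A2} combined with the wide-plateau construction and the above estimates. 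This is where I expect the bulk of the work, and it is also the point at which the argument, like the paper's more direct one, avoids Lions' concentration-compactness machinery.
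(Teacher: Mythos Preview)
Your claimed ``main obstacle'' is not an obstacle at all, and your diagnosis of why it should be hard is wrong. Assumption \ref{ass:A2} is equivalent (differentiate/integrate) to the pointwise scaling $V(ta)\ge t^{\gamma_0}V(a)$ for all $t\ge1$ and $a>0$, and this holds \emph{regardless of the sign of $V$}. Summing and integrating gives $N(tf)\ge t^{\gamma_0}N(f)$, hence $H(tf)\le t^{\gamma_0}H(f)$ for $t\ge1$ (the kinetic term scales like $t^2\le t^{\gamma_0}$). Taking infima yields $E_{s^2\lambda}\ge s^{\gamma_0}E_\lambda$ for $0\le s\le1$, and then for $\lambda_1+\lambda_2\le\lambda$ one gets
\[
E_{\lambda_1}+E_{\lambda_2}\ge\bigl((\lambda_1/\lambda)^{\gamma_0/2}+(\lambda_2/\lambda)^{\gamma_0/2}\bigr)E_\lambda,
\]
which is strictly larger than $E_\lambda$ whenever $E_\lambda<0$, since $\gamma_0/2>1$ forces $(\lambda_1/\lambda)^{\gamma_0/2}+(\lambda_2/\lambda)^{\gamma_0/2}<1$. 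This is exactly what the paper does (Proposition~\ref{prop:strict-subadditivity}), it is a two-line argument, and it needs no wide-plateau construction. Your statement that ``for sign-changing nonlinearities the usual amplitude-rescaling argument does not produce the required strict gain'' is simply false.

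Your route to (iii) also diverges from the paper and has an unjustified step. The paper does \emph{not} pass to a weak limit and invoke a Brezis--Lieb splitting for $N$; instead it proves a quantitative ``fat-tail'' lower bound (Proposition~\ref{prop:fat-tail proposition}): if $f$ has $\delta$-mass both left of $a$ and right of $b$, then $H(f)\ge[1-c(\delta/\lambda)^{\gamma_0/2}]E_\lambda-\text{error}(b-a)$, with the error vanishing as $b-a\to\infty$. Combined with the strict subadditivity above (which gives the bracket $<1$ when $E_\lambda<0$), this forces uniform tightness of any minimizing sequence after translation, and then weak convergence plus tightness gives strong convergence directly. Your Brezis--Lieb step ``$H(\widetilde\vphi_n)=H(\psi)+H(\rho_n)+o(1)$'' for the \emph{nonlocal} functional $N$ is not provided by Lemma~\ref{lem:V-splitting} (that lemma controls splitting only for functions with well-separated supports, not for weak remainders), and you give no argument for the required uniform integrability in $(x,r)$. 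It may be salvageable in the discrete setting, but it is not the paper's approach and you have not done the work.
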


\begin{remarks}
  \itemthm The proof of Theorem \ref{thm:threshold-intro} is given at the end of Section \ref{sec:threshold}. The precise definition of the threshold $\lambda_{\text{cr}}$ is given in Definition \ref{def:threshold}. As we will see in Theorem \ref{thm:existence}, minimizing sequences for \eqref{eq:min} are relatively compact in $l^2(\Z)$ modulo translations if and only if $E^{\dav}_\lambda<0$.
  So when $\lambda=\lambda_{\mathrm{cr}}$ minimizers might exist, but minimizing sequences do not have be be precompact modulo translations. \\
  \itemthm Using $h=\varphi$ in \eqref{eq:GT-weak}, it is clear that the Lagrange multipliers are
  \begin{align*}
  	\omega = \omega(\varphi)= \frac{\dav\la D_+\varphi, D_+\varphi\ra
		- \int_\R \la P(T_r \varphi),T_r \varphi\ra\, \mu( dr) }{\la \varphi,\varphi\ra}
  \end{align*}	
 and using assumption \ref{ass:A2} this will yield a rather direct proof of
 $\omega(\varphi)< 2 E^{\dav}_\lambda/\lambda < 0$ for all minimizers $\varphi$, see \eqref{eq:negative-Lagrange-multiplier}.
\end{remarks}

If $\varphi$ is a solution of \eqref{eq:GT}, or rather of its weak version \eqref{eq:GT-weak}, one can ask how well it will be localized. As it turns out, the answer to this depends on whether $\dav=0$ or $\dav>0$. In an earlier paper \cite{HuLee 2012}, super-exponential decay of solutions for $\dav=0$ was shown in the case that the nonlinearity is cubic, $P(a)=|a|^2a$ or $V(a)=\frac{1}{4} |a|^4$. The case of positive average diffraction was not studied.

There is a simple physical heuristic guess for decay rate of solutions of \eqref{eq:GT-weak}: Assume that $\varphi$ decays exponentially and make the ansatz $\varphi(x)= e^{-\nu x}$ for $x\gg 1$. Plugging this into \eqref{eq:GT} and hoping that
, even despite possible nonlocal effects, the nonlinearity in \eqref{eq:GT} is of higher order than $e^{-\nu x}$, then
\begin{align*}
	\omega e^{-\nu x}
	& = -\dav \Delta(e^{-\nu \cdot})(x) +\oh(e^{-\nu x}) \\
	& = -\dav (e^{-\nu(x+1)} -2e^{-\nu x} + e^{-\nu (x-1)}) + \oh(e^{-\nu x}) \\
	& = -2\dav (\cosh(\nu)-1)e^{-\nu x} + \oh(e^{-\nu x}).
\end{align*}
Letting $x\to\infty$, one sees that this implies $\omega<0$ and $2\dav(\cosh(\nu)-1)= |\omega|$, or, with $\cosh^{-1}$ the inverse function of $\cosh:[0,\infty)\to [1,\infty)$,
\begin{align}
	\nu = \cosh^{-1}\left( \frac{|\omega|}{2\dav} +1 \right)
\end{align}
which is a rather precise prediction for the exponential decay rate. A remarkable feature of it is that it predicts $\nu\to\infty$ if $\dav\to 0$ as long as $\omega$ stays away from zero.

Of course, this all depends on in which sense the nonlocal nonlinear terms in \eqref{eq:GT} are really of lower exponential order. Nevertheless, this simple physical heuristic is not far from the truth, because of

\begin{theorem}[Decay for positive average diffraction]\label{thm:exponential decay}
Assume $\dav>0$ and $V$ obeys assumption \ref{ass:A1}. Then any solution $\varphi$ of \eqref{eq:GT} with $\omega<0$ decays exponentially and the decay rate is given by the above heuristic in the sense that
	\begin{align}\label{eq:exponential decay rate}
		\nu_*(\varphi):= \sup\left\{ \nu>0|\, (x\mapsto e^{\nu|x|}\varphi(x))\in l^2(\Z) \right\}
		\ge \cosh^{-1}\left(\frac{|\omega|}{2\dav}+1\right).
	\end{align}
\end{theorem}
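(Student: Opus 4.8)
The plan is to set up a self-improving (bootstrap) estimate for the weighted sequence $\varphi_\nu(x) := e^{\nu|x|}\varphi(x)$, using the weak equation \eqref{eq:GT-weak} tested against suitable weighted shifts of $\varphi$. The key structural fact is that the free evolution $T_r = e^{ir\Delta}$ on $l^2(\Z)$ has a kernel with exponential (indeed super-exponential) off-diagonal decay, since $\Delta$ is a bounded, banded operator; consequently the conjugated operators $e^{\nu|\cdot|}T_r e^{-\nu|\cdot|}$ are bounded on $l^2(\Z)$ for every fixed $\nu>0$, with norm depending continuously on $\nu$ and on the (compact) support of $\mu$. This is what will let us transfer weighted bounds through the nonlocal nonlinear term.

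First I would fix $\nu < \cosh^{-1}(|\omega|/(2\dav)+1)$ and show $\varphi_\nu \in l^2(\Z)$; the theorem's supremum then follows by letting $\nu$ increase to the claimed bound. To do this, introduce a cutoff weight $w_R(x) := e^{\nu\min(|x|,R)}$ (bounded for each $R$, so $w_R\varphi \in l^2$ a priori) and test \eqref{eq:GT-weak} with $h = w_R^2 \varphi$. On the left we get $\omega \|w_R\varphi\|^2$. The quadratic term $\dav\la D_+\varphi, D_+(w_R^2\varphi)\ra$ should be rearranged, via the discrete IMS-type localization identity, into $\dav\|D_+(w_R\varphi)\|^2$ minus an error controlled by $\dav\|(D_+ w_R) \cdot(\text{something})\|$; the point is that $D_+ w_R$ is comparable to $(e^{\nu}-1)w_R$ off the cutoff region, and $\|D_+ g\|^2 \ge 2(\cosh\nu - 1 - \delta)\|g\|^2$ type inequalities (from the symbol $2(1-\cos\theta)$ of $-\Delta$ versus $2(\cosh\nu - 1)$) give the crucial spectral gap. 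Combining, one arrives at an inequality of the shape
\begin{align*}
  \big(2\dav(\cosh\nu - 1) - |\omega|\big)\,\|w_R\varphi\|^2 \le \big| \text{nonlinear term} \big| + (\text{boundary/commutator errors}).
\end{align*}
Because $\nu$ was chosen strictly below the threshold, the prefactor on the left is strictly positive.

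The nonlinear term is $\int_\R \la P(T_r\varphi), T_r(w_R^2\varphi)\ra\,\mu(dr)$, and here I would use assumption \ref{ass:A1}: $|P(z)| = |V'(|z|)||z| \lesssim |z|^{\gamma_1-1} + |z|^{\gamma_2-1}$ with $\gamma_1,\gamma_2 > 2$. Writing $w_R^2 = w_R \cdot w_R$ and moving one weight through $T_r$ using the exponential-decay-of-kernel bound on $e^{\nu|\cdot|}T_r e^{-\nu|\cdot|}$, one estimates this term by $C\,\|w_R\varphi\| \cdot (\|\varphi\|_\infty^{\gamma_1-2} + \|\varphi\|_\infty^{\gamma_2-2})\,\|w_R\varphi\| \cdot (\text{small factor})$. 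Since $\varphi \in l^2(\Z)$ decays, by first restricting to $|x|\ge M$ for $M$ large we can make $\|\varphi\|_{l^\infty(|x|\ge M)}$ as small as we like, so that the nonlinear contribution coming from the tail is absorbed into the left-hand side; the finitely many remaining terms with $|x| < M$ contribute a bound independent of $R$. This yields $\|w_R\varphi\|^2 \le C$ uniformly in $R$, and Fatou / monotone convergence as $R\to\infty$ gives $\varphi_\nu \in l^2(\Z)$.

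The main obstacle is making the localization/commutator bookkeeping clean enough that the nonlocal nonlinear term — which, unlike in a local PDE, genuinely couples all sites through $T_r$ — is controlled with the \emph{correct} constant, i.e. so that the competition is precisely between $2\dav(\cosh\nu-1)$ and $|\omega|$ with nothing worse appearing. The resolution is that the nonlinearity is of order $> 2$ in $\varphi$, so its weighted norm carries only \emph{one} factor of $w_R\varphi$ while the remaining factors come with a genuinely small $l^\infty$-tail; hence it never competes with the quadratic spectral term and can always be absorbed, for every $\nu$ below the threshold. A secondary technical point is verifying that $e^{\nu|\cdot|}T_r e^{-\nu|\cdot|}$ is bounded with the right dependence on $r$, which follows from the explicit Bessel-function kernel of $e^{ir\Delta}$ on $\Z$ (whose entries decay faster than any exponential in the distance), together with compactness of $\supp\mu$.
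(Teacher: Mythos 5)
Your proposal is essentially sound, but it takes a genuinely different route from the paper, so a comparison is in order.

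The paper proves Theorem \ref{thm:exponential decay} in two stages: Proposition \ref{prop:some exponential decay} first establishes \emph{some} exponential decay, taking $\nu=\tau^{-1}$ with a cutoff $\chi_\tau$ and the non-sharp IMS bound, so the nonlinear coefficient (Corollary \ref{cor:DNbound}) is driven to zero as $\tau\to\infty$; Proposition \ref{prop:exponential boost} then bootstraps, using the sharp IMS estimate of Lemma \ref{lem:IMS error sharp} and the weight-exchange bound \eqref{eq:exchange} to absorb the excess weight $e^{2(\nu-\nu_0)|\cdot|}$ into the $\gamma_1-1$ factors of $\varphi$, each already carrying decay $\nu_0$; iteration of $\nu_0\mapsto\min(\gamma_1\nu_0/2,\ \ol{\nu}-\delta)$ reaches $\ol\nu$. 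Your scheme is a one-shot argument: for each fixed $\nu<\ol\nu$ you test with $h=w_R^2\varphi$ with the \emph{saturated} weight $w_R=e^{\nu\min(|\cdot|,R)}$, use the sharp commutator bound to get the prefactor $|\omega|-2\dav(\cosh\nu-1)>0$, and then make the nonlinear term absorbable by splitting the $x$-sum at $|x|=M$: on $\{|x|\ge M\}$ you extract $\|T_r\varphi\|_{l^\infty(|x|\ge M)}^{\gamma_1-2}\cdot C_\nu^2\|w_R\varphi\|_2^2$ whose coefficient is driven small as $M\to\infty$, and on $\{|x|<M\}$ you get an $R$-independent bound times at most one power of $\|w_R\varphi\|_2$, which Young's inequality absorbs. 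This does work; it replaces the paper's bootstrap with a single pass, at the cost of a new ingredient — uniform-in-$r$ tail-smallness of $T_r\varphi$ — which the paper does not need. That ingredient is not hard (compactness of $r\mapsto T_r\varphi$ on $[-B,B]$ in $l^2(\Z)$, or directly the super-exponential kernel decay \eqref{eq:kernel bound}), but you should state it as a lemma; as written, the phrase ``restricting to $|x|\ge M$\,'' conflates the test function with where the sum is evaluated. Both arguments ultimately exploit $\gamma_1>2$: you use it to peel off a small $\|\cdot\|_\infty^{\gamma_1-2}$ tail factor, the paper uses it to distribute excess exponential weight over $\gamma_1-1$ copies of $\varphi$. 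A pragmatic reason the paper chose the bootstrap is that the same two-step architecture is re-used almost verbatim for the super-exponential decay in the $\dav=0$ case (Section \ref{sec:super-exponential decay for zero average diffraction}), whereas the tightness trick does not transfer there.

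Two small corrections to your writeup. First, the claim ``$\|D_+g\|^2\ge 2(\cosh\nu-1-\delta)\|g\|^2$\,'' is not true and not what is needed; $-\Delta$ has spectrum $[0,4]$, so there is no positive lower bound of that type. What you actually use is the commutator identity of Lemma \ref{lem:IMS error sharp}: $\Real\langle w_R^2\varphi,-\Delta\varphi\rangle \ge \langle w_R\varphi,-\Delta(w_R\varphi)\rangle - \langle w_R\varphi,\bigl(\cosh(D_+\log w_R)+\cosh(D_-\log w_R)-2\bigr)w_R\varphi\rangle \ge -2(\cosh\nu-1)\|w_R\varphi\|_2^2$, where the nonnegative Dirichlet form is simply discarded and the factor $2(\cosh\nu-1)$ comes from $|D_\pm\log w_R|\le\nu$. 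Second, the formula $|P(z)|=|V'(|z|)|\,|z|$ is off by a factor of $|z|$; since $V'(a)=P(a)$ for $a\ge0$ and $P(z)=p(|z|)z$, one has $|P(z)|=|V'(|z|)|\lesssim |z|^{\gamma_1-1}+|z|^{\gamma_2-1}$, which is what your subsequent estimate uses.
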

\begin{remark}
 As we will see in Theorem \ref{thm:threshold-phenomena} below, the ground state solutions, that is, the ones with minimal energy, are solutions with   $\omega< 2E^{\dav}_\lambda/\lambda<0$
 for all $\dav >0$.  	
 At the moment, we cannot rule out that there are solutions of \eqref{eq:GT} for which $\nu_*> \cosh^{-1}\left(\frac{|\omega|}{2\dav}+1\right)$.
\end{remark}

Given the lower bound on the exponential decay rate given in \eqref{eq:exponential decay rate}, one expects that $\nu_*(\varphi_\dav)\to\infty$ as $\dav \to 0$, as long as the corresponding Lagrange multipliers  $\omega= \omega(\varphi_\dav)$ stay away from zero. In general, this might not be the case,  but it is true for ground state solutions.
\begin{corollary}\label{cor:divergence}
	Let $\lambda>0$, $\dav > 0$, and $\calM^\dav_\lambda $ the set of
	minimizers of the constrained minimization problem \eqref{eq:min}. Then for fixed $\lambda>0$ and any choice $\varphi_\dav\in \calM^\dav_\lambda$ the exponential decay rates diverge in the limit of small average dispersion. More precisely, we have the lower bound
	\begin{align*}
		\liminf_{\dav \to 0}\frac{\nu_*(\varphi_\dav)}{\cosh^{-1}\left(\frac{|E^0_\lambda|-\delta}{\lambda\dav}+1\right)} \ge 1
	\end{align*}
	for any $0<\delta< |E^0_\lambda|$, so the exponential decay rate $\nu_*(\varphi_\dav)$ diverges at least logarithmically as $\dav\to 0$.
\end{corollary}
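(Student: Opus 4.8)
The plan is to combine the lower bound on the exponential decay rate from Theorem \ref{thm:exponential decay} with a lower bound on the size of the Lagrange multiplier $|\omega(\varphi_\dav)|$ that is uniform as $\dav\to 0$. Since $\varphi_\dav\in\calM^\dav_\lambda$ is a minimizer, the remark after Theorem \ref{thm:threshold-intro} gives $\omega(\varphi_\dav)< 2E^\dav_\lambda/\lambda<0$, hence $|\omega(\varphi_\dav)|> 2|E^\dav_\lambda|/\lambda$. Plugging this into \eqref{eq:exponential decay rate} and using that $\cosh^{-1}$ is increasing yields
\begin{align*}
	\nu_*(\varphi_\dav)\ge \cosh^{-1}\left(\frac{|\omega(\varphi_\dav)|}{2\dav}+1\right)
	\ge \cosh^{-1}\left(\frac{|E^\dav_\lambda|}{\lambda\dav}+1\right).
\end{align*}
So the corollary reduces to showing that $|E^\dav_\lambda|$ does not collapse too fast as $\dav\to 0$; concretely, it suffices to show $\liminf_{\dav\to 0}|E^\dav_\lambda|\ge |E^0_\lambda|$, equivalently $\limsup_{\dav\to 0} E^\dav_\lambda \le E^0_\lambda$ (recall both are $\le 0$), since then for any $0<\delta<|E^0_\lambda|$ we have $|E^\dav_\lambda|\ge |E^0_\lambda|-\delta$ for all sufficiently small $\dav$, and monotonicity of $\cosh^{-1}$ gives the claimed ratio bound.

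The key step is therefore the upper semicontinuity $\limsup_{\dav\to 0}E^\dav_\lambda\le E^0_\lambda$. This I would get by a direct comparison: fix any test function $\psi\in l^2(\Z)$ with $\|\psi\|_2^2=\lambda$; then $H^\dav(\psi)=\frac{\dav}{2}\|D_+\psi\|^2-N(\psi)$, and since $N$ does not depend on $\dav$ we have $H^\dav(\psi)\to H^0(\psi)=-N(\psi)$ as $\dav\to 0$. Hence $\limsup_{\dav\to 0}E^\dav_\lambda\le \limsup_{\dav\to 0}H^\dav(\psi)=H^0(\psi)$, and taking the infimum over all admissible $\psi$ gives $\limsup_{\dav\to 0}E^\dav_\lambda\le E^0_\lambda$. (One should note $E^0_\lambda<0$: under \ref{ass:A1}--\ref{ass:A3} plus $\lambda>\lambda_{\mathrm{cr}}$ this is Theorem \ref{thm:threshold-intro}, and the statement of the corollary implicitly presumes $|E^0_\lambda|>0$ since otherwise the range $0<\delta<|E^0_\lambda|$ is empty; so there is nothing further to assume.)

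Assembling the pieces: for $0<\delta<|E^0_\lambda|$ and $\dav$ small enough that $|E^\dav_\lambda|\ge |E^0_\lambda|-\delta>0$, we get
\begin{align*}
	\frac{\nu_*(\varphi_\dav)}{\cosh^{-1}\!\left(\frac{|E^0_\lambda|-\delta}{\lambda\dav}+1\right)}
	\ge \frac{\cosh^{-1}\!\left(\frac{|E^\dav_\lambda|}{\lambda\dav}+1\right)}{\cosh^{-1}\!\left(\frac{|E^0_\lambda|-\delta}{\lambda\dav}+1\right)}\ge 1,
\end{align*}
which is exactly the asserted $\liminf\ge 1$. Finally, since $\cosh^{-1}(1+t)\sim\log(2t)$ as $t\to\infty$, the denominator $\cosh^{-1}\big((|E^0_\lambda|-\delta)/(\lambda\dav)+1\big)$ behaves like $\log(1/\dav)$ as $\dav\to 0$, so $\nu_*(\varphi_\dav)$ diverges at least logarithmically. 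The main obstacle is the upper semicontinuity of $\dav\mapsto E^\dav_\lambda$ at $\dav=0$, but as sketched this is elementary because the nonlocal term $N$ is $\dav$-independent and the only $\dav$-dependence sits in the nonnegative kinetic term $\frac{\dav}{2}\|D_+\psi\|^2$, which vanishes in the limit for each fixed finitely-supported (or $l^2$) test function; no compactness of the minimizers $\varphi_\dav$ as $\dav\to0$ is needed.
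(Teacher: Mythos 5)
Your proof is correct and follows the same route as the paper: it combines the exponential-decay lower bound \eqref{eq:exponential decay rate} with the Lagrange-multiplier estimate $|\omega(\varphi_\dav)|\lambda \ge 2|E^\dav_\lambda|$ from Theorem~\ref{thm:existence} and the convergence $E^\dav_\lambda \to E^0_\lambda$ as $\dav\to 0$. You spell out the one step the paper leaves implicit, namely the upper semicontinuity $\limsup_{\dav\to 0}E^\dav_\lambda \le E^0_\lambda$ via a fixed test function, and this explicitness is actually a welcome clarification, since the monotonicity the paper cites by itself gives $E^\dav_\lambda \ge E^0_\lambda$ (the wrong direction for $|E^\dav_\lambda| \ge |E^0_\lambda|-\delta$), and it is precisely your test-function comparison that pins the limit down to $E^0_\lambda$.
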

\begin{proof}
  This is, in fact, a simple consequence of the lower bound \eqref{eq:exponential decay rate},
  the negativity of  $E^{\dav}_\lambda$, guaranteed by Theorem \ref{thm:existence},
  its monotonicity\footnote{which follows immediately from  definition \eqref{eq:min}.} in $\dav\ge 0$, and the bound on the Lagrange multipliers from Theorem \ref{thm:existence}, which imply
  that for all $\delta>0$ one has $|\omega(\varphi_\dav)|\lambda \ge 2|E^{\dav}_\lambda|
  \ge 2(|E^0_\lambda|-\delta)$ for all small enough $\dav>0$.
\end{proof}

Given that the exponential decay rate of the ground states for average diffraction $\dav>0$ diverges as $\dav\to 0$, one can ask how fast solutions of \eqref{eq:GT-weak} decay
when   $\dav=0$. This was done in \cite{HuLee 2012} for the special fourth order
nonlinearity $V(a)\sim a^4$, but it holds in much greater generality.
\begin{theorem}[Super-exponential decay for zero average diffraction]\label{thm:super-exp decay}
Assume $\dav=0$ and $V$ obeys assumption \ref{ass:A1}. Then any solution $\varphi$ of \eqref{eq:GT} with $\omega \neq 0$ decays super-exponentially, more precisely,
 \begin{align}
    \nu_{**}(\varphi):= \sup\left\{ \nu>0|\,  (x\mapsto (|x|+1)^{\nu(|x|+1)}\varphi(x))\in l^2(\Z) \right\} \ge \frac{2\gamma_1-3}{2(\gamma_1-1)} .
  \end{align}
\end{theorem}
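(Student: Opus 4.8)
The plan is to bootstrap decay estimates from the weak equation \eqref{eq:GT-weak} with $\dav=0$, which in this case reads $\omega\la\varphi,h\ra = -\int_\R\la P(T_r\varphi),T_rh\ra\,\mu(dr)$ for all $h\in l^2(\Z)$. Since $\omega\neq0$ this expresses $\varphi$ directly in terms of the nonlocal nonlinearity: writing $h=\delta_x$, one gets $\varphi(x) = -\omega^{-1}\int_\R (T_r^{-1}[P(T_r\varphi)])(x)\,\mu(dr)$. The key structural input is that $T_r$ has a kernel given by Bessel functions, $T_r\varphi(x)=\sum_{y} (T_r)_{x,y}\varphi(y)$ with $|(T_r)_{x,y}|\le C_r^{|x-y|}/|x-y|!$ for suitable constants (this is the super-exponential decay of the discrete Schr\"odinger propagator, which presumably is recorded earlier in the paper), and that $P(z)=p(|z|)z$ with $|P(z)|\lesssim |z|^{\gamma_1-1}+|z|^{\gamma_2-1}$ by \ref{ass:A1}, so $P$ is at least $(\gamma_1-1)$-homogeneous near zero with $\gamma_1-1>1$.

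The main steps are as follows. First, I would establish that $\varphi\in l^2$ together with the kernel bound on $T_r$ already forces some a priori decay — in fact, since $\mu$ has compact support, each application of the nonlinear map $\varphi\mapsto \int T_r^{-1}[P(T_r\varphi)]\,\mu(dr)$ takes a function with a given super-exponential decay profile $(|x|+1)^{-\alpha(|x|+1)}$-type weight to one with a strictly better profile, because: (i) $T_r$ convolves against a factorially-decaying kernel, which improves super-exponential decay, and (ii) composing with $P$ multiplies the exponent by at least $\gamma_1-1>1$. Iterating this fixed-point/bootstrap argument, one extracts the largest self-consistent decay rate. The number $\frac{2\gamma_1-3}{2(\gamma_1-1)}$ should emerge as the solution of the balance equation that the weight $w(x)=(|x|+1)^{\nu(|x|+1)}$ must satisfy in order that $w\varphi\in l^2$ be stable under the map: roughly, the factorial gain from $T_r$ contributes a shift by $1$ in the "$|x|!$-exponent", the power $\gamma_1-1$ in $P$ multiplies it, and matching the $l^2$-summability of $\sum_x w(x)^2|\varphi(x)|^2$ against $\sum_x w(x)^2 (w(x)^{-\gamma_1}\cdots)$ pins down $\nu$; solving $\nu\cdot\gamma_1 = \ldots$ should give exactly $\nu_{**}\ge\frac{2\gamma_1-3}{2(\gamma_1-1)}$. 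Concretely, I would first prove a weighted inequality of the form $\norm{w\, T_r f}\lesssim \norm{\tilde w f}$ for appropriate pairs $(w,\tilde w)$ of super-exponential weights, then combine it with the pointwise bound $|P(T_r\varphi(x))|\lesssim |T_r\varphi(x)|^{\gamma_1-1}\cdot(\text{bounded})$ valid once we know $\varphi$ is bounded (which $l^2$ gives), and feed everything into the identity for $\varphi(x)$ above.

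The main obstacle is controlling the \emph{nonlocal} composition: $P(T_r\varphi)$ is evaluated after smearing $\varphi$ by $T_r$, so a naive pointwise decay estimate on $\varphi$ does not immediately transfer, and one must track how the weight interacts with the convolution before and after applying the nonlinearity. The right way around this is to work entirely with weighted $l^2$ (or $l^\infty$) norms and a submultiplicativity property of the factorial weights under discrete convolution with the $T_r$-kernel — i.e. an estimate like $(|x|+1)^{\beta(|x|+1)}\sum_y |(T_r)_{x,y}|\,|g(y)| \lesssim \sum_y (|y|+1)^{\beta'(|y|+1)}|g(y)|$ with $\beta'<\beta$ suitably — and then to set up the iteration so that at each stage the admissible exponent strictly increases, converging (or jumping in finitely many steps) to the claimed threshold. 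A secondary technical point is the lower-order ($\gamma_2$) term in \ref{ass:A1}: near $x=\infty$ the function $T_r\varphi$ is small, so the $|z|^{\gamma_2-1}$ piece is dominated by the $|z|^{\gamma_1-1}$ piece and does not worsen the rate; this needs to be checked but is routine. I expect the cleanest write-up to isolate the weighted convolution estimate as a lemma, prove the bootstrap by a single induction on a sequence $\nu_n\uparrow \frac{2\gamma_1-3}{2(\gamma_1-1)}$, and conclude.
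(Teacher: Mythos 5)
Your overall idea---bootstrap a decay rate starting from the implicit formula $\varphi = -\omega^{-1}\int T_r^{-1}[P(T_r\varphi)]\,\mu(dr)$ and exploit the factorial decay of the discrete propagator kernel---is the right general direction, and the paper is indeed based on exactly this representation (phrased as $\la\varphi,g\ra = -\omega^{-1}DN(\varphi)[g]$). However, there are two genuine gaps that would prevent your plan from producing the stated result.

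First, you cannot skip the step of establishing that \emph{some} super--exponential decay holds before running the boost. You write that ``$\varphi\in l^2$ together with the kernel bound \dots already forces some a priori decay,'' but that is not true by inspection: the fixed--point map you describe improves a decay rate \emph{once you have one}, but gives you nothing starting from mere $l^2$. In the paper this initial step (Proposition \ref{prop:some super-exp decay}) is a separate and quite delicate argument. It uses regularized weights $H_{\nu,\veps}(l) = H_\nu(l)/(1+\veps H_\nu(l))$, a quantitative self--consistency inequality for tail sums, and a continuity--plus--dichotomy argument (the function $G(u)=u-Cu^\theta$ and the intermediate value theorem) to rule out the ``large branch.'' None of this is present in your plan, and without it the bootstrap is unanchored.

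Second, and more subtly, the exact exponent $\frac{2\gamma_1-3}{2(\gamma_1-1)} = 1-\frac{1}{2(\gamma_1-1)}$ does not come out of a pointwise convolution count the way you sketch. The crucial input in the paper is the \emph{bilinear} estimate (Lemma \ref{lem:bilinear-estimate} / \eqref{eq:strong bilinear}): when $f_1,f_2$ have supports at distance $s$, a pigeonholing ($|x-y_1|\ge \lceil s/2\rceil$ or $|x-y_2|\ge \lceil s/2\rceil$) gives $\|T_rf_1\,T_rf_2\|\lesssim s^{-\alpha s}\|f_1\|_2\|f_2\|_2$ with $\alpha$ arbitrarily close to $1/2$, not $1$. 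This factor of $\tfrac12$ is precisely what appears in the final exponent, via the self--consistency bound on tail sums
\[
\beta(n+m)\lesssim \beta(n)^{\gamma_1-1} + (m+1)^{-\alpha(m+1)},\qquad \beta(n)=\Big(\sum_{|x|\ge n}|\varphi(x)|^2\Big)^{1/2},
\]
and the optimization over the split $l=n+m$ in the boosting step. A pointwise iteration of the formula for $\varphi(x)$, as you propose, smears $\varphi$ through \emph{two} single--kernel convolutions ($T_r$ and then $T_r^{-1}$) without ever exploiting the product structure that produces the $s/2$ separation gain, and when one carries out the balance honestly this route yields a strictly weaker exponent of the form $\alpha\,\frac{\gamma_1-2}{\gamma_1}$ (with $\alpha$ at best close to $1$), not $\frac{2\gamma_1-3}{2(\gamma_1-1)}$. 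So the arithmetic you leave to ``should give exactly \dots'' does not close. To hit the stated rate you need to work with $l^2$ tails and the bilinear estimate, as the paper does; the weighted--$l^2$ framework you mention could in principle be made to work, but the key lemma there would again have to be bilinear, not a univariate weighted convolution bound.
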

\begin{remark}
	For $\gamma_1=4$, this yields the lower bound $\nu_{**}(\varphi)\ge 5/6$ which
	is much better than the lower bound $\nu_{**}(\varphi)\ge 1/4$
	proven in  \cite{HuLee 2012}.
\end{remark}

Our paper is organized as follows: In Section \ref{sec:nonlinear Estimates} we develop the main tools needed for the existence proof. This includes new fractional linear bounds on the building blocks from Definition \ref{def:M2}, which are needed to control the nonlocal nonlinearity $N(f)$ under splitting.
That minimizing sequences for \eqref{eq:min} are precompact modulo translations, that is, there exist suitable translations such that the translated minimizing sequence has a strongly convergent subsequence, if and only if $E^{\dav}_\lambda<0$ is the content of Theorem \ref{thm:existence}. Our proof in Section \ref{sec:The Existence Proof} is based on non-splitting bounds for minimizing sequences given in  Propositions \ref{prop:fat-tail proposition} and \ref{prop: tightness modulo translations}, which together with a simple characterization of strong convergence in $l^2(\Z)$ given in Lemma \ref{lem:strong-convergence} imply precompactness of minimizing sequences modulo translations once $E^{\dav}_\lambda<0$.
 This is similar, at least in spirit, to our companion paper \cite{ChoiHuLee2015} for the continuous case.

The threshold phenomenon is then studied in Section \ref{sec:threshold} and the proof of Theorem \ref{thm:threshold-intro} is given at the end of this section. It turns out that Assumptions \ref{ass:A1} and \ref{ass:A2} are enough to yield a threshold phenomenon, see Theorem \ref{thm:threshold-phenomena}, but  it could happen that $\lambda_{\text{cr}}$ is infinite, in which case no minimizers of \ref{eq:min} exist for any $\lambda>0$. Assumption \ref{ass:A3} is used only to guarantee the finiteness of the threshold and \ref{ass:A4} guarantees that the threshold is zero.

Unlike the continuous case we are able to prove strong lower bounds on the exponential decay rate for positive average diffraction, which confirm the physical heuristic, and strong lower bounds on the super-exponential decay rate for vanishing average diffraction, which improve earlier bounds given in \cite{HuLee 2012}. These bounds are established in a
two-step process: First we prove some (super-) exponential decay, see Proposition \ref{prop:some exponential decay} in Section \ref{subsec:some exponential decay}, respectively Proposition \ref{prop:some super-exp decay} in Section \ref{subsec:some super-exp decay}, and then give arguments which allow us to boost the decay rate, see Proposition \ref{prop:exponential boost} in Section \ref{subsec:boost}, respectively Proposition \ref{prop:boost super-exp decay} in  Section \ref{subsec:boost super-exp decay}. These results are based on several intermediate results, in particular, we need suitable a-priori bounds on exponentially twisted versions of the building blocks from Definition \ref{def:L} for the derivative of the nonlinearity $N$.

In Appendix \ref{sec:useful bounds}, we gather some useful bounds for the space time norms of solutions of the free discrete Schr\"odinger equation on $l^2(\Z)$. These estimates have analogous results on $l^2(\Z^d)$, similar to the discussion in  \cite{HuLee 2012}, for example, but we give them only for $l^2(\Z)$ for brevity. Lemma \ref{lem:useful} looks somewhat technical, at first, but is at the heart of most of  our results in this work.

In Appendix \ref{sec:Strict concavity of the ground state energy}, we give the somewhat technical proof of negativity and subadditvity of the ground state energy $E^{\dav}_\lambda$ from \eqref{eq:min}. The proof of subadditivity is similar to the continuous case and given for the convenience of the reader, it also immediately yields strict subadditivity once $E^{\dav}_\lambda<0$.  That Assumption \ref{ass:A4} implies $E^{\dav}_\lambda<0$ for any $\lambda>0$ and all $\dav\ge 0$ turns out to be \emph{very much different} from the continuous case where Gaussians form a convenient set of initial conditions, since on $l^2(\Z)$ there is no simple family of initial conditions for which one can explicitly compute the time evolution under the free discrete Schr\"odinger evolution.

Appendix \ref{sec:IMS} discusses a discrete version of the well-known\footnote{See, for example, Section 3.1 of \cite{CFKS} and references therein.} IMS localization formula, which is needed for strictly positive average diffraction. Finally, in Appendix \ref{sec:connection-nonlinear-optics}, we give for the convenience of the reader a short discussion on how the highly nonlocal diffraction management equation \eqref{eq:GT} arises in the study of solitary solutions of diffraction managed waveguides arrays.

\section{Nonlinear estimates}\label{sec:nonlinear Estimates}
\subsection{Fractional linear estimates}\label{subsec:fractional linear estimates}
First, we gather some bounds which will be used in the proofs of Proposition \ref{prop:fat-tail proposition} and Proposition \ref{prop:some exponential decay}, which are the basis for the proofs of Theorems \ref{thm:existence} and \ref{thm:exponential decay} .
 We use $\| \cdot \|_p$ for $\| \cdot \|_{l^p(\Z)}$. For two functions $g$ and $h$, we write
 $g\lesssim h$ if there exists a constant $C>0$ such that $g\leq Ch$.

The space $L^p(\Z\times \R, dx\mu(dr) )$ consists of all space-time functions with finite norm
\begin{equation*}
\|f\|_{L^p(\Z\times \R, dx\mu(dr) )}:=
\begin{cases}
\left(\int_{\R}\sum _{x\in\Z}|f(x,r)|^p\mu(dr)\right)^{1/p}& \text{ if } 1 \le p < \infty , \\
\mathrm{esssup}_{r\in\supp\mu}\|f(\cdot, r)\|_\infty & \text{ if } p = \infty
\end{cases}
\end{equation*}
where the essential supremum is with respect to the measure $\mu$, that is, modulo sets of $\mu$-measure zero.

A simple but useful bound is given in
\begin{lemma}\label{lem:T_rf estimate}
Let $1\le q\le p\le \infty$ and $f\in l^q(\Z)$. Then
\bdm
\|T_rf\|_{L^p(\Z\times \R,dx \mu(dr))}\lesssim\|f\|_q
\edm
where the implicit constant depends only on $q,p$, $\mu(\R)$ and, if $q\neq 2$, also on  $\supp\mu$.
\end{lemma}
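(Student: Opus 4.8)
The plan is to prove the claimed bound $\|T_rf\|_{L^p(\Z\times\R,dx\,\mu(dr))}\lesssim\|f\|_q$ by interpolating between three elementary endpoint estimates and then integrating in $r$ against the finite measure $\mu$. The three ingredients I would use are: (i) unitarity of $T_r=e^{ir\Delta}$ on $l^2(\Z)$, i.e. $\|T_rf\|_2=\|f\|_2$ for every $r$; (ii) the trivial $l^1\to l^\infty$ dispersive-type bound $\|T_rf\|_\infty\le\|T_r\|_{1\to\infty}\,\|f\|_1$, where for fixed $r$ the operator $T_r$ has a bounded convolution kernel (the kernel is given by Bessel functions $J_{x}(2r)$ up to phases, but all we need is that it is bounded uniformly for $r$ in the compact set $\supp\mu$, so $\|T_r\|_{1\to\infty}\lesssim 1$ there); and (iii) the $l^1\to l^1$ and $l^\infty\to l^\infty$ bounds $\|T_rf\|_1\le\|T_r\|_{1\to 1}\|f\|_1$ and $\|T_rf\|_\infty\le\|T_r\|_{\infty\to\infty}\|f\|_\infty$, again with operator norms bounded uniformly for $r\in\supp\mu$ (the $l^1\to l^1$ norm is just the $\ell^1$ norm of the kernel, which is finite and locally bounded in $r$).

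From these endpoints I would run two Riesz–Thoren interpolations. First, interpolating (i) with (ii) gives, for each fixed $r$, the pointwise-in-$r$ bound $\|T_rf\|_p\lesssim\|f\|_{p'}$ for $2\le p\le\infty$ with $1/p+1/p'=1$, with implicit constant depending only on $p$ and (through $\|T_r\|_{1\to\infty}$) on $\supp\mu$. For the general case $1\le q\le p$ I would combine this with the $l^q\to l^q$ boundedness of $T_r$ (itself obtained by interpolating (iii), since $\|T_r\|_{q\to q}\le\max(\|T_r\|_{1\to1},\|T_r\|_{\infty\to\infty})$) and the embedding $l^q(\Z)\hookrightarrow l^{q'}(\Z)$ when $q\le q'\le p$ — on $\Z$ one has $\|g\|_{r_2}\le\|g\|_{r_1}$ for $r_1\le r_2$ — so that $\|T_rf\|_p\lesssim\|f\|_q$ with a constant depending on $q,p,\supp\mu$. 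When $q=2$ one can use unitarity directly at the step where $q$-boundedness was needed, which is why no dependence on $\supp\mu$ is claimed in that case. Finally, taking $L^p$ in $r$: if $p<\infty$, $\int_\R\|T_rf\|_p^p\,\mu(dr)\lesssim \|f\|_q^p\,\mu(\R)$, giving the $\mu(\R)^{1/p}$ factor; if $p=\infty$ the bound is uniform in $r$ and the $L^\infty$-norm in $r$ costs nothing. This yields exactly the asserted dependence of the implicit constant on $q,p,\mu(\R)$ and $\supp\mu$.

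The only genuine point requiring care — and the step I expect to be the main obstacle to writing cleanly — is establishing that the relevant operator norms of $T_r$ (namely $\|T_r\|_{1\to\infty}$ and $\|T_r\|_{1\to1}$) are bounded uniformly for $r$ in a compact set. This is where one must actually look at the convolution kernel of $e^{ir\Delta}$ on $\Z$; I would either invoke the known Bessel-function representation of that kernel and the standard bounds on Bessel functions, or, more self-containedly, refer forward to the space-time estimates collected in Appendix~\ref{sec:useful bounds} (Lemma~\ref{lem:useful} and its consequences), which presumably already contain precisely these $l^1$-kernel bounds on compact $r$-intervals. Everything else is routine: Riesz–Thorén interpolation, the nesting of $l^p$ spaces on a discrete set, Minkowski/Hölder in $r$ against the finite measure $\mu$, and bookkeeping of which constants depend on what.
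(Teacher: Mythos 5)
Your proposal is correct in substance and rests on the same two pillars as the paper's proof: the nesting $\|g\|_p\le\|g\|_q$ on $\Z$ for $q\le p$, and the fact that $T_r$ is bounded on $l^q(\Z)$ uniformly for $r$ in a compact interval; both of you then integrate the pointwise-in-$r$ bound against the finite measure $\mu$. The route you take to the $l^q$-boundedness is heavier than necessary, though. The paper does not need the dispersive $l^1\to l^\infty$ kernel bound or the Bessel-function representation: it proves $\|T_rf\|_q\le e^{4|r|\,|1-2/q|}\|f\|_q$ (equation \eqref{eq:lpbound}, Lemma~\ref{lem:useful}) simply from the norm-convergent power series $T_r=\sum_{n\ge 0}\frac{(ir)^n}{n!}\Delta^n$, the elementary bound $\|\Delta\|_{p\to p}\le 4$ for $p\in\{1,\infty\}$, unitarity on $l^2$, and a single Riesz--Thorin interpolation between these endpoints. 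Once that is available, the proof of the lemma is one line: $\|T_rf\|_p\le\|T_rf\|_q\le e^{4B\,|1-2/q|}\|f\|_q$ for $\supp\mu\subset[-B,B]$, then integrate (or take $\operatorname{esssup}$ if $p=\infty$). In your argument the intermediate interpolation of $l^2\to l^2$ against $l^1\to l^\infty$, yielding $\|T_rf\|_p\lesssim\|f\|_{p'}$ for conjugate exponents, is a detour: it is never actually used in your final chain, since the nesting $\|T_rf\|_p\le\|T_rf\|_q$ already reduces everything to the $l^q\to l^q$ bound. So the proposal is sound once the kernel bound is sourced from Appendix~\ref{sec:useful bounds}, but the paper's version is more economical and avoids both the dispersive estimate and any reference to Bessel functions.
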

\begin{proof}
Since $\|g\|_{p}\leq\|g\|_{q}$ for all $1\leq q\leq p\le   \infty$,
we get for $p<\infty$
$$
\int_{\R}\sum_{x\in \Z}|T_r f|^{p}\mu(dr)=\int_{\R}\|T_rf\|_{p}^{p} \mu(dr)
\leq\int_{\R}\|T_rf\|_q^{p} \mu(dr)\le \mu(\R) e^{4Bp|1-2/q|} \|f\|_q^{p}.
$$
where we used \eqref{eq:lpbound} and chose $B>0$ such that
$\supp\mu\subset[-B,B]$.
If $p=\infty$,
\begin{align*}
	\|T_rf\|_{L^\infty(\Z\times \R,dx \mu(dr))}
	\le \sup_{r\in [-B,B]} \|T_rf\|_\infty
	\le \sup_{r\in [-B,B]} \|T_rf\|_q
	\le e^{4B|1-2/q|} \|f\|_q
\end{align*}
\end{proof}

\begin{lemma}[Bilinear estimate]\label{lem:bilinear-estimate}
Let $1\le p\le \infty$, $f_1, f_2\in l^2(\Z)$, set $ s= \dist (\supp f_1, \supp  f_2)$, the distance of their supports, and $B>0$ such that $\supp\mu\subset[-B,B]$. Then
\begin{equation}\label{eq:bilinear-estimate-1}
\|T_r f_1T_r f_2\|_{L^p(\Z\times \R,dx \mu(dr))}\lesssim \min\left(1,\frac{8e^{16B}(4B)^{\lceil\frac{s}{2}\rceil}}{\lceil\frac{s}{2}\rceil!} \right) \|f_1\|_2\|f_2\|_2,
\end{equation}
where we used  $\lceil s\rceil:= \min\{n\in\Z|\, s\le n\}$ and the implicit constant depends only on $\mu(\R)$ and $p$.
\end{lemma}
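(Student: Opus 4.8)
The plan is to strip off the $r$-integration and the passage from $l^p(\Z)$ to $l^1(\Z)$ at the very start, reducing the whole estimate to a bound on $\|T_rf_1\cdot T_rf_2\|_{l^1(\Z)}$ that is uniform in $r\in\supp\mu$, and then to extract the decay in $s$ from the strong off-diagonal decay of the kernel of $T_r=e^{ir\Delta}$. For the reduction, note that $\|g\|_{l^p(\Z)}\le\|g\|_{l^1(\Z)}$ for every $p\in[1,\infty]$, so for $p<\infty$
\[
\|T_rf_1T_rf_2\|_{L^p(\Z\times\R,dx\mu(dr))}=\Big(\int_\R\|T_rf_1\cdot T_rf_2\|_{l^p(\Z)}^p\,\mu(dr)\Big)^{1/p}\le\mu(\R)^{1/p}\sup_{r\in\supp\mu}\|T_rf_1\cdot T_rf_2\|_{l^1(\Z)},
\]
and the same bound with $\mu(\R)^{1/p}$ replaced by $1$ holds for $p=\infty$; this is exactly the asserted dependence of the implicit constant on $\mu(\R)$ and $p$. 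It thus remains to bound $\sup_{|r|\le B}\|T_rf_1\cdot T_rf_2\|_{l^1(\Z)}$ by $\min\big(1,\,8e^{16B}(4B)^{\lceil s/2\rceil}/\lceil s/2\rceil!\big)\|f_1\|_2\|f_2\|_2$. The ``$1$'' alternative is immediate from Cauchy--Schwarz and unitarity of $T_r$ on $l^2(\Z)$: $\|T_rf_1\cdot T_rf_2\|_{l^1}\le\|T_rf_1\|_2\|T_rf_2\|_2=\|f_1\|_2\|f_2\|_2$.

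For the decaying alternative I would first record a kernel bound. Writing $\Delta$ as the sum of the two unit shifts on $l^2(\Z)$ and $-2$ times the identity, the multinomial theorem gives $|\langle\delta_x,\Delta^k\delta_y\rangle|\le(1+1+2)^k=4^k$, and this matrix element vanishes once $|x-y|>k$; summing $T_r=\sum_{k\ge0}(ir)^k\Delta^k/k!$ yields
\[
|\langle\delta_x,T_r\delta_y\rangle|\le\sum_{k\ge|x-y|}\frac{(4|r|)^k}{k!}\le e^{4|r|}\,\frac{(4|r|)^{|x-y|}}{|x-y|!}.
\]
From this, for $f\in l^2(\Z)$, $|r|\le B$ and any integer $m\ge0$, the function $(T_rf)\ind_{\{\dist(\cdot,\supp f)\ge m\}}$ is pointwise dominated by $e^{4B}(k_m*|f|)$ with $k_m(n):=\ind_{\{|n|\ge m\}}(4B)^{|n|}/|n|!$, and since $\sum_{n\ge m}(4B)^n/n!\le e^{4B}(4B)^m/m!$ one has $\|k_m\|_{l^1}\le2e^{4B}(4B)^m/m!$; Young's inequality then gives the twisted bound
\[
\big\|(T_rf)\ind_{\{\dist(\cdot,\supp f)\ge m\}}\big\|_2\le2e^{8B}\,\frac{(4B)^m}{m!}\,\|f\|_2.
\]

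Finally I would exploit the gap between the supports by a geometric splitting. Set $m:=\lceil s/2\rceil$ and write $\Z=A\cup A^c$ with $A:=\{x:\dist(x,\supp f_1)<m\}$ (the case $s=0$, i.e.\ $m=0$, being already covered by the ``$1$'' bound). On $A^c$ one has $\dist(x,\supp f_1)\ge m$ by definition; on $A$ the triangle inequality gives $\dist(x,\supp f_2)\ge s-(m-1)\ge m$, the last step being the elementary fact $2\lceil s/2\rceil\le s+1$. Splitting the $l^1$-sum over $A$ and $A^c$, applying Cauchy--Schwarz on each piece, and using $\|T_rf_i\|_2=\|f_i\|_2$ together with the twisted bound above (with $\ind_A\le\ind_{\{\dist(\cdot,\supp f_2)\ge m\}}$ and $\ind_{A^c}=\ind_{\{\dist(\cdot,\supp f_1)\ge m\}}$),
\[
\|T_rf_1\cdot T_rf_2\|_{l^1}\le\big\|(T_rf_1)\ind_{A^c}\big\|_2\|f_2\|_2+\|f_1\|_2\big\|(T_rf_2)\ind_A\big\|_2\le4e^{8B}\,\frac{(4B)^{\lceil s/2\rceil}}{\lceil s/2\rceil!}\,\|f_1\|_2\|f_2\|_2,
\]
and since $4e^{8B}\le8e^{16B}$, combining with the reduction of the first paragraph closes the proof.

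The argument is of Combes--Thomas type and I do not expect a conceptual obstacle; the care goes entirely into the constants, so that everything fits inside $8e^{16B}(4B)^{\lceil s/2\rceil}/\lceil s/2\rceil!$ --- in particular the crude, operator-norm-free bound $|\langle\delta_x,\Delta^k\delta_y\rangle|\le4^k$ is exactly what produces the base $4B$ rather than $B$ --- and into checking that $m=\lceil s/2\rceil$ really forces distance $\ge m$ to the opposite support on \emph{both} halves of the splitting, which is the little parity computation indicated above.
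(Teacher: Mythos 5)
Your proof is correct, and it arrives at the stated bound with a slightly sharper constant ($4e^{8B}$ rather than $8e^{16B}$) before weakening to match the lemma. The high-level strategy is the same as the paper's: reduce to $p=1$, use Cauchy--Schwarz and unitarity to get the trivial bound, then exploit the off-diagonal kernel decay $|\langle x|T_r|y\rangle|\le e^{4|r|}(4|r|)^{|x-y|}/|x-y|!$ and the fact that each $x\in\Z$ is at distance $\ge\lceil s/2\rceil$ from at least one of the two supports. Where you differ is in the packaging: the paper (Lemma \ref{lem:useful}\ (v)) phrases the argument as an asymmetric Schur test, bounding $\|T_rf_1T_rf_2\|_1\le(A_{r,1,2}A_{r,2,1})^{1/2}\|f_1\|_2\|f_2\|_2$ with $A_{r,l,m}=\sup_{y_1\in\supp f_l}\sum_{y_2\in\supp f_m}A_r(y_1,y_2)$ and splits the inner $(y_2,x)$-sum according to which of $|x-y_1|,|x-y_2|$ exceeds $\lceil s/2\rceil$; you instead partition the spatial domain $\Z=A\sqcup A^c$ by proximity to $\supp f_1$, apply Cauchy--Schwarz to each piece, and bound the ``far'' factor by a clean Young's-inequality lemma $\|(T_rf)\ind_{\{\dist(\cdot,\supp f)\ge m\}}\|_2\le 2e^{8B}(4B)^m/m!\cdot\|f\|_2$. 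Your route is a bit more modular (the twisted $l^2$ tail bound is a self-contained estimate that could be reused), while the paper's is closer in form to the twisted and weighted variants \eqref{eq:twisted strong bilinear}--\eqref{eq:exchange} that it develops in the same lemma. The parity step $2\lceil s/2\rceil\le s+1$ that you flag is the only point requiring care, and you handle it correctly.
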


\bpf

The proof of \eqref{eq:bilinear-estimate-1}  is based on the strong bilinear estimate from Lemma \ref{lem:useful} in the appendix, which strengthens and simplifies the strong bilinear bound from \cite{HuLee 2012-existence, HuLee 2012}.
Choosing $B$ large enough so that $\supp\mu\subset[-B,B]$ and using  \eqref{eq:strong bilinear}  we get
\begin{align*}
\|T_r f_1T_r f_2\|_{L^p(\Z\times \R, dx\mu(dr))}
&= \left( \int_{\R}\|T_rf_1 T_rf_2\|_p^p\mu(dr)\right)^{1/p}
\leq \mu(\R)^{1/p}\sup_{r\in [-B,B]}\|T_r f_1T_r f_2\|_p\\
&\lesssim \min\left(1,\frac{8e^{16B}(4B)^{\lceil\frac{s}{2}\rceil}}{\lceil\frac{s}{2}\rceil!} \right) \|f_1\|_2\|f_2\|_2
\end{align*}
which proves \eqref{eq:bilinear-estimate-1}.
\epf

\begin{remark}
 Since $n! \ge e^{n\ln n -n}$ and $(4B)^{\lceil \frac{s}{2}\rceil}\lesssim e^{\frac{s}{2}\ln(4B)}$, Lemma \ref{lem:bilinear-estimate} implies the bounds
  \begin{equation}\label{eq:bilinear-estimate-2}
  	\|T_r f_1 T_r f_2\|_{L^p(\Z\times \R, dx\mu(dr))}
  		\lesssim \min(1,s^{-\alpha s})\,\|f_1\|_2\|f_2\|_2
  \end{equation}
for all $0<\alpha <\frac{1}{2}$ and all $1\le p\le \infty$. Here, if $s=0$, we set
$0^{-\alpha 0} \coloneq \lim_{s\to 0+} s^{-\alpha s} =1 $.
\end{remark}

The following will be the building blocks for our bounds on the nonlocal nonlinearity, their definition is motivated by the splitting of the nonlinear potential in Lemma
\ref{lem:V-splitting}.
\begin{definition} \label{def:M2} For any $ 2\le \gamma <\infty$, let
	\begin{align*}
		M_\mu^\gamma(f_1,f_2):=
		\int_{\R} \sum_{x\in \Z}|T_rf_1(x)||T_rf_2(x)|(|T_rf_1(x)|+|T_rf_2(x)|)^{\gamma-2} \, \mu(dr).
	\end{align*}
\end{definition}

\begin{proposition}\label{prop:M-bounded}
	Let $s= \dist (\supp f_1, \supp  f_2)$, $2\le  \gamma <\infty$, and $0<\alpha <\frac{1}{2}$, then
 	\begin{align}\label{eq:M-time-1}
	M_\mu^\gamma(f_1,f_2)
	\lesssim \min(1,s^{-\alpha s})\|f_1\|_2\|f_2\|_2(\|f_1\|_2+\|f_2\|_2)^{\gamma-2}
	\end{align}
where the implicit constant depends only on $\supp\mu$, $\mu(\R)$, $\gamma$, and $\alpha$.
\end{proposition}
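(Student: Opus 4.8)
The plan is to read $M_\mu^\gamma(f_1,f_2)$ as the space-time $L^1$-norm of a product,
\begin{align*}
	M_\mu^\gamma(f_1,f_2) = \Big\| \, |T_rf_1\, T_rf_2|\,\big(|T_rf_1|+|T_rf_2|\big)^{\gamma-2}\, \Big\|_{L^1(\Z\times\R,\,dx\,\mu(dr))},
\end{align*}
and then to split this norm by Hölder's inequality, handing the genuinely bilinear piece $T_rf_1\,T_rf_2$ to the strong bilinear bound of Lemma \ref{lem:bilinear-estimate} (which is the sole source of the decay factor $\min(1,s^{-\alpha s})$) and the remaining factor to the elementary linear estimate of Lemma \ref{lem:T_rf estimate}.

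Concretely, I would apply Hölder with the pair $p=1$, $p'=\infty$ to obtain
\begin{align*}
	M_\mu^\gamma(f_1,f_2) \le \big\| T_rf_1\, T_rf_2\big\|_{L^1(\Z\times\R,\,dx\,\mu(dr))}\cdot \big\| \big(|T_rf_1|+|T_rf_2|\big)^{\gamma-2}\big\|_{L^\infty(\Z\times\R,\,dx\,\mu(dr))}.
\end{align*}
The first factor is estimated by Lemma \ref{lem:bilinear-estimate}, or more precisely by the consequence \eqref{eq:bilinear-estimate-2} of it, giving $\|T_rf_1\,T_rf_2\|_{L^1}\lesssim \min(1,s^{-\alpha s})\,\|f_1\|_2\|f_2\|_2$ for every $0<\alpha<\tfrac12$, with implicit constant depending only on $\supp\mu$, $\mu(\R)$ and $\alpha$. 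For the second factor, since $\gamma\ge 2$ the function $t\mapsto t^{\gamma-2}$ is non-decreasing on $[0,\infty)$, so the power can be pulled out of the (essential) supremum,
\begin{align*}
	\big\| \big(|T_rf_1|+|T_rf_2|\big)^{\gamma-2}\big\|_{L^\infty} = \Big(\big\| |T_rf_1|+|T_rf_2|\big\|_{L^\infty}\Big)^{\gamma-2} \le \Big(\|T_rf_1\|_{L^\infty}+\|T_rf_2\|_{L^\infty}\Big)^{\gamma-2},
\end{align*}
and Lemma \ref{lem:T_rf estimate} with $q=2$, $p=\infty$ bounds each $\|T_rf_i\|_{L^\infty(\Z\times\R,\,dx\,\mu(dr))}\lesssim \|f_i\|_2$, with constant depending only on $\supp\mu$ and $\mu(\R)$. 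Multiplying the two bounds yields exactly \eqref{eq:M-time-1}; the constant picks up a dependence on $\gamma$ through the exponent $\gamma-2$, and on $\alpha$ through the bilinear estimate, as claimed. The case $\gamma=2$ is trivial, since then the extra factor is identically $1$ and the statement reduces to Lemma \ref{lem:bilinear-estimate} with $p=1$.

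There is no real obstacle here; the argument is a clean two-line Hölder split once one notices the product structure, and the only points requiring a word of care are: (i) the hypothesis $\gamma\ge 2$ is precisely what makes $t\mapsto t^{\gamma-2}$ monotone, so that the $L^\infty$ norm of the power equals the power of the $L^\infty$ norm (for $\gamma<2$ this factor would blow up where $T_rf_i$ is small); and (ii) all the spatial decay in $s$ is carried by the bilinear bound of Lemma \ref{lem:bilinear-estimate}, the remaining factor contributing only the polynomial-in-norm term $(\|f_1\|_2+\|f_2\|_2)^{\gamma-2}$. One may also run the argument with a general Hölder pair $1/p+1/p'=1$, using Lemma \ref{lem:bilinear-estimate} for $\|T_rf_1\,T_rf_2\|_{L^p}$ and Lemma \ref{lem:T_rf estimate} for $\||T_rf_1|+|T_rf_2|\|_{L^{p'(\gamma-2)}}$ (this needs $p'(\gamma-2)\ge 2$), but the choice $p=1$, $p'=\infty$ is the most economical and works uniformly in $\gamma\ge 2$.
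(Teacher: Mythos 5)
Your proof is correct and takes essentially the same route as the paper: pull the $\big(|T_rf_1|+|T_rf_2|\big)^{\gamma-2}$ factor out in $L^\infty$ (using $\|T_rf\|_\infty\le\|T_rf\|_2=\|f\|_2$ by unitarity) and hand the bilinear piece $T_rf_1\,T_rf_2$ to the strong bilinear bound \eqref{eq:bilinear-estimate-2}, which is the only source of the $\min(1,s^{-\alpha s})$ decay. The paper phrases this as ``taking a supremum out of the integral'' rather than formally invoking Hölder, but the step is identical.
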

\begin{proof}
Taking a supremum out of the integral we get
  \begin{align*}
 	M_\mu^\gamma(f_1,f_2)
 	&=\int_{\R}\sum_{x\in \Z} |T_rf_1(x)T_rf_2(x)|(|T_rf_1(x)|+|T_rf_2(x)|)^{\gamma-2} \, \mu(dr) \\
 	&\le \|T_rf_1 T_rf_2\|_{L^{1}(\Z\times\R, dx\mu(dr))}
		 (\sup_{r\in\R }\|T_rf_1\|_\infty+\sup_{r\in\R}\|T_rf_2\|_\infty)^{\gamma-2}.
 \end{align*}
 Applying $\|T_rf\|_\infty \le \|T_rf\|_2 =\|f\|_2$ and  \eqref{eq:bilinear-estimate-2} for the first factor yields \eqref{eq:M-time-1}.
\end{proof}

\subsection{Splitting the nonlocal nonlinearity}\label{subsec:splitting}
Recall
\begin{align*}
  N(f):= \int_{\R} \sum _{x\in \Z} V(|T_rf(x)|)\, \mu(dr)	.
\end{align*}
The inequality \eqref{eq:Vbound} and  Lemma \ref{lem:T_rf estimate} immediatley yield
\begin{proposition}[Boundedness]\label{prop:N-boundedness}
	Let $2\le \gamma_1\le \gamma_2< \infty$. Then for all $f\in l^2(\Z)$
	\begin{align*}
		N(f)\lesssim \|f\|_2^{\gamma_1} + \|f\|_2^{\gamma_2} ,
	\end{align*}
	where the implicit constant depends only on $\mu(\R)$. 	
\end{proposition}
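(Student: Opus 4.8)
The plan is to directly estimate $N(f)$ using the pointwise bound \eqref{eq:Vbound} together with the space-time estimate of Lemma \ref{lem:T_rf estimate}. Recall that assumption \ref{ass:A1} integrates to $|V(a)|\lesssim a^{\gamma_1}+a^{\gamma_2}$ for all $a\ge 0$, as noted in the first remark of the introduction, and that $2\le\gamma_1\le\gamma_2<\infty$.

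First I would apply \eqref{eq:Vbound} inside the definition of $N$, giving
\begin{align*}
  N(f)=\int_\R\sum_{x\in\Z}V(|T_rf(x)|)\,\mu(dr)
  \le\int_\R\sum_{x\in\Z}|V(|T_rf(x)|)|\,\mu(dr)
  \lesssim\int_\R\sum_{x\in\Z}\big(|T_rf(x)|^{\gamma_1}+|T_rf(x)|^{\gamma_2}\big)\,\mu(dr).
\end{align*}
The two terms on the right are exactly $\|T_rf\|_{L^{\gamma_1}(\Z\times\R,dx\mu(dr))}^{\gamma_1}$ and $\|T_rf\|_{L^{\gamma_2}(\Z\times\R,dx\mu(dr))}^{\gamma_2}$. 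Since $\gamma_1,\gamma_2\ge 2=q$, Lemma \ref{lem:T_rf estimate} applies with $q=2$ and $p=\gamma_i$, yielding $\|T_rf\|_{L^{\gamma_i}(\Z\times\R,dx\mu(dr))}\lesssim\|f\|_2$, where by the unitarity of $T_r$ on $l^2$ the implicit constant in fact depends only on $\gamma_i$ and $\mu(\R)$ (the $q=2$ case of Lemma \ref{lem:T_rf estimate} needs no dependence on $\supp\mu$). Raising to the appropriate power gives $N(f)\lesssim\|f\|_2^{\gamma_1}+\|f\|_2^{\gamma_2}$, which is the claim.

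There is essentially no obstacle here — the statement is an immediate corollary, and the only point to be mildly careful about is tracking that the constant depends only on $\mu(\R)$ (and on $\gamma_1,\gamma_2$), which follows because we use Lemma \ref{lem:T_rf estimate} only in its $q=2$ form where $T_r$ is an isometry on $l^2(\Z)$. One could equivalently absorb $\gamma_1,\gamma_2$ into the fixed data of the problem, as the proposition implicitly does by stating the constant depends only on $\mu(\R)$.
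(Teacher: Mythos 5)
Your proof is correct and matches the paper's intended argument: the paper states the proposition follows immediately from \eqref{eq:Vbound} and Lemma \ref{lem:T_rf estimate}, which is exactly the two-step estimate you carry out. Your remark that the $q=2$ case of Lemma \ref{lem:T_rf estimate} involves no dependence on $\supp\mu$ (by unitarity of $T_r$ on $l^2$) correctly accounts for why the constant depends only on $\mu(\R)$.
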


Since $N(f)$ is \emph{highly nonlocal} in $f$, it is difficult to control $N(f)$, when $f$ splits into $f=f_1+f_2$ where $f_1$ and $f_2$ have widely separated supports.
The following simple observation helps at this stage and is at the heart of all our estimates.

\begin{lemma}\label{lem:V-splitting}
	Assume that $V$ obyes \ref{ass:A1}. Then
	\begin{align}\label{eq:V-splitting}
		\left| V(|z+w|) - V(|z|) - V(|w|)) \right|
		\lesssim \left( (|z|+|w|)^{\gamma_1-2} +(|z|+|w|)^{\gamma_2-2} \right)
		|z||w|
	\end{align}
	for all $z,w\in\C$.
\end{lemma}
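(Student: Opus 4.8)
The plan is to reduce the claim \eqref{eq:V-splitting} to the Fundamental Theorem of Calculus applied to the real-valued function $V$ along the segment joining $|w|$ to $|z+w|$, and then bound everything using the growth hypothesis \ref{ass:A1}. First I would dispose of the triangle-inequality bookkeeping: since $|z+w|$ lies between $\big||z|-|w|\big|$ and $|z|+|w|$, write
\[
V(|z+w|)-V(|w|) = \int_{|w|}^{|z+w|} V'(t)\,dt,
\]
so that
\[
\big| V(|z+w|) - V(|w|) \big| \le \big||z+w|-|w|\big| \cdot \sup_{t} |V'(t)| \le |z|\cdot \sup_{t}|V'(t)|,
\]
where the supremum is over $t$ between $|w|$ and $|z+w|$, all of which satisfy $0\le t\le |z|+|w|$. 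By \ref{ass:A1}, $|V'(t)|\lesssim t^{\gamma_1-1}+t^{\gamma_2-1} \le (|z|+|w|)^{\gamma_1-1}+(|z|+|w|)^{\gamma_2-1}$ since $\gamma_1,\gamma_2>2\ge 1$ so these powers are increasing. This gives
\[
\big| V(|z+w|) - V(|w|) \big| \lesssim \left((|z|+|w|)^{\gamma_1-1}+(|z|+|w|)^{\gamma_2-1}\right)|z|.
\]

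Next I would handle the remaining term $V(|z|)$ using \eqref{eq:Vbound}, which \ref{ass:A1} implies: $|V(|z|)| \lesssim |z|^{\gamma_1}+|z|^{\gamma_2}$. The issue is that the desired right-hand side carries a factor $|z||w|$, not $|z|$ or $|z|^{\gamma_i}$ alone, so a naive bound is not yet of the stated form. The resolution is the standard trick of splitting into the two regimes $|w|\le |z|$ and $|w|>|z|$ (equivalently, exploiting the symmetry of the estimate). When $|z|\le |w|$, one has $|z|\lesssim |w|$ hence $|z| = \frac{|z||w|}{|w|}\cdot\frac{|w|}{|w|}$... more cleanly: in the regime $|z|\le|w|$ we bound $(|z|+|w|)^{\gamma_i-1}|z| \le (|z|+|w|)^{\gamma_i-2}(|z|+|w|)|z| \lesssim (|z|+|w|)^{\gamma_i-2}|w||z|$ since $|z|+|w|\le 2|w|$, which is exactly the target shape; and $|V(|z|)|\lesssim |z|^{\gamma_i} \le |z|^{\gamma_i-2}|z|^2 \le (|z|+|w|)^{\gamma_i-2}|z||w|$ using $|z|\le |w|$ in one of the two factors. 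In the complementary regime $|w|\le|z|$ one argues symmetrically, now also needing to dominate $V(|w|)$; but note $V(|w|)$ does not appear on the left after our FTC step — rather, by symmetry of \eqref{eq:V-splitting} in $z,w$ we may simply assume WLOG $|z|\le|w|$ from the start, having instead integrated from $|z|$ to $|z+w|$ if it were the other way. So in fact: assume WLOG $|w|\ge|z|$, use $V(|z+w|)-V(|z|)=\int_{|z|}^{|z+w|}V'$, bound the increment by $|w|\sup|V'|$, and then $V(|w|)$ is controlled via \eqref{eq:Vbound} together with $|w|\ge|z|$ exactly as above.

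The main obstacle — really the only subtlety — is making sure every term ends up with the product structure $|z||w|$ and the correct powers $\gamma_i-2$; this is precisely why one must split on which of $|z|,|w|$ is larger and use that comparison to convert a lone $|z|$ (or $|z|^{\gamma_i}$) into $|z||w|$ times a power of $|z|+|w|$. Everything else is routine: the FTC step, monotonicity of $t\mapsto t^{\gamma_i-1}$ for $\gamma_i\ge 1$, the triangle inequalities $|z+w|\le|z|+|w|$ and $|z|+|w|\le 2\max(|z|,|w|)$, and absorbing absolute constants into $\lesssim$. I would present it as: reduce to $|w|\ge|z|$ by symmetry, apply FTC on $[|z|,|z+w|]$, bound $|V(|z+w|)-V(|z|)|\lesssim\big((|z|+|w|)^{\gamma_1-1}+(|z|+|w|)^{\gamma_2-1}\big)|w| \lesssim \big((|z|+|w|)^{\gamma_1-2}+(|z|+|w|)^{\gamma_2-2}\big)(|z|+|w|)|w|$, crudely estimate $(|z|+|w|)|w|\lesssim |w|^2$ — no wait, that loses the $|z|$; instead keep $(|z|+|w|)\le 2|w|$ is wrong direction too. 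The correct move: $(|z|+|w|)^{\gamma_i-1}|w|$: use one copy of $(|z|+|w|)$ from the power, getting $(|z|+|w|)^{\gamma_i-2}\cdot(|z|+|w|)\cdot|w| \le (|z|+|w|)^{\gamma_i-2}\cdot 2|w|\cdot|w|$, which has $|w|^2$ not $|z||w|$; but since $|w|\ge|z|$ is the wrong inequality for that — we'd want $|z|\ge$ something. So actually the clean statement is: WLOG $|z|\le|w|$, write $V(|z+w|)-V(|w|)=\int_{|w|}^{|z+w|}V'$ (increment length $\le|z|$), bound by $\big((|z|+|w|)^{\gamma_1-1}+(|z|+|w|)^{\gamma_2-1}\big)|z| = \big((|z|+|w|)^{\gamma_1-2}+(|z|+|w|)^{\gamma_2-2}\big)(|z|+|w|)|z|\le 2\big((|z|+|w|)^{\gamma_1-2}+(|z|+|w|)^{\gamma_2-2}\big)|w||z|$, using $|z|+|w|\le 2|w|$; then $|V(|w|)|\lesssim|w|^{\gamma_1}+|w|^{\gamma_2}=|w|^{\gamma_i-2}|w|\cdot|w|$... and here $|w|^{\gamma_i-2}\le(|z|+|w|)^{\gamma_i-2}$ but we get $|w|^2$ again unless we use $|z|\le|w|$ to replace one $|w|$ by... no. The honest fix: the term $V(|w|)$ must be paired using $|w|\le |z+w|+|z| $... — this final juggling of which variable is small is the one place care is needed, and I would work it out carefully in the write-up; conceptually it is entirely standard and causes no real difficulty.
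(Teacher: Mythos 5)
Your underlying idea is sound and is in fact a genuinely different, more elementary route than the paper's. The paper avoids any case split by first rewriting
\begin{align*}
V(|z+w|)-V(|z|)-V(|w|)
=\Big[\tfrac{V(|z+w|)}{|z|+|w|}-\tfrac{V(|z|)}{|z|}\Big]|z|
+\Big[\tfrac{V(|z+w|)}{|z|+|w|}-\tfrac{V(|w|)}{|w|}\Big]|w|,
\end{align*}
which forces a factor of $|z|$ and a factor of $|w|$ into the two summands from the start, and then estimates each bracket by $\lesssim\big((|z|+|w|)^{\gamma_1-2}+(|z|+|w|)^{\gamma_2-2}\big)|w|$ (resp.\ $|z|$) via FTC plus \eqref{eq:Vbound}. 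Your approach instead normalizes by the WLOG assumption $|z|\le|w|$ and produces the $|z||w|$ factor by hand. Both are correct; the paper's identity buys symmetry and no case split, while yours is more transparent and, once written cleanly, shorter.

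That said, the final paragraph of your write-up does contain a genuine wrong turn that you leave unresolved. After assuming $|z|\le|w|$ you must apply FTC to $V(|z+w|)-V(|w|)$ (integrating from the \emph{larger} of $|z|,|w|$), which leaves $V(|z|)$ (the \emph{smaller}) to be bounded separately. You stated this correctly earlier in the proposal, but in the last paragraph you switched to bounding $V(|w|)$ and, unsurprisingly, found that the estimate refuses to produce $|z||w|$; the reason is simply that you were chasing the wrong leftover term. The correct version is: with $|z|\le|w|$, FTC gives $|V(|z+w|)-V(|w|)|\lesssim\big((|z|+|w|)^{\gamma_1-1}+(|z|+|w|)^{\gamma_2-1}\big)|z|$, and peeling off one factor $(|z|+|w|)\le 2|w|$ turns $(|z|+|w|)^{\gamma_i-1}|z|$ into $\lesssim(|z|+|w|)^{\gamma_i-2}|z||w|$; for the leftover term, $|V(|z|)|\lesssim|z|^{\gamma_i}=|z|^{\gamma_i-2}|z|\cdot|z|\le(|z|+|w|)^{\gamma_i-2}|z||w|$ using $|z|\le|w|$ on the second factor of $|z|$. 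Note that both comparisons $|z|+|w|\le 2|w|$ and $|z|\le|w|$ are used, but in different terms; keeping track of that is exactly what your closing paragraph lost. If you adopt the paper's algebraic decomposition, this bookkeeping disappears.
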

\begin{proof} Since $V(0)=0$, we have $V(|z+w|)-V(|z|)-V(|w|)=0$ if at least one of $z$
and $w$ equals zero. So assume $z,w\neq 0$ in the following. Then
	\begin{equation}
	\begin{split}\label{eq:diff1}
		V(|z+w|) -V(|z|) - V(|w|)
		&=
			 \left[ \frac{1}{|z|+|w|}V(|z+w|) - \frac{1}{|z|}V(|z|) \right] |z| \\
		&\phantom{=} \, + \left[ \frac{1}{|z|+|w|}V(|z+w|) - \frac{1}{|w|}V(|w|) \right] |w| .
	\end{split}
	\end{equation}
	Moreover,
	\begin{align} \label{eq:diff2}
		\frac{1}{|z|+|w|}V(|z+w|) - \frac{1}{|z|}V(|z|)
			&=
				\frac{1}{|z|+|w|} \left( V(|z+w|) - V(|z|) \right) - \frac{|w|}{(|z|+|w|)|z|} V(|z|)
	\end{align}
	Let $c=\min(|z|,|z+w|)$ and $d=\max(|z|,|z+w|)\le |z|+|w|$. Then $d-c=||z+w|-|z||\le |w|$ and using \ref{ass:A1}, we have
	\begin{align*}
		\left| V(|z+w|) -V(|z|) \right|
		&\le
			\int\limits_{c}^{d} |V'(a)|\, da
			\lesssim  (d^{\gamma_1-1}+d^{\gamma_2-1})(d-c) \\
		&\le ((|z|+|w|)^{\gamma_1-1}+(|z|+|w|)^{\gamma_2-1})|w|
	\end{align*}
	Since $V(0)=0$, \ref{ass:A1} also implies
	\begin{align*}
		|V(|z|)|\lesssim (|z|^{\gamma_1-1}+ |z|^{\gamma_2-1})|z|.
	\end{align*}
	Using the two inequalities above in \eqref{eq:diff2} shows
	\begin{align*}
		\left|\frac{1}{|z|+|w|}V(|z+w|) - \frac{1}{|z|}V(|z|)\right|
		\lesssim ((|z|+|w|)^{\gamma_1-2}+(|z|+|w|)^{\gamma_2-2})|w|
	\end{align*}
	and a similar inequality holds when we interchange $z$ and $w$. Hence  \eqref{eq:diff1} implies \eqref{eq:V-splitting}.
\end{proof}

The following is our main tool to control the nonlocal nonlinearity.
\begin{proposition}[Splitting]\label{prop:N-splitting}
Let $f_1,f_2 \in l^2(\Z)$ and $s=\dist(\supp f_1,\supp f_2 )$. Then
for all $2\leq \gamma_1 \leq \gamma_2 < \infty $
  \begin{align} \label{eq:N-splitting1}
     \left|N(f_1+f_2)- N(f_1) - N(f_2)\right|
     \lesssim
       \min(1, s^{-\alpha  s})\|f_1\|_2\|f_2\|_2 \left(1+\|f_1\|_2^{\gamma_2-2}+\|f_2\|_2^{\gamma_2-2}\right)
  \end{align}
  for all $0<\alpha <\frac{1}{2}$.
\end{proposition}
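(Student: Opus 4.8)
The plan is to reduce the nonlocal estimate to the pointwise estimate of Lemma~\ref{lem:V-splitting} and then apply the bilinear machinery already assembled, in particular Proposition~\ref{prop:M-bounded}. First I would note that since $T_r$ is linear, $T_r(f_1+f_2)(x) = T_rf_1(x) + T_rf_2(x)$, so that
\begin{align*}
  \left|N(f_1+f_2)-N(f_1)-N(f_2)\right|
  \le \int_\R \sum_{x\in\Z}
  \left|V(|T_rf_1(x)+T_rf_2(x)|) - V(|T_rf_1(x)|) - V(|T_rf_2(x)|)\right| \mu(dr).
\end{align*}
Applying Lemma~\ref{lem:V-splitting} pointwise with $z=T_rf_1(x)$ and $w=T_rf_2(x)$ bounds the integrand by a constant times
$\big((|T_rf_1(x)|+|T_rf_2(x)|)^{\gamma_1-2} + (|T_rf_1(x)|+|T_rf_2(x)|)^{\gamma_2-2}\big)|T_rf_1(x)||T_rf_2(x)|$,
which is exactly the integrand defining $M_\mu^{\gamma_1}(f_1,f_2) + M_\mu^{\gamma_2}(f_1,f_2)$ from Definition~\ref{def:M2}. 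Hence
\[
  \left|N(f_1+f_2)-N(f_1)-N(f_2)\right| \lesssim M_\mu^{\gamma_1}(f_1,f_2) + M_\mu^{\gamma_2}(f_1,f_2).
\]

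Next I would invoke Proposition~\ref{prop:M-bounded} for each of the two terms, with the same exponent $\alpha\in(0,\tfrac12)$, giving
\[
  M_\mu^{\gamma_j}(f_1,f_2) \lesssim \min(1,s^{-\alpha s})\,\|f_1\|_2\|f_2\|_2\,(\|f_1\|_2+\|f_2\|_2)^{\gamma_j-2}
\]
for $j=1,2$. Then it remains to massage the algebraic factors into the form stated in \eqref{eq:N-splitting1}. Since $\gamma_1\le\gamma_2$, one has the elementary bound $(\|f_1\|_2+\|f_2\|_2)^{\gamma_j-2} \lesssim 1 + (\|f_1\|_2+\|f_2\|_2)^{\gamma_2-2} \lesssim 1 + \|f_1\|_2^{\gamma_2-2} + \|f_2\|_2^{\gamma_2-2}$ (splitting according to whether $\|f_1\|_2+\|f_2\|_2$ is $\le 1$ or $>1$, and in the latter case using $(a+b)^t \lesssim a^t + b^t$ for $t\ge 0$). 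Summing the two contributions and absorbing constants yields precisely
\[
  \left|N(f_1+f_2)-N(f_1)-N(f_2)\right| \lesssim \min(1,s^{-\alpha s})\,\|f_1\|_2\|f_2\|_2\left(1+\|f_1\|_2^{\gamma_2-2}+\|f_2\|_2^{\gamma_2-2}\right),
\]
as claimed.

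There is really no serious obstacle here: the proposition is essentially a packaging of Lemma~\ref{lem:V-splitting} and Proposition~\ref{prop:M-bounded}, and the only minor care needed is in the bookkeeping of the power-type factors so that the two terms coming from $\gamma_1$ and $\gamma_2$ collapse into the single displayed bound with $\gamma_2$ as the dominant exponent. I would also make sure the dependence of the implicit constant (on $\supp\mu$, $\mu(\R)$, $\gamma_1$, $\gamma_2$, and $\alpha$) is recorded, since it is inherited directly from Proposition~\ref{prop:M-bounded}. If one wanted to be completely explicit one could instead substitute the factorial bound \eqref{eq:bilinear-estimate-1} to get a sharper-looking constant, but the $\min(1,s^{-\alpha s})$ form is the one used downstream in the splitting arguments of Section~\ref{sec:The Existence Proof}, so that is what I would state.
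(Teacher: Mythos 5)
Your proof is correct and follows exactly the same route as the paper's: reduce to the pointwise inequality of Lemma~\ref{lem:V-splitting}, recognize the resulting integrals as $M_\mu^{\gamma_1}+M_\mu^{\gamma_2}$, apply Proposition~\ref{prop:M-bounded}, and absorb the power factors using $(a+b)^{\gamma_1-2}+(a+b)^{\gamma_2-2}\lesssim 1+a^{\gamma_2-2}+b^{\gamma_2-2}$. Nothing to add.
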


\begin{proof}
 Because of Lemma \ref{lem:V-splitting} we have
 \begin{align*}
   	\Big|N(f_1+f_2) &- N(f_1) - N(f_2)\Big| \\
   	&\le  \int_{\R}\sum_{x\in \Z} \left| V(|T_rf_1(x)+T_rf_2(x)|) -V(|T_rf_1(x)|) -V(|T_rf_2(x)|) \right|\,  \mu(dr) \\
   	&\lesssim
   	  M_\mu^{\gamma_1}(f_1,f_2) + M_\mu^{\gamma_2}(f_1,f_2).
  \end{align*}
 So \eqref{eq:N-splitting1} follows from
 \eqref{eq:M-time-1}, noting also that
 \begin{align*}
 	(a+b)^{\gamma_1-2} + (a+b)^{\gamma_2-2} \lesssim  1+ a^{\gamma_2-2} +b^{\gamma_2-2} ,
 \end{align*}
 for all $a,b\ge 0$.
\end{proof}

\section{The existence proof}\label{sec:The Existence Proof}
In this section we will give the proof of
\begin{theorem}\label{thm:existence}
	Let $\lambda>0$ and assume that $V$ obeys \ref{ass:A1} and \ref{ass:A2}. Then every minimizing sequence for the constrained variational problem \eqref{eq:min} is precompact modulo translations if and only if
	$E^{\dav}_\lambda<0$.
	In particular, if $E^{\dav}_\lambda<0$, then minimizers of \eqref{eq:min} exist and these miniminzers are solutions of the diffraction management equation \eqref{eq:GT} for some Lagrange multiplier
	$\omega< 2E^{\dav}_\lambda/\lambda<0$.
\end{theorem}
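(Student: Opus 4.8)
The plan is to prove the stated equivalence in both directions and then read off existence of a minimizer and the Euler--Lagrange equation. For the direction ``every minimizing sequence precompact modulo translations $\Rightarrow E^{\dav}_\lambda<0$'' I would first record the soft fact that $E^{\dav}_\lambda\le 0$ for every $\lambda>0$: with the spread-out trial functions $\varphi_n=\sqrt{\lambda/n}\,\ind_{\{1,\dots,n\}}$ one has $\|D_+\varphi_n\|_2^2=2\lambda/n\to 0$ and $\sup_{r}\|T_r\varphi_n\|_\infty\le\tfrac{1}{2\pi}\|\widehat{\varphi_n}\|_{L^1([-\pi,\pi])}\lesssim\sqrt{\lambda/n}\,\log n\to 0$, so $N(\varphi_n)\to 0$ by \eqref{eq:Vbound} and Lemma \ref{lem:T_rf estimate}, hence $H(\varphi_n)\to 0$. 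If $E^{\dav}_\lambda=0$ this $(\varphi_n)$ is a minimizing sequence which, together with every sequence of its translates, tends weakly to $0$ in $l^2(\Z)$, hence is \emph{not} precompact modulo translations; this gives the contrapositive.

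For the main direction, assume $E^{\dav}_\lambda<0$ and let $(\varphi_n)$ be a minimizing sequence, $\|\varphi_n\|_2^2=\lambda$ and $H(\varphi_n)\to E^{\dav}_\lambda$. From $H(\varphi_n)=\tfrac{\dav}{2}\|D_+\varphi_n\|_2^2-N(\varphi_n)\ge-N(\varphi_n)$ one gets $N(\varphi_n)\ge|E^{\dav}_\lambda|/2>0$ for large $n$, and then $\tfrac{\dav}{2}\|D_+\varphi_n\|_2^2=H(\varphi_n)+N(\varphi_n)$ is bounded via Proposition \ref{prop:N-boundedness}, so $(\varphi_n)$ is bounded in the form domain. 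The core step is to exclude vanishing and dichotomy: the uniform lower bound $N(\varphi_n)\ge c>0$ should force, after translating by suitable $\tau_n$, that a fixed positive fraction of the mass of $\psi_n:=\varphi_n(\cdot-\tau_n)$ is retained in a fixed finite window (this is what Propositions \ref{prop:fat-tail proposition} and \ref{prop: tightness modulo translations} are for), while the splitting estimate Proposition \ref{prop:N-splitting}, combined with strict subadditivity $E^{\dav}_\lambda<E^{\dav}_{\lambda_1}+E^{\dav}_{\lambda_2}$ for $\lambda_1,\lambda_2>0$, $\lambda_1+\lambda_2=\lambda$ (Appendix \ref{sec:Strict concavity of the ground state energy}), rules out $\psi_n$ breaking into two widely separated lumps. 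Passing to a subsequence I would then have $\psi_n\to\psi$ weakly in $l^2(\Z)$ with $\psi\neq 0$ and no loss of mass, $\|\psi\|_2^2=\lambda$; since $\|\psi_n\|_2=\|\psi\|_2$, Lemma \ref{lem:strong-convergence} upgrades this to strong convergence $\psi_n\to\psi$. Using translation invariance of $H$ (so $(\psi_n)$ is still minimizing), continuity of $N$ along strongly $l^2$-convergent sequences (Proposition \ref{prop:N-splitting}, or Lemma \ref{lem:T_rf estimate} with \eqref{eq:Vbound}), and weak lower semicontinuity of $\|D_+\cdot\|_2^2$, I get $H(\psi)\le\liminf H(\psi_n)=E^{\dav}_\lambda$, which forces $H(\psi)=E^{\dav}_\lambda$ and $\|D_+\psi_n\|_2\to\|D_+\psi\|_2$. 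Thus $\psi$ is a minimizer, $\psi_n\to\psi$ also in the form norm, and precompactness modulo translations follows.

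For the last assertion, let $\varphi$ be a minimizer. Assumption \ref{ass:A1} makes $N$ differentiable with $N'(\varphi)[h]=\int_\R\re\la P(T_r\varphi),T_rh\ra\,\mu(dr)$, so differentiating $t\mapsto H\big(\sqrt\lambda\,(\varphi+th)/\|\varphi+th\|_2\big)$ at $t=0$ over all $h\in l^2(\Z)$ produces exactly \eqref{eq:GT-weak} with the Lagrange multiplier $\omega=\omega(\varphi)$ of the Remark after Theorem \ref{thm:threshold-intro}. Taking $h=\varphi$ in \eqref{eq:GT-weak} and invoking $V'(a)a\ge\gamma_0 V(a)$ from \ref{ass:A2}, i.e. $\int_\R\la P(T_r\varphi),T_r\varphi\ra\,\mu(dr)=\int_\R\sum_{x\in\Z}V'(|T_r\varphi(x)|)|T_r\varphi(x)|\,\mu(dr)\ge\gamma_0 N(\varphi)$, gives
\[
 \omega\lambda=\dav\|D_+\varphi\|_2^2-\int_\R\la P(T_r\varphi),T_r\varphi\ra\,\mu(dr)\le 2H(\varphi)-(\gamma_0-2)N(\varphi)<2E^{\dav}_\lambda ,
\]
since $N(\varphi)=\tfrac{\dav}{2}\|D_+\varphi\|_2^2-E^{\dav}_\lambda\ge|E^{\dav}_\lambda|>0$ and $\gamma_0>2$; hence $\omega<2E^{\dav}_\lambda/\lambda<0$.

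The hard part is the exclusion of dichotomy for minimizing sequences: turning ``a minimizing sequence cannot split'' into a proof is exactly where the quantitative splitting bound of Proposition \ref{prop:N-splitting} --- and behind it the fractional-linear and bilinear estimates of Section \ref{sec:nonlinear Estimates} --- is needed, to show that any genuine split of $\psi_n$ costs only $o(1)$ in the energy and so contradicts strict subadditivity of $\lambda\mapsto E^{\dav}_\lambda$. The rest is comparatively soft: weak $l^2$-compactness, the Radon--Riesz-type Lemma \ref{lem:strong-convergence}, and a routine constrained critical point computation.
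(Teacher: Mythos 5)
Your proposal is correct and follows essentially the same route as the paper: build a spread-out minimizing sequence that converges weakly (after any shifts) to zero to prove the contrapositive of ``precompact $\Rightarrow E^{\dav}_\lambda<0$''; for the converse use Propositions~\ref{prop:fat-tail proposition} and~\ref{prop: tightness modulo translations} to get tightness modulo translations of any minimizing sequence, upgrade weak to strong convergence by Lemma~\ref{lem:strong-convergence}, and pass to the limit in $H$; then compute the Lagrange multiplier and use $V'(a)a\ge\gamma_0 V(a)$ together with $N(\varphi)>0$ to get $\omega<2E^{\dav}_\lambda/\lambda$. The small deviations are harmless: you estimate $\sup_r\|T_r\varphi_n\|_\infty$ by a Fourier/Dirichlet-kernel bound to get $N(\varphi_n)\to0$, whereas the paper simply uses $\|T_rf_n\|_\gamma\lesssim\|f_n\|_\gamma$ from \eqref{eq:lpbound} and the explicit decay of $\|f_n\|_\gamma$; you normalize onto the sphere and differentiate, whereas the paper runs the implicit function theorem with the two-parameter family $G,F$; and you invoke weak lower semicontinuity of the kinetic term, which is not needed here since $D_+$ is bounded on $l^2(\Z)$, so strong $l^2$ convergence gives full continuity of $H$ directly. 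None of these change the substance of the argument.
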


Key for our proof of Theroem \ref{thm:existence} is the following proposition,
which will help to eliminate a possible splitting of minimizing sequences
when $E^{\dav}_\lambda$ is strictly negative.
In the following  we will assume that the nonlinear potential fulfills assumptions \ref{ass:A1} and \ref{ass:A2}.
For $s\in\R$, we let $s_+:=\text{max}(s,0)$.

\begin{proposition} \label{prop:fat-tail proposition}
Assume that $V$ obeys \ref{ass:A1} and \ref{ass:A2}. Then there exists a universal
constant $C>0$ such that for any  $\lambda>0$,
 $f\in l^2(\mathbb{Z})$ with $\|f\|_2^2=\lambda $ and $ 0<\delta<\f{\lambda}{2}$, and $a,b\in\mathbb{Z}$ with
\begin{equation}\label{ineq:condition-a-b}
\sum_{x\le a}|f(x)|^2\geq\delta\ \ \text{and}\ \ \sum_{x\ge b}|f(x)|^2\geq\delta
\end{equation}
we have
\begin{equation}\label{estimate:fat-tail-positive}
H(f)\geq \left[1-(2^{\f{\gamma_0}{2}}-2)\left(\f{\delta}{\lambda}\right)^{\f{\gamma_0}{2}}\right]
    E^{\dav}_\lambda  - C(\lambda+\lambda^{\gamma_2/2})  \left((b-a+1)_+^{1/2} -1\right)_+^{-1/2}.
\end{equation}
\end{proposition}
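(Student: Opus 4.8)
The plan is to localize $f$ into three pieces — a left piece supported on $x \le a$, a middle piece, and a right piece supported on $x \ge b$ — and to estimate $H(f)$ from below by combining the energies of the outer two pieces while controlling the cross terms via the splitting bounds. Concretely, I would first choose a smooth partition of unity adapted to the intervals $(-\infty,a]$, $[a,b]$, $[b,\infty)$, say $\chi_1^2 + \chi_2^2 + \chi_3^2 = 1$ with $\chi_1$ supported on $x\le a$, $\chi_3$ on $x\ge b$, and $\chi_2$ in between, with derivatives of size $O((b-a)^{-1/2})$ in an appropriate discrete sense. Writing $f_j = \chi_j f$, the IMS-type localization formula (Appendix \ref{sec:IMS}) gives
\[
\tfrac{\dav}{2}\|D_+ f\|_2^2 = \sum_{j=1}^3 \tfrac{\dav}{2}\|D_+ f_j\|_2^2 - \text{(localization error)},
\]
where the localization error is controlled by $\dav \|\,|D_+\chi_j|\, \|_\infty^2 \|f\|_2^2 = O(\lambda (b-a)^{-1})$, and this will feed into the $(b-a+1)_+$-type error term. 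Since $\dav \ge 0$ the kinetic part only helps, so the real issue is the nonlinear term $N(f)$.

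Next I would handle $N(f)$ by iterating the splitting estimate of Proposition \ref{prop:N-splitting}. Splitting off first $f_1$ and then $f_3$ (each of which has support at distance at least roughly $(b-a)/2$ from the rest, since $\chi_2$ lives in the middle and $\chi_1,\chi_3$ are separated by $b-a$), Proposition \ref{prop:N-splitting} yields
\[
N(f) \le N(f_1) + N(f_2) + N(f_3) + C\,\min(1, s^{-\alpha s})\,(\lambda + \lambda^{\gamma_2/2}),
\]
with $s \gtrsim (b-a)$, and the rapidly decaying factor $\min(1,s^{-\alpha s})$ is in particular $\lesssim \big((b-a+1)_+^{1/2}-1\big)_+^{-1/2}$ after adjusting constants, which is exactly the form of the error term in the claim. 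Thus
\[
H(f) \ge \sum_{j=1}^3 H(f_j) - C(\lambda + \lambda^{\gamma_2/2})\big((b-a+1)_+^{1/2}-1\big)_+^{-1/2}.
\]
Now I would discard the middle term if it is nonnegative, or if $N(f_2) > 0$ use assumption \ref{ass:A2} — which via the scaling $V'(a)a \ge \gamma_0 V(a)$ forces $N(g) \le (\|g\|_2^2/\mu)^{\gamma_0/2} \sup\{\dots\}$-type homogeneity — to bound things in terms of $E^{\dav}$ at the appropriate power.

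The key quantitative step is then: each $H(f_j) \ge E^{\dav}_{\|f_j\|_2^2}$ by definition of the infimum, and $E^{\dav}_\mu$ is, by the subadditivity/homogeneity analysis of Appendix \ref{sec:Strict concavity of the ground state energy}, bounded below by $(\mu/\lambda)^{\gamma_0/2} E^{\dav}_\lambda$ when $E^{\dav}_\lambda \le 0$ (this is where the exponent $\gamma_0/2$ and assumption \ref{ass:A2} enter). Writing $\mu_j = \|f_j\|_2^2$, so $\mu_1 + \mu_2 + \mu_3 = \lambda$ with $\mu_1 \ge \delta$ and $\mu_3 \ge \delta$ (hence $\mu_1,\mu_3 \le \lambda - \delta$), the function $t \mapsto t^{\gamma_0/2}$ is convex and the sum $\sum (\mu_j/\lambda)^{\gamma_0/2}$ is maximized, under these constraints, at the extreme configuration $\mu_1 = \mu_3 = \lambda/2$, $\mu_2 = 0$ (one checks $x^{\gamma_0/2}+(1-x)^{\gamma_0/2}$ has the relevant monotonicity and that pushing mass to the two extremes at level $\ge\delta$ costs at most a $2^{1-\gamma_0/2}$-type factor). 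Since $E^{\dav}_\lambda \le 0$, multiplying by $E^{\dav}_\lambda$ reverses the inequality, and a short computation with $\sum(\mu_j/\lambda)^{\gamma_0/2} \le 1 - (2^{\gamma_0/2}-2)(\delta/\lambda)^{\gamma_0/2} \cdot (\text{sign bookkeeping})$ produces exactly the bracketed prefactor $\big[1 - (2^{\gamma_0/2}-2)(\delta/\lambda)^{\gamma_0/2}\big]$ in \eqref{estimate:fat-tail-positive}.

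The main obstacle I anticipate is making the convexity/extremization step in the last paragraph fully rigorous with the correct constant: one must show that over all $(\mu_1,\mu_2,\mu_3)$ with $\mu_1+\mu_2+\mu_3=\lambda$, $\mu_1,\mu_3\ge\delta$, the quantity $\sum_j (\mu_j/\lambda)^{\gamma_0/2}$ is at most $1-(2^{\gamma_0/2}-2)(\delta/\lambda)^{\gamma_0/2}$ (note $2^{\gamma_0/2}-2>0$ since $\gamma_0>2$), and then track that the inequality direction flips correctly when multiplied by the nonpositive $E^{\dav}_\lambda$; the subtlety is that $N(f_2)$ might be positive, so one cannot simply drop $H(f_2)$ but must instead absorb it, keeping the homogeneity bookkeeping consistent. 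A secondary technical point is verifying that the discrete IMS localization error and the distance-of-supports factor from Proposition \ref{prop:N-splitting} both collapse cleanly into the single error term $C(\lambda+\lambda^{\gamma_2/2})\big((b-a+1)_+^{1/2}-1\big)_+^{-1/2}$, which is really just choosing the cutoff width $\sim (b-a)/2$ and comparing $\min(1,s^{-\alpha s})$ with the stated polynomial-in-$\sqrt{b-a}$ rate.
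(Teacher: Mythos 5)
Your overall architecture (IMS localization, then split $N(f)$ via Proposition \ref{prop:N-splitting}, then recombine using subadditivity of $E^{\dav}_\lambda$) is the same as the paper's, but there is a genuine gap in the splitting step that your proposal does not address, and it is the heart of the proof.

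You write that each of $f_1$ and $f_3$ "has support at distance at least roughly $(b-a)/2$ from the rest, since $\chi_2$ lives in the middle." This is false: $\chi_1$ and $\chi_2$ are adjacent (similarly $\chi_2$ and $\chi_3$), so $\supp f_1$ and $\supp(f_2+f_3)$ have distance $O(1)$, not $O(b-a)$. Consequently, Proposition \ref{prop:N-splitting} does \emph{not} give an exponentially small cross term when you separate $f_1$ from $f_2+f_3$; the factor $\min(1,s^{-\alpha s})$ is just $\approx 1$, and the error is of size $\|f_2\|_2\|f\|_2(1+\ldots)$. Only the pair $(f_1,f_3)$ is genuinely separated (by roughly $b-a$), so only that cross term decays super-polynomially. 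The error $\|f_2\|_2\|f\|_2$ is $O(\lambda)$ in general, which is useless, and nothing in your argument makes $\|f_2\|_2$ small. This is exactly the difficulty the paper resolves via a pigeonhole argument: instead of fixing the cut at $a$ and $b$, one chooses $a\le a'<b'\le b$ with $l=b'-a'$ so that $\sum_{x=a'+1}^{b'-1}|f(x)|^2 \le \frac{l-1}{b-a-l+1}\lambda$, which forces $\|f_0\|_2$ to be small when $l\ll b-a$. One then optimizes $l\approx(b-a+1)^{1/2}$ to balance the three error contributions (IMS localization $\lambda/l^2$, the middle-piece error $(\frac{l-1}{b-a-l+1})^{1/2}\lambda$, and the far-separation error $(l/2)^{-l/8}\lambda$), producing precisely the $\big((b-a+1)^{1/2}-1\big)^{-1/2}$ rate in \eqref{estimate:fat-tail-positive}. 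Without this step, you cannot control the middle piece and the estimate fails.

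A secondary remark: your convexity argument for the prefactor is in the right spirit, but the paper instead drops the middle piece entirely (after making $\|f_0\|_2$ small) and applies the two-piece subadditivity bound of Proposition \ref{prop:strict-subadditivity} directly to $H(f_{-1})+H(f_1)$ with $\|f_{-1}\|_2^2,\|f_1\|_2^2\ge\delta$ and $\|f_{-1}\|_2^2+\|f_1\|_2^2\le\lambda$. Your three-variable extremization $\sum_j(\mu_j/\lambda)^{\gamma_0/2}\le 1-(2^{\gamma_0/2}-2)(\delta/\lambda)^{\gamma_0/2}$ is plausible but unnecessary once the middle mass is negligible, and bundling $H(f_2)$ into the comparison introduces bookkeeping that the pigeonhole trick avoids.
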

\begin{remark}
Note that $E^{\dav}_\lambda\le 0$ for all $\lambda>0$ (see Proposition \ref{prop:strict-subadditivity} in the appendix). As soon as $E^{\dav}_\lambda<0$ we have
\begin{align*}
	\left[1-(2^{\f{\gamma_0}{2}}-2)\left(\f{\delta}{\lambda}\right)^{\f{\gamma_0}{2}}\right]
    E^{\dav}_\lambda > E^{\dav}_\lambda .
\end{align*}	

Therefore if $E^{\dav}_\lambda<0$, taking a mininimizing sequence $f_n$ with
$\|f_n\|_2^2=\lambda>0$ and $H (f_n)\to E^{\dav}_\lambda$, and taking any $a_n$ and $b_n$ according to \eqref{ineq:condition-a-b}, the bound \eqref{estimate:fat-tail-positive} shows
\begin{align*}
	\limsup_{n\to\infty} (b_n-a_n)<\infty
\end{align*}
since $\lim_{s\to\infty} \left((s+1)_+^{1/2} -1\right)_+^{-1/2}=0$. Thus Proposition \ref{prop:fat-tail proposition} implies that the regions where a minimizing sequence $f_n$ has $\delta$-fat tails do not separate too much \emph{as soon as} the energy $E^{\dav}_\lambda$ is \emph{strictly negative}. This is the key to our proof of compactness modulo translations for minimizing sequences.
\end{remark}

\begin{proof}
First, let us consider $\dav>0$, that is, strictly positive average diffraction.
If $b\le a$, $\eqref{estimate:fat-tail-positive}$ trivially holds since the right hand side of \eqref{estimate:fat-tail-positive} equals minus infinity. So assume $b-a \ge 1$. Let $a'$ and $b'$ be arbitrary integers satisfying $a\leq a'< b'\leq b$ and $l:=b'-a'$. We will choose suitable $a'$ and $b'$ at the end of the proof.

The lower bound on $\langle f,- \Delta f\rangle$ is based on a discrete version of the well-known IMS localization formula, see Lemma \ref{lem:IMS} in the appendix. Take any smooth cutoff functions $\wti{\chi}_j$, $j=-1,0,1$, with
\begin{SE}
 \item  $\wti{\chi}_j \ge 0$ for  $j=-1,0,1$.
 \item $ \supp \wti{\chi}_{-1} \subset (-\infty,-\frac{1}{4}] $ with
  		$ \wti{\chi}_{-1}>0$  on $ (-\infty, -\frac{3}{8}]$,
  	$ \supp \wti{\chi}_{1} \subset [\frac{1}{4},\infty) $ with
  	$ \wti{\chi}_{1} >0 $  on $  [\frac{3}{8},\infty)$,  and
    $\supp  \wti{\chi}_0 \subset [-\frac{1}{2}, \frac{1}{2}] $ with
    $ \wti{\chi}_0 >0$ on  $ [-\frac{3}{8},\frac{3}{8}]$.
 \end{SE}
and set
$$
	\chi_j:= \frac{\wti{\chi}_j}{\sqrt{\sum_{l=-1}^1 \wti{\chi}_l^2}}
	\quad \text{for }  j=-1,0,1.
$$
  Then, since the denominator is always strictly positive and, by construction,  $\sum_j \chi_j^2=1$, this gives a smooth partition of unity where $\chi_j$ has the same support as $\wti{\chi_j}$, and $\chi_0 =1$ on $[-\frac{1}{4},\frac{1}{4}]$,
  $\chi_{-1} =1$  on  $(-\infty,-\frac{1}{2}] $, and
	$\chi_{1} =1$ on $[\frac{1}{2},\infty)$.
	Finally, define $\xi_j:\Z \to \R$ by
 \begin{align*}
    \xi_j(x)=\chi_j \left(\frac{x-\frac{1}{2}(a'+b')}{b'-a'} \right)\ \  \text{for}\  j=-1,0,1.
 \end{align*}
  Then $\sum_{j=-1}^1\xi_j^2=1$ and $\xi_{-1}=1$ on $[-\infty,a']$, $\xi_1=1$ on $[b',\infty)$ and the supports of $\xi_{-1}$ and $\xi_{1}$ have distance at least $l/2$,
  where $l=b'-a'$. Furthermore, the forward and backward differences
  $D_\pm f(x)= \pm( f(x\pm 1)-f(x))$  satisfy
\begin{align*}
 |D_\pm \xi_{j}(x)|&= |\xi_j(x\pm 1)-\xi _j (x)|
  =\left| \chi_j \left(\f{x-\f{1}{2}(a'+b')}{b'-a'} \pm \f{1}{b'-a'}\right)-\chi_j \left(\frac{x-\frac{1}{2}(a'+b')}{b'-a'} \right)\right|\\
& = \f{1}{b'-a'} |\chi'_j(\zeta)|
\end{align*}
for some $\zeta\in\R$.
Therefore, since $\chi'_j$ is bounded, we see that
\begin{equation}\label{gradient}
\|D_\pm\xi_j\|^2_{\infty}\leq\frac{C}{3(b'-a')^2}=\f{C}{3l^2},
\end{equation}
where $C=3\max_j\|\chi'_j\|_{L^\infty(\R)}^2$.
Using \eqref{gradient} in \eqref{eq:IMS-lower-2}, we get
\begin{align}\label{eq:kinetic}
\|D_+ f\|_2^2 &= \la f,-\Delta f\ra \geq
\sum_{j=-1}^{1} \|D_+ (\xi_j f)\|_2^2-\frac{\|f\|_2^2}{2}\sum_{j=-1}^{1}(\|D_+\xi_j\|^2_\infty+\|D_-\xi_j\|^2_\infty)\notag\\
&\geq \|D_+ (\xi_{-1} f)\|_2^2 + \|D_+ (\xi_{1} f)\|_2^2-\frac{C\|f\|_2^2}{l^2}.
\end{align}
We set $f_{j}:=\xi_{j}f,\ j=-1,1$ and define $f_0:=f-f_{-1}-f_1=(1-\xi_{-1}-\xi_{1})f$.
Obviously, $\|f_j\|_2\leq\|f\|_2$ for  $j=-1,0,1$. Moreover, because of \eqref{ineq:condition-a-b} and  $a'\geq a$, $b'\leq b$, we also have
$$
\|f_j\|_2^2\geq \delta\ \ \ \text{for}\ \ j=-1,1.
$$

Set $h:= f_{-1} + f_1$. Then $f=h+f_0$ and Proposition \ref{prop:N-splitting} shows
\begin{align*}\label{ineq:fat-tail-pf-nonlinear-estimate1}
         N(f)-N(h)-N(f_0)\lesssim \|f_0\|_2\|h\|_2(1+\|f_0\|_2^{\gamma_2-2}+\|h\|_2^{\gamma_2-2}) .
\end{align*}
Using Proposition \ref{prop:N-boundedness}, we have
$$
N(f_0)\lesssim \|f_0\|_2^{\gamma_1}+\|f_0\|_2^{\gamma_2}
	\lesssim \|f_0\|_2^2\big( 1 +  \|f_0\|_2^{\gamma_2-2} \big)
$$
and combining the above two bounds we arrive at
\beq \label{eq:main-split-1}
N(f)-N(h)\lesssim \|f_0\|_2\|f\|_2(1+\|f\|_2^{\gamma_2-2})
\eeq
where we used $\|f_0\|_2, \|h\|_2 \leq \|f\|_2$.

Since the supports of $f_{-1}$ and $f_1$ have distance at least $l/2=(b'-a')/2$, we can again use Proposition
\ref{prop:N-splitting} with $\alpha=\frac{1}{4}$ to split $N(h)$ as
\begin{align}\label{eq:main-split-2}
N(h)-N(f_{-1})-N(f_1)
&\lesssim  (l/2)^{-l/8}\|f_{-1}\|_2\|f_1\|_2(1+\|f_{-1}\|_2^{\gamma_2-2}+\|f_1\|_2^{\gamma_2-2})\notag\\
& \lesssim (l/2)^{-l/8}\|f\|_2^2(1+\|f\|_2^{\gamma_2-2}).
\end{align}
Combining \eqref{eq:main-split-1} and \eqref{eq:main-split-2}, we get
\begin{align*}
 	N(f) - N(f_{-1}) - N(f_1)
 	 \lesssim  \left( \|f_0\|_2\|f\|_2 + (l/2)^{-l/8}\|f\|_2^2 \right)
 	  \left(1+\|f\|_2^{\gamma_2-2}\right),
 \end{align*}
 which together with  \eqref{eq:kinetic}  yields
 \begin{align} \label{eq:energy-estimate}
     H(f)-H(f_{-1})-H(f_1)
     \gtrsim
      -\left[ \frac{\|f\|_2^2}{l^2}+\left(\|f_0\|_2\|f\|_2 +  (l/2)^{-l/8}\|f\|_2^2\right)
 	  \left(1+\|f\|_2^{\gamma_2-2}\right)\right] .
 \end{align}

Once we have such a bound on the splitting of the energy, we use a reasoning similar to the one in \cite{HuLee 2012-existence}: 		
 By definition of $f_0$, we have $\|f_0\|_2^2\le \sum_{x=a^\prime+1}^{b^\prime-1}|f(x)|^2$.
 To choose $a'$ and $b'$, set $I_\eta:=\{\eta+1, \eta+2,\ldots,\eta+l-1\}$ when $l\ge 2$,
 $ I_\eta \coloneq \emptyset$ when $l=1$, and note that, since the number of integers in  $[a,b-l]$ is $b-a-l+1$,
    \begin{equation*} \label{eq:pidgeonholing}
        \begin{split}
        (b-a-l+1)\min_{a\le \eta\le b-l} \sum_{x\in I_\eta} |f(x)|^2
        &\le
        \sum_{\eta= a}^{b-l} \sum_{x\in I_\eta} |f(x)|^2
            \le \sum_{x= a+1}^{b-1} \sum_{\eta= x-l+1}^{x-1} |f(x)|^2 \\
           &\le   (l-1) \|f\|_2^2 \,\, .
    \end{split}
    \end{equation*}
Hence there exists $\eta^\prime$ with $a\le \eta^\prime \le b-l$ and
    \bdm
         \sum_{x= \eta^\prime+1}^{\eta^\prime+l-1} |f(x)|^2
            \le \frac{l-1}{b-a-l+1}\|f\|_2^2\,\, .
    \edm
With $a^\prime=\eta^\prime$ and $b^\prime=\eta^\prime+l$ we therefore have
$$
\|f_0\|_2^2\leq \frac{l-1}{b-a-l+1}\|f\|_2^2\,\,.
$$
Plugging this into  $\eqref{eq:energy-estimate}$ yields
\begin{align}
     &H(f)- H(f_{-1})-H(f_1) \notag\\
     &\gtrsim -\left[ \frac{\|f\|_2^2}{l^2}+\left(\left(\frac{l-1}{b-a-l+1}\right)^{1/2}\|f\|_2^2 +  (l/2)^{-l/8}\|f\|_2^2 \right)
 	  \left(1+\|f\|_2^{\gamma_2-2}\right)\right]\notag \\
 	 &\ge -\|f\|_2^2\left(1+\|f\|_2^{\gamma_2-2}\right)
       \left[ \frac{1}{l^2}
       +\left(\frac{l-1}{b-a-l+1}\right)^{1/2} + (l/2)^{-l/8}\right].\label{eq:great}
 \end{align}
Since $\|f\|_2^2=\lambda$, $\|f_j\|_2^2 \geq \delta,\  j=-1,1$ and $\|f_{-1}\|_2^2+\|f_1\|_2^2 \leq \lambda$, Proposition \ref{prop:strict-subadditivity} shows
 \begin{align*}
 	H(f_{-1})+ H(f_1)\ge \left[1-(2^{\f{\gamma_0}{2}}-2)\left(\f{\delta}{\lambda}\right)^{\f{\gamma_0}{2}}\right]
    E_{\lambda}^\dav
 \end{align*}
 and inequality \eqref{eq:great} yields
 \begin{equation}
 \begin{split}	\label{ineq:fat-tail-pf-nonlinear-estimate3}
     H(f)- &\left[1-(2^{\f{\gamma_0}{2}}-2)\left(\f{\delta}{\lambda}\right)^{\f{\gamma_0}{2}}\right]
    E_{\lambda}^\dav
     \gtrsim\\
 &- \left(\lambda+\lambda^{\gamma_2/2}\right)
      \left[ \frac{1}{l^2}
       +\left(\frac{l-1}{b-a-l+1}\right)^{1/2} + (l/2)^{- l/8}\right]
 \end{split}
 \end{equation}
for any $1\leq l\le b-a$.

Finally, we choose $l\in \N$ with $l\leq (b-a+1)^{1/2} < l +1$. Note that this is allowed, since when $b-a=1$, we have $l=1$, and when $b-a\ge 2$, then $1\le l\le (b-a+1)^{1/2}\le b-a$.  With this choice of $l$ we have
\begin{align*}
	\f{1}{l^2} \le \frac{1}{l^{1/2}}\le \left((b-a+1)^{1/2}-1\right)^{-1/2},
\end{align*}
\begin{align*}
	\left(\frac{l-1}{ b-a -l+1}\right)^{1/2}\le \left(\frac{(b-a+1)^{1/2}-1}{b-a - (b-a+1)^{1/2}+1}\right)^{1/2}
	\le  \left((b-a+1)^{1/2}-1\right)^{-1/2}
\end{align*}
and
\begin{align*}
	(l/2)^{-l/8}\lesssim l^{-1/2}\le ((b-a+1)^{1/2}-1)^{-1/2}.
\end{align*}
Therefore, \eqref{ineq:fat-tail-pf-nonlinear-estimate3} yields \eqref{estimate:fat-tail-positive}.

If $\dav=0$, we do not have the term $\f{1}{l^2}$ in \eqref{ineq:fat-tail-pf-nonlinear-estimate3} and get the same estimate \eqref{estimate:fat-tail-positive}.
\end{proof}

An immediate consequence of Proposition \ref{prop:fat-tail proposition} is
\begin{proposition}[Tightness]\label{prop: tightness modulo translations}
Assume that $E^{\dav}_\lambda<0$.
Let $ (f_n)_{n\in\N}\subset l^2(\mathbb{Z})$ be a minimizing sequence for the variational problem $\eqref{eq:GT}$ with $\lambda=\|f_n\|_2^2>0$. Then there exist shifts $\xi_n$ such that
\begin{equation*}
\lim_{R\rightarrow\infty} \sup_{n\in{\mathbb N}}\sum_{|x-\xi_n|>R}|f_n(x)|^2=0.
\end{equation*}
\end{proposition}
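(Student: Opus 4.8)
The plan is to extract, for each $n$, a ``core'' location $\xi_n$ where a fixed positive amount of mass sits, and then use the fat-tail bound of Proposition \ref{prop:fat-tail proposition} to rule out mass escaping far to the left or right of $\xi_n$. First I would fix once and for all a small parameter $\delta$, say $\delta = \lambda/4$. For each $n$, since $\|f_n\|_2^2 = \lambda$, I can define
\[
    \xi_n := \min\Big\{ m\in\Z : \sum_{x\le m} |f_n(x)|^2 \ge \tfrac{\lambda}{2} \Big\},
\]
which exists and is finite; by minimality $\sum_{x\le \xi_n-1}|f_n(x)|^2 < \lambda/2$, hence $\sum_{x\ge \xi_n}|f_n(x)|^2 > \lambda/2 \ge \delta$, and also $\sum_{x\le \xi_n}|f_n(x)|^2 \ge \lambda/2 \ge \delta$. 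So at the site $\xi_n$ both tails carry at least $\delta$ of the mass.

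Next I would argue by contradiction: suppose the tightness statement fails. Then there is $\eps_0>0$ and, passing to a subsequence, a sequence $R_n\to\infty$ with $\sum_{|x-\xi_n|>R_n}|f_n(x)|^2 \ge \eps_0$ — more precisely, for each fixed $R$ one can find $n$ with $\sum_{|x-\xi_n|>R}|f_n(x)|^2\ge \eps_0$, and since this quantity is nonincreasing in $R$ we get, after relabelling, that the ``far mass'' does not go to zero along a subsequence as $R\to\infty$. The mass at distance $>R_n$ splits into a left part and a right part, $\sum_{x< \xi_n-R_n}$ and $\sum_{x>\xi_n+R_n}$, at least one of which is $\ge \eps_0/2$ for infinitely many $n$; WLOG (up to reflecting, or treating the two cases symmetrically) assume it is the right part. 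Then choosing $a_n := \xi_n$ (which has left-tail mass $\ge\delta$ by construction) and $b_n := \xi_n + R_n + 1$ (which has right-tail mass $\sum_{x\ge b_n}|f_n(x)|^2 \ge \eps_0/2$, assuming WLOG $\eps_0/2 \le \delta$ by shrinking $\eps_0$), the hypothesis \eqref{ineq:condition-a-b} of Proposition \ref{prop:fat-tail proposition} holds with this pair and with $\min(\delta,\eps_0/2)$ in place of $\delta$.

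Now apply \eqref{estimate:fat-tail-positive}: with $\lambda$ fixed and $\gamma_0,\gamma_2$ fixed, the bracketed prefactor of $E^{\dav}_\lambda$ is a fixed number strictly less than $1$ (a fixed constant depending only on $\delta/\lambda$), and since $E^{\dav}_\lambda<0$ this gives
\[
    H(f_n) \ge \Big[1-(2^{\gamma_0/2}-2)(\delta/\lambda)^{\gamma_0/2}\Big] E^{\dav}_\lambda
        - C(\lambda+\lambda^{\gamma_2/2})\big((b_n-a_n+1)_+^{1/2}-1\big)_+^{-1/2}.
\]
Since $b_n - a_n = R_n + 1 \to \infty$, the last term tends to $0$, so $\liminf_n H(f_n) \ge \theta E^{\dav}_\lambda$ where $\theta := 1-(2^{\gamma_0/2}-2)(\delta/\lambda)^{\gamma_0/2} \in (0,1)$; and $\theta E^{\dav}_\lambda > E^{\dav}_\lambda$ strictly because $E^{\dav}_\lambda<0$. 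This contradicts $H(f_n)\to E^{\dav}_\lambda$. Hence the tightness statement holds.

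The main obstacle — really the only place any care is needed — is the bookkeeping that reduces the ``far mass does not vanish'' hypothesis to a genuine pair $(a_n,b_n)$ with $b_n-a_n\to\infty$ satisfying \eqref{ineq:condition-a-b} with a \emph{uniform} lower mass bound: one must be careful to separate the left- and right-escaping mass, possibly pass to a subsequence, and possibly reflect, and to shrink the mass threshold to a single constant $\delta' = \min(\delta,\eps_0/2)$ so that \eqref{estimate:fat-tail-positive} can be invoked with $\delta'$ fixed (note the prefactor $1-(2^{\gamma_0/2}-2)(\delta'/\lambda)^{\gamma_0/2}$ is still a fixed number in $(0,1)$, which is all that is used). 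Everything after that is an immediate consequence of Proposition \ref{prop:fat-tail proposition} and the strict negativity of $E^{\dav}_\lambda$.
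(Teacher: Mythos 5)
Your proof is correct and fills in exactly the argument the paper leaves implicit: the published proof of this proposition is a one-line reduction to \cite[Proposition 2.4]{HuLee 2012-existence}, observing that the new fat-tail bound \eqref{estimate:fat-tail-positive} can be substituted for the one used there. Your constructive choice of $\xi_n$ as a median, the pigeonhole/subsequence/reflection bookkeeping that produces a pair $(a_n,b_n)$ with $b_n-a_n\to\infty$ satisfying \eqref{ineq:condition-a-b} for a fixed $\delta'=\min(\lambda/4,\eps_0/4)<\lambda/2$, and the final contradiction from $\theta E^{\dav}_\lambda>E^{\dav}_\lambda$ when $E^{\dav}_\lambda<0$ is precisely the mechanism the paper is invoking.
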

\begin{proof}
	 Since the function $s \mapsto (\sqrt{s+1}-1)^{-1/2}$ is decreasing on $(0,\infty)$ and goes to zero at infinity, Proposition \ref{prop:fat-tail proposition} has the same consequences as \cite[Proposition 2.4]{HuLee 2012-existence} replacing \cite[inequality (2.29)]{HuLee 2012-existence} by \eqref{estimate:fat-tail-positive}.
\end{proof}

To prove Theorem \ref{thm:existence}, we need two more results on the continuity and differentiablity of the non-linear functional $N(f)$. The proof mimics the one in  \cite{ChoiHuLee2015} for the continuous case and is therefore omitted.

\begin{lemma}\label{lem:continuous mapping}
The functional $N:l^2(\Z)\to \R$ given by
$$
 f \mapsto N(f)= \int_{\R}\sum _{x\in \Z} V(|T_r f(x)|)\mu(dr)
$$
is locally Lipshitz continuous on $l^2(\Z)$.
\end{lemma}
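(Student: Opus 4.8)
The plan is to prove the quantitative estimate
\[
  |N(f)-N(g)| \lesssim \bigl( (\|f\|_2+\|g\|_2)^{\gamma_1-1} + (\|f\|_2+\|g\|_2)^{\gamma_2-1}\bigr)\,\|f-g\|_2
\]
for all $f,g\in l^2(\Z)$; local Lipschitz continuity is then immediate, since on the ball $\{f:\|f\|_2\le R\}$ the prefactor is bounded by the $R$-dependent constant $(2R)^{\gamma_1-1}+(2R)^{\gamma_2-1}$ (times a constant depending only on $\mu(\R)$).

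The first step is a pointwise bound on the integrand. Exactly the computation carried out in the proof of Lemma \ref{lem:V-splitting} — where one shows $|V(d)-V(c)|\lesssim(d^{\gamma_1-1}+d^{\gamma_2-1})(d-c)$ for $0\le c\le d$ from \ref{ass:A1} and $V(0)=0$ — gives
\[
  |V(a)-V(b)|\lesssim\bigl((a+b)^{\gamma_1-1}+(a+b)^{\gamma_2-1}\bigr)|a-b|\qquad\text{for all }a,b\ge0 .
\]
Taking $a=|T_rf(x)|$, $b=|T_rg(x)|$ and using the reverse triangle inequality $\bigl||T_rf(x)|-|T_rg(x)|\bigr|\le|T_r(f-g)(x)|$, one gets for each $x\in\Z$ and $r\in\R$
\[
  \bigl|V(|T_rf(x)|)-V(|T_rg(x)|)\bigr|\lesssim\sum_{i=1,2}\bigl(|T_rf(x)|+|T_rg(x)|\bigr)^{\gamma_i-1}|T_r(f-g)(x)| .
\]

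The second step is to sum over $x\in\Z$ and integrate against $\mu(dr)$. For each $i$, since $\gamma_i\ge2$ I pull out the high powers as sup-norms: $\bigl(|T_rf(x)|+|T_rg(x)|\bigr)^{\gamma_i-1}\le(\|f\|_2+\|g\|_2)^{\gamma_i-2}\bigl(|T_rf(x)|+|T_rg(x)|\bigr)$, using $\|T_rh\|_\infty\le\|T_rh\|_2=\|h\|_2$. It then remains to bound $\int_\R\sum_{x\in\Z}\bigl(|T_rf(x)|+|T_rg(x)|\bigr)|T_r(f-g)(x)|\,\mu(dr)$, which follows from the Cauchy--Schwarz inequality in $L^2(\Z\times\R,dx\,\mu(dr))$ together with Lemma \ref{lem:T_rf estimate} applied with $p=q=2$: this gives the bound $(\|T_rf\|_{L^2(\Z\times\R,dx\mu(dr))}+\|T_rg\|_{L^2(\Z\times\R,dx\mu(dr))})\,\|T_r(f-g)\|_{L^2(\Z\times\R,dx\mu(dr))}\lesssim(\|f\|_2+\|g\|_2)\|f-g\|_2$. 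Adding the contributions $i=1,2$ yields the displayed estimate, and hence the lemma.

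I do not expect any genuine obstacle here; the only delicate point is the integrability of $V'$ near the origin, but this is handled exactly as in the proof of Lemma \ref{lem:V-splitting} (it uses only $V(0)=0$ and $\gamma_1>2$), so that computation can simply be reused. Joint measurability in $(x,r)$ is automatic since $r\mapsto T_rf(x)$ is continuous for each fixed $f\in l^2(\Z)$. This is precisely the argument used for the continuous problem in \cite{ChoiHuLee2015}, which is why the authors omit the details.
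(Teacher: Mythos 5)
Your proof is correct and is precisely the kind of argument the authors had in mind when they omitted the proof and referred the reader to \cite{ChoiHuLee2015}: it reuses the two ingredients already present in the paper, namely the $V'$-bound underlying Lemma \ref{lem:V-splitting} (integrated to give $|V(a)-V(b)|\lesssim\bigl((a+b)^{\gamma_1-1}+(a+b)^{\gamma_2-1}\bigr)|a-b|$) and the trivial space-time bound of Lemma \ref{lem:T_rf estimate}, glued together with Cauchy--Schwarz in $L^2(\Z\times\R,dx\,\mu(dr))$. The one small remark worth making is that the pointwise bound really needs the improper integral $\int_0^d V'(s)\,ds$ to converge, which is exactly why \ref{ass:A1} imposes $\gamma_1>2$ (so $|V'|$ is integrable at $0$); you flag this correctly. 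In short, the proof is complete and matches the intended argument.
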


\begin{lemma} \label{lem:differentiability}
For any $f\in l^2(\Z)$, the functional $N$ as above
 is continuously differentiable with derivative
 \begin{align*}
 	l^2(\Z)\ni h\mapsto DN(f)[h]=
 	   \re \int_\R \left\langle \left[ V'(|T_rf|)\sgn{T_r f} \right],T_r h \right\rangle \mu(dr),
 \end{align*}
where $\sgn{z}:=\f{z}{|z|}$ if $z\neq 0$ and $\sgn{0}:=0$. In particular, the nonlinear Hamiltonian given in \eqref{eq:energy} is continuously differentiable with derivative
\begin{align*}
	l^2(\Z)\ni h\mapsto DH(f)[h]=
 	   \dav \re\la D_+f, D_+h \ra
 	   -\re \int_\R \left\langle \left[ V'(|T_rf|)\sgn{T_r f} \right],T_r h \right\rangle \mu(dr),
\end{align*}
\end{lemma}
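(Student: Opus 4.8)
The plan is to compute the Gâteaux derivative of $N$ at $f$ in direction $h$ first, then upgrade Gâteaux differentiability with continuous derivative to Fréchet differentiability, which is automatic once the candidate derivative $f\mapsto DN(f)$ is shown to be continuous as a map from $l^2(\Z)$ into its dual. For the Gâteaux derivative, fix $f,h\in l^2(\Z)$ and consider the difference quotient $t^{-1}(N(f+th)-N(f))$ for real $t\to 0$. Writing $N(g)=\int_\R\sum_x V(|T_rg(x)|)\,\mu(dr)$ and using that $T_r$ is linear, one has $T_r(f+th)=T_rf+tT_rh$ pointwise, so we must differentiate $t\mapsto V(|T_rf(x)+tT_rh(x)|)$. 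Since $V$ is differentiable on $(0,\infty)$ with $V(0)=0$ and $a\mapsto|a|$ is differentiable away from $0$, the chain rule gives, at every $(x,r)$ with $T_rf(x)\neq0$, the $t$-derivative at $t=0$ equal to $V'(|T_rf(x)|)\,\re\!\big(\overline{\sgn{T_rf(x)}}\,T_rh(x)\big)$; at points where $T_rf(x)=0$ the bound $|V(a)|\lesssim a^{\gamma_1}+a^{\gamma_2}$ from \eqref{eq:Vbound} with $\gamma_1>2$ forces the contribution to be $o(t)$, consistent with declaring $\sgn{0}=0$. Summing and integrating this pointwise derivative yields exactly the claimed formula $DN(f)[h]=\re\int_\R\langle V'(|T_rf|)\sgn{T_rf},T_rh\rangle\,\mu(dr)$.

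The analytic content is justifying the interchange of the limit $t\to0$ with $\sum_x$ and $\int_\R\mu(dr)$, i.e. a dominated-convergence argument. The natural dominating function comes from the mean value theorem: $t^{-1}|V(|T_rf(x)+tT_rh(x)|)-V(|T_rf(x)|)|\le \sup_{0\le\theta\le1}|V'(|T_rf(x)+\theta t T_rh(x)|)|\,|T_rh(x)|$, and by assumption \ref{ass:A1} this is $\lesssim \big((|T_rf(x)|+|T_rh(x)|)^{\gamma_1-1}+(|T_rf(x)|+|T_rh(x)|)^{\gamma_2-1}\big)|T_rh(x)|$ for $|t|\le1$. Expanding and applying Hölder's inequality in the space-time measure $dx\,\mu(dr)$, each resulting term is a product of $L^{p_i}(\Z\times\R,dx\,\mu(dr))$ norms of $T_rf$ and $T_rh$ with exponents summing appropriately, and Lemma \ref{lem:T_rf estimate} bounds each such norm by $\|f\|_2$ or $\|h\|_2$ since any $l^2\to l^p$ with $p\ge2$ and $T_r$ unitary on $l^2$ give the needed control. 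Hence the dominating function is integrable and dominated convergence applies, establishing the Gâteaux derivative formula and, incidentally, the bound $|DN(f)[h]|\lesssim (\|f\|_2^{\gamma_1-1}+\|f\|_2^{\gamma_2-1})\|h\|_2$, so $DN(f)$ is a bounded linear functional.

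It remains to show $f\mapsto DN(f)$ is continuous from $l^2(\Z)$ to $(l^2(\Z))^*$; this gives Fréchet differentiability with continuous derivative by the standard fact that a Gâteaux-differentiable map with (norm-)continuous Gâteaux derivative is $C^1$. For $f_n\to f$ in $l^2$, one estimates $\sup_{\|h\|_2\le1}|DN(f_n)[h]-DN(f)[h]|$ by $\int_\R\|V'(|T_rf_n|)\sgn{T_rf_n}-V'(|T_rf|)\sgn{T_rf}\|_2\cdot\|T_rh\|_2\,\mu(dr)$ and uses $\|T_rh\|_2=\|h\|_2\le1$, reducing matters to showing $\int_\R\|V'(|T_rf_n|)\sgn{T_rf_n}-V'(|T_rf|)\sgn{T_rf}\|_{l^2_x}\,\mu(dr)\to0$. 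Since $T_rf_n\to T_rf$ in $l^2(\Z)$ uniformly in $r$ in the compact support of $\mu$, this follows from the continuity of the Nemytskii-type map $g\mapsto V'(|g|)\sgn{g}$ on the relevant $l^p$ spaces — which in turn is a consequence of the growth bound \ref{ass:A1} together with a.e.\ (here: everywhere, since we are on $\Z$) convergence and the same Hölder/Lemma \ref{lem:T_rf estimate} majorization as above, exactly as in the continuous-case argument of \cite{ChoiHuLee2015}. The statement for $H$ is then immediate: the map $f\mapsto\frac{\dav}{2}\|D_+f\|_2^2$ is a bounded quadratic form on $l^2(\Z)$, hence smooth with derivative $h\mapsto\dav\re\langle D_+f,D_+h\rangle$, and $DH=\frac{\dav}{2}D(\|D_+\cdot\|_2^2)-DN$. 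The main obstacle is purely the bookkeeping in the dominated-convergence/Hölder step when $\gamma_1\neq\gamma_2$, where one must split the domination into low- and high-power pieces and track which $l^p$ exponent each factor lands in; no genuinely new idea beyond Lemma \ref{lem:T_rf estimate} and \ref{ass:A1} is needed.
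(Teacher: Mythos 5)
Your proposal is correct and follows precisely the approach the paper itself intends: the paper omits the proof of Lemma \ref{lem:differentiability}, stating only that it mimics the continuous-case argument in \cite{ChoiHuLee2015}, and your reconstruction (pointwise chain rule away from zeros of $T_r f$, mean-value-theorem domination plus H\"older and Lemma \ref{lem:T_rf estimate} to justify dominated convergence, then Nemytskii continuity of $g\mapsto V'(|g|)\sgn{g}$ to upgrade to $C^1$) is exactly that argument transplanted to $l^2(\Z)$. The one minor overstatement is that at zeros of $T_r f$ you only need $\gamma_1>1$ rather than $\gamma_1>2$ for the $o(t)$ bound, but since \ref{ass:A1} assumes $\gamma_1>2$ this is harmless.
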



\begin{remark}\label{rem:DN}
Recall that we assume that the nonlinearity $P$ is odd, so it is of the form
$P(a)= p(|a|)a$ for $a\in\R $. If $V'(a) = P(a)$ for all $a\ge 0$, then
$V'(|z|)\sgn{z}= p(|z|)z = P(z)$ for all $z\in\C$, and therefore
\begin{align*}
DN(f)[h] & = \re \int_\R \left\langle  P(T_rf),T_rh \right\rangle \mu(dr) \\
	&= \re \left\langle\int_\R T_r^{-1}\left[ P(T_rf) \right]  \mu(dr), h\right\rangle
\end{align*}
is, modulo the real part,  the weak form of the nonlinearity in the diffraction management equation \eqref{eq:GT-weak}.
\end{remark}

It remains to prove Theorem \ref{thm:existence}.
A last step in our existence proof of minimizers of the variational problems \eqref{eq:min} is the following characterization of strong convergence in $l^2(\Z)$.

\begin{lemma}[Lemma A.1 in \cite{HuLee 2012-existence}]\label{lem:strong-convergence}
A sequence $ (f_n)_{n\in\N}\subset l^2(\Z)$ is strongly converging to $f$ in $l^2(\Z)$
if and only if it is weakly convergent to $f$ and the sequence is tight, i.e.,
 \begin{equation}\label{eq:tight-discr}
   \lim_{L\to\infty} \limsup_{n\to\infty} \sum_{|x|>L} |f_n(x)|^2 = 0.
 \end{equation}
\end{lemma}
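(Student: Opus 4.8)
The plan is to prove the two implications separately, relying only on the elementary fact that weak convergence in $l^2(\Z)$ forces coordinatewise convergence (test against the basis vectors $\delta_x$) together with the smallness of the tails of a fixed $l^2$-function.

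For the forward implication, suppose $f_n\to f$ strongly. Weak convergence is then immediate. For tightness, fix $\varepsilon>0$ and use $f\in l^2(\Z)$ to pick $L_0$ with $\sum_{|x|>L}|f(x)|^2<\varepsilon$ for all $L\ge L_0$; combining this with the pointwise bound $\sum_{|x|>L}|f_n(x)|^2\le 2\|f_n-f\|_2^2+2\sum_{|x|>L}|f(x)|^2$ and $\|f_n-f\|_2\to 0$ gives $\limsup_{n\to\infty}\sum_{|x|>L}|f_n(x)|^2\le 2\varepsilon$ for every $L\ge L_0$, which is exactly \eqref{eq:tight-discr}.

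For the reverse implication, suppose $f_n\rightharpoonup f$ weakly and \eqref{eq:tight-discr} holds. Since $f_n(x)=\la \delta_x,f_n\ra\to\la\delta_x,f\ra=f(x)$, we have coordinatewise convergence. Fix $\varepsilon>0$. By \eqref{eq:tight-discr} choose $L\in\N$ and $N_1$ with $\sum_{|x|>L}|f_n(x)|^2<\varepsilon$ for all $n\ge N_1$; enlarging $L$ if necessary, also arrange $\sum_{|x|>L}|f(x)|^2<\varepsilon$. Because $\{x\in\Z: |x|\le L\}$ is finite, coordinatewise convergence yields $N_2$ with $\sum_{|x|\le L}|f_n(x)-f(x)|^2<\varepsilon$ for all $n\ge N_2$. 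Splitting $\|f_n-f\|_2^2$ at $|x|=L$ and estimating the outer sum by $2\sum_{|x|>L}|f_n(x)|^2+2\sum_{|x|>L}|f(x)|^2$ then gives $\|f_n-f\|_2^2<5\varepsilon$ for all $n\ge\max(N_1,N_2)$, hence $f_n\to f$ in $l^2(\Z)$.

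There is no serious obstacle here; the argument is a routine head-versus-tail decomposition, which is why the authors may simply cite it. The only points needing attention are the order of the quantifiers in \eqref{eq:tight-discr} — the $\limsup_{n\to\infty}$ means finitely many $n$ never matter, so no uniform boundedness input is required — and the elementary inequality $|a-b|^2\le 2|a|^2+2|b|^2$ used to pass from the tail of $f_n-f$ to the tails of $f_n$ and $f$ individually.
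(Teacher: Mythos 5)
Your proof is correct and follows essentially the same head-versus-tail decomposition as the paper's sketch: the coordinates with $|x|\le L$ are handled via weak convergence (the paper phrases this through the finite-rank projection $P_L=\id_{[-L,L]}$ and the finite-dimensionality of its range, you by testing against the basis vectors $\delta_x$ — these are the same observation), and the tail $|x|>L$ is handled by \eqref{eq:tight-discr} together with $\sum_{|x|>L}|f(x)|^2\to 0$. The only stylistic difference is that the paper works directly with $\limsup_{n\to\infty}\|f-f_n\|_2$ and a triangle inequality on $(1-P_L)$, then sends $L\to\infty$, which compresses the $\varepsilon$–$N$ bookkeeping you spell out; mathematically the content is identical, and your version also supplies the easy forward implication that the paper only gestures at.
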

\begin{proof}[Sketch of the proof:]
 Let $P_lf:= \id_{[-l,l]} f$ and note that the range of $P_l$ is finite dimensional,
 in fact, $2l+1$ dimensional. Thus, if $f_n$ converges weakly to $f$, then
 $\lim_{n\to\infty}\|P_l(f-f_n)\|_2=0$. Since
 \begin{align*}
 	\|f- f_n\|_2 \le \|P_l(f-f_n)\|_2 + \|(1-P_l)(f-f_n)\|_2
 \end{align*}
 we see that for all $l\in\N$
 \begin{align*}
 	 \limsup_{n\to\infty} \|f- f_n\|_2 \le \limsup_{n\to\infty} \|(1-P_l)(f-f_n)\|_2
 	 \le  \|(1-P_l)f\|_2 + \limsup_{n\to\infty} \|(1-P_l) f_n\|_2 .
 \end{align*}
 As $l\to\infty$, the first term goes to zero since $f\in l^2(\Z)$ and the second goes to zero because of \eqref{eq:tight-discr}. So $f_n$ converges to $f$ in norm.

 Conversely, if $f_n$ converges to $f$ in norm, then it is easy to see that it converges to
 $f$ weakly and \eqref{eq:tight-discr} holds.
\end{proof}

Now we can come to the
\bpf[Proof of Theorem \ref{thm:existence}]
We know from Lemma \ref{lem:E-boundedness} and
\ref{lem:negativity} that $-\infty<E^{\dav}_\lambda\le 0 $. Assume that $E^{\dav}_{\lambda}=0$ for some $\lambda>0$. Define the sequence $ (f_n)_n$ by
\begin{align*}
	f_n(x) \coloneq c_n \id_{[-n,n]}(x)
\end{align*}
with $c_n= \left(\tfrac{\lambda}{2n+1}\right)^{1/2}$. Then $\|f_n\|_2^2=\lambda$. Note that $f_n$ converges weakly to zero and that any shift of $f_n$ also converges weakly to zero. So the sequence $ (f_n)_n$ is not precompact in $l^2(\Z)$ modulo translations. Moreover, we have
\begin{align*}
	\|D_+ f_n\|_2^2 = 2c_n^2 \to 0 \quad\text{as } n\to\infty
\end{align*}
and, because of \eqref{eq:Vbound},
\begin{align*}
	|N (f_n)|\lesssim \int (\|T_r f_n\|_{\gamma_1}^{\gamma_1} + \|T_r f_n\|_{\gamma_2}^{\gamma_2}) \, \mu(dr)
	\lesssim \|f_n\|_{\gamma_1}^{\gamma_1} + \|f_n\|_{\gamma_2}^{\gamma_2}
\end{align*}
where we also used the bound \eqref{eq:lpbound} from Lemma \ref{lem:useful}. Since for any
$\gamma>2$
\begin{align*}
	\|f_n\|_{\gamma}^\gamma = \left(\frac{\lambda}{2n+1}\right)^{\gamma/2} (2n+1) \to 0
\end{align*}
as $n\to\infty$, we have $N (f_n)\to 0$ as $n\to\infty$. Thus $f_n$ is a minimizing sequence for \eqref{eq:min} which is not precompact modulo translations. By contrapositive, this shows that if every minimizing sequence is precompact modulo translations, then $E^{\dav}_\lambda<0$.

 Conversely, assume that $E^{\dav}_\lambda< 0 $ and  let $ (f_n)_{n\in\N}\subset l^2(\Z)$ be a minimizing sequence of the variational problem \eqref{eq:GT}.
 First, applying Proposition \ref{prop: tightness modulo translations}, we see that there exist shifts $\{\xi_n\}$ such that for any $\epsilon>0$ there exists an $R_\epsilon >0$ for which
\begin{equation}\label{eq:shift1}
\sum_{|x-\xi_n|>R_\epsilon}|f_n(x)|^2 \leq \epsilon \ \text{ for any } n \in \N.
\end{equation}
Define the shifted sequence $(\tilde{f}_n)_n$ by $\tilde{f}_n(x):=f_n(x-\xi_n)$ for
$x\in \Z$. It is also a minimizing sequence, due to the invariance of the Hamiltonian $H$ given in \eqref{eq:energy} under shifts.

Noting that $(\|\tilde{f}_n\|_2)$ is bounded as it is a minimizing sequence, we can see there exists a subsequence, also denoted by $(\tilde{f}_n)_{n\in\N}$, which converges weakly to some $\varphi$ in $l^2(\Z)$.
Due to \eqref{eq:shift1} the shifted sequence $(\tilde{f}_n)_{n\in\N}$ is tight in the sense of Lemma \ref{lem:strong-convergence}, hence by Lemma \ref{lem:strong-convergence} it  converges strongly in $l^2(\Z)$ and $\|\varphi\|_2^2=\lim_{n\rightarrow\infty}\|\tilde{f}_n\|_2^2=\lambda$. Thus the minimizing sequence $ (f_n)_n$ is precompact modulo tranlations.

Moreover, it follows from Lemma \ref{lem:continuous mapping} that $H(\varphi)=\lim_{n\rightarrow \infty}H(g_n)= E_\lambda$ which finishes the proof of existence of a minimizer for the constraint variational problem \eqref{eq:min}.

Now we prove that any minimizer is a solution of the associated Euler-Lagrange equation \eqref{eq:GT-weak} for some Lagrange multiplier
$\omega\in\R$. This is standard in the calculus of variations, for the convenience of the reader, we will give the argument.
Let $\varphi$ be a minimizer for \eqref{eq:min} and $h\in l^2(\Z)$ arbitrary. Furthermore define
\begin{align*}
	G(t,s)&\coloneq \la \varphi+t h+s\varphi, \varphi+t h+s\varphi \ra \\
	F(t,s)&\coloneq H(\varphi+t h+s\varphi) ,
\end{align*}
then a short calculation gives
\begin{align*}
	\nabla G(t,s) = \left( \begin{array}{c}
		\partial_t G(t,s) \\
		\partial_s G(t,s)
	\end{array} \right)
	=
	2 \left( \begin{array}{c}
		\re \la \varphi+t h+s\varphi, \varphi\ra  \\
		\re \la\varphi+t h+s\varphi, h\ra
	\end{array} \right)
\end{align*}
and
\begin{align*}
	\nabla F(t,s) = \left( \begin{array}{c}
		\partial_t F(t,s) \\
		\partial_s F(t,s)
	\end{array} \right)
	=
	    \left( \begin{array}{c}
		DH(\varphi+t h+s\varphi)[\varphi] \\
		DH(\varphi+t h+s\varphi)[h]
	\end{array} \right)
\end{align*}
where $DH$ is the derivative of the nonlinear Hamiltonian,
\begin{align*}
	DH(\varphi)[h]
		&= \dav \re \la -\Delta\varphi, h \ra - DN(\varphi)[h] \\
		&= \dav \re\la D_+\varphi, D_+h \ra - \re \int_\R \la V'(|T_r\varphi|)\sgn{T_r\varphi}, T_rh \ra\, \mu(dr) \\
		&= \dav \re\la D_+\varphi, D_+h \ra - \re \int_\R \la P(T_r\varphi), T_rh \ra\, \mu(dr)
\end{align*}
where we used Remark \ref{rem:DN} for the last equality.

 We have $\partial_t G(0,0)=\la\varphi,\varphi\ra=\lambda>0$, hence by the implicit function theorem, there exists $\delta>0$ and a differentiable function
 $g:(-\delta,\delta)\to \R $ with $g(0)=0$ such that $G(g(s),s)=G(0,0)=\lambda$ for all $|s|<\delta$. Thus, since $\varphi$ is a minimizer of the constrained minimization problem \eqref{eq:min},  the function
\begin{align*}
	(-\delta,\delta)\ni s\mapsto \tilde{F}(s)\coloneq F(g(s),s)
\end{align*}
has a local minimum at $s=0$ and together with the chain rule this  implies
\begin{align}\label{eq:euler lagrange 0}
	0 &= \partial_s \tilde{F}(s)\vert_{s=0}= \partial_t F(0,0) g'(0) + \partial_s F(0,0)
		= DH(\varphi)[\varphi] g'(0) + DH(\varphi)[h]
\end{align}
Moreover, since $G(g(s),s)$ is constant, we also have
\begin{align*}
	0= \partial_t G(0,0) g'(0) +\partial_s G(0,0) = 2\lambda g'(0) + 2\re \la \varphi,h \ra
\end{align*}
solving for $g'(0)$ and plugging it back into \eqref{eq:euler lagrange 0} yields
\begin{align}\label{eq:euler lagrange 1}
	\omega \re\la\varphi,h\ra = \dav \re\la D_+\varphi, D_+h \ra - \re \int_\R \la V'(|T_r\varphi|)\sgn{T_r\varphi}, T_rh \ra\, \mu(dr)
\end{align}
with the Lagrange multiplier
\begin{align}\label{eq:lagrange multiplier}
	\omega=\omega(\varphi)\coloneq \frac{DH(\varphi)[\varphi]}{\lambda} \in \R \,.
\end{align}
Replacing $h$ by $-ih$ in \eqref{eq:euler lagrange 1} yields
\begin{align*}
	\omega \im\la\varphi,h\ra = \dav \im\la D_+\varphi, D_+h \ra - \im \int_\R \la V'(|T_r\varphi|)\sgn{T_r\varphi}, T_rh \ra\, \mu(dr)
\end{align*}
and together with \eqref{eq:euler lagrange 1} this proves \eqref{eq:GT-weak}.

It remains to prove that $\omega< 2E^{\dav}_\lambda$. Recall that assumption \ref{ass:A2} states that
\begin{align*}
	V'(a)a \ge \gamma_0 V(a) \quad \text {for all } a>0. 	
\end{align*}
Thus
\begin{align*}
	DN(\varphi)[\varphi]
	= \int_\R \sum_{x\in\Z} V'(|T_r\varphi(x)|)|T_r\varphi(x)| \,\mu(dr)
	\ge \gamma_0 \int_\R \sum_{x\in\Z} V(|T_r\varphi(x)|) \,\mu(dr) =\gamma_0 N(\varphi)
\end{align*}
and since $E^{\dav}_\lambda<0$, we must have $N(\varphi)>0$ for any minimizer
$\varphi$, so \eqref{eq:lagrange multiplier} gives
\begin{align}\label{eq:negative-Lagrange-multiplier}
	\omega(\varphi)\lambda= DH(\varphi)[\varphi] &= \dav\la D_+\varphi,D_+\varphi\ra -DN(\varphi)[\varphi]
	\le \dav\la D_+\varphi,D_+\varphi\ra -\gamma_0 N(\varphi) \\
	&= 2H(\varphi) -(\gamma_0-2) N(\varphi) < 2H(\varphi) = 2E^{\dav}_\lambda <0
\end{align}
for all $\varphi$ in the ground state set $\calM^\dav_\lambda$.

\end{proof}

\section{Threshold phenomena}\label{sec:threshold}
As we showed in the previous section, assumptions \ref{ass:A1} and \ref{ass:A2} guarantee the existence of minimizers for arbitrary $\lambda>0$ and $\dav\ge 0$ as soon as the ground state  energy $E^{\dav}_\lambda$ is \emph{strictly negative}.  In this section we will prove  a threshold phenomenon: There exists $0\le\lambda_{\mathrm{cr}}\le \infty$ such that solutions exist if the power $\lambda= \|f\|_2^2>\lambda_{\mathrm{cr}}$.
Furthermore $\lambda_{\mathrm{cr}}<\infty$ under assumption \ref{ass:A3}.

For pure power law nonlinearities and the model case $d_0=\id_{[0,1)}-\id_{[1,2]}$ for the diffraction profile, this had been partly investigated earlier in \cite{Moeser05} for
the diffraction management equation and for pure power nonlinearities in \cite{weinstein} for the discrete nonlinear Schr\"odinger equation. We are not aware of any work which investigates threshold phenomena for general nonlinearities obeying only \ref{ass:A1} and \ref{ass:A2}.

 In the following we will always assume, without explicitly mentioning it every time,
 that $\mu$ is a finite measure on $\R$ with compact support, that is, there exists $0<B<\infty$ such that $\supp\mu\subset [-B,B]$.
Our main result in this section is
\begin{theorem}[Threshold phenomenon]
 \label{thm:threshold-phenomena}
	Assume that $V$ obeys \ref{ass:A1} 
 and \ref{ass:A2}. Then \\
	\itemthm\label{thm:threshold-phenomena-1} For any average diffraction
	$\dav\ge  0$ and any $\lambda>0$ we have $E^{\dav}_\lambda\le 0$, the map $\lambda\mapsto E^{\dav}_\lambda$ is decreasing on $(0,\infty)$, and there exists  a critical threshold
	$0\le \lambda_{\mathrm{cr}}(\dav)\le \infty$ such that for
	$0 < \lambda < \lambda_{\mathrm{cr}}(\dav)$ we have $E^{\dav}_\lambda = 0$ and for $\lambda>\lambda_{\mathrm{cr}}(\dav)$ we have $-\infty<E^{\dav}_\lambda < 0$. \\
	\itemthm\label{thm:threshold-phenomena-2} 	If $\lambda> \lambda_{\mathrm{cr}}$, then minimizers of \eqref{eq:min} exist and any minimizing sequence is, up to translations, precompact in $l^2(\Z)$ and thus has a subsequence which converges, up to translations, to a minimizer. \\
	\itemthm\label{thm:threshold-phenomena-3} If $0< \lambda< \lambda_{\mathrm{cr}}(\dav)$ and $\dav>0$, then no minimizers of the variational problem \eqref{eq:min} exist. \\
	\itemthm\label{thm:threshold-phenomena-4} $\lambda_{\mathrm{cr}}(\dav)< \infty$ for all $\dav\ge 0$ if and only if there exists
		$f\in l^2(\Z)$ such that $N(f)>0$.  \\
	\itemthm\label{thm:threshold-phenomena-5} If in assumption \ref{ass:A1} we have $\gamma_1\ge 6$, then $\lambda_{\text{cr}}(\dav)>0$ for all $\dav>0$.
\end{theorem}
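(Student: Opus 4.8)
The plan is to prove the five parts essentially in order, the first two being soft consequences of results already available, the last three relying on the scaling monotonicity coming from \ref{ass:A2} and, for \ref{thm:threshold-phenomena-5}, on a discrete interpolation inequality. For parts \ref{thm:threshold-phenomena-1} and \ref{thm:threshold-phenomena-2} I would recall from the appendix (Lemmas~\ref{lem:E-boundedness} and \ref{lem:negativity}, Proposition~\ref{prop:strict-subadditivity}) that $-\infty<E^{\dav}_\lambda\le 0$ for every $\lambda>0$ and that $E^{\dav}_{\lambda_1+\lambda_2}\le E^{\dav}_{\lambda_1}+E^{\dav}_{\lambda_2}$; since each term on the right is $\le 0$, the map $\lambda\mapsto E^{\dav}_\lambda$ is non-increasing. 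Setting $\lambda_{\mathrm{cr}}(\dav):=\sup\{\lambda>0:E^{\dav}_\lambda=0\}$ (with $\sup\emptyset=0$), monotonicity together with $E^{\dav}_\lambda\le 0$ forces $E^{\dav}_\lambda=0$ for $\lambda<\lambda_{\mathrm{cr}}$ and $-\infty<E^{\dav}_\lambda<0$ for $\lambda>\lambda_{\mathrm{cr}}$, which is \ref{thm:threshold-phenomena-1}; then \ref{thm:threshold-phenomena-2} is exactly Theorem~\ref{thm:existence} applied in the range $\lambda>\lambda_{\mathrm{cr}}$, where $E^{\dav}_\lambda<0$.

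The engine behind \ref{thm:threshold-phenomena-3} and \ref{thm:threshold-phenomena-4} is a scaling monotonicity extracted from \ref{ass:A2}: for fixed $a>0$ the map $t\mapsto t^{-\gamma_0}V(ta)$ has derivative $t^{-\gamma_0-1}\big(V'(ta)(ta)-\gamma_0 V(ta)\big)\ge 0$, hence is nondecreasing, giving $V(ta)\ge t^{\gamma_0}V(a)$ for all $t\ge 1$; applying this pointwise to $|T_rf|$ and summing and integrating yields $N(tf)\ge t^{\gamma_0}N(f)$ for every $f\in l^2(\Z)$ and $t\ge 1$. For \ref{thm:threshold-phenomena-3}, assume $0<\lambda<\lambda_{\mathrm{cr}}(\dav)$, $\dav>0$, and suppose for contradiction that a minimizer $\varphi$ exists; by \ref{thm:threshold-phenomena-1} we have $H(\varphi)=E^{\dav}_\lambda=0$, so $N(\varphi)=\tfrac{\dav}{2}\|D_+\varphi\|_2^2$, and since $\varphi\neq 0$ lies in $l^2(\Z)$ it cannot be constant, so $\|D_+\varphi\|_2^2>0$. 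Then for $t>1$,
\[
 H(t\varphi)\le \tfrac{\dav}{2}t^2\|D_+\varphi\|_2^2-t^{\gamma_0}N(\varphi)=\tfrac{\dav}{2}\|D_+\varphi\|_2^2\,(t^2-t^{\gamma_0})<0,
\]
because $\gamma_0>2$, hence $E^{\dav}_{\lambda'}<0$ for every $\lambda'>\lambda$, which forces $\lambda_{\mathrm{cr}}(\dav)\le\lambda$ — a contradiction.

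For \ref{thm:threshold-phenomena-4}: if $N(f)\le 0$ for all $f\in l^2(\Z)$, then $H(f)=\tfrac{\dav}{2}\|D_+f\|_2^2-N(f)\ge 0$, so $E^{\dav}_\lambda=0$ and $\lambda_{\mathrm{cr}}(\dav)=\infty$ for all $\lambda>0$ and all $\dav\ge 0$; contrapositively this is the ``only if'' direction. Conversely, if $N(f)>0$ for some $f$, then for any fixed $\dav\ge 0$ and $t\ge 1$ we get $H(tf)\le \tfrac{\dav}{2}t^2\|D_+f\|_2^2-t^{\gamma_0}N(f)$, which is $<0$ once $t$ is large enough (since $\gamma_0>2$ and $N(f)>0$); hence $E^{\dav}_{t^2\|f\|_2^2}<0$ for $t$ large, so $\lambda_{\mathrm{cr}}(\dav)<\infty$.

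Part \ref{thm:threshold-phenomena-5} is the main obstacle, since one must quantitatively beat $N$ with the kinetic term at small power. The tool I would use is the discrete interpolation inequality $\|g\|_\infty^2\le 2\|g\|_2\|D_+g\|_2$ for $g\in l^2(\Z)$, obtained by telescoping $|g(x)|^2=-\sum_{y\ge x}\big(|g(y+1)|^2-|g(y)|^2\big)$, discarding the nonnegative $\sum_{y\ge x}|D_+g(y)|^2$ that appears, and Cauchy--Schwarz. Since \ref{ass:A1} gives $|V(a)|\lesssim a^{\gamma_1}$ for $0\le a\le 1$ (using $\gamma_1\le\gamma_2$), for $\|f\|_2^2=\lambda\le 1$ we have $|T_rf(x)|\le\|f\|_2\le 1$, so, using $\|D_+(T_rf)\|_2=\|D_+f\|_2$ (as $T_r$ is unitary and commutes with $\Delta$) and $\|g\|_{\gamma_1}^{\gamma_1}\le\|g\|_\infty^{\gamma_1-2}\|g\|_2^2$,
\[
 N(f)\lesssim \int_\R\|T_rf\|_{\gamma_1}^{\gamma_1}\,\mu(dr)\lesssim \|f\|_2^{(\gamma_1+2)/2}\,\|D_+f\|_2^{(\gamma_1-2)/2}.
\]
This is exactly where $\gamma_1\ge 6$ enters: then $(\gamma_1-2)/2\ge 2$, so $\|D_+f\|_2^{(\gamma_1-2)/2}=\|D_+f\|_2^2\,\|D_+f\|_2^{(\gamma_1-6)/2}$ with the last factor $\le (4\lambda)^{(\gamma_1-6)/4}$ by $\|D_+f\|_2^2\le 4\|f\|_2^2$; collecting the powers of $\lambda$ gives $N(f)\le C\lambda^{(\gamma_1-2)/2}\|D_+f\|_2^2$ for $\lambda\le 1$, with $C$ depending only on $\mu$, $\gamma_1$ and the constant in \ref{ass:A1}. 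Hence $H(f)\ge\big(\tfrac{\dav}{2}-C\lambda^{(\gamma_1-2)/2}\big)\|D_+f\|_2^2\ge 0$ whenever $\lambda\le\min\big(1,(\dav/2C)^{2/(\gamma_1-2)}\big)$, so $E^{\dav}_\lambda=0$ on this range and $\lambda_{\mathrm{cr}}(\dav)\ge\min\big(1,(\dav/2C)^{2/(\gamma_1-2)}\big)>0$. The only delicate points are applying the interpolation inequality to $T_rf$ (via unitarity and the commutation of $T_r$ with $\Delta$) and arranging the exponent bookkeeping so that one full factor of $\|D_+f\|_2^2$ survives to absorb into the kinetic term — which is precisely the role of the hypothesis $\gamma_1\ge 6$.
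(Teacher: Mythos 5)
Your proposal is correct, but it takes a genuinely different (and in several places shorter) route than the paper, which organizes everything around the auxiliary function $R(\lambda)=\sup_{\|f\|_2^2=\lambda}N(f)/\|D_+f\|_2^2$. In the paper the threshold is \emph{defined} as $\lambda_{\mathrm{cr}}(\dav)=\inf\{\lambda>0:R(\lambda)>\dav/2\}$ (Definition~\ref{def:threshold}), and parts \ref{thm:threshold-phenomena-1}, \ref{thm:threshold-phenomena-3} and \ref{thm:threshold-phenomena-4} are deduced from the monotonicity and dichotomy results for $R$ (Lemma~\ref{lem:R-lower}, Corollary~\ref{cor:properties-of-R}, Proposition~\ref{prop:threshold}); in particular, nonexistence of minimizers for small $\lambda$ comes from $H(f)\ge\|D_+f\|_2^2(\dav/2-R(\lambda))$ together with $R(\lambda)<\dav/2$. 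You instead define $\lambda_{\mathrm{cr}}$ directly via $E^\dav_\lambda$, use subadditivity (Proposition~\ref{prop:strict-subadditivity}) for the monotonicity of $E^\dav_\lambda$, and replace the whole $R$-machinery in \ref{thm:threshold-phenomena-3}--\ref{thm:threshold-phenomena-4} by a clean scaling contradiction: from \ref{ass:A2} you extract $N(tf)\ge t^{\gamma_0}N(f)$ for $t\ge1$ (this is the content of the paper's Lemma~\ref{lem:V-lower-bound}), so a hypothetical minimizer at zero energy would force $E^\dav_{\lambda'}<0$ for all $\lambda'>\lambda$, contradicting the choice $\lambda<\lambda_{\mathrm{cr}}$, and a single $f$ with $N(f)>0$ forces $E^\dav_\lambda<0$ for large $\lambda$. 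This is more elementary and avoids introducing $R$ at all; what the paper's $R$-formalism buys is the additional structural information — e.g.\ the quantitative bounds $\lambda_{\mathrm{cr}}(\dav)\sim(\dav/(2R_0))^{2/(\gamma_0-2)}$ in Corollary~\ref{cor:lambda-crit-bounds} and the full dichotomy in Corollary~\ref{cor:properties-of-R} — none of which you need for the theorem as stated. Your two definitions of $\lambda_{\mathrm{cr}}$ coincide since, by Lemma~\ref{lem:threshold-basic} and $E^\dav_\lambda\le0$, the condition $E^\dav_\lambda=0$ is equivalent to $R(\lambda)\le\dav/2$.

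For part \ref{thm:threshold-phenomena-5} your argument is essentially the paper's, differing only in bookkeeping: you interpolate via $\|g\|_\gamma^\gamma\le\|g\|_\infty^{\gamma-2}\|g\|_2^2$ and $\|g\|_\infty^2\le2\|g\|_2\|D_+g\|_2$ and restrict to $\lambda\le1$ so that only the $a^{\gamma_1}$ piece of $V$ matters; the paper uses $\|g\|_\gamma^\gamma\le\|g\|_{\gamma-4}^{\gamma-4}\|g\|_\infty^4$ to obtain the Weinstein-type inequality $\|g\|_\gamma^\gamma\le\|g\|_2^{\gamma-2}\|D_+g\|_2^2$ for $\gamma\ge6$ and bounds $R(\lambda)$ directly for all $\lambda$. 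Both use unitarity of $T_r$ and its commutation with $\Delta$ in the same way. Two minor points worth tidying: you cite Lemma~\ref{lem:negativity} in part \ref{thm:threshold-phenomena-1}, but that lemma assumes \ref{ass:A4}, which is not a hypothesis of this theorem; what you actually use there is only Lemma~\ref{lem:E-boundedness}. Also, the plain subadditivity $E^\dav_{\lambda_1+\lambda_2}\le E^\dav_{\lambda_1}+E^\dav_{\lambda_2}$ is not the literal statement of Proposition~\ref{prop:strict-subadditivity}; it follows from it (or more directly from the intermediate inequality $E^\dav_{s^2\lambda}\ge s^{\gamma_0}E^\dav_\lambda$ for $0\le s\le1$ in its proof), but this deserves a word.
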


\begin{remark}
The precise definition of  $\lambda_{\mathrm{cr}}(\dav)$ is given below in Definition \ref{def:threshold}.
When $\lambda>\lambda_{\mathrm{cr}}(\dav)$ we have $E^{\dav}_\lambda<0$ and Theorem \ref{thm:existence} shows that  any minimizing sequence is precompact modulo translations, that minimizers exist and that these minimizers yield solutions of \eqref{eq:GT} for some Lagrange multiplier $\omega < 2E^{\dav}_\lambda/\lambda<0$.

Since $E^{\dav}_\lambda=0$ when $0<\lambda<\lambda_{\mathrm{cr}}$, Theorem \ref{thm:existence} also shows that there are minimizing sequences which are not precompact modulo translations in this case. Nevertheless, it could be that minimizers still exist. At least when $\dav>0$, Theorem \ref{thm:threshold-phenomena} shows that this cannot be the case.
At the moment, we need $\dav>0$ to conclude nonexistence of minimizers when $0<\lambda<\lambda_{\mathrm{cr}}$.
\end{remark}

We give the proof of Theorem \ref{thm:threshold-phenomena} at the end of this section after
some preparations.
Recall
\begin{align*}
	H(f) = \frac{\dav}{2}\|D_+f\|_2^2  - N(f)
\end{align*}
and
\begin{align*}
	E^{\dav}_\lambda = \inf\{H(f):\, f\in l^2(\Z), \|f\|_2^2=\lambda\} .
\end{align*}
Given $f\in l^2(\Z)$ with $\lambda=\|f\|_2^2>0$, write it as $f=\sqrt{\lambda} h$ then $h\in l^2(\Z)$ with $\|h\|_2=1$ and
\begin{align}\label{eq:energy-threshold}
	H(f) = \frac{\dav}{2}\|D_+f\|_2^2  - N(f)
		= \|D_+f\|_2^2  \left( \frac{\dav}{2}- \frac{N(\sqrt{\lambda}h)}{\lambda \|D_+h\|_2^2} \right).
\end{align}
In the case of vanishing average diffraction, we can still write
\begin{align*}
	H(f)= -N(f) = - \|D_+f\|_2^2  \left(\frac{N(\sqrt{\lambda}h)}{\lambda \|D_+h\|_2^2} \right),
\end{align*}
so defining\footnote{Note that the kernel of $D_+$ on $l^2(\Z)$ is trivial, so $R(\lambda,h)$ is well defined for any $h\neq 0$.}
\begin{align*}
	R(\lambda,h)\coloneq \frac{N(\sqrt{\lambda}h)}{\lambda \|D_+h\|_2^2}
\end{align*}
and
\begin{align}\label{def:R}
	R(\lambda)\coloneq \sup_{\|h\|_2=1} R(\lambda,h)
		=  \sup_{\|f\|_2^2=\lambda} \frac{N(f)}{ \|D_+f\|_2^2}
\end{align}
 we see that the following holds
\begin{lemma}\label{lem:threshold-basic}
	For any $\dav\ge 0$ and $\lambda> 0$ one has $E^{\dav}_\lambda<0$ if and only if $R(\lambda,h)>\frac{\dav}{2}$ for some $h\in l^2(\Z)$ with $\|h\|_2=1$ and this is the case if and only if $R(\lambda)>\frac{\dav}{2} $.
\end{lemma}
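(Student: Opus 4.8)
The plan is to read the claim straight off the reformulation \eqref{eq:energy-threshold} of the energy, so the ``proof'' is essentially a chain of three elementary equivalences. Fix $\dav\ge 0$ and $\lambda>0$. Given any $f\in l^2(\Z)$ with $\|f\|_2^2=\lambda$, write $f=\sqrt{\lambda}\,h$ with $\|h\|_2=1$; then \eqref{eq:energy-threshold} (and the displayed identity immediately following it, which covers the case $\dav=0$ verbatim) gives
\begin{align*}
	H(f)=\|D_+f\|_2^2\left(\frac{\dav}{2}-R(\lambda,h)\right).
\end{align*}
Since the kernel of $D_+$ on $l^2(\Z)$ is trivial and $f\neq 0$, the prefactor $\|D_+f\|_2^2$ is strictly positive, so $H(f)<0$ if and only if $R(\lambda,h)>\frac{\dav}{2}$. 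This is the first equivalence.

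Next I would observe that $E^{\dav}_\lambda<0$ if and only if there exists $f\in l^2(\Z)$ with $\|f\|_2^2=\lambda$ and $H(f)<0$: one implication is immediate from the definition \eqref{eq:min} of the infimum, and conversely, if $E^{\dav}_\lambda<0$ then by definition of infimum there is an admissible $f$ with $H(f)<E^{\dav}_\lambda/2<0$. Combining this with the first equivalence, $E^{\dav}_\lambda<0$ holds precisely when there is some $h\in l^2(\Z)$ with $\|h\|_2=1$ and $R(\lambda,h)>\frac{\dav}{2}$.

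Finally, by the definition \eqref{def:R} of $R(\lambda)$ as $\sup_{\|h\|_2=1}R(\lambda,h)$, the existence of such an $h$ is equivalent to $R(\lambda)>\frac{\dav}{2}$: a supremum strictly exceeds a given number if and only if some element of the set does, and this remains correct in the degenerate case $R(\lambda)=+\infty$. Chaining the three equivalences yields the statement. There is no real obstacle here; the argument is pure bookkeeping with the definitions of infimum and supremum. The only points worth stating explicitly are the strict positivity of $\|D_+f\|_2$ for $f\neq0$ (already recorded in the footnote to the definition of $R(\lambda,h)$) and the fact that the computation is insensitive to whether $\dav>0$ or $\dav=0$.
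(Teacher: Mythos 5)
Your proof is correct and is exactly the argument the paper intends: the lemma is stated immediately after the identity \eqref{eq:energy-threshold} and the definition \eqref{def:R} precisely because those reduce it to the elementary facts about infima, suprema, and the injectivity of $D_+$ that you spell out.
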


The function $R$ defined above has very interesting properties, which make $R$ ideal for the study of the threshold phenomenon.
First we give a simple Lemma, which is at the heart of our study of $R$.

\begin{lemma}\label{lem:R-lower}
	Assume that $V$ obeys assumption \ref{ass:A2}. Then for any $\lambda_2\ge \lambda_1>0$ one has
	\begin{align}\label{eq:R-lower}
		R(\lambda_2) \ge \left( \frac{\lambda_2}{\lambda_1} \right)^{\frac{\gamma_0-2}{2}} R(\lambda_1).
	\end{align}
\end{lemma}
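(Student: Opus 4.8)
The idea is to exploit assumption \ref{ass:A2}, which is a homogeneity-type lower bound on $V$, to compare the nonlocal nonlinearity $N$ evaluated at two different powers. First I would observe that \ref{ass:A2} says $V'(a)a \ge \gamma_0 V(a)$ for all $a>0$, which is exactly the statement that the function $a\mapsto V(a)/a^{\gamma_0}$ is nondecreasing on $(0,\infty)$: indeed, $\frac{d}{da}\big(V(a)a^{-\gamma_0}\big) = a^{-\gamma_0-1}\big(V'(a)a - \gamma_0 V(a)\big) \ge 0$. Consequently, for any $t\ge 1$ and any $a\ge 0$ one has $V(ta) \ge t^{\gamma_0} V(a)$ (the inequality is trivial at $a=0$ since $V(0)=0$). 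This scaling inequality is the only consequence of \ref{ass:A2} that I will need.

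Next, fix $\lambda_2 \ge \lambda_1 > 0$ and set $t := (\lambda_2/\lambda_1)^{1/2} \ge 1$. Given any $h\in l^2(\Z)$ with $\|h\|_2 = 1$, I would apply the scaling inequality pointwise in $x$ and $r$ to $a = |T_r(\sqrt{\lambda_1}\,h)(x)| = \sqrt{\lambda_1}\,|T_rh(x)|$, using that $T_r$ is linear so $|T_r(\sqrt{\lambda_2}\,h)(x)| = t\,|T_r(\sqrt{\lambda_1}\,h)(x)|$. Integrating against $\sum_x$ and $\mu(dr)$ gives
\begin{align*}
N(\sqrt{\lambda_2}\,h) = \int_\R \sum_{x\in\Z} V\big(t\,|T_r(\sqrt{\lambda_1}\,h)(x)|\big)\,\mu(dr)
\ge t^{\gamma_0} \int_\R \sum_{x\in\Z} V\big(|T_r(\sqrt{\lambda_1}\,h)(x)|\big)\,\mu(dr)
= t^{\gamma_0} N(\sqrt{\lambda_1}\,h).
\end{align*}
Now divide by $\lambda_2\|D_+h\|_2^2 = t^2\,\lambda_1\|D_+h\|_2^2$ (legitimate since $D_+$ has trivial kernel, so $\|D_+h\|_2>0$ for $h\ne 0$) to obtain $R(\lambda_2,h) \ge t^{\gamma_0-2} R(\lambda_1,h) = (\lambda_2/\lambda_1)^{(\gamma_0-2)/2} R(\lambda_1,h)$. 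Since $\gamma_0 > 2$, the prefactor $t^{\gamma_0-2}$ is $\ge 1$; but I do not even need that — I simply take the supremum over all $h$ with $\|h\|_2 = 1$ on both sides. The supremum on the left is $R(\lambda_2)$ by definition \eqref{def:R}, and the supremum on the right is $(\lambda_2/\lambda_1)^{(\gamma_0-2)/2} R(\lambda_1)$, which yields \eqref{eq:R-lower}.

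The only mild subtlety — and what I would call the main (though still minor) obstacle — is justifying the pointwise scaling inequality $V(ta)\ge t^\gamma_0 V(a)$ cleanly from \ref{ass:A2}, since $V$ is only assumed differentiable on $(0,\infty)$ and could a priori be negative. Writing $g(a) := V(a)a^{-\gamma_0}$ for $a>0$, assumption \ref{ass:A2} gives $g'(a)\ge 0$, so $g$ is nondecreasing; hence for $a>0$ and $t\ge 1$ we get $g(ta)\ge g(a)$, i.e. $V(ta)(ta)^{-\gamma_0}\ge V(a)a^{-\gamma_0}$, i.e. $V(ta)\ge t^{\gamma_0}V(a)$; the case $a=0$ is handled separately using $V(0)=0$. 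One must also check measurability of $(x,r)\mapsto V(|T_rh(x)|)$ so that the integral manipulation is valid, but this follows from continuity of $V$ (assumption \ref{ass:A1}, or even just \ref{ass:A2}) and continuity of $r\mapsto T_rh$, together with the integrability already established in Proposition \ref{prop:N-boundedness}. Everything else is a direct computation.
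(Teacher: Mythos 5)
Your proof is correct, and it takes a genuinely different route from the paper's. The paper differentiates the functional $A(s) := s^{-2}N(sh)$ using Lemma~\ref{lem:differentiability}, applies \ref{ass:A2} to the integrand of $DN(sh)[sh]-2N(sh)$ to get the differential inequality $A'(s)\ge\frac{\gamma_0-2}{s}A(s)$, and then integrates it with the factor $s^{2-\gamma_0}$ to obtain $R(\lambda_2,h)\ge(\lambda_2/\lambda_1)^{(\gamma_0-2)/2}R(\lambda_1,h)$. You instead push the monotonicity argument all the way down to the scalar function $V$ itself: the observation that $a\mapsto V(a)a^{-\gamma_0}$ is nondecreasing is precisely the content of the paper's Lemma~\ref{lem:V-lower-bound} (stated and proved separately in Appendix~\ref{sec:Strict concavity of the ground state energy}, but used there only for Proposition~\ref{prop:strict-subadditivity}), and you then sum and integrate the pointwise inequality $V(ta)\ge t^{\gamma_0}V(a)$ to obtain $N(th)\ge t^{\gamma_0}N(h)$ for $t\ge1$. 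This buys you something real: you bypass the differentiability of the nonlocal functional $N$ entirely (a nontrivial fact the paper must invoke via Lemma~\ref{lem:differentiability}), trading it for the elementary pointwise ODE for $V$. In short, the paper works at the level of the functional $N$, you work at the level of the scalar nonlinearity $V$; both give the same scaling $N(th)\ge t^{\gamma_0}N(h)$ and hence \eqref{eq:R-lower}, but your path is arguably cleaner and more self-contained. Your handling of $a=0$ via $V(0)=0$ and of the supremum step (multiplying by the positive constant $(\lambda_2/\lambda_1)^{(\gamma_0-2)/2}$ before taking the supremum over $\|h\|_2=1$) is sound; the measurability/integrability remark is the right level of care and, as you note, is already needed for $N$ and $R$ to be well defined in the first place.
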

\begin{remark}\label{rem:pure-power-law}
  For a pure power law nonlinearity, given by $V(a)=c a^\gamma$ for some $\gamma>2$ and $c>0$, one can explicitly calculate
  \begin{align*}
  	R(\lambda) =
  		\sup_{\|h\|_2=1} \frac{N(\sqrt{\lambda} h)}{\lambda \|D_+ h\|_2^2}
  		= \lambda^{\frac{\gamma-2}{2}} R_0
  		\quad \text{with } R_0= \sup_{\|h\|_2=1} \frac{N(h)}{\|D_+ h\|_2^2} \in (0,\infty] .
  \end{align*}
  Thus inequality \eqref{eq:R-lower} is very natural. Using the bound
  \eqref{eq:weinstein-type-ineq} below one sees that
  \begin{align*}
    R_0\le \sup_{\|h\|_2=1}  c \int_\R \|T_r h\|_2^{\gamma-2} \mu(dr) = c \mu(\R) <\infty
  \end{align*}
  for all $\gamma\ge 6$ since
  $\|D_+ T_rh\|_2^2 = \la T_rh, -\Delta T_rh\ra = \la h,-\Delta h\ra = \|D_+h\|_2^2$,
  using that $\Delta$ and  $T_r$ commute.
  To see that $R_0=\infty$ if $2<\gamma<6$
  is a little bit trickier.
  If $2<\gamma<6$, then Lemma \ref{lem:negativity} shows
  $E^\dav_\lambda<0$ for all $\dav\ge 0$ and all $\lambda>0$. So with Lemma \ref{lem:threshold-basic} for $\lambda=1$ this gives
  $R_0=R(1)>\dav/2$  for all $\dav\ge 0$.
  Thus  $R_0=\infty$ in this case.
\end{remark}

\begin{proof}[Proof of Lemma \ref{lem:R-lower}]
	Fix $h\in l^2(\Z)\setminus\{0\}$ and define
	\begin{align*}
		A(s)\coloneq  s^{-2}N(sh)
	\end{align*}
	for $s>0$.
	Because of Lemma \ref{lem:differentiability}, $A$ is differentiable with derivative
	\begin{align*}
		A'(s) = s^{-3} \Big( DN(sh)[sh] -2N(sh) \Big)
	\end{align*}
	where
	\begin{align*}
		DN(sh)[sh] - 2N(sh)
			&= \int_\R \sum_{x \in \Z} \left[ V'(|T_r(sh)(x)|)|T_r(sh)(x)| - 2V(|T_r(sh)(x)|)\right] \,\mu(dr) \\
			&\ge 	(\gamma_0-2) N(sh)
	\end{align*}
	where the lower bound follows from assumption \ref{ass:A2}.
	Thus we arrive at the first order differential inequality
	\begin{align}\label{eq:A'lowerbound}
		A'(s) \ge \frac{\gamma_0-2}{s} A(s)
	\end{align}
	for all $s>0$. Using the integrating factor $s^{2-\gamma_0}$, one sees that this implies
	$\frac{d}{ds} (s^{2-\gamma_0}A(s)) \ge 0 $
	and thus
	\begin{align*}
		s^{2-\gamma_0} A(s) \ge s_0^{2-\gamma_0} A(s_0)
	\end{align*}
	for all $0<s_0\le s$. Since $R(\lambda,h) = A(\sqrt{\lambda})/\|D_+h\|_2^2$, this proves
	\begin{align*}
		R(\lambda_2,h) \ge \left( \frac{\lambda_2}{\lambda_1} \right)^{\frac{\gamma_0-2}{2}} R(\lambda_1,h).
	\end{align*}
	for all $0<\lambda_1\le \lambda_2$ and taking the supremum over all $h\in l^2(\Z)$ with
	$\|h\|_2=1$ gives  \eqref{eq:R-lower}.
\end{proof}

\begin{corollary}[Properties of R]\label{cor:properties-of-R}
	Assume that $V$ obeys Assumption \ref{ass:A2}. \\
	\itemthm \label{cor:properties-of-R-1} For any $0\le a\le \infty$
	\begin{align}
		\begin{array}{lll}\label{eq:properties-of-R-1}
			\text{there exist } \lambda_0>0 \text{ with } R(\lambda_0) \ge a
			&\Rightarrow &
				 R(\lambda) \ge a \text{ for all } \lambda\ge  \lambda_0  \\
			\text{there exist } \lambda_0>0 \text{ with } R(\lambda_0) \le  a
			&\Rightarrow &
				 R(\lambda) \le  a \text{ for all } 0<\lambda\le \lambda_0
		\end{array} \,
	\end{align}
	Moreover, for any $0<a<\infty$
	\begin{align}	
		\begin{array}{lll}\label{eq:properties-of-R-2}
			\text{there exist } \lambda_0>0 \text{ with } R(\lambda_0) \ge a
			&\Rightarrow &
				 R(\lambda)> a \text{ for all } \lambda > \lambda_0  \\
						\text{there exist } \lambda_0>0 \text{ with } R(\lambda_0) \le a
			&\Rightarrow &
				 R(\lambda) < a \text{ for all } 0<\lambda< \lambda_0
		\end{array}  .
	\end{align}
   Furthermore, we have the equivalences
	\begin{align}\label{eq:R-equivalence}
		\begin{array}{lcl}
		\text{there exists } \lambda>0 \text{ with } R(\lambda)>0
		&\Leftrightarrow & \quad \lim_{\lambda\to\infty} R(\lambda) =\infty \\
		\text{there exists } \lambda>0 \text{ with } R(\lambda)<\infty
		&\Leftrightarrow & \quad \limsup_{\lambda\to 0+} R(\lambda) \le 0 \, .
		\end{array}
	\end{align}
  \itemthm\label{cor:properties-of-R-2}
  Define the set $A_0\coloneq\{\lambda>0:\, R(\lambda) > 0\}$, then it is either empty or an unbounded interval. Moreover, the map $R$ is increasing on $A_0$ and it is strictly increasing where it is finite.
\end{corollary}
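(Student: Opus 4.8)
The plan is to extract everything from the single scaling inequality of Lemma~\ref{lem:R-lower}, namely $R(\lambda_2)\ge (\lambda_2/\lambda_1)^{c}R(\lambda_1)$ for $\lambda_2\ge\lambda_1>0$, where $c=(\gamma_0-2)/2>0$ by Assumption~\ref{ass:A2}. Since $(\lambda_2/\lambda_1)^{c}\ge 1$ precisely when $\lambda_2\ge\lambda_1$, this factor only \emph{enlarges} a nonnegative value of $R$ while it \emph{shrinks the modulus} of a negative one; keeping these two sign regimes apart is the only genuinely delicate point, since $R(\lambda)$ need not be nonnegative a priori (it is $\le 0$, for instance, when $N\le 0$ on all of $\{\|f\|_2^2=\lambda\}$, as happens for $V(a)=-a^4-a^6$).

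For the first line of \eqref{eq:properties-of-R-1}: if $R(\lambda_0)\ge a\ge 0$ then $R(\lambda_0)\ge 0$, so for $\lambda\ge\lambda_0$ Lemma~\ref{lem:R-lower} gives $R(\lambda)\ge(\lambda/\lambda_0)^{c}R(\lambda_0)\ge R(\lambda_0)\ge a$, the case $a=\infty$ being immediate. For the second line, rewrite the lemma as $R(\lambda)\le(\lambda/\lambda_0)^{c}R(\lambda_0)$ for $0<\lambda\le\lambda_0$; if $R(\lambda)\le 0$ then $R(\lambda)\le 0\le a$, and if $R(\lambda)>0$ then necessarily $R(\lambda_0)>0$, whence $R(\lambda)\le(\lambda/\lambda_0)^{c}R(\lambda_0)\le R(\lambda_0)\le a$. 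The strict versions \eqref{eq:properties-of-R-2} proceed identically, except that $\lambda>\lambda_0$ (resp. $\lambda<\lambda_0$) makes $(\lambda/\lambda_0)^{c}$ strictly exceed $1$ and $a>0$ forces the value of $R$ multiplying this factor to be strictly positive, turning the inequalities strict; for the second line of \eqref{eq:properties-of-R-2} I will argue by contradiction, supposing $R(\lambda)\ge a$ and deducing $R(\lambda_0)\ge(\lambda_0/\lambda)^{c}R(\lambda)>a$. The equivalences \eqref{eq:R-equivalence} then fall out: if $R(\lambda_0)>0$ then $R(\lambda)\ge(\lambda/\lambda_0)^{c}R(\lambda_0)\to\infty$ as $\lambda\to\infty$ because $c>0$, and the converse is trivial; dually, if $R(\lambda_0)<\infty$ then $R(\lambda)\le(\lambda/\lambda_0)^{c}R(\lambda_0)$ for $0<\lambda\le\lambda_0$, and the right-hand side tends to $0$ as $\lambda\to0+$ (from below if $R(\lambda_0)<0$), so $\limsup_{\lambda\to0+}R(\lambda)\le 0$, the converse again being trivial.

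For part~\ref{cor:properties-of-R-2}: if $A_0=\{\lambda>0:R(\lambda)>0\}$ is nonempty and $\lambda_0\in A_0$, then $R(\lambda)\ge(\lambda/\lambda_0)^{c}R(\lambda_0)>0$ for all $\lambda\ge\lambda_0$, hence $[\lambda_0,\infty)\subset A_0$; it follows that $A_0$ is an interval of the form $[\lambda_*,\infty)$ or $(\lambda_*,\infty)$ with $\lambda_*=\inf A_0\ge 0$, in particular unbounded. Monotonicity on $A_0$ is just $R(\lambda_2)\ge(\lambda_2/\lambda_1)^{c}R(\lambda_1)\ge R(\lambda_1)$ for $\lambda_1\le\lambda_2$ in $A_0$ (legitimate since $R(\lambda_1)>0$), and when additionally $\lambda_1<\lambda_2$ and $R(\lambda_1)<\infty$, the fact that $(\lambda_2/\lambda_1)^{c}>1$ together with $0<R(\lambda_1)<\infty$ makes it strict. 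In short, the only real work is the sign bookkeeping flagged above; nothing beyond Lemma~\ref{lem:R-lower} is needed, so I do not anticipate any substantive obstacle.
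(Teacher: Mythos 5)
Your proof is correct and follows essentially the same path as the paper's: everything is derived from the scaling inequality of Lemma~\ref{lem:R-lower} by carefully tracking the sign of $R$, with the only cosmetic difference being that for the second line of \eqref{eq:properties-of-R-2} you argue by contradiction where the paper does a direct case split on $R(\lambda_0)$. The sign bookkeeping you flag is exactly the delicate point the paper's proof handles, and your treatment of it (including the $a\in\{0,\infty\}$ endpoint cases and the possibility $R(\lambda_0)<0$) is complete.
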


\begin{remarks}
	\itemthm
	Even though Lemma \ref{lem:E-boundedness} shows that under Assumption \ref{ass:A1} the energy is negative, this is not enough to conclude that $R(\lambda)\ge 0$ for all $\lambda>0$, in general.  \\
	\itemthm
	All the conclusions of Corollary \ref{cor:properties-of-R} are trivially true if $V(a)=ca^\gamma$ is a pure power law for some $\gamma>2$ and $c>0$, since in this case
	$R(\lambda)= R_0\lambda^{(\gamma-2)/2}$ as in Remark \ref{rem:pure-power-law}. \\
	\itemthm
		The first equivalence in \eqref{eq:R-equivalence} shows that we have the dichotomy that either $R(\lambda)\le 0$ for all $\lambda>0$, or $\lim_{\lambda\to \infty}R(\lambda)=\infty$.
		
		Similarly, the second equivalence in  \eqref{eq:R-equivalence} shows
		the dichotomy that either $R(\lambda)=\infty $ for all $\lambda>0$
		or $\limsup_{\lambda\to 0+}R(\lambda) \le 0$.
\end{remarks}
\begin{proof}[Proof of Corollary \ref{cor:properties-of-R}]
 	\ref{cor:properties-of-R-1} The implications of \eqref{eq:properties-of-R-1} and \eqref{eq:properties-of-R-2} follow directly from Lemma \ref{lem:R-lower}.  	
 	If $\lambda>\lambda_0$, then choosing $\lambda_1=\lambda_0$ and $\lambda_2=\lambda$ in
 	\eqref{eq:R-lower} shows
 	\begin{align} \label{eq:lower}
 		R(\lambda)\ge \left(\frac{\lambda}{\lambda_0}\right)^{\frac{\gamma_0-2}{2}} R(\lambda_0)\, .
	\end{align}
	Let $0<a<\infty$. If $R(\lambda_0)=\infty$ we also have $R(\lambda)=\infty$ for all $\lambda\ge \lambda_0$. If $R(\lambda_0)=\infty$, then \eqref{eq:lower} shows $R(\lambda)=\infty>a$.
 	If $a\le R(\lambda_0)<\infty$, then necessarily $R(\lambda_0)>0$, hence
 	$\left(\tfrac{\lambda}{\lambda_0}\right)^{\frac{\gamma_0-2}{2}} R(\lambda_0)>R(\lambda_0)$ and \eqref{eq:lower} again gives $R(\lambda)>a$. So the first implication of \eqref{eq:properties-of-R-2} is true. Hence also the first implication of \eqref{eq:properties-of-R-1} is true when $a$ is strictly positive and finite, but when $a=\infty$ or $a=0$, the first implication of \eqref{eq:properties-of-R-1} immediately follows from \eqref{eq:lower}.
 	This finishes the proof of the first implications in \eqref{eq:properties-of-R-1} and \eqref{eq:properties-of-R-2}.
 	\smallskip
 	
 	Now let $0< \lambda<\lambda_0$. Choosing
	$\lambda_1=\lambda$ and $\lambda_2=\lambda_0$ in \eqref{eq:R-lower} gives the upper bound
	\begin{align}\label{eq:upper}
			R(\lambda)\le \left( \frac{\lambda}{\lambda_0} \right)^{\frac{\gamma_0-2}{2}}  R(\lambda_0) \, ,
	\end{align}
 	If $0<a<\infty$ and $R(\lambda_0)=0$, then \eqref{eq:upper} shows $R(\lambda)\le 0 <a$.
 	If $0<R(\lambda_0)\le a$, then $\left( \frac{\lambda}{\lambda_0} \right)^{\frac{\gamma_0-2}{2}}  R(\lambda_0)< R(\lambda_0)$, so \eqref{eq:upper}  again yields $R(\lambda)<a$. This proves the second implication in \eqref{eq:properties-of-R-2}.
 	The second implication of \eqref{eq:properties-of-R-1} when $a=\infty$ or $a=0$ immediately follows from \eqref{eq:upper}.
 	This finishes the proof of \eqref{eq:properties-of-R-1} and \eqref{eq:properties-of-R-2}.
 	\smallskip
	
	For the proof of \eqref{eq:R-equivalence} assume first that $\lim_{\lambda\to\infty}R(\lambda)=\infty$. Then, of course, there exists $\lambda>0$
	with $R(\lambda)>0$. On the other hand, if there exists $\lambda_0>0$ such that $R(\lambda_0)>0$, then the lower bound \eqref{eq:lower} gives
	$\liminf_{\lambda\to\infty}R(\lambda)=\infty$, so the first equivalence in \eqref{eq:R-equivalence} is true.
	
	We can argue similarly for the second equivalence. Certainly
	$\limsup_{\lambda\to 0+} R(\lambda)\le 0$ implies that there exists $\lambda>0$ such that $R(\lambda)<\infty$.  Conversely, if $R(\lambda_0) < \infty$ for some $\lambda_0$, then \eqref{eq:upper}  yields
	$\limsup_{\lambda\to 0+}R(\lambda)\le 0$. This finishes \eqref{eq:R-equivalence}.
	\smallskip
	
	\ref{cor:properties-of-R-2} Note that if $\lambda_0\in A_0$, then \eqref{eq:properties-of-R-2} yields  $\lambda\in A_0$ for all $\lambda > \lambda_0$, so
	\begin{align*}
		A_0 = \bigcup_{ R(\lambda)> 0 } [\lambda,\infty)
	\end{align*}
	is either empty or an unbounded interval. Moreover, the first implication of \eqref{eq:properties-of-R-1} shows that $R$ is increasing on $A_0$ and the first implication of \eqref{eq:properties-of-R-2} shows that it is strictly increasing where it is finite.
\end{proof}

Now we come to our definition of the threshold.
\begin{definition}[Threshold]\label{def:threshold}
		For $\dav\ge 0$ we let
	\begin{align*}
		\lambda_{\mathrm{cr}}\coloneq\lambda_{\mathrm{cr}}(\dav)
			\coloneq
				\inf \{ \lambda>0:\, R(\lambda) > \frac{\dav}{2} \}.
	\end{align*}
\end{definition}

For the properties of the threshold, we note

\begin{proposition}[Properties of the threshold]\label{prop:threshold}
	Assume that $V$ obeys \ref{ass:A2}. \\
	\itemthm\label{prop:threshold-1} The map $\dav\mapsto \lambda_{\mathrm{cr}} (\dav)$ is increasing on $[0, \infty)$ and $0 \le \lambda_{\mathrm{cr}}(\dav) \le \infty$ for every $\dav \ge 0$. \\
	\itemthm\label{prop:threshold-2} If $\dav \ge 0$ then
	$R(\lambda)>\frac{\dav}{2}$ for all
		$\lambda>\lambda_{\mathrm{cr}}(\dav)$ and
 		$R(\lambda) \le \frac{\dav}{2}$ for all $0<\lambda<\lambda_{\mathrm{cr}}(\dav)$. \\
 Furthermore, if $\dav > 0$ then
		$R(\lambda) < \frac{\dav}{2}$ for all $0<\lambda<\lambda_{\mathrm{cr}}(\dav)$. \\
	\itemthm\label{prop:threshold-3} We have the equivalences
		\begin{align}\label{eq:threshold-equivalence-1}
		\begin{array}{lll}
		  \lambda_{\mathrm{cr}}(\dav) <\infty \text{ for all }\dav\ge 0
		   & \Leftrightarrow &
		  \lambda_{\mathrm{cr}}(\dav) <\infty \text{ for some }\dav\ge 0 \\
		  ~
		  	& \Leftrightarrow &
			 R(\lambda)>0 \text{ for some }\lambda>0 	\\
		~  & \Leftrightarrow &
			 \lim_{\lambda\to\infty} R(\lambda) =\infty
		\end{array}		\, ,
		\end{align}
		and
		\begin{align}\label{eq:threshold-equivalence-2}
		\begin{array}{lll}
		  \lambda_{\mathrm{cr}}(\dav) >0 \text{ for all }\dav> 0
		   & \Leftrightarrow &
		  \lambda_{\mathrm{cr}}(\dav) >0 \text{ for some }\dav\ge  0 \\
		  ~
		  	& \Leftrightarrow &
			 \limsup_{\lambda\to 0+}R(\lambda)\le 0
		\end{array}		\, .
		\end{align}
		For zero average diffraction we have
		\begin{align}\label{eq:threshold-equivalence-3}
		\begin{array}{lll}
		  \lambda_{\mathrm{cr}}(0) =0
		  	& \Leftrightarrow &
			 R(\lambda)>0 \text{ for all }\lambda> 0 \\
		  \lambda_{\mathrm{cr}}(0) >0
		  	& \Leftrightarrow &
			 R(\lambda)\le 0 \text{ for some }\lambda> 0
		\end{array}		\, .
		\end{align}
\end{proposition}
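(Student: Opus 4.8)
The plan is to read off all three parts from Definition~\ref{def:threshold}, the first order monotonicity bound of Lemma~\ref{lem:R-lower}, and the structural properties of $R$ collected in Corollary~\ref{cor:properties-of-R}; the proposition is little more than a reformulation of those facts for the infimum $\lambda_{\mathrm{cr}}(\dav)=\inf\{\lambda>0:\, R(\lambda)>\dav/2\}$. For part~\ref{prop:threshold-1}, I would note that $\dav_1\le\dav_2$ gives the inclusion $\{\lambda>0:\, R(\lambda)>\dav_2/2\}\subseteq\{\lambda>0:\, R(\lambda)>\dav_1/2\}$, so taking infima (with the convention $\inf\emptyset=\infty$) yields $\lambda_{\mathrm{cr}}(\dav_1)\le\lambda_{\mathrm{cr}}(\dav_2)$, and $0\le\lambda_{\mathrm{cr}}(\dav)\le\infty$ is immediate. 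For the ``below threshold'' half of part~\ref{prop:threshold-2}, any $\lambda$ with $0<\lambda<\lambda_{\mathrm{cr}}(\dav)$ lies strictly below the infimum of the defining set, hence is not a member of it, i.e.\ $R(\lambda)\le\dav/2$.

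For the ``above threshold'' half of part~\ref{prop:threshold-2}, I would fix $\lambda>\lambda_{\mathrm{cr}}(\dav)$ and pick, by definition of the infimum, a $\lambda_0$ with $0<\lambda_0<\lambda$ and $R(\lambda_0)>\dav/2\ge0$; since then $R(\lambda_0)>0$, Lemma~\ref{lem:R-lower} gives $R(\lambda)\ge(\lambda/\lambda_0)^{(\gamma_0-2)/2}R(\lambda_0)>R(\lambda_0)>\dav/2$, the middle inequality being strict because $\lambda>\lambda_0$, $\gamma_0>2$ and $R(\lambda_0)>0$ (and trivially true if $R(\lambda_0)=\infty$). For the last clause of part~\ref{prop:threshold-2}, suppose $\dav>0$ and $0<\lambda<\lambda_{\mathrm{cr}}(\dav)$; we already know $R(\lambda)\le\dav/2$, and if equality held then $R(\lambda)=\dav/2>0$, so choosing $\mu$ with $\lambda<\mu<\lambda_{\mathrm{cr}}(\dav)$ and applying Lemma~\ref{lem:R-lower} again would give $R(\mu)>R(\lambda)=\dav/2$, contradicting $R(\mu)\le\dav/2$; hence $R(\lambda)<\dav/2$. (Instead of re-using Lemma~\ref{lem:R-lower} one may invoke the strict implications recorded in \eqref{eq:properties-of-R-2}.)

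For part~\ref{prop:threshold-3} the key bookkeeping observation is that $\lambda_{\mathrm{cr}}(\dav)<\infty$ is equivalent to the defining set $\{\lambda>0:\, R(\lambda)>\dav/2\}$ being nonempty, while $\lambda_{\mathrm{cr}}(\dav)>0$ is equivalent to that set being empty or bounded away from $0$. Combined with Corollary~\ref{cor:properties-of-R} this closes all the equivalences: if $\lambda_{\mathrm{cr}}(\dav_0)<\infty$ for some $\dav_0\ge0$ then there is $\lambda$ with $R(\lambda)>\dav_0/2\ge0$, hence $R(\lambda)>0$, and by the first equivalence in \eqref{eq:R-equivalence} this forces $\lim_{\lambda\to\infty}R(\lambda)=\infty$, which makes $\{R>\dav/2\}$ nonempty for \emph{every} $\dav\ge0$, so $\lambda_{\mathrm{cr}}(\dav)<\infty$ for all $\dav$; together with the trivial implication ``for all $\dav$'' $\Rightarrow$ ``for some $\dav$'' this proves \eqref{eq:threshold-equivalence-1}. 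For \eqref{eq:threshold-equivalence-2}, $\lambda_{\mathrm{cr}}(\dav_0)>0$ for some $\dav_0\ge0$ produces a $\lambda$ with $R(\lambda)\le\dav_0/2<\infty$, so by the second equivalence in \eqref{eq:R-equivalence} we get $\limsup_{\lambda\to0+}R(\lambda)\le0$; conversely $\limsup_{\lambda\to0+}R(\lambda)\le0<\dav/2$ gives, for each $\dav>0$, some $\delta>0$ with $R<\dav/2$ on $(0,\delta)$, whence $\lambda_{\mathrm{cr}}(\dav)\ge\delta>0$, and the trivial implication again closes the loop. Finally, $\lambda_{\mathrm{cr}}(0)=0$ means $\{R>0\}$ contains points arbitrarily close to $0$, so for any $\lambda>0$ one may pick $\lambda_0\in(0,\lambda)$ with $R(\lambda_0)>0$ and apply Lemma~\ref{lem:R-lower} to get $R(\lambda)\ge(\lambda/\lambda_0)^{(\gamma_0-2)/2}R(\lambda_0)>0$; the reverse implication is immediate, and the second line of \eqref{eq:threshold-equivalence-3} is simply the logical negation of the first (using $\lambda_{\mathrm{cr}}(0)\ge0$).

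All the steps above are routine once Corollary~\ref{cor:properties-of-R} is available, so there is no real obstacle here; the only places that need a little care are keeping inequalities strict exactly where needed — which is precisely where $R(\lambda_0)>0$ together with $\gamma_0>2$ enters through Lemma~\ref{lem:R-lower} — and handling the degenerate cases $R\equiv\infty$, empty defining sets, and the $\dav=0$ versus $\dav>0$ distinction in the last clause of part~\ref{prop:threshold-2}.
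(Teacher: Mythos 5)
Your proposal is correct and follows essentially the same route as the paper: both pass through the set $A_{\dav}=\{\lambda>0:\,R(\lambda)>\dav/2\}$, read part (i) off the nesting of these sets, deduce the two-sided behavior of $R$ around $\lambda_{\mathrm{cr}}$ from Lemma~\ref{lem:R-lower} and its consequences in Corollary~\ref{cor:properties-of-R}, and close part (iii) via the dichotomies in \eqref{eq:R-equivalence}. The only cosmetic difference is that where the paper first records that $A_{\dav}$ is an unbounded interval (equation~\eqref{eq:A-dav-interval}) and then appeals to that structure, you instead apply Lemma~\ref{lem:R-lower} directly at the relevant points — but these are interchangeable, and you even note the alternative yourself; you also handle the degenerate cases ($R(\lambda_0)=\infty$, $\dav=0$) with appropriate care.
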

\begin{remark}\label{rem:finite-threshold}
	A moments reflection shows that $R(\lambda)>0$ for some $\lambda>0$ if and only if
	there exists  $f\in l^2(\Z)$ with $\|f\|_2^2=\lambda$ and $N(f)>0$. So by \eqref{eq:threshold-equivalence-1}
	one sees  that the critical threshold $\lambda_{\mathrm{cr}}(\dav)$ is finite for
	all $\dav\ge 0$ if and only if $N(f)>0$ for some $f\in l^2(\Z)$.
\end{remark}

Before we prove the proposition we state and prove a corollary, which gives quantitive bounds on the threshold. We do not need these bounds in the following, but the proof is easy and the bounds are very natural, as the example of a pure power nonlinearity shows.
\begin{corollary}[Quantitative bounds on $\lambda_{\mathrm{cr}}$]
	\label{cor:lambda-crit-bounds}
	Assume that $V$ obeys \ref{ass:A2}.
 	If there exist $\lambda_0$ and $0<R_0 <\infty$ such that $R_0\ge  R(\lambda_0)$  then we have the lower bound
	    \begin{align}\label{eq:lambda-crit-lower}
	    	& \lambda_0 \left( \min\Big(\frac{\dav}{2R_0},1\Big) \right)^{\frac{2}{\gamma_0-2}}
	    		\le \lambda_{\mathrm{cr}}(\dav)
	    	\quad \text{ for all } \dav >0
	    \end{align}
	    and if there exist $\lambda_0$ and $0<R_0 <\infty$ such that $R_0\le  R(\lambda_0)$  then we have the upper bound
	    \begin{align}\label{eq:lambda-crit-upper}
	    	&  \lambda_{\mathrm{cr}}(\dav) \le
	    	\lambda_0  \left( \max\Big(\frac{\dav}{2R_0},1\Big) \right)^{\frac{2}{\gamma_0-2}}
	    	\quad \text{ for all } \dav> 0 .
	    \end{align}	
\end{corollary}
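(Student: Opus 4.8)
The plan is to derive both inequalities directly from the scaling estimate \eqref{eq:R-lower} of Lemma \ref{lem:R-lower} together with Definition \ref{def:threshold}, $\lambda_{\mathrm{cr}}(\dav)=\inf\{\lambda>0:\,R(\lambda)>\dav/2\}$. For the lower bound I would set $\lambda_*:=\lambda_0\big(\min(\tfrac{\dav}{2R_0},1)\big)^{\frac{2}{\gamma_0-2}}$, which is strictly positive since $\dav>0$ and $R_0<\infty$, and show that $R(\lambda)<\dav/2$ for every $0<\lambda<\lambda_*$; then no $\lambda\in(0,\lambda_*)$ belongs to the set defining $\lambda_{\mathrm{cr}}(\dav)$, so $\lambda_{\mathrm{cr}}(\dav)\ge\lambda_*$, which is \eqref{eq:lambda-crit-lower}. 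For the upper bound I would set $\lambda^*:=\lambda_0\big(\max(\tfrac{\dav}{2R_0},1)\big)^{\frac{2}{\gamma_0-2}}$ and show $R(\lambda)>\dav/2$ for every $\lambda>\lambda^*$; then $(\lambda^*,\infty)$ is contained in that set, so $\lambda_{\mathrm{cr}}(\dav)\le\lambda^*$, which is \eqref{eq:lambda-crit-upper}.

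For the lower bound, fix $0<\lambda<\lambda_*$. Since $\min(\tfrac{\dav}{2R_0},1)\le 1$ one has $\lambda<\lambda_0$, so \eqref{eq:R-lower} applied with $\lambda_1=\lambda$ and $\lambda_2=\lambda_0$ rearranges to $R(\lambda)\le(\lambda/\lambda_0)^{\frac{\gamma_0-2}{2}}R(\lambda_0)$. Multiplying the hypothesis $R(\lambda_0)\le R_0$ by the nonnegative factor $(\lambda/\lambda_0)^{\frac{\gamma_0-2}{2}}$ — a step that does not require $R(\lambda_0)\ge 0$ — gives $R(\lambda)\le(\lambda/\lambda_0)^{\frac{\gamma_0-2}{2}}R_0$. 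Finally $\lambda<\lambda_*$ forces $(\lambda/\lambda_0)^{\frac{\gamma_0-2}{2}}<\min(\tfrac{\dav}{2R_0},1)\le\tfrac{\dav}{2R_0}$, and multiplying by $R_0>0$ yields $R(\lambda)<\dav/2$. The upper bound is symmetric: for $\lambda>\lambda^*$ the factor $\max(\tfrac{\dav}{2R_0},1)\ge 1$ gives $\lambda>\lambda_0$, so \eqref{eq:R-lower} with $\lambda_1=\lambda_0$ and $\lambda_2=\lambda$ gives $R(\lambda)\ge(\lambda/\lambda_0)^{\frac{\gamma_0-2}{2}}R(\lambda_0)\ge(\lambda/\lambda_0)^{\frac{\gamma_0-2}{2}}R_0$, using $R(\lambda_0)\ge R_0>0$, while $\lambda>\lambda^*$ forces $(\lambda/\lambda_0)^{\frac{\gamma_0-2}{2}}>\max(\tfrac{\dav}{2R_0},1)\ge\tfrac{\dav}{2R_0}$, hence $R(\lambda)>\dav/2$.

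I do not expect a genuine obstacle here; the argument is bookkeeping around \eqref{eq:R-lower}. The only points needing care are ensuring the arguments of $R$ are ordered so that Lemma \ref{lem:R-lower} applies — which is exactly why the truncations $\min(\cdot,1)$ and $\max(\cdot,1)$ appear in $\lambda_*$ and $\lambda^*$, so that no separate cases for $\dav\lessgtr 2R_0$ are required — and, in the lower-bound step, noting that the inequality $R(\lambda_0)\le R_0$ may be scaled by a nonnegative factor without knowing the sign of $R(\lambda_0)$. As a sanity check, for a pure power nonlinearity $V(a)=ca^\gamma$ one has $R(\lambda)=R_0\lambda^{(\gamma-2)/2}$ as in Remark \ref{rem:pure-power-law}, and with $\gamma_0=\gamma$ the above choices make \eqref{eq:lambda-crit-lower} and \eqref{eq:lambda-crit-upper} coincide with the exact value of $\lambda_{\mathrm{cr}}(\dav)$.
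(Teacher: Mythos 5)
Your proof is correct, and it takes a more direct route than the paper's. The paper's proof of the lower bound splits into cases: for $\dav\ge 2R_0$ it invokes the monotonicity statement \eqref{eq:properties-of-R-1} from Corollary \ref{cor:properties-of-R} to get $R(\lambda)\le R_0$ on $(0,\lambda_0]$ and combines this with the monotonicity of $\lambda_{\mathrm{cr}}(\cdot)$ in $\dav$ from Proposition \ref{prop:threshold}\ref{prop:threshold-1}; for $0<\dav<2R_0$ it distinguishes whether $\lambda_{\mathrm{cr}}\ge\lambda_0$ or not, and in the nontrivial sub-case applies Proposition \ref{prop:threshold}\ref{prop:threshold-2} to produce points $\lambda_{\mathrm{cr}}+\delta$ with $R(\lambda_{\mathrm{cr}}+\delta)>\dav/2$, then uses \eqref{eq:R-lower} and sends $\delta\to 0$. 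Your argument bypasses Corollary \ref{cor:properties-of-R} and Proposition \ref{prop:threshold} entirely, relying only on \eqref{eq:R-lower} and Definition \ref{def:threshold}, and the truncations $\min(\cdot,1)$ and $\max(\cdot,1)$ built into $\lambda_*$ and $\lambda^*$ automatically absorb the case distinction $\dav\lessgtr 2R_0$ that the paper handles by hand; there is also no limiting argument in $\delta$. You also correctly flag the one subtlety that both proofs must address, namely that in the lower-bound step $R(\lambda_0)$ need not be nonnegative, so one multiplies the hypothesis $R(\lambda_0)\le R_0$ by the strictly positive scaling factor rather than comparing magnitudes. Both proofs are short; yours is the more self-contained and streamlined of the two.
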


\begin{remark}
  If $V(a)=c a^\gamma$ is a pure power law for some $\gamma>2$ and $c>0$, then by Remark \ref{rem:pure-power-law} we have  $R(\lambda)= R_0\lambda^{(\gamma-2)/2}$ for some $0<R_0\le \infty$. In this case one can easily calculates
  \begin{align*}
  	\lambda_{\mathrm{cr}}(\dav) = \left(\frac{\dav}{2R_0}\right)^{\frac{2}{\gamma-2}}
  \end{align*}
  and with this example in mind one sees that the bounds of Corollary \ref{cor:lambda-crit-bounds} and the claims of Proposition \ref{prop:threshold} are very natural.
\end{remark}
\begin{proof}
  Since the proofs of \eqref{eq:lambda-crit-lower} and \eqref{eq:lambda-crit-upper} are very analogous, we give only the proof of \eqref{eq:lambda-crit-lower}.
 Assume that there exist $\lambda_0$ and $0<R_0<\infty $ with
 $ R(\lambda_0)\le R_0$.
 By \eqref{eq:properties-of-R-1} with $a=R_0$ we see that $R(\lambda ) \le R_0$ for all
 $0<\lambda\le \lambda_0$, so  $(\lambda_0,\infty)\supset \{\lambda>0:\, R(\lambda)>R_0\}$.  This shows
 $\lambda_{\mathrm{cr}}(2R_0)\ge \lambda_0$ for $\dav=2R_0$ and using the  monotonicity in
 $\dav$ from Proposition \ref{prop:threshold-1} we also have  $\lambda_{\mathrm{cr}}(\dav) \ge \lambda_{\mathrm{cr}}(2R_0)\ge \lambda_0$ for all $\dav\ge 2R_0$.

 Now let $0<\dav< 2R_0$ and write $\lambda_{\mathrm{cr}}$ for $\lambda_{\mathrm{cr}}(\dav)$.
 Either we have $\lambda_{\mathrm{cr}}\ge\lambda_0$, then \eqref{eq:lambda-crit-lower}
 trivially holds, or $0\le\lambda_{cr}<\lambda_0$.
 In the last case set $\lambda_2=\lambda_0$ and
 $0<\lambda_1=\lambda_{\mathrm{cr}}+\delta <\lambda_0$ for all small enough $\delta>0$,
 then Proposition \ref{prop:threshold-2} shows $R(\lambda_{cr}+\delta)>\tfrac{\dav}{2}$
 which together with \eqref{eq:R-lower} gives
 \begin{align*}
 	R_0 \ge R(\lambda_0)\ge \left(\frac{\lambda_0}{\lambda_{\mathrm{cr}}+\delta}\right)^{\frac{\gamma_0-2}{2}} R(\lambda_{\mathrm{cr}}+\delta)
 	 > \left(\frac{\lambda_0}{\lambda_{\mathrm{cr}}+\delta}\right)^{\frac{\gamma_0-2}{2}} \frac{\dav}{2}
 \end{align*}
 for all small enough $\delta>0$. This proves the lower bound \eqref{eq:lambda-crit-lower} and similarly one proves the upper bound \eqref{eq:lambda-crit-upper}.
\end{proof}

\begin{proof}[Proof of Proposition \ref{prop:threshold}] First some preparations.
For $\dav\ge 0$, define the set
\begin{align*}
	A_{\dav}\coloneq \{ \lambda>0:\, R(\lambda)>\tfrac{\dav}{2} \}
\end{align*}
 so that    $\lambda_{\mathrm{cr}}(\dav)=\inf A_{\dav}$. Arguing as in the proof of Corollary \ref{cor:properties-of-R}\ref{cor:properties-of-R-2} we see that
 if $\lambda_0\in A_{\dav}$, then $\lambda\in A_{\dav}$ for all $\lambda\ge \lambda_0$. Hence
 \begin{align}\label{eq:A-dav-interval}
 	A_{\dav} = \bigcup_{R(\lambda_0)>\frac{\dav}{2}} [\lambda_0,\infty)
 \end{align}
 and so the set  $A_{\dav}$ is either empty,
  or an interval that is bounded from below by $0$ but unbounded from above,
  and they are nested,  in the sense
  that if $0\le \dav_{,1}\le \dav_{,2}$ then  $A_{\dav_{,2}}\subset A_{\dav_{,1}}$.
 In addition, $A_{\dav}$ is empty if and only if the threshold $\lambda_{\mathrm{cr}}(\dav)=\infty$, $A_{\dav}$ is not empty if and only if $0\le \lambda_{\mathrm{cr}}(\dav)<\infty$, and $A_{\dav}=(0,\infty)$ if and only if $\lambda_{\mathrm{cr}}(\dav) =0$.
  \smallskip

\ref{prop:threshold-1} Since, by the above $A_{\dav_{,2}}\subset A_{\dav_{,1}}$ for $0\le \dav_{,1}\le \dav_{,2}$, we immediately see $\lambda_{\mathrm{cr}}(\dav_{,1})\le \lambda_{\mathrm{cr}}(\dav_{,2})$.
\smallskip

 \ref{prop:threshold-2} First let $\dav \ge 0$. Certainly, if $R(\lambda)>\tfrac{\dav}{2}$ then $\lambda \ge \lambda_{\mathrm{cr}}(\dav)$, so, by contrapositive, if $0<\lambda<\lambda_{\mathrm{cr}}(\dav)$ then $R(\lambda)\le \tfrac{\dav}{2}$. Moreover, if $\lambda>\lambda_{\mathrm{cr}}(\dav)$, then \eqref{eq:A-dav-interval} shows $\lambda\in A_\dav$,
 so $R(\lambda)>\tfrac{\dav}{2}$. This proves the first claim.

 Now let $\dav>0$ and $0<\lambda<\lambda_{\mathrm{cr}}(\dav)$. Then, by the first claim, we already know $R(\lambda)\le \tfrac{\dav}{2}$. Moreover, if we suppose that $R(\lambda) = \tfrac{\dav}{2}$ then \eqref{eq:properties-of-R-2} yields a contradiction to the fact that $\lambda_{\mathrm{cr}}(\dav)$ is a lower bound for $A_\dav$. Thus $R(\lambda) < \tfrac{\dav}{2}$ and this proves the second claim.
 \smallskip

\ref{prop:threshold-3}
  We certainly have that
  $\lambda_{\mathrm{cr}}(\dav) <\infty$  for all $\dav\ge 0  $
  implies $ \lambda_{\mathrm{cr}}(\dav) <\infty$ for some $\dav\ge 0$. Next, if there exists $\dav \ge 0$ such that $\lambda_{\mathrm{cr}}(\dav) <\infty $ then $A_\dav$ is not empty, hence  $R(\lambda)>\tfrac{\dav}{2}\ge 0$ for some $\lambda>0$.
  Thirdly, if there exists $\lambda>0$ such that $R(\lambda)> 0$ then, \eqref{eq:R-equivalence} gives
  $\lim_{\lambda\to\infty}R(\lambda)=\infty$. Lastly, if $\lim_{\lambda\to\infty}R(\lambda)=\infty$, then,  for every $\dav\ge 0$, we have $R(\lambda)>\tfrac{\dav}{2}$ for all large enough $\lambda$ which shows that $A_\dav$ is not empty, hence
  $\lambda_{\mathrm{cr}}(\dav) <\infty $ for all $\dav \ge 0$. This finishes the proof of \eqref{eq:threshold-equivalence-1}.
  \smallskip

  For the proof of \eqref{eq:threshold-equivalence-2} we note that  certainly $\lambda_{\mathrm{cr}}(\dav) >0$ for all $\dav> 0$ implies
	 $ \lambda_{\mathrm{cr}}(\dav) >0$ for some $\dav\ge  0$.
    Next, if there exists $\dav \ge 0$ such that $\lambda_{\mathrm{cr}}(\dav) > 0 $ then $R(\lambda) \le \tfrac{\dav}{2} < \infty$ for some $\lambda>0$
    and so, by \eqref{eq:R-equivalence}, $\limsup_{\lambda\to 0+}R(\lambda)\le 0$.
    Lastly, if $\limsup_{\lambda\to 0+}R(\lambda)\le 0$ then, for every $\dav>0$, we have $R(\lambda)\le \f{\dav}{2}$ for all small enough
	 $\lambda>0$, and hence $\lambda_{\mathrm{cr}}(\dav)>0$ for every $\dav>0$.
	 This finishes the proof of \eqref{eq:threshold-equivalence-2}.
  \smallskip

  For the proof of \eqref{eq:threshold-equivalence-3} recall that $\lambda_{\mathrm{cr}}(0) =0$ if and only if $A_0=(0,\infty)$, which is the case if and only if $R(\lambda)>0$ for any $\lambda>0$. Moreover, $\lambda_{\mathrm{cr}}(0) >0$ if and only if $A_0\neq (0,\infty)$, that is, if and only if there exists $\lambda>0$ with $R(\lambda)\le 0$.
\end{proof}

Now we can give the
\begin{proof}[Proof of Theorem \ref{thm:threshold-phenomena}]
  \ref{thm:threshold-phenomena-1}$+$\ref{thm:threshold-phenomena-2}
   Fix $\dav\ge 0$. It follows from Lemma \ref{lem:E-boundedness} and Proposition \ref{prop:strict-subadditivity} that $-\infty < E^{\dav}_\lambda\le 0$, for every $\dav \ge 0$ and $\lambda >0$, and the map $\lambda\mapsto E^{\dav}_\lambda$ is decreasing on $(0, \infty)$.
  Proposition \ref{prop:threshold} gives the existence of a critical threshold $0\le \lambda_{\mathrm{cr}}=\lambda_{\mathrm{cr}}(\dav)\le \infty$ such that if $\lambda>\lambda_{\mathrm{cr}}$, we have $R(\lambda)>\tfrac{\dav}{2}$. In this case, Lemma \ref{lem:threshold-basic} shows that $E^{\dav}_\lambda<0$. Moreover, if $0<\lambda<\lambda_{\mathrm{cr}}(\dav)$, then Proposition \ref{prop:threshold} and Lemma \ref{lem:threshold-basic} also show that $E^{\dav}_\lambda\ge 0$ and so $E^{\dav}_\lambda=0$ for all $0<\lambda<\lambda_{\mathrm{cr}}(\dav)$. This proves the first part of Theorem \ref{thm:threshold-phenomena} and
  Theorem \ref{thm:existence} yields the claims of its second part.
  \smallskip

  \ref{thm:threshold-phenomena-3} Let $\dav>0$ and
  $0<\lambda<\lambda_{\mathrm{cr}}$. If  $f\in l^2(\Z)$ with $\|f\|_2^2=\lambda>0$ is a minimizer, then using
  \eqref{eq:energy-threshold} gives
  \begin{align}\label{eq:nonexistence}
  	0=E^{\dav}_\lambda = H(f) \ge \|D_+f\|^2 \left(\frac{\dav}{2} -R(\lambda)\right)  .
  \end{align}
  By Proposition \ref{prop:threshold}\ref{prop:threshold-2}, we have $R(\lambda)<\tfrac{\dav}{2}$, so the inequality \eqref{eq:nonexistence} implies
  $  \|D_+f\|_2^2 \le 0 $, that is, $\|D_+f\|_2^2 = 0 $.
  Since the kernel of $D_+$ is trivial this shows $f=0$, which contradicts $\|f\|_2^2=\lambda>0$, so no minimizers can exist in this case.
  \smallskip

  \ref{thm:threshold-phenomena-4} By Remark \ref{rem:finite-threshold}  we have
  $\lambda_{\mathrm{cr}}<\infty$
  if and only if there exist $f\in l^2(\Z)$ with $N(f)>0$.
  \smallskip

  \ref{thm:threshold-phenomena-5} We have to show that if $\gamma_1\ge 6$, then $\lambda_{\mathrm{cr}}(\dav)>0$ for all $\dav>0$. For this we use the inequality
  \begin{align}\label{eq:weinstein-type-ineq}
  	\|f\|_{\gamma}^{\gamma}\le \|f\|_2^{\gamma-2} \|D_+f\|_2^2	
  \end{align}
  which holds for all $f\in l^2(\Z)$ and all $\gamma\ge 6$. Assuming
  \eqref{eq:weinstein-type-ineq} for the moment, one can argue as follows: From \eqref{eq:weinstein-type-ineq} we have,
  under assumptions \ref{ass:A1} with $\gamma_1\ge 6$,
  \begin{align*}
  	|N(f)| &\le \int_\R \|V(|T_r f|)\|_1\,\mu(dr)
  	 \lesssim \int_\R \left( \|T_rf\|_2^{\gamma_1-2} + \|T_rf\|_2^{\gamma_2-2}\right)  \|D_+T_rf\|_2^2\, \mu(dr) \\
  	 &\lesssim \left( \|f\|_2^{\gamma_1-2} + \|f\|_2^{\gamma_2-2}\right)
  	 	\|D_+f\|_2^2\, .
  \end{align*}
  So
  \begin{align*}
  	|R(\lambda)| = \sup_{\|f\|_2^2=\lambda} \frac{|N(f)|}{\|D_+ f\|_2^2}
  	 \lesssim \lambda^{\frac{\gamma_1-2}{2}} + \lambda^{\frac{\gamma_2-2}{2}} <\infty
  \end{align*}
  which directly shows $\lim_{\lambda\to 0+} R(\lambda)= 0 $ and then \eqref{eq:threshold-equivalence-2} gives $\lambda_{\mathrm{cr}}(\dav)>0$
  for all $\dav>0$.

It remains to prove \eqref{eq:weinstein-type-ineq}. For this we recall
\begin{align}\label{eq:linftybound}
	\|f\|_\infty^2 \le \|f\|_2 \|D_+f\|_2
\end{align}
for any $f\in l^2(\Z)$. Indeed, to see this let  $x\in\Z$,  then
\begin{align*}
  |f(x)|^2 = \sum_{l\le x} (| f(l)|^2-| f(l-1)|^2)	
  	=
  		\sum_{l\le x} (| f(l)|+| f(l-1)|)(| f(l)|-| f(l-1)|)
\end{align*}
and similarly,
 \begin{align*}
  |f(x)|^2 = - \sum_{l> x} (| f(l)|^2-| f(l-1)|^2)	
  	=
  		-\sum_{l> x} (| f(l)|+| f(l-1)|)(| f(l)|-| f(l-1)|) \, .
\end{align*}
Adding this two inequalities and using Cauchy-Schwarz gives
  \begin{align*}
  	|f(x)|^2 &\le \frac{1}{2}\sum_{l\in\Z} (| f(l)|+| f(l-1)|)|| f(l)|-| f(l-1)|| \\
  	&\le \frac{1}{2}\sum_{l\in\Z } (| f(l)|+| f(l-1)|)| f(l) -  f(l-1)| \\
  	&\le \left(\sum_{l\in\Z } | f(l)|^2\right)^{1/2}
  			\left(\sum_{l\in\Z } | f(l) -  f(l-1)|^2\right)^{1/2}
  		=  \|f\|_2 \|D_+f\|_2
  \end{align*}
  which, since it holds for all $x\in\Z$, gives \eqref{eq:linftybound}. From \eqref{eq:linftybound}, we see that for $\gamma> 4$,
  \begin{align*}
  	\|f\|_{\gamma}^{\gamma}
  	\le \|f\|_{\gamma-4}^{\gamma-4} \|f\|_\infty^4
  	\le 	\|f\|_{\gamma-4}^{\gamma-4} \|f\|_2^2 \|D_+f\|_2^2
  \end{align*}
  from which we immediately get   \eqref{eq:weinstein-type-ineq}
  as soon as $\gamma\ge 6$,
  since in this case $\|f\|_{\gamma-4}\le \|f\|_2$, by the monotonicity properties of $l^p(\Z)$ norms.
\end{proof}

Finally, we come to the
\begin{proof}[Proof of Theorem \ref{thm:threshold-intro}]
  Assume that $V$ obeys \ref{ass:A1} through \ref{ass:A3}.
  Except for the finiteness of the threshold $\lambda_{\mathrm{cr}}$, all claims of
  Theorem \ref{thm:threshold-intro} follow immediately from
  Theorem \ref{thm:threshold-phenomena}. In addition,  Theorem
  \ref{thm:threshold-phenomena} shows that the threshold is finite if and only if
  there exists $f\in l^2(\Z)$ with
  \begin{align*}
     N(f) = \int_\R \sum_{x\in\Z} V(|T_rf(x)|)\, \mu(dr) >0.
  \end{align*}
  Lemma \ref{lem:V-lower-bound} shows that under assumptions \ref{ass:A2} and \ref{ass:A3} we have $\lim_{a\to\infty} V(a)=\infty$ and thus $N(f)$ should be large, in particular positive, if $f$ is `large'.   Since $V$ can be negative and due to the nonlocal nature of $N$, this is not obvious, however.
  Moreover, in the discrete setting there are no nice initial conditions for which one can calculate the time evolution
  $T_r f$ and then also $N(f)$ explicitly, so the proof turns out to be a bit
  technical. It is deferred to Lemma \ref{lem:N-positivity}, where we show that
  under conditions \ref{ass:A1}, \ref{ass:A2}, and \ref{ass:A3}, there exists a simple family $f_l\in l^2(\Z)$ with $\lim_{l\to\infty} N(f_l)=\infty$.

  If, in addition, we assume \ref{ass:A4}, then Lemma \ref{lem:negativity} shows
  $E^{\dav}_\lambda<0$ for all $\lambda>0$ and all $\dav\ge 0$. So in this case we have
  $\lambda_{{cr}}(\dav)=0$ for all $\dav\ge 0$.
\end{proof}

\section{Exponential decay for positive average diffraction}\label{sec:exponential decay}
In this section, we will give the proof of Theorem \ref{thm:exponential decay}.  Our strategy for its proof is to first prove some exponential decay and then to boost this to get it up to what the physical heuristic argument in the introduction predicts.

\subsection{Some exponential decay}\label{subsec:some exponential decay}
The main goal in this section is to prove
\begin{proposition}\label{prop:some exponential decay}
	Assume that $V$ obeys the assumption \ref{ass:A1}. Then any solution $\varphi$ of \eqref{eq:GT} with $\omega<0$ decays exponentially, i.e., there exists $\nu>0$ such that
	\begin{align}\label{eq:some exponential decay}
		 x\mapsto e^{\nu|x|}\varphi(x)\in l^2(\Z) .
	\end{align}
\end{proposition}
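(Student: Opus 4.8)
The plan is to obtain, for a family of \emph{bounded} exponential weights, an a priori bound on the corresponding weighted $l^2$-norm of $\varphi$ which is uniform in a truncation parameter, and then to let the truncation go to infinity. Throughout, fix $B>0$ with $\supp\mu\subset[-B,B]$, a solution $\varphi$ of \eqref{eq:GT} with $\omega<0$ (and $\dav>0$, as in the present section), and for $\nu\in(0,1]$ and $n\in\N$ set $F_n(x):=\min\!\big(e^{\nu|x|},e^{\nu n}\big)$. This weight is positive and bounded, \emph{slowly varying} in the sense that $e^{-\nu}\le F_n(x+1)/F_n(x)\le e^{\nu}$, and \emph{log-Lipschitz} in the sense that $F_n(x)\le e^{\nu|x-y|}F_n(y)$ for all $x,y\in\Z$. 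Since $F_n$ is bounded, $h:=F_n^2\varphi\in l^2(\Z)$, and inserting $h$ into \eqref{eq:GT-weak} and comparing real parts (the left-hand side $\omega\|F_n\varphi\|_2^2$ being real) gives
\begin{align}\label{eq:proposal-identity}
|\omega|\,\|F_n\varphi\|_2^2
   = -\dav\,\re\la D_+\varphi,D_+(F_n^2\varphi)\ra
   + \re\int_\R \la P(T_r\varphi),T_r(F_n^2\varphi)\ra\,\mu(dr).
\end{align}

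For the kinetic term one has the elementary identity (a discrete ground-state/IMS substitution, cf.\ Appendix \ref{sec:IMS}) $\re\la D_+\varphi,D_+(F_n^2\varphi)\ra=\|D_+(F_n\varphi)\|_2^2-\sum_{x}(F_n(x+1)-F_n(x))^2\,\re\big(\overline{\varphi(x)}\varphi(x+1)\big)$; since $F_n$ is slowly varying, $(F_n(x+1)-F_n(x))^2\le(e^\nu-1)^2\,F_n(x)F_n(x+1)$, so the correction term is bounded by $(e^\nu-1)^2\|F_n\varphi\|_2^2\le e^2\nu^2\|F_n\varphi\|_2^2$. Hence $\re\la D_+\varphi,D_+(F_n^2\varphi)\ra\ge-e^2\nu^2\|F_n\varphi\|_2^2$, i.e.\ $-\dav\,\re\la D_+\varphi,D_+(F_n^2\varphi)\ra\le \dav\,e^2\nu^2\|F_n\varphi\|_2^2$.

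The nonlinear term is the crux. Writing $g_r:=T_r^{-1}[P(T_r\varphi)]$ and using $\la P(T_r\varphi),T_r(F_n^2\varphi)\ra=\la g_r,F_n^2\varphi\ra=\la F_ng_r,F_n\varphi\ra$, the term is at most $\big(\int_\R\|F_ng_r\|_2\,\mu(dr)\big)\|F_n\varphi\|_2$ by Cauchy--Schwarz, and it suffices to bound $\|F_ng_r\|_2$ by a \emph{small} multiple of $\|F_n\varphi\|_2$ plus a constant that does not depend on $n$. Two ingredients enter. (i) \emph{Twisted evolution bounds}: since the kernel of $T_r$ (and of $T_r^{-1}$) decays super-exponentially, $|T_r(x,y)|\lesssim(4B)^{|x-y|}/|x-y|!$ by Lemma \ref{lem:useful}, the log-Lipschitz bound on $F_n$ gives, via Young's inequality, $\|F_nT_r f\|_2\le C_B\|F_nf\|_2$ and $\|F_nT_r^{-1}f\|_2\le C_B\|F_nf\|_2$ with a constant $C_B$ depending only on $B$, in particular independent of $n$ and of $\nu\in(0,1]$. (ii) \emph{Superlinearity and splitting}: by \ref{ass:A1}, $F_n(x)\,|P(T_r\varphi(x))|\lesssim\big(F_n(x)|T_r\varphi(x)|\big)\big(|T_r\varphi(x)|^{\gamma_1-2}+|T_r\varphi(x)|^{\gamma_2-2}\big)$. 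Fix $\veps\in(0,1]$ and split $\varphi=\ind_{[-R,R]}\varphi+\ind_{\{|x|>R\}}\varphi$; choosing first $R$ large so that $\|\ind_{\{|x|>R\}}\varphi\|_2<\veps$, then $R'>R$ large, the super-exponential decay of $T_r(\ind_{[-R,R]}\varphi)$ away from $[-R,R]$ yields $\sup_{|r|\le B}\|T_r\varphi\|_{l^\infty(\{|x|>R'\})}<\veps$. On $\{|x|\le R'\}$ the weight is $\le e^{\nu R'}$, so that part of $\|F_nP(T_r\varphi)\|_2$ is at most $e^{R'}\|P(T_r\varphi)\|_2\lesssim e^{R'}\big(\|\varphi\|_2^{\gamma_1-1}+\|\varphi\|_2^{\gamma_2-1}\big)$, a constant independent of $n$; on $\{|x|>R'\}$ one bounds $|T_r\varphi(x)|^{\gamma_j-2}\le\veps^{\gamma_j-2}$ and uses (i) to get a contribution $\lesssim C_B(\veps^{\gamma_1-2}+\veps^{\gamma_2-2})\|F_n\varphi\|_2$. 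Putting these together and integrating over $\mu$,
\begin{align}\label{eq:proposal-nonlinear}
\int_\R\|F_ng_r\|_2\,\mu(dr)\;\le\;\mu(\R)\,C_B\Big(C_1(\varphi)\,e^{R'}+C_B\,(\veps^{\gamma_1-2}+\veps^{\gamma_2-2})\,\|F_n\varphi\|_2\Big),
\end{align}
with $C_1(\varphi)$ depending only on $\|\varphi\|_2$, $\gamma_1$, $\gamma_2$.

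Combining \eqref{eq:proposal-identity} with the kinetic bound and \eqref{eq:proposal-nonlinear} gives
\[
|\omega|\,\|F_n\varphi\|_2^2\;\le\;\Big(\dav\,e^2\nu^2+\mu(\R)\,C_B^2(\veps^{\gamma_1-2}+\veps^{\gamma_2-2})\Big)\|F_n\varphi\|_2^2+\mu(\R)\,C_B\,C_1(\varphi)\,e^{R'}\,\|F_n\varphi\|_2 .
\]
Since $\gamma_1,\gamma_2>2$, we first choose $\veps$ so small that the $\veps$-term is $\le|\omega|/4$ (which fixes $R$ and $R'$), and then $\nu\in(0,1]$ so small that $\dav e^2\nu^2\le|\omega|/4$; the coefficient of $\|F_n\varphi\|_2^2$ is then at most $|\omega|/2$, so $\|F_n\varphi\|_2\le 2\mu(\R)C_BC_1(\varphi)e^{R'}/|\omega|$, a bound independent of $n$. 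Letting $n\to\infty$ and invoking monotone convergence yields $\|e^{\nu|\cdot|}\varphi\|_2<\infty$, which is \eqref{eq:some exponential decay}. The main obstacle is precisely ingredient (ii): extracting a \emph{small} coefficient in front of $\|F_n\varphi\|_2^2$ from the highly nonlocal nonlinear term, which forces one to combine the $n$-uniform twisted evolution estimates of (i) with the splitting of $\varphi$ into a finitely supported piece (on which the weight is harmless) and an $l^2$-small piece, crucially using that the nonlinearity is genuinely superlinear, $\gamma_1>2$. (An alternative to the truncation argument is an open--closed/continuity argument in the decay parameter $\nu$, with closedness from Fatou's lemma and openness from the same quantitative estimates.)
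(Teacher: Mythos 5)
Your proposal is correct, and it reaches the conclusion by a genuinely different technical route than the paper's. Both proofs follow the same skeleton: test \eqref{eq:GT-weak} against $\xi^2\varphi$ with a \emph{bounded} approximation $\xi$ to the exponential weight, use the IMS/ground-state identity to make the kinetic contribution harmless for small $\nu$, exploit $\gamma_1,\gamma_2>2$ to extract a small coefficient on $\|\xi\varphi\|_2^2$ from the nonlocal nonlinearity, and then let the regularization parameter relax. Where the two differ is (a) the choice of regularization and (b) the mechanism producing smallness in the nonlinear term. The paper uses $\xi_{\nu,\veps,\tau}=e^{F_{\nu,\veps}}\chi_\tau$ (with $F_{\nu,\veps}(s)=\nu|s|/(1+\veps|s|)$) and builds a cutoff $\chi_\tau$ vanishing near the origin directly into the weight; the smallness of $DN(\varphi)[\xi^2\varphi]$ is then obtained via the twisted bilinear machinery of Lemmas \ref{lem:Ltwisted}--Corollary \ref{cor:DNbound}, splitting $e^{F}\varphi$ in $l^2$ by support separation and linking $\nu=\tau^{-1}$. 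You instead take the hard truncation $F_n=\min(e^{\nu|\cdot|},e^{\nu n})$, prove the $n$- and $\nu$-uniform twisted evolution bound $\|F_n T_r^{\pm1}f\|_2\lesssim_B\|F_nf\|_2$ by Young's inequality from the super-exponential kernel bound \eqref{eq:kernel bound}, and get the small coefficient by first fixing $R'$ so that $\sup_{|r|\le B}\|T_r\varphi\|_{l^\infty(\{|x|>R'\})}$ is small (using $\varphi\in l^2$ plus the same kernel decay), with the region $\{|x|\le R'\}$ contributing only an $n$-independent constant. Your decoupling of the two smallness parameters ($\veps$ fixing $R,R'$ first, then $\nu$) is a bit cleaner than the paper's $\nu=\tau^{-1}$ coupling; the paper's version, on the other hand, reuses the $L^\gamma_\mu$/$M^\gamma_\mu$ estimates already built for the existence proof, which keeps the decay section shorter given that infrastructure. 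Minor points worth tightening in a write-up: the $l^\infty$-smallness of $T_r\varphi$ outside $[-R',R']$ comes from writing $\varphi=\ind_{[-R,R]}\varphi+\ind_{\{|x|>R\}}\varphi$, bounding the second summand by unitarity and the first by the Cauchy--Schwarz bound on the tail of the kernel, and costs a harmless factor $2$; and the bound $e^{\nu R'}\le e^{R'}$ should be flagged as using $\nu\le1$ so that the constant really is $n$- and $\nu$-independent.
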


To prepare for its proof, define the cutoff function $\chi(s):= \min(1,(|s|-1)_+)$, i.e., $\chi(s)=0$ if $|s|\le 1$, $\chi(s)=1$ for $|s|\ge 2$, and linearly interpolating in between. Furthermore, define the functions of $s$ on $\R$
\begin{align}
	\chi_\tau(s):= \chi(s/\tau), \quad
	F_{\nu,\veps}(s):=  \frac{\nu|s|}{1+\veps|s|}, \quad
	\text{ and}\quad
	\xi_{\nu,\veps,\tau} := e^{F_{\nu,\veps}}\chi_\tau ,
\end{align}
for any $\tau>0,\ \nu,\ \veps \ge 0$.

It is clearly enough to prove that $\xi_{\nu,0,\tau}\varphi\in l^2(\Z)$ for some $\nu>0$ and $\tau\ge 1$ for any solution of \eqref{eq:GT} with $\omega<0$. Choosing $g=\xi^2\varphi$ in \eqref{eq:GT-weak} and using Lemma \ref{lem:differentiability} on has
\begin{align}\label{eq:exp decay basic 0}
	\omega \|\xi\varphi\|_2^2
		&=\re( \omega\la\xi^2\varphi,\varphi\ra )
			= DH(\varphi)[\xi^2 \varphi]
			=
			\dav\re( \la \xi^2\varphi, -\Delta\varphi\ra )
			- \re( DN(\varphi)[\xi^2\varphi] ) \nonumber\\
		&\ge
			- \frac{\dav}{2}\la \varphi, (|D_+\xi|^2 + |D_-\xi|^2)\vphi \ra
			-\re DN(\varphi)[\xi^2\varphi]
\end{align}
where we used the lower bound \eqref{eq:IMS-lower-1} from Lemma \ref{lem:IMS} and $\la \xi\varphi,-\Delta( \xi\varphi) \ra\ge 0$.
Thus for $\omega<0$ we have
\begin{align}\label{eq:exp decay basic}
	\|\xi\varphi\|_2^2
		\le
			\frac{\dav}{2|\omega|} \la\varphi, (|D_+\xi|^2 + |D_-\xi|^2)\vphi \ra
			+ \frac{1}{|\omega|}\re DN(\varphi)[\xi^2\varphi]
\end{align}
which is our starting point for the proof of Proposition \ref{prop:some exponential decay}.
To use it, the following two Lemmata are helpful.
\begin{lemma}\label{lem:IMS error non sharp}
	For all $\nu,\ \veps\ge 0$ and $\tau>0$ we have
	\begin{align}\label{eq:IMS error non sharp 1}
		|D_+\xi_{\nu,\veps,\tau}|^2 + |D_-\xi_{\nu,\veps,\tau}|^2
		\le
			  \frac{4e^{2\nu \tau}}{\tau^2}
			 + 4 (e^\nu -1)^2\xi_{\nu,\veps,\tau}^2.
	\end{align}
	In particular, for the choice $\nu:=\tau^{-1}$ one has
	\begin{align} \label{eq:IMS error non sharp 2}
		\la \varphi, (|D_+\xi_{\nu,\veps,\tau}|^2 + |D_-\xi_{\nu,\veps,\tau}|^2 )\varphi\ra
		\le
			 \frac{4e^2}{\tau^2} \|\varphi\|_2^2
			 +4 (e^{\tau^{-1}}-1)^2 \|\xi_{\nu,\veps,\tau}\varphi\|_2^2
	\end{align}
\end{lemma}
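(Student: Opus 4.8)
The plan is to estimate the forward and backward differences of $\xi_{\nu,\veps,\tau} = e^{F_{\nu,\veps}}\chi_\tau$ pointwise by splitting into the region where $\chi_\tau$ is nonconstant (i.e. $\tau \le |s| \le 2\tau$, where $|D_\pm\chi_\tau| \le \tau^{-1}$) and the region where $\chi_\tau$ is locally constant. First I would write, for either sign, $D_\pm\xi = (D_\pm e^{F})\chi_\tau + (e^{F}\,\text{shifted})\,D_\pm\chi_\tau$ using the discrete Leibniz rule $D_\pm(fg)(x) = (D_\pm f)(x)\,g(x\pm1) + f(x)(D_\pm g)(x)$; then $(a+b)^2 \le 2a^2+2b^2$ separates the ``exponential weight'' contribution from the ``cutoff'' contribution.

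For the cutoff contribution, on $\tau \le |s| \le 2\tau$ one has $|D_\pm\chi_\tau(s)| \le \tau^{-1}$ and the accompanying factor $e^{F_{\nu,\veps}(s\pm1)} \le e^{\nu(2\tau+1)/(1+\veps\cdots)} \le e^{2\nu\tau}e^{\nu}$ — here I need to be slightly careful to absorb the $e^{\nu}$; since in the relevant range one can bound $e^{F}$ on the support of $D_\pm\chi_\tau$ by something like $e^{2\nu\tau}$ up to the constant $4$ already present, this yields a term $\le \tfrac{2e^{2\nu\tau}}{\tau^2}$ (per sign, so $\tfrac{4e^{2\nu\tau}}{\tau^2}$ total, which matches the stated bound if one is a little generous with constants). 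For the exponential-weight contribution, the key elementary estimate is that $F_{\nu,\veps}$ is $\nu$-Lipschitz (since $F_{\nu,\veps}(s) = \nu|s|/(1+\veps|s|)$ has derivative bounded by $\nu$ in modulus), hence $|F_{\nu,\veps}(x\pm1) - F_{\nu,\veps}(x)| \le \nu$, so $|e^{F(x\pm1)} - e^{F(x)}| \le (e^\nu - 1)e^{F(x)\vee F(x\pm1)}$; combined with $\chi_\tau \le 1$ and reorganizing so the weight lands on $\xi_{\nu,\veps,\tau}^2$ itself, this produces the $4(e^\nu-1)^2\xi_{\nu,\veps,\tau}^2$ term. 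Summing the two contributions over both signs gives \eqref{eq:IMS error non sharp 1}.

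For \eqref{eq:IMS error non sharp 2}, one simply specializes $\nu = \tau^{-1}$, so $e^{2\nu\tau} = e^2$, pairs the pointwise bound \eqref{eq:IMS error non sharp 1} against $|\varphi|^2$ and sums over $x \in \Z$, using $\sum_x |\varphi(x)|^2 = \|\varphi\|_2^2$ for the first term and $\sum_x \xi_{\nu,\veps,\tau}(x)^2|\varphi(x)|^2 = \|\xi_{\nu,\veps,\tau}\varphi\|_2^2$ for the second. The main (though still mild) obstacle is purely bookkeeping: making sure the constant $e^\nu$ coming from the shift in the argument of $e^{F_{\nu,\veps}}$ near the support of $D_\pm\chi_\tau$ is genuinely absorbed into the stated constant $4$, and that the ``shifted'' factor $e^{F(x\pm1)}$ in the Leibniz rule is correctly re-expressed in terms of $\xi_{\nu,\veps,\tau}^2(x)$ rather than a shifted version — both handled by the Lipschitz bound on $F_{\nu,\veps}$ and by enlarging constants slightly. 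No concentration-compactness or hard analysis is needed here; it is a direct discrete computation.
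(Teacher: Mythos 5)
Your proposal follows the paper's proof essentially step for step: discrete Leibniz rule, quadratic splitting $(a+b)^2\le 2a^2+2b^2$, the $\tau^{-1}$ bound on $D_\pm\chi_\tau$ on its support together with $e^{2F_{\nu,\veps}}\lesssim e^{4\nu\tau}$ there, and the $\nu$-Lipschitz bound $|F_{\nu,\veps}(x\pm 1)-F_{\nu,\veps}(x)|\le\nu$ for the weight term. The one small slip is bounding $|e^{F(x\pm 1)}-e^{F(x)}|$ by $(e^\nu-1)e^{F(x)\vee F(x\pm1)}$: the $\max$ introduces a stray factor $e^{2\nu}$ once you square and try to land the weight on $\xi_{\nu,\veps,\tau}(x)^2$, and that factor is not absorbable into the fixed constant $4$ for large $\nu$ (although it is harmless for the eventual application, where $\nu=\tau^{-1}\to 0$). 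The paper avoids this by factoring $e^{F(x)}$ out exactly, $|e^{F(x+1)}-e^{F(x)}|^2\chi_\tau(x)^2=|e^{F(x+1)-F(x)}-1|^2\,\xi_{\nu,\veps,\tau}(x)^2\le(e^\nu-1)^2\xi_{\nu,\veps,\tau}(x)^2$, i.e.\ by using $e^{\min}$ rather than $e^{\max}$, which gives the weight on $\xi^2$ with no loss.
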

\begin{proof}
Clearly, \eqref{eq:IMS error non sharp 2} follows from \eqref{eq:IMS error non sharp 1}, so it is enough to prove the first claim.
	Denoting $\xi=\xi_{\nu,\veps,\tau}$ and $F=F_{\nu,\veps}$, we have
	\begin{align*}
		D_+\xi(x)
				= e^{F(x+1)} D_+\chi_\tau(x) + D_+e^F(x)\chi_\tau(x)
	\end{align*}
Thus for $x\ge 0$
\begin{equation}
	\begin{split} \label{eq:IMS error 1}
	|D_+\xi(x)|^2
	 	&\le 2 e^{2F(x+1)}|D_+\chi_\tau(x)|^2 + 2 |e^{F(x+1)}-e^{F(x)}|^2\chi_\tau(x)^2 \\
	 	&\le 2 e^{2F(x+1)}\frac{1}{\tau^2}\id_{[\tau,2\tau-1]}(x)
	 		+ 2 |e^{F(x+1)-F(x)}-1|^2\xi(x)^2 \\
	 	&\le \frac{2e^{2F(2\tau)}}{\tau^2}
	 		+ 2 (e^{\nu}-1)^2\xi(x)^2
	\end{split}
\end{equation}
where we used $D_+\chi_\tau(x)\le \frac{1}{\tau}\id_{[\tau,2\tau-1]}(x)$ for $x\ge 0$ in the second inequality, the monotonicity of $F$ and the fact that $F$ obeys the triangle inequality, that is, $F(s_1+s_2)\le F(s_1)+F(s_2)$ for all $s_1,s_2\in\R$  and hence, by the reverse triangle inequality, also $F(x+1)-F(x)\le |F(x+1)-F(x)|\le F(1)\le \nu$, in the third inequality.

Now let $x\ge 1$. Then
\begin{align*}
	D_-\xi(x) = D_-e^F(x)\chi_\tau(x) + e^{F(x-1)}D_-\chi_\tau(x) ,
\end{align*}
so arguing similarly as above,
\begin{equation}\label{eq:IMS error 2}
	\begin{split}
	|D_-\xi(x)|^2
		&\le 2|D_-e^F(x)\chi_\tau(x)|^2 + 2|e^{F(x-1)}D_-\chi_\tau(x)|^2 \\
		&\le 2|1-e^{-|F(x)-F(x-1)|}|^2\xi(x)^2
			+ 2e^{2F(x-1)} \frac{1}{\tau^2}\id_{[\tau+1,2\tau]}(x) \\
		&\le 2(e^\nu-1)^2\xi(x)^2 + \frac{2e^{2F(2\tau)}}{\tau^2}
	\end{split}
\end{equation}
for all $x\ge 1$. Using that $\xi$ is symmetric, and hence
$ D_+\xi(-x)= -D_-\xi(x) $ holds, the bound \eqref{eq:IMS error 1} shows
\begin{align*}
	|D_-\xi(x)|^2 \le \frac{2e^{2F(2\tau)}}{\tau^2}
	 		+ 2 (e^{\nu}-1)^2\xi(x)^2
\end{align*}
for $x\le 0$ and the bound \eqref{eq:IMS error 2} shows
\begin{align*}
	|D_+\xi(x)|^2 \le \frac{2e^{2F(2\tau)}}{\tau^2}
	 		+ 2 (e^{\nu}-1)^2\xi(x)^2
\end{align*}
for $x\le -1$. This proves \eqref{eq:IMS error non sharp 1}.
\end{proof}
From \eqref{eq:exp decay basic} it is clear that we also have to control $DN(\varphi)[\xi^2\varphi]$.
From Lemma \ref{lem:differentiability}, we get a simple bound
\begin{align}\label{eq:DNbound0}
	|DN(f_2)[f_1]|\le \int_\R \|T_rf_1 V'(|T_rf_2|)\|_1 \,\mu(dr)
\end{align}
since $|\la h_1,h_2\ra|\le \|h_1h_2\|_1$.
From our assumptions on $V$ one sees
 \begin{align*}
 	|V'(a)|\lesssim a^{\gamma_1-1} + a^{\gamma_2-1}
 \end{align*}
 for all $a\in\R_+$ and therefore
 \begin{align}\label{eq:DNbound}
 	|DN(f_2)[f_1]| \lesssim L^{\gamma_1}_\mu(f_1,f_2) +  L^{\gamma_2}_\mu(f_1,f_2)
 \end{align}
 where we used
 \begin{definition} \label{def:L} For $\gamma\ge 2$ and $\mu$ a finite measure on $\R$ with compact support, let
 \begin{align}\label{eq:L}	
 	L^{\gamma}_\mu(f_1,f_2):= \int_\R \|T_rf_1 |T_rf_2|^{\gamma-1}\|_1\, \mu(dr)
 \end{align}
 \end{definition}
 A simple bound for the derivative of the nonlocal nonlinearity is given by
 \begin{lemma}\label{lem:DNsimple}
 	Assume that $\mu$ is a finite measure with compact support and $V$ obeys assumption \ref{ass:A1}. Then for any $f_1,f_2\in l^2(\Z)$
 	\begin{align*}
 		|DN(f_2)[f_1]| \lesssim \|f_1\|_2\bigl(\|f_2\|_2^{\gamma_1-1}+ \|f_2\|_2^{\gamma_2-1}\bigr)
 	\end{align*}
 	where the implicit constant depends only on $\mu(\R)$ and $\supp\mu$ (and the constants in \eqref{eq:DNbound}).
 \end{lemma}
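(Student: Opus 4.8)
The plan is to reduce everything to the building block $L^{\gamma}_\mu$ from Definition \ref{def:L} and then estimate that quantity by an elementary Cauchy--Schwarz argument together with the unitarity of $T_r$ on $l^2(\Z)$. Concretely, Lemma \ref{lem:differentiability} combined with the pointwise bound on $V'$ from assumption \ref{ass:A1} already yields, as recorded in \eqref{eq:DNbound},
\begin{align*}
	|DN(f_2)[f_1]| \lesssim L^{\gamma_1}_\mu(f_1,f_2) + L^{\gamma_2}_\mu(f_1,f_2),
\end{align*}
so it suffices to prove the single-exponent bound $L^{\gamma}_\mu(f_1,f_2)\lesssim \|f_1\|_2\|f_2\|_2^{\gamma-1}$ for every fixed $\gamma\ge 2$, with implicit constant depending only on $\mu(\R)$ (and on the constants in \eqref{eq:DNbound}); summing the contributions for $\gamma=\gamma_1$ and $\gamma=\gamma_2$ then gives the assertion.

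For the single-exponent bound I would fix $r\in\supp\mu$ and split the integrand of \eqref{eq:L} as $|T_rf_1(x)|\,|T_rf_2(x)|^{\gamma-1} = |T_rf_1(x)|\,|T_rf_2(x)|\cdot|T_rf_2(x)|^{\gamma-2}$, which is legitimate since $\gamma-2\ge 0$. Pulling the last factor out in sup norm, $|T_rf_2(x)|^{\gamma-2}\le \|T_rf_2\|_\infty^{\gamma-2}$, and estimating the remaining sum by Cauchy--Schwarz,
\begin{align*}
	\sum_{x\in\Z} |T_rf_1(x)|\,|T_rf_2(x)| \le \|T_rf_1\|_2\,\|T_rf_2\|_2 ,
\end{align*}
gives $\|T_rf_1\,|T_rf_2|^{\gamma-1}\|_1 \le \|T_rf_1\|_2\,\|T_rf_2\|_2\,\|T_rf_2\|_\infty^{\gamma-2}$. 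Since $T_r$ is unitary on $l^2(\Z)$ we have $\|T_rf_j\|_2=\|f_j\|_2$, and the trivial embedding $\|g\|_\infty\le\|g\|_2$ gives $\|T_rf_2\|_\infty\le\|f_2\|_2$, so the right-hand side is $\le \|f_1\|_2\|f_2\|_2^{\gamma-1}$ uniformly in $r$. Integrating against $\mu$ contributes the harmless factor $\mu(\R)$ and completes the single-exponent bound.

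There is no genuine obstacle here; the only points worth flagging are bookkeeping ones. First, one should \emph{not} simply invoke Proposition \ref{prop:M-bounded} with $s=0$: since $|T_rf_2|^{\gamma-2}\le(|T_rf_1|+|T_rf_2|)^{\gamma-2}$ one does get $L^{\gamma}_\mu(f_1,f_2)\le M^{\gamma}_\mu(f_1,f_2)$, but the resulting estimate carries the factor $(\|f_1\|_2+\|f_2\|_2)^{\gamma-2}$ instead of $\|f_2\|_2^{\gamma-2}$, i.e. it fails to be linear in $\|f_1\|_2$; linearity in $f_1$ is exactly why the factor extracted in sup norm must be $\|T_rf_2\|_\infty^{\gamma-2}$, involving $f_2$ only. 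Second, for $2<\gamma<3$ one cannot bound $\|T_rf_2\|_{\gamma-1}$ by $\|T_rf_2\|_2$ directly since $l^p(\Z)$-monotonicity runs the wrong way, which is a further reason to peel off $|T_rf_2|^{\gamma-2}$ before summing rather than treating $|T_rf_2|^{\gamma-1}$ as a single $l^{\gamma-1}$ function.
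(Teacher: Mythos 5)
Your proof is correct and takes essentially the same route as the paper: reduce via \eqref{eq:DNbound} to estimating $L^\gamma_\mu$, peel off $\|T_rf_2\|_\infty^{\gamma-2}$, and bound the remaining bilinear term by $\|f_1\|_2\|f_2\|_2$. The paper achieves the last step by invoking the strong bilinear bound \eqref{eq:strong bilinear} with the trivial $\min(\cdot)\le 1$ branch, which is precisely the Cauchy--Schwarz argument you write out explicitly, so the two proofs coincide up to citation style; your remarks about why $M^\gamma_\mu$ and $l^p$-monotonicity are the wrong tools are a nice sanity check but not part of the paper's argument.
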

 \begin{proof}
 	This follows simply from \eqref{eq:DNbound}
 	\begin{align*}
 		\|T_rf_1 |T_rf_2|^{\gamma-1}\|_1
 		\le
 			\|T_rf_1 T_rf_2\|_1 \|T_rf_2\|_\infty^{\gamma-2}
 	\end{align*}
 	the bound $\|T_rf_2\|_\infty \le \|T_rf_2\|_2= \|f_2\|_2$, by unitarity of $T_r$, the bound \eqref{eq:strong bilinear} and the assumption that $\mu$ is a finite measure with compact support.
 \end{proof}
 We will need a version of Lemma \ref{lem:DNsimple} which is `\emph{exponentially twisted}',
\begin{lemma}\label{lem:Ltwisted} For $x\in\R$. Then for all $\gamma\ge 2$, all finite measures $\mu$ with compact support, and all $0<\alpha<\frac{1}{2}$
	\begin{align}
		L^\gamma_\mu (e^{F_{\nu,\veps}}h_1, e^{-F_{\nu,\veps}}h_2)
		\lesssim
			\min(1,s^{-\alpha s})\|h_1\|_2 \|h_2\|_2 \|e^{-F_{\nu,\veps}}h_2\|_2^{\gamma-2}	
	\end{align}
	where $s:=\dist(\supp h_1,\supp h_2)\ge 0$ and  the implicit constant is independent of
	$\veps>0$ and depends increasingly on $\nu\ge0$, $\mu(\R)$, and the support of $\mu$ and $\alpha$.
\end{lemma}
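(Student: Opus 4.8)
\emph{Proof strategy.}
The plan is to first peel off $\gamma-2$ powers of $T_r(e^{-F_{\nu,\veps}}h_2)$ in $L^\infty$ and thereby reduce the whole estimate to an exponentially twisted version of the bilinear bound in Lemma \ref{lem:bilinear-estimate}. Write $F=F_{\nu,\veps}$ and fix $B>0$ with $\supp\mu\subset[-B,B]$. For every $x\in\Z$ and $r\in\R$,
\[
  |T_r(e^Fh_1)(x)|\,|T_r(e^{-F}h_2)(x)|^{\gamma-1}
  = |T_r(e^Fh_1)(x)\,T_r(e^{-F}h_2)(x)|\;|T_r(e^{-F}h_2)(x)|^{\gamma-2},
\]
and, by unitarity of $T_r$ on $l^2(\Z)$, $\|T_r(e^{-F}h_2)\|_\infty\le\|T_r(e^{-F}h_2)\|_2=\|e^{-F}h_2\|_2$. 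Summing in $x$, integrating against $\mu$, and pulling the supremum out of the integral gives
\[
  L^\gamma_\mu(e^Fh_1,e^{-F}h_2)
  \le \mu(\R)\,\|e^{-F}h_2\|_2^{\gamma-2}\;\sup_{r\in[-B,B]}\big\|T_r(e^Fh_1)\,T_r(e^{-F}h_2)\big\|_1 .
\]
So it suffices to prove the twisted bilinear estimate $\sup_{r\in[-B,B]}\|T_r(e^Fh_1)T_r(e^{-F}h_2)\|_1\lesssim\min(1,s^{-\alpha s})\|h_1\|_2\|h_2\|_2$ with a constant depending increasingly on $\nu$, $B$, $\alpha$ and independent of $\veps$.

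For this I would use the convolution kernel $K_r$ of $T_r=e^{ir\Delta}$, i.e. $T_rf(x)=\sum_y K_r(x-y)f(y)$. The only feature of $F_{\nu,\veps}$ needed is that it is $\nu$-Lipschitz uniformly in $\veps$: since $F_{\nu,\veps}(t)=\nu|t|/(1+\veps|t|)$ has $|F'_{\nu,\veps}|\le\nu$ off the origin, one gets $|F_{\nu,\veps}(a)-F_{\nu,\veps}(b)|\le\nu|a-b|$ for all $a,b\in\R$. Writing
\[
 T_r(e^Fh_1)(x)\,T_r(e^{-F}h_2)(x)=\sum_{y\in\supp h_1}\sum_{z\in\supp h_2} K_r(x-y)K_r(x-z)\,e^{F(y)-F(x)}\,e^{F(x)-F(z)}\,h_1(y)h_2(z),
\]
and bounding $e^{F(y)-F(x)}\le e^{\nu|x-y|}$, $e^{F(x)-F(z)}\le e^{\nu|x-z|}$, yields the pointwise domination
\[
  |T_r(e^Fh_1)(x)\,T_r(e^{-F}h_2)(x)|\le (\widetilde T_r|h_1|)(x)\,(\widetilde T_r|h_2|)(x),\qquad \widetilde K_r(u):=|K_r(u)|\,e^{\nu|u|},
\]
where $\widetilde T_r$ denotes convolution with $\widetilde K_r$. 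By the kernel bounds of Lemma \ref{lem:useful} (the same ones behind \eqref{eq:strong bilinear}), $|K_r(u)|$ decays faster than any exponential in $|u|$, uniformly for $r\in[-B,B]$, so $\widetilde K_r\in l^1(\Z)$ with $\|\widetilde K_r\|_1\le C(\nu,B)<\infty$, increasing in $\nu$ and $B$, and moreover the tail obeys $\sum_{|u|\ge m}|K_r(u)|e^{\nu|u|}\lesssim\min(1,m^{-\alpha m})$ for every $0<\alpha<\tfrac12$ — the factorial gain in Lemma \ref{lem:useful} beats $e^{\nu|u|}(4B)^{|u|}$, exactly as in the Remark following Lemma \ref{lem:bilinear-estimate}.

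It then remains to estimate $\|\widetilde T_r|h_1|\,\widetilde T_r|h_2|\|_1=\sum_x(\widetilde T_r|h_1|)(x)(\widetilde T_r|h_2|)(x)$. By Young's inequality $\|\widetilde T_r g\|_2\le\|\widetilde K_r\|_1\|g\|_2$, so Cauchy--Schwarz gives the trivial bound $\le C(\nu,B)^2\|h_1\|_2\|h_2\|_2$, handling the factor $1$ in the minimum. For the decay, note $\dist(x,\supp h_1)+\dist(x,\supp h_2)\ge s$ for every $x$, hence $\Z=A_1\cup A_2$ with $A_i=\{x:\dist(x,\supp h_i)\ge s/2\}$; splitting the sum over $A_1$ and $A_2\setminus A_1$ and applying Cauchy--Schwarz on each piece, on $A_i$ the function $\widetilde T_r|h_i|$ coincides with the convolution of $|h_i|$ against $\widetilde K_r\,\ind_{\{|u|\ge s/2\}}$, whose $l^1$ norm is $\lesssim\min(1,s^{-\alpha s})$ by the tail bound above, so each piece is $\le C(\nu,B)\min(1,s^{-\alpha s})\|h_1\|_2\|h_2\|_2$. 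Combining the two bounds proves the twisted bilinear estimate and hence the Lemma. The one genuinely new point — and the main obstacle — is the tail estimate for the exponentially twisted kernel $\widetilde K_r$; everything else repeats the proofs of Lemma \ref{lem:bilinear-estimate} and Proposition \ref{prop:M-bounded}, with the role of Lemma \ref{lem:useful} being precisely to supply these twisted kernel bounds with constants that grow monotonically in $\nu$ and stay uniform in $\veps$.
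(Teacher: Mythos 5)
Your opening reduction (peeling off $\gamma-2$ powers of $\|T_r(e^{-F}h_2)\|_\infty\le\|e^{-F}h_2\|_2$ and pulling the $\sup$ over $|r|\le B$ out of the $\mu$-integral) is exactly what the paper does. The one thing you did not know, writing blind, is that the resulting \emph{twisted strong bilinear bound}
\[
  \sup_{|r|\le B}\|T_r(e^{F_{\nu,\veps}}h_1)\,T_r(e^{-F_{\nu,\veps}}h_2)\|_1
  \lesssim \min(1,s^{-\alpha s})\,\|h_1\|_2\|h_2\|_2
\]
is already stated and proved in the paper as \eqref{eq:twisted strong bilinear} in Lemma \ref{lem:useful}; the paper's proof of Lemma \ref{lem:Ltwisted} simply cites it and is done in two lines. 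You re-prove that bilinear bound from scratch, and your argument is essentially the appendix proof viewed from a slightly different angle: you dominate the twisted product pointwise by $(\widetilde T_r|h_1|)(\widetilde T_r|h_2|)$ with the exponentially enlarged kernel $\widetilde K_r(u)=|K_r(u)|e^{\nu|u|}$, then split $\Z=A_1\cup A_2$ by distance to $\supp h_1,\supp h_2$ and use Cauchy--Schwarz on each piece. The paper instead symmetrizes into a kernel $A_{r,\nu}(y_1,y_2)=\sum_x G_{r,\nu}(x-y_1)G_{r,\nu}(x-y_2)$ and splits the sum according to whether $|x-y_1|\ge \lceil s/2\rceil$ or $|x-y_2|\ge\lceil s/2\rceil$; the two splittings are interchangeable and both ultimately draw on the same factorial kernel decay \eqref{eq:kernel bound}, which defeats the extra factor $e^{\nu|u|}$ and stays uniform in $\veps$ because $|F_{\nu,\veps}(a)-F_{\nu,\veps}(b)|\le\nu|a-b|$ uniformly in $\veps$.

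One small bookkeeping slip: your tail estimate applied at cutoff $m=s/2$ yields $(s/2)^{-\alpha(s/2)}$, not $s^{-\alpha s}$. The fix is the same one the paper invokes after Lemma \ref{lem:bilinear-estimate}: since $\frac{c^m}{m!}\lesssim m^{-\alpha' m}$ for every $\alpha'<1$ (not just $\alpha'<1/2$), taking $\alpha'$ close to $1$ and then converting $(s/2)^{-\alpha'(s/2)}$ into $s^{-\alpha s}$ gives every $\alpha<1/2$. So the constraint $\alpha<\tfrac12$ appears only at this last conversion step, not in the raw kernel tail bound.
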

\begin{remark}
In the equation above, we set $0^{-\alpha 0}:= \lim_{s\to 0} s^{-\alpha s} =1$, when $s=0$.
\end{remark}

\begin{proof}
 Let $B>0$ such that $\supp\mu\subset[-B,B]$. Then
 \begin{align*}
 	L^\gamma_\mu (e^{F_{\nu,\veps}}h_1, e^{-F_{\nu,\veps}}h_2)
 	\le
 		\mu(\R) \sup_{|r|\le B}
 				\|T_r(e^{F_{\nu,\veps}}h_1) |T_r(e^{-F_{\nu,\veps}}h_2)|^{\gamma-1}\|_1
 \end{align*}	
 Fix $r\in\R $, then
 \begin{align*}
 	\|T_r(e^{F_{\nu,\veps}}h_1) |T_r(e^{-F_{\nu,\veps}}h_2)|^{\gamma-1}\|_1
 	\le
 		\|T_r(e^{F_{\nu,\veps}}h_1) T_r(e^{-F_{\nu,\veps}}h_2)\|_1
 		\|T_r(e^{-F_{\nu,\veps}}h_2)\|_\infty^{\gamma-2}
 \end{align*}
 The first factor is bounded by \eqref{eq:twisted strong bilinear} and
 for the second factor we simply note
 \begin{align*}
 	\|T_r(e^{-F_{\nu,\veps}}h_2)\|_\infty
 	\le
 		\|T_r(e^{-F_{\nu,\veps}}h_2)\|_2 = \|e^{-F_{\nu,\veps}}h_2\|_2
 \end{align*}
 since $T_r$ is unitary on $l^2(\Z)$. Since $\frac{2(4Be^\nu)^{\lceil\frac{s}{2}\rceil}}{\lceil\frac{s}{2}\rceil!}\lesssim s^{-\alpha s}$ for any fixed $B,\nu>0$, $0<\alpha<\frac{1}{2}$, and all bounded $\nu $, this finishes the proof.
\end{proof}
A useful consequence of this is
\begin{corollary}\label{cor:DNbound}
Assume that $V$ obeys assumption \ref{ass:A1}. Then for the choice $\nu=\tau^{-1}$
	\begin{align*}
		|DN(\varphi)[\xi_{\nu,\veps,\tau}^2\varphi]| \lesssim \oh(1) \left(\|\xi_{\nu,\veps,\tau}\varphi\|_2^2 + \|\xi_{\nu,\veps,\tau}\varphi\|_2\right)
	\end{align*}
	where the implicit constant depends only on $\gamma_1,\gamma_2$, $\mu(\R)$, the support of $\mu$,
	$\|\varphi\|_2$ and $\oh(1)$ denotes a term which, for fixed $\varphi\in l^2(\Z)$, goes to zero uniformly in $\veps>0$  as $\tau\to\infty$.
\end{corollary}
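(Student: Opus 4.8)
The plan is to pass to the fractional-linear building blocks of Definition \ref{def:L} and then split the inner function $\varphi$ into a near part and a far tail. First I would use \eqref{eq:DNbound} to write $|DN(\varphi)[\xi_{\nu,\veps,\tau}^2\varphi]|\lesssim L^{\gamma_1}_\mu(\xi_{\nu,\veps,\tau}^2\varphi,\varphi)+L^{\gamma_2}_\mu(\xi_{\nu,\veps,\tau}^2\varphi,\varphi)$, so it is enough to estimate $L^\gamma_\mu(\xi^2\varphi,\varphi)$ for $\gamma\in\{\gamma_1,\gamma_2\}$, both of which are $>2$ by \ref{ass:A1}. Here and below I abbreviate $\xi=\xi_{\nu,\veps,\tau}$, $F=F_{\nu,\veps}$ and recall $\xi=e^F\chi_\tau$ with $\nu=\tau^{-1}$. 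Decomposing $\varphi=\varphi_<+\varphi_>$ with $\varphi_<:=\ind_{\{|x|\le\tau/2\}}\varphi$ and using $(a+b)^{\gamma-1}\lesssim a^{\gamma-1}+b^{\gamma-1}$ inside the definition of $L^\gamma_\mu$ gives $L^\gamma_\mu(\xi^2\varphi,\varphi)\lesssim L^\gamma_\mu(\xi^2\varphi,\varphi_<)+L^\gamma_\mu(\xi^2\varphi,\varphi_>)$. In both terms I factor the first argument as $\xi^2\varphi=e^F h_1$ with $h_1:=e^{-F}\xi^2\varphi=e^F\chi_\tau^2\varphi=\chi_\tau\,\xi\varphi$; since $0\le\chi_\tau\le1$ this yields the crucial bound $\|h_1\|_2\le\|\xi\varphi\|_2$, and $\supp h_1\subset\{|x|\ge\tau\}$.

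For the near term I write $\varphi_<=e^{-F}h_2$ with $h_2:=e^F\varphi_<$, supported in $\{|x|\le\tau/2\}$, so that $s:=\dist(\supp h_1,\supp h_2)\ge\tau/2$. Because $\nu=\tau^{-1}$ one has $F\le\nu|x|\le\tfrac12$ on $\supp h_2$, hence $\|h_2\|_2\le e^{1/2}\|\varphi\|_2$ and $\|e^{-F}h_2\|_2=\|\varphi_<\|_2\le\|\varphi\|_2$. Applying Lemma \ref{lem:Ltwisted} with $\alpha=\tfrac14$ then bounds $L^\gamma_\mu(\xi^2\varphi,\varphi_<)\lesssim (\tau/2)^{-\tau/8}\|\xi\varphi\|_2$, with an implicit constant depending only on $\gamma$, $\mu(\R)$, $\supp\mu$ and $\|\varphi\|_2$; this is uniform in $\veps>0$ since the constant in Lemma \ref{lem:Ltwisted} is increasing in $\nu=\tau^{-1}\le1$, and $(\tau/2)^{-\tau/8}=\oh(1)$ as $\tau\to\infty$.

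For the far term I write $\varphi_>=e^{-F}h_2$ with $h_2:=e^F\varphi_>$; now $\supp h_1$ and $\supp h_2$ may overlap, so $s=0$ and Lemma \ref{lem:Ltwisted} only yields $L^\gamma_\mu(\xi^2\varphi,\varphi_>)\lesssim \|h_1\|_2\|h_2\|_2\|\varphi_>\|_2^{\gamma-2}\le\|\xi\varphi\|_2\,\|e^F\varphi_>\|_2\,\delta_\tau^{\gamma-2}$, where $\delta_\tau:=\|\ind_{\{|x|>\tau/2\}}\varphi\|_2\to0$ as $\tau\to\infty$. The step I expect to be the main obstacle is controlling $\|e^F\varphi_>\|_2$ uniformly in $\veps$: a crude bound produces $e^{\nu/\veps}$, which blows up as $\veps\to0$. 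The fix is to cut the tail again at $|x|=2\tau$: on $\{\tau/2<|x|<2\tau\}$ one has $F\le\nu|x|<2$, so $e^F<e^2$ there, while on $\{|x|\ge2\tau\}$ one has $\chi_\tau\equiv1$, hence $\xi=e^F$ and $\|e^F\ind_{\{|x|\ge2\tau\}}\varphi\|_2=\|\ind_{\{|x|\ge2\tau\}}\xi\varphi\|_2\le\|\xi\varphi\|_2$. Thus $\|e^F\varphi_>\|_2\le e^2\delta_\tau+\|\xi\varphi\|_2$ and $L^\gamma_\mu(\xi^2\varphi,\varphi_>)\lesssim \delta_\tau^{\gamma-1}\|\xi\varphi\|_2+\delta_\tau^{\gamma-2}\|\xi\varphi\|_2^2$; since $\gamma>2$ and $\delta_\tau\le\|\varphi\|_2$ is bounded, both prefactors are $\oh(1)$ uniformly in $\veps$. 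Combining the near and far estimates for $\gamma=\gamma_1,\gamma_2$ and reinserting into \eqref{eq:DNbound} gives $|DN(\varphi)[\xi^2\varphi]|\lesssim \oh(1)\big(\|\xi\varphi\|_2+\|\xi\varphi\|_2^2\big)$, which is the claim, and all the $\oh(1)$ quantities — powers of $(\tau/2)^{-\tau/8}$ and of $\delta_\tau$ — are manifestly independent of $\veps$.
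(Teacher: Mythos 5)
Your proof is correct and essentially the same as the paper's: both reduce to $L^\gamma_\mu(\xi^2\varphi,\varphi)$ via \eqref{eq:DNbound}, factor the first argument as $e^F h_1$ with $h_1=\chi_\tau\xi\varphi$ so that $\|h_1\|_2\le\|\xi\varphi\|_2$, split $\varphi$ by location into a region $\{|x|\le\tau/2\}$ (handled via the separation $s\gtrsim\tau/2$ in Lemma \ref{lem:Ltwisted}), an intermediate region $\{\tau/2<|x|<2\tau\}$ (where $e^F\le e^2$), and a far region where $\chi_\tau\equiv 1$ so that the exponential weight is absorbed into $\|\xi\varphi\|_2$. The only cosmetic difference is that you use hard indicator cutoffs of $\varphi$ and peel off the near piece first, whereas the paper uses the soft cutoffs $\chi_\tau$, $1-\chi_\tau$ and peels off the tail first; the resulting three terms and the role of Lemma \ref{lem:Ltwisted} are the same.
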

\begin{proof}
	Set $\xi=\xi_{\nu,\veps,\tau}$ and $F=F_{\nu,\veps}$. Because of \eqref{eq:DNbound}, we need to control
	$L^\gamma_\mu(\xi^2\varphi,\varphi)$ for $\gamma=\gamma_1$ and $\gamma=\gamma_2$.
	Let $\varphi_\tau:= \chi_\tau\varphi$ and $h_\tau:= e^{F} \varphi_\tau$ and
	split $h:= e^{F}\varphi$ into $h_\tau$ and
	$h_{\le\tau}:= (1-\chi_\tau)h$. Then $h=h_\tau + h_{\le\tau}$ and since
	$|a+b|^{\gamma-1}\lesssim |a|^{\gamma-1}+|b|^{\gamma-1}$, we have
	\begin{align*}
		L^\gamma_\mu(\xi^2\varphi,\varphi)
		&= L^\gamma_\mu(e^{F}\chi_\tau h_\tau,e^{-F}h)  \\
		&\lesssim L^\gamma_\mu(e^{F}\chi_\tau h_\tau,e^{-F}h_\tau)
			+ L^\gamma_\mu(e^{F}\chi_\tau h_\tau,e^{-F}h_{\le\tau})
	\end{align*}
	Lemma \ref{lem:Ltwisted} yields
	\begin{align*}
		L^\gamma_\mu(e^{F}\chi_\tau h_\tau,e^{-F}h_\tau)
		&\lesssim
			\|h_\tau\|_2^2 \|\varphi_\tau\|_2^{\gamma-2}	
	\end{align*}
	since $\|\chi_\tau h_\tau\|_2\le \| h_\tau\|_2$. Splitting $h_{\le\tau}=h_{\ll\tau}+h_{\sim\tau}$, where $h_{\ll\tau}:= \id_{[-\tau/2,\tau/2]}(x)h_\tau$ and $h_{\sim\tau}:= h_{\tau}-h_{\ll\tau}$  we also have
	\begin{align*}
		L^\gamma_\mu(e^{F}\chi_\tau h_\tau,e^{-F}h_{\le\tau})
		&\lesssim
			L^\gamma_\mu(e^{F}\chi_\tau h_\tau,e^{-F}h_{\ll\tau})
			+ L^\gamma_\mu(e^{F}\chi_\tau h_\tau,e^{-F}h_{\sim\tau}) \\
		&\le
			(\tau/2)^{-\tau/8}\|h_\tau\|_2 \| h_{\ll\tau} \|_2
		 	\|\varphi_{\ll\tau}\|_2^{\gamma-2}	
		 	+ \|h_\tau\|_2 \| h_{\sim\tau} \|_2
		 	\|\varphi_{\sim\tau}\|_2^{\gamma-2}	\\
		 &\le
		 	(\tau/2)^{-\tau/8} e^{1/2}\|h_\tau\|_2
		 	\|\varphi\|_2^{\gamma-1}	
		 	+ e^2 \|h_\tau\|_2
		 	\|\varphi_{\sim\tau }\|_2^{\gamma-1}
	\end{align*}
	because of Lemma \ref{lem:Ltwisted}, since $h_{\ll\tau}$ and $h_\tau$ have supports separated by at least $\tau/2$ and
	$\| h_{\ll\tau} \|_2\le e^{\nu \tau/2} \|\varphi_{\ll\tau}\|_2 \le e^{1/2} \|\varphi\|_2$
	and $\| h_{\sim\tau} \|_2 \le e^{2\nu\tau} \|\varphi_{\sim\tau}\|_2\le e^2\|\varphi_{\sim\tau}\|_2$. Together, the above bounds show
	\begin{align*}
		L^\gamma_\mu(\xi^2\varphi,\varphi)
			\lesssim (\|\varphi_\tau\|^{\gamma-2} + (\tau/2)^{-\tau/8}\|\varphi\|_2^{\gamma-1} + \|\varphi_{\sim\tau }\|_2^{\gamma-1})
				(\|h_\tau\|_2^2 + \|h_\tau\|_2 ) .
	\end{align*}
	Since, for fixed $\varphi\in l^2(\Z)$, the term $\|\varphi_\tau\|_2^{\gamma-2} + (\tau/2)^{-\tau/8}\|\varphi\|_2^{\gamma-1} + \|\varphi_{\sim\tau }\|_2^{\gamma-1}$
	goes to zero as $\tau\to\infty$, this finishes the proof of the corollary.
\end{proof}

Now we can give the
\begin{proof}[Proof of Proposition \ref{prop:some exponential decay}]
 Let  $\varphi$ be a solution of \eqref{eq:GT} with $\omega<0$ and $\dav>0$. Then with $\chi_\tau$, $F_{\nu,\veps}$, and $\xi_{\nu,\veps,\tau}$ as before together with the choice
 $\nu=\tau^{-1}$, the inequality \eqref{eq:exp decay basic} and Lemma
 	\ref{lem:IMS error non sharp}  and Corollary \ref{cor:DNbound} show
 \begin{align}\label{eq:good news 1}
 	\|\xi_{\nu,\veps,\tau}\varphi\|_2^2\le \oh_1(1)(\|\xi_{\nu,\veps,\tau}\varphi\|_2^2
 		+ \|\xi_{\nu,\veps,\tau}\varphi\|_2) + \oh_2(1)
 \end{align}
 where $\oh_1(1)$ and $\oh_2(1)$ denote terms which, for fixed $\varphi\in l^2(\Z)$ and $\nu=\tau^{-1}$, go to zero as $\tau\to\infty$ \emph{uniformly } in $\veps>0$.
 Choosing $\tau$ so large that $\oh_1(1)\le \frac{1}{2}$, the bound \eqref{eq:good news 1} gives
 \begin{align}\label{eq:good news 2}
 	 \|\xi_{\nu,\veps,\tau}\varphi\|_2^2 - \|\xi_{\nu,\veps,\tau}\varphi\|_2
 	\lesssim 1
 \end{align}
 as long as $\nu=\tau^{-1}$ and $\tau$ is large enough. Clearly, \eqref{eq:good news 2} shows that $\|\xi_{\nu,\veps,\tau}\varphi\|_2$ stays bounded as $\veps\to0$, so
 \begin{align*}
 	\|e^{F_{\nu,0}}\varphi_\tau\|_2 =\lim_{\veps\to 0 }\|e^{F_{\nu,\veps}}\varphi_\tau\|_2 <\infty
 \end{align*}
 as long as $\nu=\tau^{-1}$ and $\tau$ is large enough.
\end{proof}

\subsection{Boosting the decay rate}\label{subsec:boost}
Given Proposition \ref{prop:some exponential decay} we know that a solution $\varphi$ of \eqref{eq:GT} with $\omega<0$ and $\dav>0$ has some exponential decay, that is,
for some $\nu>0$ we have $e^{\nu|\cdot|}\varphi(\cdot)\in l^2(\Z)$.
The goal in this section is to boost this to prove the lower bound \eqref{eq:exponential decay rate} on the exponential decay rate from Theorem \ref{thm:exponential decay}. For this we need a refinement of \eqref{eq:exp decay basic} and of Lemma \ref{lem:IMS error non sharp}. Looking at the proof of \eqref{eq:exp decay basic}, we need to refine the error in the IMS localization formula. This is the context of
\begin{lemma}\label{lem:IMS error sharp}
	Let $F:\Z\to\R $ be bounded. Then for all $\varphi\in l^2(\Z)$
	\begin{align*}
		\re(\la e^{2F} \varphi, -\Delta \varphi \ra)
		 \ge
				- \la e^{F}\varphi, \big(\cosh(D_+F)+\cosh(D_-F)-2\big)e^{F}\varphi \ra
	\end{align*}
\end{lemma}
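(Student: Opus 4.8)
The plan is to reduce the inequality to a pointwise identity in $x$ after the substitution $\psi:=e^{F}\varphi$. Since $F$ is bounded, $e^{\pm F}$ are bounded multipliers on $l^2(\Z)$, so $\psi\in l^2(\Z)$, $e^{2F}\varphi=e^{F}\psi$, $\varphi=e^{-F}\psi$, and all the sums below converge absolutely. First I would apply the summation by parts identity $-\la\Delta\varphi,h\ra=\la D_+\varphi,D_+h\ra$ from Section~\ref{introduction} with $h=e^{2F}\varphi$ and take complex conjugates to get $\la e^{2F}\varphi,-\Delta\varphi\ra=\la D_+(e^{2F}\varphi),D_+\varphi\ra$, hence
\begin{align*}
  \re\la e^{2F}\varphi,-\Delta\varphi\ra = \re\la D_+(e^{F}\psi), D_+(e^{-F}\psi)\ra .
\end{align*}

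Next I would expand the summand pointwise: with $a=F(x)$, $b=F(x+1)$, $u=\psi(x)$, $v=\psi(x+1)$ one has $D_+(e^{F}\psi)(x)=e^{b}v-e^{a}u$ and $D_+(e^{-F}\psi)(x)=e^{-b}v-e^{-a}u$, and since $a,b\in\R$ and $b-a=D_+F(x)$,
\begin{align*}
  \re\bigl[\,\overline{(e^{b}v-e^{a}u)}\,(e^{-b}v-e^{-a}u)\,\bigr]
  = |u|^2+|v|^2-2\cosh\bigl(D_+F(x)\bigr)\re(\bar u v) .
\end{align*}
The key algebraic step is the elementary identity, valid for all $t\in\R$,
\begin{align*}
  |u|^2+|v|^2-2\cosh t\,\re(\bar u v)
  = \cosh t\,|v-u|^2 + \bigl(1-\cosh t\bigr)\bigl(|u|^2+|v|^2\bigr),
\end{align*}
which I would apply with $t=D_+F(x)$ and $|v-u|^2=|D_+\psi(x)|^2$. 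Summing over $x\in\Z$ then yields
\begin{align*}
  \re\la e^{2F}\varphi,-\Delta\varphi\ra
  = \sum_{x\in\Z}\cosh\bigl(D_+F(x)\bigr)|D_+\psi(x)|^2
    + \sum_{x\in\Z}\bigl(1-\cosh\bigl(D_+F(x)\bigr)\bigr)\bigl(|\psi(x)|^2+|\psi(x+1)|^2\bigr) .
\end{align*}

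It remains to identify the second sum. Splitting it into its $|\psi(x)|^2$ and $|\psi(x+1)|^2$ parts and reindexing $x\mapsto x-1$ in the latter, together with $D_+F(x-1)=D_-F(x)$, one obtains $\sum_{x\in\Z}\bigl(2-\cosh(D_+F(x))-\cosh(D_-F(x))\bigr)|\psi(x)|^2 = -\la e^{F}\varphi,\bigl(\cosh(D_+F)+\cosh(D_-F)-2\bigr)e^{F}\varphi\ra$. Since $\cosh\ge 0$, the first sum is nonnegative, and the claimed inequality follows; in fact the argument gives the stronger identity $\re\la e^{2F}\varphi,-\Delta\varphi\ra+\la e^{F}\varphi,(\cosh(D_+F)+\cosh(D_-F)-2)e^{F}\varphi\ra=\sum_x\cosh(D_+F(x))|D_+\psi(x)|^2\ge 0$. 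There is essentially no obstacle beyond bookkeeping here: the only points requiring care are the direction of the summation by parts, the reindexing in the last step, and the remark that the boundedness of $F$ (hence of $D_\pm F$ and of $\cosh(D_\pm F)$) makes every rearrangement of these absolutely convergent series legitimate.
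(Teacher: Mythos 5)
Your proof is correct. It arrives at the same bound as the paper but by a more self-contained route: the paper first invokes the discrete IMS localization identity (Lemma~\ref{lem:IMS}), drops the nonnegative term $\la e^F\varphi,-\Delta(e^F\varphi)\ra=\|D_+\psi\|_2^2$, computes $|D_+e^F(x)|^2=2(\cosh(D_+F(x))-1)e^{F(x)+F(x+1)}$, and then estimates the remaining cross term with $2\re(\bar w z)\le|w|^2+|z|^2$; you instead go directly from summation by parts to the pointwise expansion $|u|^2+|v|^2-2\cosh(D_+F(x))\re(\bar u v)$ and split it algebraically, which produces the exact identity $\re\la e^{2F}\varphi,-\Delta\varphi\ra=\sum_x\cosh(D_+F(x))|D_+\psi(x)|^2-\la\psi,(\cosh(D_+F)+\cosh(D_-F)-2)\psi\ra$ in one stroke. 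In fact the two arguments discard exactly the same quantity, $\sum_x\cosh(D_+F(x))|D_+\psi(x)|^2$ (the paper in two pieces --- first $\|D_+\psi\|_2^2$, then $\sum_x(\cosh(D_+F(x))-1)|D_+\psi(x)|^2$ in the Cauchy--Schwarz step), so the bounds are identical. The advantage of your version is that it bypasses the IMS lemma entirely and records the exact remainder; the advantage of the paper's version is that it reuses a general-purpose identity already needed elsewhere (e.g.\ in Proposition~\ref{prop:fat-tail proposition} and Lemma~\ref{lem:IMS error non sharp}).
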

\begin{proof}
	Using the formula \eqref{eq:IMS} for $\xi=e^F$ one sees
	\begin{align*}
		\re(\la e^{2F} \varphi, -\Delta \varphi\ra)
		 & =
			\la e^{F} \varphi, -\Delta (e^{F} \varphi)\ra
				- \sum_{x\in\Z} |D_+e^{F}(x)|^2\re(\ol{\varphi(x)}\varphi(x+1)) \\
		 &\ge - \sum_{x\in\Z} |D_+e^{F}(x)|^2\re(\ol{\varphi(x)}\varphi(x+1)).
	\end{align*}
	since $\la e^{F} \varphi, -\Delta (e^{F} \varphi)\ra \ge 0$.
	A simple calculation shows
	\begin{align*}
		|D_+e^{F}(x)|^2
		= 2\left(\cosh(F(x+1)-F(x)) -1\right)
			e^{F(x)}e^{F(x+1)}.
	\end{align*}
	 Thus
	\begin{align*}
		\sum_{x\in\Z}& |D_+e^{F}(x)|^2 \re(\ol{\varphi(x)}\varphi(x+1)) \\
		&= \sum_{x\in\Z}\left(\cosh(D_+F(x)) -1\right)
			 2\re(e^{F(x)}\ol{\varphi(x)}e^{F(x+1)}\varphi(x+1)) \\
		&\le \sum_{x\in\Z}\left(\cosh(D_+F(x)) -1\right)
				\left( |e^{F(x)}\varphi(x)|^2+ |e^{F(x+1)}\varphi(x+1)|^2\right) \\
		&= \la e^{F}\varphi, \left(\cosh(D_+F) + \cosh(D_-F)-2\right) e^{F}\varphi\ra
	\end{align*}
\end{proof}

Since $\cosh$ is even and increasing on $\R_+$ and
\begin{align*}
	|D_\pm F_{\nu,\veps}(x)| = |F_{\nu,\veps}(x\pm 1)- F_{\nu,\veps}(x)|\le F_{\nu,\veps}(1)\le\nu ,
\end{align*}
Lemma \ref{lem:IMS error sharp} gives for $F=F_{\nu,\veps}$ and any solution $\varphi$ of \eqref{eq:GT} with $\omega<0$ and $\dav>0$ the bound
\begin{align}
		\omega \|e^{F}\varphi\|_2^2
		&=\re( \omega\la e^{2F}\varphi,\varphi\ra )
			=
			\dav\re( \la e^{2F}\varphi, -\Delta\varphi\ra )
			- \re( DN(\varphi)[e^{2F}\varphi] ) \nonumber\\
		&\ge
			- \dav\la e^F\varphi, 2(\cosh(\nu) -1) e^F\varphi \ra
			-|DN(\varphi)[e^{2F}\varphi]|
\end{align}
In other words, since $\omega<0$, we have the bound
\begin{align}\label{eq:super}
	\left(|\omega|-2\dav (\cosh(\nu)-1)\right)\|e^{F_{\nu,\veps}}\varphi \|_2^2
	\le
		|DN(\varphi)[e^{2F_{\nu,\veps}}\varphi]|
\end{align}
which  will help to control $\|e^{F_{\nu,\veps}}\varphi \|$ as long as $|\omega|>2\dav (\cosh(\nu)-1)$. To control the right hand side of \eqref{eq:super}, we note
\begin{lemma}\label{lem:superduper}
	Assume that $V$ obeys the assumption \ref{ass:A1} and $\mu$ is a finite measure with compact support.
	Then, if $e^{\nu_0|\cdot|} \varphi(\cdot)\in l^2(\Z)$ for some $\nu_0>0$ we have
	\begin{align}\label{eq:superduper}
		\limsup_{\veps\to 0} |DN(\varphi)[e^{2F_{\nu,\veps}}\varphi]| <\infty  	
	\end{align}
	for all $0<\nu\le \frac{\gamma_1}{2}\nu_0$.
\end{lemma}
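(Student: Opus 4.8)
The plan is to bound $|DN(\varphi)[e^{2F_{\nu,\veps}}\varphi]|$ via the inequality \eqref{eq:DNbound}, so that it suffices to show that $L^\gamma_\mu(e^{2F_{\nu,\veps}}\varphi,\varphi)$ stays bounded as $\veps\to0$ for $\gamma=\gamma_1$ and $\gamma=\gamma_2$. By Definition~\ref{def:L}, $L^\gamma_\mu(e^{2F_{\nu,\veps}}\varphi,\varphi)=\int_\R\sum_x|T_r(e^{2F_{\nu,\veps}}\varphi)(x)|\,|T_r\varphi(x)|^{\gamma-1}\mu(dr)$, that is, $\gamma$ copies of $\varphi$ occur, exactly one of them carrying the full weight $e^{2F_{\nu,\veps}}$. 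Since we only know $e^{\nu_0|\cdot|}\varphi\in l^2(\Z)$, and since $\tfrac{2\nu}{\gamma}\le\tfrac{2\nu}{\gamma_1}\le\nu_0$ whenever $0<\nu\le\tfrac{\gamma_1}{2}\nu_0$ and $\gamma\ge\gamma_1$, the idea is to \emph{redistribute} the weight $e^{2F_{\nu,\veps}}$ evenly over all $\gamma$ factors, so that each one only has to absorb $e^{\frac{2}{\gamma}F_{\nu,\veps}(\cdot)}\le e^{\frac{2\nu}{\gamma}|\cdot|}$, which $\varphi$ can afford.

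First I would record the elementary fact that exponential weights commute past $T_r$ up to an $l^1$-bounded convolution: using that $F_{\nu,\veps}$ is $\nu$-Lipschitz (so $F_{\nu,\veps}(y)\le F_{\nu,\veps}(x)+\nu|x-y|$ and $F_{\nu,\veps}\le\nu|\cdot|$) together with the super-exponential decay $|T_r(x,y)|\lesssim (C|r|)^{|x-y|}/|x-y|!$ of the kernel of $e^{ir\Delta}$ contained in Lemma~\ref{lem:useful}, one gets for every $a\ge0$ and $g\in l^2(\Z)$ the pointwise bound
\[
  |T_r(e^{aF_{\nu,\veps}}g)(x)|\ \le\ e^{aF_{\nu,\veps}(x)}\,\big(K_r^{(a)}*|g|\big)(x),\qquad
  K_r^{(a)}(n):=\frac{(C|r|\,e^{a\nu})^{|n|}}{|n|!},
\]
whose kernel has $\|K_r^{(a)}\|_1\le 2e^{C|r|e^{a\nu}}$, bounded uniformly in $\veps>0$ and in $|r|\le B$ with $\supp\mu\subset[-B,B]$. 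Applying this with $a=2$ peels the weight off the heavy factor; then writing $e^{2F_{\nu,\veps}(x)}=\big(e^{\frac{2}{\gamma}F_{\nu,\veps}(x)}\big)^{\gamma}$ and assigning one factor $e^{\frac{2}{\gamma}F_{\nu,\veps}(x)}$ to $(K_r^{(2)}*|\varphi|)(x)$ and one to each of the $\gamma-1$ factors $|T_r\varphi(x)|$, and applying the pointwise bound once more (with $a=\frac{2}{\gamma}$) to each of these, one arrives at $|T_r(e^{2F_{\nu,\veps}}\varphi)(x)|\,|T_r\varphi(x)|^{\gamma-1}\le\big(\bar K_r*g_r\big)(x)^{\gamma}$ with $g_r:=e^{\frac{2}{\gamma}F_{\nu,\veps}}|\varphi|\le e^{\frac{2\nu}{\gamma}|\cdot|}|\varphi|$ and $\bar K_r$ an $l^1$-bounded (uniformly in $\veps>0$ and $|r|\le B$) kernel. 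Summing in $x$, using $\gamma\ge\gamma_1>2$ and hence $\|\bar K_r*g_r\|_\gamma\le\|\bar K_r*g_r\|_2\le\|\bar K_r\|_1\|g_r\|_2$ by the monotonicity of $l^p$-norms and Young's inequality, and integrating against the finite measure $\mu$, gives
\[
  L^\gamma_\mu(e^{2F_{\nu,\veps}}\varphi,\varphi)\ \lesssim\ \mu(\R)\,\big\|e^{\frac{2\nu}{\gamma}|\cdot|}\varphi\big\|_2^{\gamma},
\]
with an implicit constant independent of $\veps$. For $\gamma\in\{\gamma_1,\gamma_2\}$ we have $\frac{2\nu}{\gamma}\le\nu_0$, so the right-hand side is finite and $\veps$-independent, and \eqref{eq:DNbound} then bounds $|DN(\varphi)[e^{2F_{\nu,\veps}}\varphi]|$ by $\|e^{\nu_0|\cdot|}\varphi\|_2^{\gamma_1}+\|e^{\nu_0|\cdot|}\varphi\|_2^{\gamma_2}$ up to a constant, uniformly in $\veps>0$, which gives \eqref{eq:superduper} (in fact a stronger uniform bound).

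The hard part is the redistribution step, and the reason it cannot be avoided: if one keeps $T_r(e^{2F_{\nu,\veps}}\varphi)$ intact and estimates it by $\|e^{2F_{\nu,\veps}}\varphi\|_2$, one is forced to require $\nu\le\nu_0/2$; and the twisted bilinear estimate underlying Lemma~\ref{lem:Ltwisted}, which exploits cancellation between only two of the $\gamma$ factors, reaches only $\nu\le\nu_0$. Getting up to $\nu\le\tfrac{\gamma_1}{2}\nu_0$ genuinely requires spreading the weight over all $\gamma\ge\gamma_1$ factors, and the kernel-decay/convolution device above is what makes this possible while keeping every constant uniform in $\veps$, so that the $\limsup$ in \eqref{eq:superduper} is controlled.
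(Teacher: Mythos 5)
Your proof is correct, and it reaches the sharp threshold $\nu\le\tfrac{\gamma_1}{2}\nu_0$, but the \emph{packaging} differs from the paper's. The paper first peels off the ``affordable'' weight by setting $\psi_\veps\coloneq e^{F_{\nu_0,\veps}}\varphi$, using the algebraic identity $2F_{\nu,\veps}=F_{2\nu-\nu_0,\veps}+F_{\nu_0,\veps}$ so that $DN(\varphi)[e^{2F_{\nu,\veps}}\varphi]=DN(\varphi)[e^{F_{2\nu-\nu_0,\veps}}\psi_\veps]$. Then it invokes the prepackaged \emph{exchange-of-exponential-weight} bound \eqref{eq:exchange} from Lemma~\ref{lem:useful} with $\alpha=\gamma-1$, which moves the residual weight $e^{F_{2\nu-\nu_0,\veps}}$ from $\psi_\veps$ onto the $\gamma-1$ factors of $T_r\varphi$, giving each a $1/(\gamma-1)$ share. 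The constraint $(2\nu-\nu_0)/(\gamma_1-1)\le\nu_0$ then reads $\nu\le\frac{\gamma_1}{2}\nu_0$. You instead keep $\varphi$ as is, redistribute $e^{2F_{\nu,\veps}}$ \emph{evenly} over all $\gamma$ factors, and re-derive the needed convolution estimate (Lipschitz weight + super-exponential kernel decay + Young) from scratch; your constraint $2\nu/\gamma_1\le\nu_0$ is the same inequality. This is no coincidence: in either bookkeeping the constraint is that the total weight $2\nu$ is spread over $\gamma_1$ copies of $\varphi$ with each factor carrying at most $\nu_0$. Your route is more self-contained but essentially re-proves \eqref{eq:exchange} inline; the paper's is shorter once that lemma is on the shelf. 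One minor point: the displayed pointwise inequality $|T_r(e^{aF_{\nu,\veps}}g)(x)|\le e^{aF_{\nu,\veps}(x)}(K_r^{(a)}*|g|)(x)$ is correct, but to handle the factors $e^{(2/\gamma)F_{\nu,\veps}(x)}|T_r\varphi(x)|$ you actually need the cognate estimate $e^{aF_{\nu,\veps}(x)}|T_rg(x)|\le(K_r^{(a)}*(e^{aF_{\nu,\veps}}|g|))(x)$ (same kernel argument, weight pushed the other way); worth stating it explicitly rather than saying ``apply the pointwise bound once more''. Similarly the various $\gamma$ kernels have slightly different growth rates, but all are dominated by one $\bar K_r$ with exponent $(2+2/\gamma)\nu$, so the step $\le(\bar K_r*g_r)^\gamma$ is fine after that remark.
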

\begin{remark} One might hope that even despite the nonlocal nature of $DN$ one could have
\begin{align*}
	|DN(\varphi)[e^{F}\psi]|\lesssim |DN(e^{F}\varphi)\psi]| .
\end{align*}
Setting $\psi_\veps\coloneq e^{F_{\nu_0,\veps}}\varphi$ and using  $2F_{\nu,\veps}= F_{2(\nu-\nu_0),\veps}+ 2F_{\nu_0,\veps}$, one could conclude from this
\begin{align*}
	|DN(\varphi)[e^{2F_{\nu,\veps}}\varphi]|
	=
		|DN(\varphi)[e^{F_{\nu_0,\veps}}e^{F_{2(\nu-\nu_0),\veps}}\psi_\veps]|
	\lesssim
			|DN(\psi_\veps)[e^{F_{2(\nu-\nu_0),\veps}}\psi_\veps]|
\end{align*}
but since $\gamma_1>2$, we have $\nu-\nu_0>0$ for $\nu_0<\nu\le \frac{\gamma_1}{2}\nu_0$ and this leaves an \emph{excess exponential weight} $F_{2(\nu-\nu_0),\veps}$. The point of the Lemma is that  this excess weight is absorbed by the nonlinearity even though it is nonlocal.
\end{remark}
\begin{proof}
	Set $\psi_\veps\coloneq e^{F_{\nu_0,\veps}}\varphi$. Then $\limsup_{\veps\to 0}\|\psi_\veps\|_2<\infty$ and with \eqref{eq:DNbound} and Definition \ref{def:L} we have
	\begin{align*}
		|DN(\varphi)[e^{2F_{\nu,\veps}}\vphi]|
		& =
			 |DN(\varphi)[e^{F_{2\nu-\nu_0,\veps}}\psi_\veps]|
		\lesssim	
			L_\mu^{\gamma_1}(e^{F_{2\nu-\nu_0,\veps}}\psi_\veps,\varphi)
				+ L_\mu^{\gamma_2}(e^{F_{2\nu-\nu_0,\veps}}\psi_\veps,\varphi)
	\end{align*}
	Using \eqref{eq:exchange}, we see
	\begin{align*}
		L_\mu^\gamma(e^{F_{2\nu-\nu_0,\veps}}\psi_\veps,\varphi)
		&\le \mu(\R)\sup_{|r|\le B} \|T_r(e^{F_{2\nu-\nu_0,\veps}}\psi_\veps)|T_r\varphi|^{\gamma-1}\|_1 \\
		&\le \mu(\R) (2e^{4B(1+e^\nu)})^\gamma  \|\psi_\veps\|_2 \|e^{F_{(2\nu-\nu_0)/(\gamma-1),\veps}}\varphi\|_2^{\gamma-1}
	\end{align*}
	for $\gamma=\gamma_1,\gamma_2$. By assumption, $\limsup_{\veps\to0}\|\psi_\veps\|_2<\infty$ and in order to have
	\begin{align*}
		\limsup_{\veps\to0}\|e^{F_{(2\nu-\nu_0)/(\gamma-1),\veps}}\varphi\|_2<\infty
	\end{align*}
	we need $2\nu-\nu_0\le (\gamma_1-1)\nu_0$, which is equivalent to $\nu\le\frac{\gamma_1}{2}\nu_0$, so  \eqref{eq:superduper} follows.
\end{proof}
Before we come our key result for boosting the exponential decay rate, we need some more notation. Note that $0\le \nu\mapsto 2\dav(\cosh(\nu)-1)$ is strictly increasing from zero to infinity. Thus for any $\omega<0$ there exist a unique $\ol{\nu}>0$ such that
\begin{align}\label{eq:olnu}
	2\dav(\cosh(\ol{\nu})-1) = |\omega|.
\end{align}
In other words, $\ol{\nu}$ is given by the right hand side of  \eqref{eq:exponential decay rate}.
\begin{proposition}[Boosting the exponential decay rate]\label{prop:exponential boost} Assume that $V$ obeys the assumption \ref{ass:A1} and that $\varphi$ is a solution of \eqref{eq:GT} for some $\omega<0$ and
	$\dav>0$, and $\ol{\nu}$ is given by \eqref{eq:olnu}.  Furthermore, assume that for some $0<\nu<\ol{\nu}$ we have $e^{\nu|\cdot|}\varphi\in l^2(\Z)$.
	If $\delta>0$ is such that
	\begin{align*}
		\nu+ \delta <\ol{\nu} \quad \text{ and }\quad \delta \le \frac{\gamma_1-2}{2}\nu
	\end{align*}
	then $	e^{(\nu+\delta)|\cdot|}\varphi \in l^2(\Z).$
\end{proposition}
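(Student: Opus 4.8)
The plan is to iterate the inequality \eqref{eq:super} with a suitable choice of the regularized exponential weight $F_{\mu',\veps}$, where $\mu'$ will be taken to be $\nu+\delta$ (I avoid the letter $\mu$ here since it clashes with the measure; call the weight parameter $\mu'$). First I would fix $\mu' = \nu+\delta$ and note that by hypothesis $\mu' < \ol{\nu}$, so by the definition \eqref{eq:olnu} of $\ol\nu$ and the strict monotonicity of $\nu\mapsto 2\dav(\cosh\nu-1)$ we have the strictly positive gap
\begin{align*}
	|\omega| - 2\dav(\cosh(\mu')-1) > 0 .
\end{align*}
Applying \eqref{eq:super} with this weight gives, for every $\veps>0$,
\begin{align*}
	\big(|\omega| - 2\dav(\cosh(\mu')-1)\big)\|e^{F_{\mu',\veps}}\varphi\|_2^2
	\le |DN(\varphi)[e^{2F_{\mu',\veps}}\varphi]| .
\end{align*}
The point is that the right-hand side is controlled uniformly in $\veps$ by Lemma \ref{lem:superduper}, \emph{provided} the hypothesis $\mu' \le \frac{\gamma_1}{2}\nu$ of that lemma holds with $\nu_0 = \nu$. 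But $\mu' = \nu+\delta$ and the assumption $\delta \le \frac{\gamma_1-2}{2}\nu$ is exactly equivalent to $\nu+\delta \le \frac{\gamma_1}{2}\nu$. So Lemma \ref{lem:superduper} applies (its input hypothesis $e^{\nu|\cdot|}\varphi\in l^2(\Z)$ is our standing assumption), and we conclude
\begin{align*}
	\limsup_{\veps\to 0} |DN(\varphi)[e^{2F_{\mu',\veps}}\varphi]| < \infty .
\end{align*}

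Combining the two displays, $\limsup_{\veps\to 0}\|e^{F_{\mu',\veps}}\varphi\|_2^2 < \infty$ since the prefactor on the left is a fixed positive constant. Now $F_{\mu',\veps}(x) = \frac{\mu'|x|}{1+\veps|x|} \uparrow \mu'|x|$ pointwise and monotonically as $\veps \downarrow 0$, so by monotone convergence
\begin{align*}
	\sum_{x\in\Z} e^{2\mu'|x|}|\varphi(x)|^2 = \lim_{\veps\to 0}\sum_{x\in\Z} e^{2F_{\mu',\veps}(x)}|\varphi(x)|^2 = \lim_{\veps\to 0}\|e^{F_{\mu',\veps}}\varphi\|_2^2 < \infty ,
\end{align*}
which is precisely $e^{\mu'|\cdot|}\varphi = e^{(\nu+\delta)|\cdot|}\varphi \in l^2(\Z)$, as claimed.

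The main obstacle is not in this bookkeeping but is entirely packaged inside Lemma \ref{lem:superduper}: controlling the nonlocal quantity $DN(\varphi)[e^{2F_{\mu',\veps}}\varphi]$ uniformly in $\veps$ even though the weight $e^{2F_{\mu',\veps}}$ has growth rate $2\mu' = 2(\nu+\delta)$, which exceeds the a-priori rate $\nu$ we start from. That lemma handles it by writing $2F_{\mu',\veps} = F_{2\mu'-\nu,\veps} + 2F_{\nu,\veps}$, absorbing the factor $e^{2F_{\nu,\veps}}$ into $\varphi$ to form the a-priori-bounded $\psi_\veps = e^{F_{\nu,\veps}}\varphi$, and then using the exponentially twisted bilinear/$L^\gamma_\mu$ estimates (inequality \eqref{eq:exchange}, Definition \ref{def:L}) to show the leftover excess weight $F_{2\mu'-\nu,\veps}$ gets divided by $\gamma-1$ inside the nonlinearity — so the condition needed is $2\mu'-\nu \le (\gamma_1-1)\nu$, i.e. $\mu'\le\frac{\gamma_1}{2}\nu$. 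Thus once Lemma \ref{lem:superduper} is granted, the proof of the proposition is the short chain above; the only thing one must check carefully is that the two numerical conditions in the statement of the proposition are exactly what is required to invoke both the positivity of the spectral gap (needs $\nu+\delta<\ol\nu$) and Lemma \ref{lem:superduper} (needs $\delta\le\frac{\gamma_1-2}{2}\nu$).
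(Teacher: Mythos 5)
Your proof is correct and follows the paper's argument step for step: set $\nu_1=\nu+\delta$, use the positivity of the gap $|\omega|-2\dav(\cosh\nu_1-1)>0$ together with \eqref{eq:super}, observe that $\delta\le\frac{\gamma_1-2}{2}\nu$ is exactly the hypothesis $\nu_1\le\frac{\gamma_1}{2}\nu$ needed for Lemma \ref{lem:superduper}, and conclude by monotone convergence. The extra paragraph explaining the internal mechanism of Lemma \ref{lem:superduper} is accurate but not required once that lemma is taken as given.
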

\begin{proof} Let $\nu_1 \coloneq \nu+\delta<\ol{\nu}$ . Then $|\omega|-2\dav(\cosh(\nu_1)-1)>0$ and \eqref{eq:super} shows
 \begin{align}\label{eq:super1}
 	\|e^{F_{\nu_1,\veps}}\varphi\|_2^2
 	\lesssim
 		|DN(\varphi)[e^{2F_{\nu_1,\veps}}\varphi]| .
 \end{align}
 Since the condition $\delta\le \frac{\gamma_1-2}{2}\nu$ is equivalent to $\nu_1\le \frac{\gamma_1}{2}\nu$, \eqref{eq:super1},  Lemma \ref{lem:superduper}, and the monotone convergence theorem yield
 \begin{align*}
 	\|e^{F_{\nu_1,0}}\varphi\|_2^2 =\lim_{\veps\to0}\|e^{F_{\nu_1,\veps}}\varphi\|_2^2
 	\le \limsup_{\veps\to0 } |DN(\varphi)[e^{2F_{\nu_1,\veps}}\varphi]| <\infty
 \end{align*}
 which proves the claim.
\end{proof}

Now we come to the
\begin{proof}[Proof of Theorem \ref{thm:exponential decay}] {}From Proposition \ref{prop:some exponential decay} we know that
\begin{align}
	\nu_*:= \sup\left\{ \nu>0|\, (x\mapsto e^{\nu|x|}\varphi(x))\in l^2(\Z) \right\} >0 .
\end{align}
	In order to prove the lower bound \eqref{eq:exponential decay rate}, let us assume that, in the contrary, $0< \nu_*<\ol{\nu} $, where $\ol{\nu}$ is given by \eqref{eq:olnu}. Take any $0<\nu_0<\nu_*$ and choose
	\begin{align*}
		\delta= \delta_{\nu_0}\coloneq \min\left( \frac{\ol{\nu}-\nu_0}{2}, \frac{\gamma_1-2}{2}\nu_0 \right) .
	\end{align*}
	Then Proposition \ref{prop:exponential boost} shows $e^{\nu|\cdot|}\varphi\in l^2(\Z)$ for $\nu=  \nu_0+\delta_{\nu_0}$, that is,
	\begin{align*}
		\nu_0+\delta_{\nu_0} \le \nu_*\quad \text{for any }0<\nu_0<\nu_*
	\end{align*}
	by the definition of $\nu_*$. However, since we assumed $0<\nu_*<\ol{\nu}$ and $\gamma_1>2$ we have
	$\frac{\ol{\nu}+\nu_*}{2} > \nu_*$ and  $\frac{\gamma_1}{2}\nu_*>\nu_*$. Thus
	\begin{align*}
		\nu_0+\delta_{\nu_0} = \min\left( \frac{\ol{\nu}+\nu_0}{2}, \frac{\gamma_1}{2}\nu_0 \right) \rightarrow \min \left(  \frac{\ol{\nu}+\nu_*}{2}, \frac{\gamma_1}{2}\nu_*\right) >\nu_*
		\quad \text{as } \nu_0\nearrow \nu_*
	\end{align*}
	which is a contradiction. So $\nu_*\ge \ol{\nu}$.
\end{proof}

\section{Super-exponential decay for zero average diffraction}\label{sec:super-exponential decay for zero average diffraction}
In this section, we show that any solution $\varphi\in \l^2(\Z)$ of \eqref{eq:GT} for zero average diffraction decays super-exponentially, with an explicit lower bound on the decay rate.  We are guided by the approach of \cite{HuLee 2012} and  follow in part their argument, however, we also need to make substantial modifications. Similar to \cite{HuLee 2012}, we focus on the tail distribution $\beta$ of $\varphi$, where
 \beq\label{eq:beta}
 \beta(n):=\left(\sum _{|x|\geq n}|\varphi(x)|^2\right)^{1/2}
 \eeq
 for $n\in \N_0$. Our main tool for showing this very fast decay is the following self-consistency bound on the tail distribution $\beta$, which generalizes the one in \cite{HuLee 2012}. This bound will be important for establishing \emph{some} super-expoential decay in Section \ref{subsec:some super-exp decay}, as well as \emph{boosting} it to the lower bound in Section \ref{subsec:boost super-exp decay}, which together will yield the proof of Theorem  \ref{thm:super-exp decay}.

\begin{proposition}[Self-consistency bound] \label{prop:self-consistency}
Assume that $V$ obeys the conditions of assumption \ref{ass:A1} and $\omega\neq 0$.
If $\varphi$ is a solution of \eqref{eq:GT} for $\dav=0$ then with $\theta\coloneq\gamma_1-1>1$ and for any $m,n\in \N_0$ and $0<\alpha<\frac{1}{2}$ the bound
\beq\label{eq:self-consistency}
\beta(n+m)\lesssim \beta(n)^{\theta}+(m+1)^{-\alpha(m+1)}
\eeq
holds where the implicit constant depends only on $\alpha$, $\omega$, and $\|\varphi\|_2$.
\end{proposition}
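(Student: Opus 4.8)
The plan is to test the weak equation \eqref{eq:GT-weak} (with $\dav=0$) against the shifted-and-cut-off function $h=\chi^2\varphi$, where $\chi=\chi_{n+m}$ is a cutoff which vanishes on $[-n,n]$ and equals $1$ outside $[-(n+m),n+m]$ — more precisely, take a plateau cutoff $\chi$ with $\chi=0$ on $\{|x|\le n\}$, $\chi=1$ on $\{|x|\ge n+m\}$, and $|D_\pm\chi|\lesssim (m+1)^{-1}$ (a linear ramp on the annulus works, with the convention that $m=0$ gives a sharp cutoff). Since $\dav=0$, the kinetic term drops out entirely, so from \eqref{eq:GT-weak} and Remark~\ref{rem:DN} one gets the clean identity
\begin{align*}
	|\omega|\,\|\chi\varphi\|_2^2 = |\omega|\,|\langle \chi^2\varphi,\varphi\rangle|
		= |DN(\varphi)[\chi^2\varphi]|
		\le \int_\R \|T_r(\chi^2\varphi)\,V'(|T_r\varphi|)\|_1\,\mu(dr)
		\lesssim L^{\gamma_1}_\mu(\chi^2\varphi,\varphi) + L^{\gamma_2}_\mu(\chi^2\varphi,\varphi),
\end{align*}
using \eqref{eq:DNbound0}–\eqref{eq:DNbound}. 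Since $\beta(n+m)^2 = \sum_{|x|\ge n+m}|\varphi(x)|^2 \le \|\chi\varphi\|_2^2$, a lower bound $\beta(n+m)^2 \le |\omega|^{-1}(L^{\gamma_1}_\mu + L^{\gamma_2}_\mu)$ is what we must post-process.

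The second step is to estimate $L^\gamma_\mu(\chi^2\varphi,\varphi)$ for $\gamma\in\{\gamma_1,\gamma_2\}$ by splitting $\varphi$ in the second slot according to the geometry. Write $\varphi = \varphi_{\mathrm{in}} + \varphi_{\mathrm{out}}$ where $\varphi_{\mathrm{out}} := \id_{\{|x|\ge n\}}\varphi$ (so $\|\varphi_{\mathrm{out}}\|_2=\beta(n)$) and $\varphi_{\mathrm{in}} := \id_{\{|x|< n\}}\varphi$. Using $\|T_rf_2\|_\infty\le\|f_2\|_2=\|\varphi\|_2$ to peel off $\gamma-2$ factors as in the proof of Lemma~\ref{lem:DNsimple}, and then the bilinear bound \eqref{eq:bilinear-estimate-2} / Lemma~\ref{lem:bilinear-estimate} for the remaining $\|T_r(\chi^2\varphi)\,T_r\varphi_{\bullet}\|_1$ term, one gets two contributions:
\begin{align*}
	L^\gamma_\mu(\chi^2\varphi,\varphi) \lesssim \|\varphi\|_2^{\gamma-2}\Big( \|\chi^2\varphi\|_2\,\|\varphi_{\mathrm{out}}\|_2 + (m+1)^{-\alpha(m+1)}\|\chi^2\varphi\|_2\,\|\varphi_{\mathrm{in}}\|_2 \Big) \lesssim \|\varphi\|_2^{\gamma-1}\Big( \beta(n) + (m+1)^{-\alpha(m+1)} \Big),
\end{align*}
where the decay factor in the second term comes from the fact that $\supp(\chi^2\varphi)\subset\{|x|\ge n\}$... wait — actually $\supp\chi\subset\{|x|>n\}$ and $\supp\varphi_{\mathrm{in}}\subset\{|x|<n\}$, so their separation is at least... hmm, they are adjacent, not separated by $m$. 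The correct split must instead separate using the annulus width: take $\varphi_{\mathrm{out}}:=\id_{\{|x|\ge n\}}\varphi$ as the "near" part absorbing $\beta(n)$ with no decay gain, but note $\chi^2\varphi$ is supported in $\{|x|\ge n+m/2\}$ if we place the ramp on the inner half of the annulus, while $\varphi_{\mathrm{in}}:=\id_{\{|x|<n\}}\varphi$ gives separation $\ge m/2$. So one should define $\chi$ to be supported in $\{|x|\ge n+\lceil m/2\rceil\}$ and equal $1$ on $\{|x|\ge n+m\}$, with $|D_\pm\chi|\lesssim (m+1)^{-1}$; then $\beta(n+m)^2\le\|\chi\varphi\|_2^2$ still holds, $\|\chi^2\varphi\|_2\le\beta(n+\lceil m/2\rceil)\le\beta(n)$, and the bilinear term against $\varphi_{\mathrm{in}}$ picks up $(m/2+1)^{-\alpha m/2}$, which is $\lesssim(m+1)^{-\alpha'(m+1)}$ for a slightly smaller $\alpha'$ — and since $\alpha\in(0,1/2)$ is arbitrary this is harmless after relabeling.

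Combining, $|\omega|\beta(n+m)^2 \lesssim (\|\varphi\|_2^{\gamma_1-1}+\|\varphi\|_2^{\gamma_2-1})(\beta(n) + (m+1)^{-\alpha(m+1)})$. This gives $\beta(n+m) \lesssim \beta(n)^{1/2} + (m+1)^{-\alpha(m+1)/2}$, which is \emph{weaker} than \eqref{eq:self-consistency} — we need the exponent $\theta=\gamma_1-1>1$, not $1/2$, on $\beta(n)$. The gain must come from using $\varphi_{\mathrm{out}}$ more efficiently: in the term $\|\varphi\|_2^{\gamma-2}\|\chi^2\varphi\|_2\|\varphi_{\mathrm{out}}\|_2$, the factor $\|\varphi\|_2^{\gamma-2}$ is wasteful — when the second slot is localized to $\{|x|\ge n\}$, \emph{all} the $\gamma-1$ copies of $T_r\varphi$ beyond the first should see $\varphi_{\mathrm{out}}$, giving $\|T_r\varphi_{\mathrm{out}}\|_\infty^{\gamma-2}\le\|\varphi_{\mathrm{out}}\|_2^{\gamma-2}=\beta(n)^{\gamma-2}$ there, hence $\beta(n)^{\gamma-2}\cdot\|\chi^2\varphi\|_2\cdot\beta(n)\lesssim\beta(n)^{\gamma-1}$; keeping the dominant $\gamma=\gamma_1$ (since $\beta\le\|\varphi\|_2$ is bounded, $\beta^{\gamma_2-1}\lesssim\beta^{\gamma_1-1}$) yields exactly the $\beta(n)^\theta$ term. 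The key algebraic point enabling this is $V'(|T_r\varphi|) = V'(|T_r\varphi_{\mathrm{in}}+T_r\varphi_{\mathrm{out}}|)$ and the splitting estimate of Lemma~\ref{lem:V-splitting}-type applied to $V'$: write $V'(|T_r\varphi|)\sgn{T_r\varphi}$, bound $|V'(|T_r\varphi|)|\lesssim|T_r\varphi|^{\gamma_1-1}+|T_r\varphi|^{\gamma_2-1}\lesssim(|T_r\varphi_{\mathrm{in}}|+|T_r\varphi_{\mathrm{out}}|)^{\gamma-1}$, expand, and route factors to whichever of $\varphi_{\mathrm{in}},\varphi_{\mathrm{out}}$ produces either a $\beta(n)$ power or, when paired with $\chi^2\varphi$, a decay-in-$m$ factor.

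\textbf{Main obstacle.} The delicate point is the bookkeeping in the last step: one must show that in \emph{every} monomial arising from expanding $(|T_r\varphi_{\mathrm{in}}|+|T_r\varphi_{\mathrm{out}}|)^{\gamma-1}$ and pairing against $\chi^2\varphi$, either (i) at least $\gamma-1=\theta$ factors of $\varphi_{\mathrm{out}}$ appear — giving $\beta(n)^\theta$ after using $\|\chi^2\varphi\|_2\le\beta(n)$ is \emph{not} needed, just $\|\varphi_{\mathrm{out}}\|_\infty\le\beta(n)$ on $\gamma-1$ slots and $\|\chi^2\varphi\,T_r\varphi_{\mathrm{out}}\|_1\lesssim\|\varphi\|_2\beta(n)$ on the remaining bilinear slot, total $\beta(n)^{\gamma-1}$ — or (ii) at least one factor of $\varphi_{\mathrm{in}}$ pairs with $\chi^2\varphi$ through the bilinear estimate, producing the $(m+1)^{-\alpha(m+1)}$ decay (the other slots bounded by $\|\varphi\|_2$). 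Making this dichotomy airtight — and checking that the genuinely sharp bilinear constant from Lemma~\ref{lem:bilinear-estimate}, converted via the remark to $s^{-\alpha s}$, feeds through with $s$ comparable to $m$ — is the real content; the rest is the routine estimates already packaged in Section~\ref{sec:nonlinear Estimates}.
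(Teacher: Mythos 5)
Your overall strategy — test the weak equation against a tail cutoff, split $\varphi$ into near and far pieces, and use the bilinear decay for the far piece — is exactly the paper's, and you correctly diagnose the issue that a naive $\|\varphi\|_2^{\gamma-2}$ for the sup factor only yields $\beta(n)^1$ rather than $\beta(n)^{\theta}$. But your proposed fix — expanding $(|T_r\varphi_{\mathrm{in}}|+|T_r\varphi_{\mathrm{out}}|)^{\gamma-1}$ into ``monomials'' and routing factors — has a real gap: for non-integer $\gamma$ there is no such expansion, and your casework dichotomy is not airtight as stated. The clean move, which the paper uses and which you miss, is the elementary inequality $|a+b|^{\gamma-1}\lesssim |a|^{\gamma-1}+|b|^{\gamma-1}$ (valid for any $\gamma\ge 2$), applied \emph{inside} the $L^\gamma_\mu$ functional: writing $\varphi=\varphi_>+\varphi_<$ with $\varphi_>=\chi_n\varphi$, this gives directly
\begin{align*}
  L^\gamma_\mu(\varphi_{\gg},\varphi)\ \lesssim\ L^\gamma_\mu(\varphi_{\gg},\varphi_>)+L^\gamma_\mu(\varphi_{\gg},\varphi_<),
\end{align*}
and each summand is then a single application of Lemma~\ref{lem:Ltwisted} with $F=0$: the near term yields $\|\varphi_{\gg}\|_2\,\|\varphi_>\|_2^{\gamma-1}=\beta(n+m)\beta(n)^{\gamma-1}$ (here the $\gamma-1$ power of $\beta(n)$ appears automatically, since after the split \emph{all} $\gamma-1$ nonlinearity factors see $\varphi_>$), while the far term, with $\dist(\supp\varphi_{\gg},\supp\varphi_<)\ge m+1$, yields $(m+1)^{-\alpha(m+1)}\beta(n+m)\|\varphi\|_2^{\gamma-1}$. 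Dividing by $\beta(n+m)$ and using $\beta(n)^{\gamma_2-1}\lesssim\beta(n)^{\gamma_1-1}$ finishes the argument — there is no delicate bookkeeping left once the split is done at the level of the $(\gamma-1)$-th power rather than term by term.

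Two smaller points. First, for $\dav=0$ the smooth cutoff $\chi^2$ with a ramp on the inner half of the annulus is unnecessary overhead: the paper simply takes the sharp cutoff $\varphi_{\gg}=\chi_{n+m}\varphi$ (with $\chi_l=\id_{\{|x|\ge l\}}$), which gives the exact identity $\la\varphi_{\gg},\varphi\ra=\beta(n+m)^2$ and avoids the $\alpha\mapsto\alpha'$ relabeling. Second, you should divide by $\beta(n+m)$ (not take square roots), since $\|\varphi_{\gg}\|_2=\beta(n+m)$ appears as the first-slot factor in every $L^\gamma_\mu$ bound; this is what produces the linear-in-$\beta(n+m)$ structure that matches the statement.
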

\begin{proof}
	If $\varphi$ is a solution of \eqref{eq:GT} with $\omega \neq 0$ and $\dav=0$, then
\begin{align*}
	\la  \varphi, g\ra = -\omega^{-1} DN(\varphi)[g]
\end{align*}
with $DN(\varphi)[g]$ from Remark \ref{rem:DN}. Now define the hard cutoff $\chi_l(x)\coloneq 1$ if $|x|\ge l$ and $\chi_l(x)= 0$ if $|x|\le l-1$ and choose
	$g= \varphi_{\gg}\coloneq\chi_l \vphi$ with $l=n+m$. Then \eqref{eq:DNbound} again shows
\begin{align*}
	\beta(n+m)^2 = \la \varphi_{\gg},\varphi\ra \lesssim L^{\gamma_1}_\mu(\varphi_{\gg},\varphi) +  L^{\gamma_2}_\mu(\varphi_{\gg},\varphi)
\end{align*}
and splitting $\varphi=\varphi_>+\varphi_<$ with $\varphi_>\coloneq \chi_n\varphi$ and $\varphi_<\coloneq \varphi-\varphi_>$, which has support in $[-(n-1),n-1]$ shows
\begin{align*}
	\beta(n+m)^2
		&\lesssim
		  L^{\gamma_1}_\mu(\varphi_{\gg},\varphi) +  L^{\gamma_2}_\mu(\varphi_{\gg},\varphi)\\
		&\lesssim
		 	L^{\gamma_1}_\mu(\varphi_{\gg},\varphi_>) +  L^{\gamma_2}_\mu(\varphi_{\gg},\varphi_>)
		 	+ L^{\gamma_1}_\mu(\varphi_{\gg},\varphi_<) +  L^{\gamma_2}_\mu(\varphi_{\gg},\varphi_<).
\end{align*}
Lemma \ref{lem:Ltwisted} for $F=F_{\nu,\veps}=0$ shows
\begin{align*}
	L^{\gamma}_\mu(\varphi_{\gg},\varphi_>)\lesssim \|\varphi_{\gg}\|_2 \|\varphi_{>}\|_2^{\gamma-1} = \beta(n+m)\beta(n)^{\gamma-1}
\end{align*}
and
\begin{align*}
	L^{\gamma}_\mu(\varphi_{\gg},\varphi_<)\lesssim (m+1)^{-\alpha(m+1)}\|\varphi_{\gg}\|_2 \|\varphi_{<}\|_2^{\gamma-1} \le  (m+1)^{-\alpha(m+1)}\beta(n+m)\beta(0)^{\gamma-1}
\end{align*}
for $\gamma= \gamma_1,\gamma_2$. Since $\beta(n)^{\gamma_2-1}\le \beta(0)^{\gamma_2-\gamma_1}\beta(n)^{\gamma_1-1}$, this finishes the proof.
\end{proof}
The self-consistency bound from Proposition \ref{prop:self-consistency} is our main tool to prove Theorem \ref{thm:super-exp decay}. Again, we split the argument, first we show some super-exponential decay and then we boost this.  The first part is, with considerable changes, similar to the approach in \cite{HuLee 2012}, but since the decay rate of  Theorem \ref{thm:super-exp decay} for $V(a)\sim |a|^2a$ is quite a bit better than in \cite{HuLee 2012}, we have to do much better in the second step.
\subsection{Some super--exponential decay}\label{subsec:some super-exp decay}
\begin{proposition}[Some super-exponential decay]\label{prop:some super-exp decay}
	Let $\beta$ be a decreasing non-negative function, vanishing at infinity,
	which obeys the self-consistency bound \eqref{eq:self-consistency} of
	Proposition \ref{prop:self-consistency} for some $\theta=\gamma_1-1>1$.
	Then there exists $\nu>0$ such that
 \bdm
    \beta(n)\lesssim (n+1)^{-\nu(n+1)} \quad \text{ for all } n\in\N_0\, .
 \edm
\end{proposition}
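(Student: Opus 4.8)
Since the statement only concerns the sequence $\beta$, the plan is to iterate the self-consistency bound \eqref{eq:self-consistency} along a sparse, carefully chosen set of scales. First dispose of the trivial case: if $\beta(n_*)=0$ for some $n_*$ then $\beta$ vanishes identically beyond $n_*$ and there is nothing to prove, so assume $\beta(n)>0$ for all $n$ and put $u(n):=\ln\bigl(1/\beta(n)\bigr)$, which is nondecreasing (as $\beta$ is decreasing) and tends to $+\infty$ (as $\beta\to 0$). Write $\theta:=\gamma_1-1>1$ and fix, say, $\alpha=\tfrac14$; then \eqref{eq:self-consistency} furnishes a constant $C\ge 1$ with $\beta(n+m)\le C\max\bigl(\beta(n)^{\theta},(m+1)^{-\alpha(m+1)}\bigr)$, and taking logarithms this reads
\[
  u(n+m)\ \ge\ -\ln C+\min\bigl(\theta\,u(n),\ \alpha(m+1)\ln(m+1)\bigr)\qquad\text{for all }m,n\in\N_0 .
\]

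Now build the scales. Choose $\Lambda_0$ so large that $\Lambda_0':=\Lambda_0-\tfrac{\ln C}{\theta-1}\ge 1$, and define $\Lambda_{k+1}:=\theta\Lambda_k-\ln C$, so that $\Lambda_0'\theta^{k}\le\Lambda_k\le\Lambda_0\theta^{k}$. Let $n_0$ be the first index with $u(n_0)\ge\Lambda_0$ (it exists since $u\to\infty$), let $m_k$ be the smallest integer with $\alpha(m_k+1)\ln(m_k+1)\ge\theta\Lambda_k$ (note $m_k\ge 1$ since $\theta\Lambda_k>0$, and $m_k\to\infty$), and set $n_{k+1}:=n_k+m_k$. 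A one-line induction on the displayed inequality then gives $u(n_k)\ge\Lambda_k$ for all $k$: granting it for $k$, both $\theta u(n_k)\ge\theta\Lambda_k$ and $\alpha(m_k+1)\ln(m_k+1)\ge\theta\Lambda_k$, so $u(n_{k+1})\ge-\ln C+\theta\Lambda_k=\Lambda_{k+1}$. The hypothesis $\theta>1$ enters crucially here, since it is what makes the recursion for $\Lambda_k$ amplify rather than saturate.

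The heart of the matter is to convert $u(n_k)\ge\Lambda_k$ into the super-exponential bound, which requires estimating $n_K$ from above in terms of $\Lambda_K$. Since the equation $x\ln x=y$ forces $x\asymp y/\ln y$, minimality in the definition of $m_k$ yields $m_k\asymp\Lambda_k/\ln\Lambda_k\asymp\theta^{k}/k$; because $\theta>1$ this sequence grows at a geometric rate, so $n_K-n_0=\sum_{k<K}m_k$ is comparable to its last term and hence $n_K\asymp\theta^{K}/K\asymp\Lambda_K/\ln\Lambda_K$. Inverting (with $K\asymp\ln\Lambda_K/\ln\theta$) gives $\Lambda_K\ge c\,n_K\ln n_K$ for some $c>0$ and all large $K$, i.e.\ $\beta(n_K)\le e^{-c\,n_K\ln n_K}$. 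For an arbitrary $n$ I would pick $K$ with $n_K\le n<n_{K+1}$ and use that $\beta$ is decreasing together with $n_{K+1}\le C'n_K$ (again a consequence of $m_K\asymp n_K\asymp\Lambda_K/\ln\Lambda_K$) to conclude $\beta(n)\le\beta(n_K)\le e^{-c\,n_K\ln n_K}\le (n+1)^{-\nu(n+1)}$ once $\nu$ is fixed small enough in terms of $c$ and $C'$; the finitely many remaining small $n$ are absorbed into the implicit constant since $\beta$ is bounded. The only genuine obstacle is the bookkeeping in the estimate $m_k\asymp\Lambda_k/\ln\Lambda_k$ and the ensuing $n_K\asymp\Lambda_K/\ln\Lambda_K$: a careless bound such as $m_k\lesssim\Lambda_k$ would deliver merely exponential decay $\beta(n)\lesssim e^{-cn}$, so one must retain the logarithmic gain from inverting $x\ln x=y$, which is precisely the mechanism producing the extra factor $\ln n$ in the exponent and hence genuine super-exponential decay.
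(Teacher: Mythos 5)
Your argument is correct, and it is genuinely different from the one in the paper. The paper works with a regularized weighted sup-norm $\|\beta\|_{\nu,\veps,\tau}=\sup_{l\ge\tau}H_{\nu,\veps}(l)\beta(l)$ (with $H_{\nu,\veps}(l)=(l+1)^{\nu(l+1)}/(1+\veps(l+1)^{\nu(l+1)})$), massages the self-consistency bound into the form $G(\|\beta\|_{\nu,\veps,\tau})\le R_2(\tau)$ with $G(u)=u-Cu^\theta$, and then runs an intermediate-value-theorem dichotomy in $\veps$: since $u\mapsto G(u)$ has a single hump, $\|\beta\|_{\nu,\veps,\tau}$ must stay in $[0,u_1]$ or in $[u_2,\infty)$ for all $\veps$, and the first branch is forced by evaluating at $\veps=1$ (where the weight is bounded by one). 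Passing $\veps\to 0$ (Lemma \ref{lem:H}) finishes. Your route instead passes to $u=\ln(1/\beta)$, constructs a sparse increasing sequence $n_{k+1}=n_k+m_k$ at which the two competing terms in the self-consistency bound are balanced, and shows by a one-step induction that $u(n_k)\ge\Lambda_k$ with $\Lambda_k$ growing geometrically (the fixed-point shift $\Lambda_0'=\Lambda_0-\tfrac{\ln C}{\theta-1}$ is exactly what is needed for the recursion $\Lambda_{k+1}=\theta\Lambda_k-\ln C$ to amplify). The asymptotic inversion $m_k\asymp\Lambda_k/\ln\Lambda_k\asymp\theta^k/k$ is what produces the extra $\ln n$ in the exponent, as you correctly emphasize; with the cruder $m_k\lesssim\Lambda_k$ one would only get exponential decay. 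Your approach dispenses entirely with the $\veps$-regularization, the continuity Lemma \ref{lem:H}, and the IVT dichotomy, and is in that sense more elementary; the price is somewhat fussier bookkeeping in the estimates $n_K\asymp\theta^K/K$ and $n_{K+1}\le C'n_K$, and the weighted-norm machinery the paper sets up is reused in the subsequent boosting step (Section \ref{subsec:boost super-exp decay}), so the paper's extra investment pays off later. Two small points worth tightening if you write this out: (i) the sum $\sum_{k<K}m_k$ is comparable to $\theta^K/K$ but not literally dominated by its last term — splitting the sum at $K/2$ is the clean way to see the upper bound; (ii) you need $\beta$ bounded (which is automatic here, since $\beta$ is decreasing with $\beta(0)=\|\varphi\|_2<\infty$) to absorb the finitely many small $n$ into the implicit constant.
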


\begin{corollary}[$=$ first step in the proof of Theorem \ref{thm:super-exp decay}]\label{cor:super-exp-decay}
Assume $\dav=0$ and $V$ obeys assumption \ref{ass:A1}. Then for any solution $\varphi$ of \eqref{eq:GT} with $\omega \neq 0$, there exists $\nu>0$ such that
 \bdm
    |\varphi(x)| \lesssim (|x|+1)^{-\nu(|x|+1)}
 \edm
for all $x\in \Z$.
\end{corollary}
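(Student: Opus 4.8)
\medskip

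The plan is to obtain the corollary as an immediate synthesis of the two preceding propositions, so almost all of the substance is borrowed from them. First I would take a solution $\varphi$ of \eqref{eq:GT} with $\dav=0$ and $\omega\neq 0$ and introduce its tail distribution $\beta$ exactly as in \eqref{eq:beta}, namely $\beta(n)=\big(\sum_{|x|\ge n}|\varphi(x)|^2\big)^{1/2}$ for $n\in\N_0$. Since $\varphi\in l^2(\Z)$, the quantities $\sum_{|x|\ge n}|\varphi(x)|^2$ are non-negative and non-increasing in $n$, and they tend to $0$ as $n\to\infty$, being the tails of the convergent series $\|\varphi\|_2^2$; hence $\beta$ is a decreasing non-negative function vanishing at infinity.

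Next I would invoke Proposition \ref{prop:self-consistency}. Since $\varphi$ solves \eqref{eq:GT} with $\dav=0$ and $\omega\neq 0$ and $V$ obeys assumption \ref{ass:A1}, so that $\gamma_1>2$ and therefore $\theta:=\gamma_1-1>1$, that proposition shows that $\beta$ satisfies the self-consistency bound \eqref{eq:self-consistency} with exponent $\theta=\gamma_1-1$ for any fixed $\alpha\in(0,\tfrac12)$, with an implicit constant depending only on $\alpha$, $\omega$, and $\|\varphi\|_2$. With this in hand, Proposition \ref{prop:some super-exp decay} applies directly to $\beta$: being a decreasing non-negative function vanishing at infinity that obeys \eqref{eq:self-consistency} with $\theta=\gamma_1-1>1$, it satisfies $\beta(n)\lesssim(n+1)^{-\nu(n+1)}$ for all $n\in\N_0$ and some $\nu>0$.

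Finally, I would convert the tail estimate into the pointwise one: for each $x\in\Z$ one has $|\varphi(x)|^2\le\sum_{|y|\ge|x|}|\varphi(y)|^2=\beta(|x|)^2$, hence $|\varphi(x)|\le\beta(|x|)\lesssim(|x|+1)^{-\nu(|x|+1)}$, which is precisely the claimed bound.

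I do not expect a genuine obstacle in the corollary itself; the real work lies entirely in the two cited propositions. Proposition \ref{prop:self-consistency} is where the highly nonlocal term $DN(\varphi)$ is controlled, by splitting $\varphi=\varphi_>+\varphi_<$ into its far and near parts and estimating the building blocks $L^{\gamma}_\mu$ through the exponentially twisted bilinear bound of Lemma \ref{lem:Ltwisted} (with zero exponential weight); and Proposition \ref{prop:some super-exp decay} is where the self-consistency inequality is iterated, so that the super-exponentially small term $(m+1)^{-\alpha(m+1)}$ forces the super-exponential decay of $\beta$. Once these are granted, the only care needed here is to verify that the $\beta$ of \eqref{eq:beta} meets the monotonicity and decay hypotheses of Proposition \ref{prop:some super-exp decay}, which is immediate.
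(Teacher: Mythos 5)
Your proof is correct and follows exactly the paper's route: apply Proposition \ref{prop:self-consistency} to get the self-consistency bound for $\beta$, feed this into Proposition \ref{prop:some super-exp decay}, and conclude via $|\varphi(x)|\le\beta(|x|)$. The paper's own proof is terser, but the substance is identical; your explicit verification that $\beta$ is decreasing, non-negative, and vanishes at infinity is a welcome bit of care the paper leaves implicit.
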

\bpf
Given Proposition \ref{prop:some super-exp decay}, this follows immediately from $|\varphi(x)|\le \beta(|x|)$, where $\beta$ is defined in \eqref{eq:beta}.
\epf

In order to prove Proposition \ref{prop:some super-exp decay}, for any $\nu \ge 0$,  define the weight $H_{\nu}$
by
\begin{align}\label{eq:H_nu}
	H_\nu(s)\coloneq (s+1)^{\nu(s+1)}
\end{align}
for $s\ge 0$ and its regularized version of $H_{\nu,\veps}$ given by
\begin{align}\label{eq:H_nu_veps}
	H_{\nu,\veps}(s)\coloneq \frac{H_\nu(s)}{1+\veps H_\nu(s)} = \frac{1}{H_\nu(s)^{-1}+\veps} .
\end{align}
for $s,\veps\ge 0$. We need some basic properties of $H_{\nu, \veps}$ given in
\begin{lemma} \label{lem:H}
 \itemthm\label{lem:H1} For any $\veps \ge 0$, the function $(\nu,s) \in \R^+\times\R^+ \mapsto H_{\nu,\veps} (s)$ is bounded above by $\veps^{-1}$.
  Moreover, the function $H_{\nu,\veps} (s)$ is  increasing in $s,\nu\ge 0$, decreasing in $\veps\ge 0$, and depends continuously on  $\nu,\veps,s\ge 0$. \\[0.2em]
 \itemthm\label{lem:H2} Let $0<\sigma<1$ and $0<\nu< \frac{\sigma}{8}$. Furthermore\footnote{Recall $\lfloor s\rfloor\coloneq \max \{k\in\Z\, |\, k\le s\}$.}, let
 $m\coloneq \lfloor \sigma(l+1)\rfloor$ for $l\in\N_0$. Then
 	\begin{align}\label{eq:H2}
 		H_\nu(l) (m+1)^{-\frac{m+1}{4}} \le \exp\Big(-(\frac{\sigma}{8}\ln(l+1) +\frac{1}{4e})(l+1)\Big) .
 	\end{align}
 \itemthm\label{lem:H3} Let $\nu\ge 0$, $\theta>1$, $0<\sigma<\frac{\theta-1}{\theta}$, then there exist a constant $C=C(\theta,\sigma,\nu)$ which is decreasing in $\theta$, increasing in $\sigma$ and $\nu$, such that with $n\coloneq l- \lfloor \sigma(l+1)\rfloor$
 \begin{align}\label{eq:H3}
 	H_{\nu,\veps}(l)\le C H_{\nu,\veps}(n)^\theta
 \end{align}
  for all $l\in\N_0$ and $0\le \veps\le 1$.   \\[0.2em]
 \itemthm\label{lem:H4} Let $\beta:\N_0\to\R $ be a bounded function and $\tau\in\N_0$. Then the map
 \begin{align*}
 	[0,\infty)\times (0,\infty)\ni(\nu,\veps)\mapsto \|\beta\|_{\nu,\veps,\tau}\coloneq \sup_{l\ge \tau} H_{\nu,\veps}(l)\beta(l)
 \end{align*}
 is continuous. \\[0.2em]
 \itemthm \label{lem:H5} For $0<\nu $, $\tau\in\N_0$, and an arbitrary bounded
 function $\beta:\N_0\to\R$
 \begin{align}\label{eq:H5}
    \|\beta\|_{\nu,0,\tau} = \lim_{\veps\to 0}  \|\beta\|_{\nu,\veps,\tau}
    =\sup_{0<\veps\le 1} \|\beta\|_{\nu,\veps,\tau} \,\, .
 \end{align}
\end{lemma}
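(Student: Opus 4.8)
The plan is to dispatch the five items in turn; each reduces to a short estimate once rewritten suitably, the only real work being the quantitative comparisons \eqref{eq:H2} and \eqref{eq:H3}. For (i), use the representation $H_\nu(s)=e^{\nu(s+1)\ln(s+1)}$ from \eqref{eq:H_nu}: since $\ln(s+1)\ge 0$ for $s\ge 0$, $H_\nu(s)$ is continuous in $(\nu,s)$, nondecreasing in each of $\nu\ge 0$ and $s\ge 0$, and $H_0\equiv 1$. By \eqref{eq:H_nu_veps}, $H_{\nu,\veps}(s)=\bigl(H_\nu(s)^{-1}+\veps\bigr)^{-1}$, so $H_{\nu,\veps}(s)\le\veps^{-1}$ at once, and since $x\mapsto(x^{-1}+\veps)^{-1}$ is increasing in $x$ while $\veps\mapsto(x^{-1}+\veps)^{-1}$ is decreasing, the claimed monotonicity in $s,\nu$ (increasing) and $\veps$ (decreasing) follows, as does joint continuity of $(\nu,\veps,s)\mapsto H_{\nu,\veps}(s)$ as a composition of continuous maps (continuity persisting at $\nu=0$ and at $\veps=0$).

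For (ii), take logarithms: \eqref{eq:H2} is equivalent to $\bigl(\nu+\tfrac{\sigma}{8}\bigr)(l+1)\ln(l+1)+\tfrac{1}{4e}(l+1)\le\tfrac{m+1}{4}\ln(m+1)$. Set $t=l+1\ge 1$. From $m=\lfloor\sigma t\rfloor$ one has $m+1>\sigma t$, so as soon as $\sigma t\ge 1$, monotonicity of $x\mapsto x\ln x$ on $[1,\infty)$ gives $\tfrac{m+1}{4}\ln(m+1)\ge\tfrac{\sigma t}{4}\ln(\sigma t)=\tfrac{\sigma t}{4}\ln t-\tfrac{\sigma t}{4}\abs{\ln\sigma}$. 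Using $\nu<\sigma/8$, the displayed inequality then follows from $\bigl(\tfrac{\sigma}{8}-\nu\bigr)\ln t\ge\tfrac{1}{4e}+\tfrac{\sigma}{4}\abs{\ln\sigma}$, which holds for all $t$ beyond a threshold $t_0(\sigma,\nu)$ since the left side is a positive multiple of $\ln t$. The finitely many remaining values of $l$ are handled directly, the left side of \eqref{eq:H2} being bounded there by a constant depending only on $\sigma$ and $\nu$. I expect the delicate point to be exactly this balance: the gap $\tfrac{\sigma}{8}-\nu$ must simultaneously absorb the logarithmic loss $\abs{\ln\sigma}$ and the linear term $\tfrac{1}{4e}t$.

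For (iii), first treat $\veps=0$, where \eqref{eq:H3} reads $H_\nu(l)\le C\,H_\nu(n)^\theta$ with $n=l-\lfloor\sigma(l+1)\rfloor$, so $n+1\ge(1-\sigma)(l+1)$. The hypothesis $\sigma<\tfrac{\theta-1}{\theta}$ is exactly $\theta(1-\sigma)>1$; writing $\theta(1-\sigma)=1+\delta$ with $\delta>0$ and taking logarithms, it suffices that $\delta(l+1)\ln(l+1)+(1+\delta)(l+1)\ln(1-\sigma)\ge-\nu^{-1}\ln C$, and for $l$ large the term $\delta(l+1)\ln(l+1)$ dominates the negative multiple of $(l+1)$; the finitely many small $l$ are absorbed into $C$ (using $H_\nu(n)\ge 1$). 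Tracking this construction, or replacing $C$ by the optimal constant, gives the monotonicity of $C$ — decreasing in $\theta$ because a larger exponent can only help, increasing in $\sigma$ because larger $\sigma$ shrinks $n$, increasing in $\nu$ because the excess $H_\nu(l)/H_\nu(n)^\theta$ grows with $\nu$ wherever it exceeds $1$. To pass to $0<\veps\le 1$ from $H_\nu(l)\le C'H_\nu(n)^\theta$, split on whether $\veps H_\nu(n)\le 1$ or $\veps H_\nu(n)>1$: in the first case $H_{\nu,\veps}(n)\ge\tfrac12 H_\nu(n)$, so $H_{\nu,\veps}(l)\le H_\nu(l)\le C'H_\nu(n)^\theta\le C'2^\theta H_{\nu,\veps}(n)^\theta$; in the second case $H_{\nu,\veps}(l)\le\veps^{-1}\le\veps^{-\theta}$ (since $0<\veps\le 1$ and $\theta>1$) while $H_{\nu,\veps}(n)\ge(2\veps)^{-1}$, so $H_{\nu,\veps}(l)\le 2^{\theta}H_{\nu,\veps}(n)^\theta$. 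Hence \eqref{eq:H3} holds with $C=2^{\theta}\max(C',1)$.

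For (v), we may take $\beta\ge 0$ (it is a tail function). For each fixed $l$, $H_{\nu,\veps}(l)\uparrow H_\nu(l)$ as $\veps\downarrow 0$, so $\|\beta\|_{\nu,\veps,\tau}$ is nondecreasing as $\veps\downarrow 0$, whence $\lim_{\veps\to 0}\|\beta\|_{\nu,\veps,\tau}=\sup_{0<\veps\le 1}\|\beta\|_{\nu,\veps,\tau}\le\|\beta\|_{\nu,0,\tau}$; conversely, for each $l$, $\lim_{\veps\to 0}\|\beta\|_{\nu,\veps,\tau}\ge\lim_{\veps\to 0}H_{\nu,\veps}(l)\beta(l)=H_\nu(l)\beta(l)$, and taking the supremum over $l\ge\tau$ yields the reverse inequality, proving \eqref{eq:H5}; this is just monotone convergence for suprema and needs no uniformity. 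For (iv), fix a compact $K\subset[0,\infty)\times(0,\infty)$ with $\veps\ge\veps_0>0$ on $K$. By (i), $H_{\nu,\veps}(l)\le\veps_0^{-1}$ on $K$, so $\sup_{l>L}H_{\nu,\veps}(l)\beta(l)\le\veps_0^{-1}\sup_{l>L}\abs{\beta(l)}=:\rho_L$, with $\rho_L\to 0$ as $L\to\infty$ (using that in the applications $\beta$ vanishes at infinity; away from $\nu=0$ one may alternatively use that $H_{\nu,\veps}(l)\to\veps^{-1}$ uniformly). Since $\max_{\tau\le l\le L}H_{\nu,\veps}(l)\beta(l)$ is continuous and $\|\beta\|_{\nu,\veps,\tau}$ differs from it by at most $\rho_L$ on $K$, $\|\beta\|_{\nu,\veps,\tau}$ is a uniform limit on $K$ of continuous functions, hence continuous. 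The only point requiring care here is precisely this uniform truncation of the supremum over $l$.
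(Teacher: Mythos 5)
Your approach to (ii) has a genuine gap. The log-reduction is correct: after dividing by $t=l+1$ and using $\tfrac{m+1}{4}\ln(m+1)\ge\tfrac{\sigma t}{4}\ln(\sigma t)$, the inequality reduces to $(\tfrac{\sigma}{8}-\nu)\ln t\ge\tfrac{1}{4e}+\tfrac{\sigma}{4}|\ln\sigma|$, which indeed holds only for $t$ beyond a threshold. You then claim the finitely many small $l$ are ``handled directly, the left side being bounded by a constant.'' Bounding the left side by a constant proves nothing about the inequality, and in fact \eqref{eq:H2} as written \emph{fails} at $l=0$: there $m=\lfloor\sigma\rfloor=0$, so the left side is $H_\nu(0)\cdot 1^{-1/4}=1$, while the right side is $e^{-1/(4e)}<1$. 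The threshold you ran into is a signal that the stated inequality is off, not something to hand-wave. The sign on $\tfrac{1}{4e}$ in \eqref{eq:H2} should be a minus; that is what the paper's proof actually derives (and what is used later): it bounds $(m+1)^{-(m+1)/4}\le(\sigma(l+1))^{-\sigma(l+1)/4}$, writes the product as $\exp\!\big(-\big((\tfrac{\sigma}{4}-\nu)\ln(l+1)+\tfrac{\sigma\ln\sigma}{4}\big)(l+1)\big)$, and uses the two term-by-term inequalities $\tfrac{\sigma}{4}-\nu\ge\tfrac{\sigma}{8}$ and $\tfrac{\sigma\ln\sigma}{4}\ge-\tfrac{1}{4e}$, each valid for \emph{all} $l\ge 0$, with no threshold. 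Once the sign is corrected your reduction closes too: the scalar inequality becomes $(\tfrac{\sigma}{8}-\nu)\ln t+\tfrac{1}{4e}\ge-\tfrac{\sigma\ln\sigma}{4}$, which holds for every $t\ge 1$ since $(\tfrac{\sigma}{8}-\nu)\ln t\ge 0$ and $\tfrac{1}{4e}\ge-\tfrac{\sigma\ln\sigma}{4}$.

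On (iii) and (iv) you take genuinely different routes from the paper. In (iii), you settle $\veps=0$ by a log estimate and extend to $\veps\in(0,1]$ via a case split on $\veps H_\nu(n)\lessgtr 1$, while the paper writes $g=H_{\nu,\veps}(l)/H_{\nu,\veps}(n)^\theta$ in closed form, computes $\partial_\veps g$, locates the supremum at the endpoints $\veps\in\{0,1\}$, and bounds each endpoint; your route is shorter, but both produce a constant carrying a $2^\theta$ factor, so neither literally delivers a $C$ decreasing in $\theta$ (the paper's appeal to $H_{\nu,\veps}\ge 1$ to see this is also doubtful, as $H_{\nu,\veps}(0)=1/(1+\veps)<1$; fortunately that monotonicity is never used). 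In (iv), you truncate the supremum over $l$ and therefore need $\beta$ to vanish at infinity -- a caveat you correctly flag, since for $\beta\equiv 1$ the map is genuinely discontinuous at $\nu=0$; the paper instead reparametrizes and invokes uniform continuity. Parts (i) and (v) match the paper's argument; both implicitly use $\beta\ge 0$ in (v), which you make explicit.
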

We will give the proof of this Lemma at the end of this section and come to the
\begin{proof}[Proof of Proposition \ref{prop:some super-exp decay}]
We need to show that for some $\nu>0$ and some $\tau\in\N$
\begin{align}
	\sup_{l\ge\tau} H_{\nu}(l)\beta(l) <\infty .
\end{align}
 	Let $\alpha=\frac{1}{4}$ and $\theta=\gamma_1-1>1$.  The self-consistency bound \eqref{eq:self-consistency} shows
 	\begin{align}\label{eq:great1}
 		\beta(l)\lesssim  \beta(n)^{\theta}+(m+1)^{-(m+1)/4}
 	\end{align}
 for all $l,m,n\in\N_0$ with $l=n+m$.

 We fix $\sigma=\frac{\theta-1}{2\theta}$ then $0<\sigma<1/2$ and we consider $0<\nu\le \sigma/8$, which we choose more precisely below, and let
 \begin{align*}
	m= \lfloor \sigma(l+1)\rfloor .
 \end{align*}
Multiplying \eqref{eq:great1} by $H_{\nu,\veps}(l)$ and using Lemma \ref{lem:H3} shows
  \begin{align*}
  	H_{\nu,\veps}(l)\beta(l)\lesssim  (H_{\nu,\veps}(n)\beta(n))^{\theta}+H_\nu(l)(m+1)^{-(m+1)/4}
  \end{align*}
  uniformly in $0<\veps\le 1$,  hence, for any $\tau \in \N_0$, since $n=l-m\ge \lfloor (1-\sigma)(\tau+1)-1\rfloor$ if $l\ge \tau$, also
  \begin{align*}
  	\|\beta\|_{\nu,\veps,\tau}
  		&= \sup_{l\ge\tau}H_{\nu,\veps}(l)\beta(l)
	  \lesssim
	  		\| \beta\|_{\nu,\veps,\tilde{\tau}}^\theta
	  		+ \sup_{l\ge \tau} H_\nu(l)(m+1)^{-(m+1)/4}
	\end{align*}
	where we introduced  $\tilde{\tau}\coloneq \lfloor (1-\sigma)(\tau+1)-1\rfloor$. Note
	\begin{align*}
		\|\beta\|_{\nu,\veps,\tilde{\tau}}^\theta
		&\le  \|\beta\|_{\nu,\veps,\tau}^\theta
			+ \max_{\tilde{\tau}\le n\le\tau-1} (H_{\nu,\veps}(n)\beta(n))^\theta
		\le  \|\beta\|_{\nu,\veps,\tau}^\theta  + H_\nu(\tau-1)^\theta \beta(\tilde{\tau})^\theta \\
		&= \|\beta\|_{\nu,\veps,\tau}^\theta + \tau^{\theta\nu\tau} \beta(\tilde{\tau})^\theta
	\end{align*}
	by the monotonicity of $H_{\nu,\veps}$ and $\beta$. So setting
	$R_1(\tau)\coloneq \sup_{l\ge \tau} e^{-[\frac{\sigma}{8}\ln(l+1) -\frac{1}{4e}](l+1)} $,  using  Lemma \ref{lem:H2} and $0<\nu\le \frac{\sigma}{8}$, we arrive at
	\begin{align}\label{eq:punchline0}
		\|\beta\|_{\nu,\veps,\tau}
  		&\le
  			C\left(\|\beta\|_{\nu,\veps,\tau}^\theta + \tau^{\theta\nu\tau} \beta(\tilde{\tau})^\theta
  				+ R_1(\tau)\right)
	\end{align}	
	for some universal constant $C$ \emph{independent} of $\tau\in\N$. Now let $\tau_1$ be so large that $\frac{1}{\tau\ln\tau}\le \frac{\sigma}{8}$ for all $\tau\ge\tau_1$.  Choosing $\nu\coloneq \frac{1}{\tau\ln\tau}$ gives $\tau^{\theta\nu\tau}=e^\theta$ and with
	\begin{align*}
		G(u)\coloneq u- C u^\theta \quad\text{ for } u\ge 0
	\end{align*}
 we see that \eqref{eq:punchline0} can be rewritten as
 \begin{align}\label{eq:punchline}
 		G( \|\beta\|_{\nu,\veps,\tau}) \le R_2(\tau)
 \end{align}
 for all $\tau\ge\tau_1$, where $\nu=\frac{1}{\tau\ln\tau}$ and
 $R_2(\tau):= C\left(e^\theta \beta(\tilde{\tau})
  				+ R_1(\tau)\right)$
  with $\tilde{\tau}\coloneq \lfloor (1-\sigma)(\tau+1)-1\rfloor$.

 Now the argument continues exactly as in \cite{HuLee 2012}, we will give it for the convenience of the reader: Certainly $G$ is continuous on $[0,\infty)$ with $G(0)=0$ and $\lim_{u\to\infty}G(u)=-\infty$. Also $G$ has a single strictly positive maximum on $[0,\infty)$, that is there exists a single $u_{\max}>0$ such that
  \begin{align*}
  	G_{\max} \coloneq G(u_{\max}) =\sup_{u\ge 0} G(u) >0
  \end{align*}
  and the inverse image of the set $[0,G_{\max}/2]$ under $G$ is given by
  \begin{align*}
  	G^{-1}([0,G_{\max}/2) = [0,u_1]\cup[u_2,\infty)
  \end{align*}
  for some $0<u_1<u_{\max}< u_2<\infty$.

 Note that $\lim_{\tau\to\infty} R_2(\tau)=0$ since $\beta$ and $R_1$ are going to zero at infinity and  $\lim_{\tau\to\infty}\tilde{\tau}=\infty$. Choose $\tau_2\ge\tau_1$ so large that $R_2(\tau)\le G_{\max}/2$ for all $\tau\ge\tau_2$. Then \eqref{eq:punchline} shows that
 \begin{align}\label{eq:punch2}
 	\|\beta\|_{\nu,\veps,\tau} \in [0,u_1]\cup[u_2,\infty)
 \end{align}
 for all $\tau\ge \tau_2$ and  all $0<\veps\le 1$, as long as $\nu=\frac{1}{\tau\ln\tau}$.

 Step 1: Because of Lemma  \ref{lem:H4}, for fixed $\nu,\tau>0$, the map $0<\veps\mapsto \|\beta\|_{\nu,\veps,\tau} $ is continuous. So since $u_1<u_2$, the intermediate value theorem for continuous functions and \eqref{eq:punch2} show that we have, for all $\tau\ge\tau_2$ and $\nu=\frac{1}{\tau\ln\tau}$, the dichotomy
 \begin{align*}
 	\text{either } 0\le \|\beta\|_{\nu,\veps,\tau} \le u_1 \text{ for all } 0<\veps\le 1,
 	\text{ or }  u_2\le \|\beta\|_{\nu,\veps,\tau} \text{ for all }0<\veps\le 1.
 \end{align*}

 Step 2: Since $H_{\tau,1}\le 1$ by Lemma \ref{lem:H1}, we have $\|\beta\|_{\nu,1,\tau}= \beta(\tau)\to0$ as $\tau\to\infty$. So we can choose $\tau\ge \tau_2$ so large such that
 $\|\beta\|_{\nu,1,\tau}\le  u_1$. For this $\tau$ we have from Step 1 that
 \begin{align*}
 	\|\beta\|_{\nu,\veps,\tau} \le u_1 \text{ for all } 0<\veps\le 1 ,
 \end{align*}
 where $\nu=\frac{1}{\tau\ln\tau}>0$.
 Thus also
 \begin{align*}
 	\|\beta\|_{\nu,0,\tau} = \lim_{\veps\to 0} \|\beta\|_{\nu,\veps,\tau} \le u_1 <\infty
 \end{align*}
 by Lemma \ref{lem:H5}. This finishes the proof of
 Proposition \ref{prop:some super-exp decay}.
\end{proof}
Now we come to the
\begin{proof}[Proof of Lemma \ref{lem:H}]
  Part (i) is clear from the definition of $H_{\nu,\veps}$. For part (ii) we note that
  for fixed  $0<\sigma<1$ and  $m\coloneq \lfloor \sigma(l+1)\rfloor$, one has
  $m\le \sigma(l+1)<m+1$, hence
  \begin{align*}
  	H_\nu(l)(m+1)^{-(m+1)/4}
  		&\le (l+1)^{\nu(l+1)} \big(\sigma(l+1) \big)^{-\sigma(l+1)/4} \\
  	 	&=  \exp\left( -\big( (\frac{\sigma}{4}-\nu)\ln(l+1) +\frac{\sigma \ln\sigma}{4} \big)(l+1)\right) \\
  	 	&\le \exp\left( -\big( \frac{\sigma}{8}\ln(l+1) -\frac{1}{4e} \big)(l+1)\right)
  \end{align*}
 since $0<-\sigma\ln\sigma\le e^{-1}$ for all $0 < \sigma < 1$ and $\nu\le \sigma/8$. This proves \eqref{eq:H2}.

 Of course, \eqref{eq:H3} holds with constant
 \begin{align*}
 	C:= \sup_{l\in \N_0}\sup_{0\le \veps\le 1} g(n,l,\veps)
 \end{align*}
 where $n= l- \lfloor \sigma(l+1) \rfloor$ and
 \begin{align*}
 	g(n,l,\veps)\coloneq \frac{H_{\nu,\veps}(l)}{H_{\nu,\veps}(n)^\theta}
 	= \frac{(H_\nu(n)^{-1}+\veps)^\theta}{H_{\nu}(l)^{-1}+\veps}
 \end{align*}
 where we droped, for simplicity of notation, the dependence of $g$ on $\theta$ and $\nu$.
 Since $H_{\nu,\veps}\ge 1$, $g$ is certainly decreasing in $\theta>1$, and so is $C$.
 A simple computation shows
 \begin{align*}
 	\frac{\partial}{\partial\veps} g(n,l,\veps)
 	= \frac{(H_\nu(n)^{-1}+\veps)^{\theta-1}}{(H_{\nu}(l)^{-1}+\veps)^{2}}
 	\left( \theta H_\nu(l)^{-1} -H_\nu(n)^{-1} + (\theta-1)\veps \right) .
 \end{align*}
 Since $n\le l$ and $\theta>1$, the map $0\le\veps\mapsto \theta H_\nu(l)^{-1} -H_\nu(n)^{-1} + (\theta-1)\veps $ is either positive for all $\veps\ge 0$, or it is negative for small and  positive for large $\veps$, with a single zero for some $\veps>0$. Thus the map
 $0\le \veps\mapsto g(n,l,\veps)$ is either increasing in $\veps\ge 0$, or it decreasing for small and increasing for large $\veps\ge 0$, with a single minimum at some $\veps>0$ and no maximum in $(0,\infty)$. Thus the supremum of $g(n,l,\veps)$ over $0\le \veps\le 1$ is attained at the boundary,
 \begin{align*}
 	\sup g(n,l,\veps) = \max(g(n,l,0), g(n,l,1))
 \end{align*}
  for all $0\le n\le l$. We have
  \begin{align*}
  	g(n,l,1) = \frac{(H_\nu(n)^{-1}+1)^\theta}{H_{\nu}(l)^{-1}+1}\le 2^\theta
  \end{align*}
  for all $n,l\in\N_0$ and, because $n=l-\lfloor \sigma(l+1) \rfloor\ge l- \sigma(l+1) = (1-\sigma)(l+1)-1 $,
  \begin{align*}
  	g(n,l,0)
  		&\le  \frac{(l+1)^{\nu(l+1)}}{((1-\sigma)(l+1))^{\theta\nu(1-\sigma)(l+1)}} \\
  		&=
  			\exp\left( \nu\Big[(1-\theta(1-\sigma)) (l+1)\ln(l+1) - \theta(1-\sigma)\ln(1-\sigma)(l+1) \Big] \right) \\
  		&\le \exp\left( -\nu\Big[(\theta(1-\sigma)-1)\ln(l+1) - e^{-1}\theta \Big](l+1) \right)
  \end{align*}
  A short calculation reveals that for $a,b>0$ the maximum of $B(s)= -(a\ln s-b)s$ over $s>0$ is attained at $  	 B_{\max} = a e^{\frac{b}{a}-1} $
  so with $a= \theta(1-\sigma)-1$ and $b=e^{-1}\theta$ this shows
  \begin{align*}
  	g(n,l,0)
  	\le \exp\left( \nu (\theta(1-\sigma)-1) \exp\left( \frac{e^{-1}\theta}{(\theta(1-\sigma)-1)}-1 \right) \right)
  \end{align*}
  for all $l\in\N_0$ and with $n=l- \lfloor \sigma(l+1) \rfloor$ as long as $\theta(1-\sigma)>1$, which in turn is equivalent to $\sigma< \frac{\theta-1}{\theta}$. This proves  \eqref{eq:H3} and alos shows that the constant $C$ is increasing in $\nu$.

	To prove part (iv) note that because for the triangle inequality
	\begin{align*}
		|\|\beta\|_{\nu',\veps',\tau}-\|\beta\|_{\nu,\veps,\tau}|
		\le
			\sup_{l\in\N_0} \left|H_{\nu',\veps'}(l) - H_{\nu,\veps}(l)\right|
			\sup_{l\in\N_0}|\beta(l)|
	\end{align*}
	for all $\nu,\nu'\ge 0$ and $\veps,\veps'>0$. Note that
	\begin{align*}
		\sup_{l\in\N_0} \left|H_{\nu',\veps'}(l) - H_{\nu,\veps}(l)\right|
		\le \sup_{s\in[0,1]} \left| h(\nu',\veps',s) - h(\nu,\veps,s)  \right|
	\end{align*}
	with $h(\nu,\veps,s)\coloneq (s^{\nu s} +\veps)^{-1}$. The function $h$ is continuous on
	$[0,\infty)\times(0,\infty)\times [0,1]$ and thus uniformly continuous on
	$[0,\kappa^{-1}]\times[\kappa^{-1},\kappa]\times [0,1]$ for any $\kappa>0$.  Thus, for any $r>0$ there exist $\delta>0$ with $|h(\nu',\veps',s') - h(\nu,\veps,s) | \le r$ as long as $0\le \nu,\nu'\le \kappa$, $\kappa^{-1}\le \veps,\veps'\le \kappa$ and $0\le s,s'\le 1$ are such that
	$|\nu'-\nu|,|\veps'-\veps|, |s'-s|\le \delta$. Thus for these $\nu,\nu'$ and $\veps,\veps'$ also
	\begin{align*}
		\sup_{0\le s\le 1}| h(\nu',\veps',s) - h(\nu,\veps,s) | \le r .
	\end{align*}
	Hence also
		\begin{align*}
		\sup_{l\in\N_0} \left|H_{\nu',\veps'}(l) - H_{\nu,\veps}(l)\right|
		\le \sup_{s\in[0,1]} \left| h(\nu',\veps',s) - h(\nu,\veps,s)  \right|
		\le r
	\end{align*}
	for all $0\le \nu,\nu'\le \kappa$, $\kappa^{-1}\le \veps,\veps'\le \kappa$ with
	$|\nu'-\nu|,|\veps'-\veps| \le \delta$. Since $\kappa>1$ is arbitrary, this shows the continuity of $\|\beta\|_{\nu,\veps,\tau}$ in $\nu\ge 0$ and $\veps>0$.
	
	To prove the last claim, we simply note that $H_{\nu,\veps}$ is decreasing in $\veps>0$, so the map $0<\veps\mapsto \|\beta\|_{\nu,\veps,\tau}$ is decreasing. By the monotone convergence theorem and since one can interchange suprema, we get
	\begin{align*}
		\lim_{\veps\to 0} \|\beta\|_{\nu,\veps,\tau}
		&= \sup_{0<\veps\le 1}\|\beta\|_{\nu,\veps,\tau}
		= \sup_{0<\veps\le 1} \sup_{l\ge \tau }H_{\nu,\veps}(l)\beta(l) \\
		&= \sup_{l\ge \tau }\sup_{0<\veps\le 1}H_{\nu,\veps}(l)\beta(l)
		= \sup_{l\ge \tau }H_{\nu,0}(l)\beta(l)
		= \|\beta\|_{\nu,0,\tau}
	\end{align*}
	which proves \eqref{eq:H5} and finishes the proof of Lemma \ref{lem:H}.
\end{proof}

\subsection{Boosting the (super--exponential) decay rate}\label{subsec:boost super-exp decay}
\begin{proposition}[Boosting the super-exponential decay rate]
\label{prop:boost super-exp decay}
	Let $\beta$ be a non-negative function which obeys the self-consistency bound \eqref{eq:self-consistency} of
	Proposition \ref{prop:self-consistency} for some $\theta>1$ and some $0<\alpha<\frac{1}{2}$.
	Furthermore, assume that for
	some $\nu>0$ we have
	\begin{align*}
		\beta(l)\lesssim (l+1)^{-\nu(l+1)} \quad \text{ for all } l\ge0 .
	\end{align*}
	Then for all $0<\alpha_1<\alpha$,  setting $\nu_1\coloneq \frac{2\theta\alpha_1}{\alpha_1+\theta\nu}\nu$, we have
	\begin{align*}
		\beta(l)\lesssim (l+1)^{-\nu_1(l+1)} \quad \text{ for all } l\ge0  .
	\end{align*}
\end{proposition}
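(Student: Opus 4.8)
The plan is to iterate the self-consistency bound \eqref{eq:self-consistency} once, feeding in the assumed bound $\beta(l)\lesssim (l+1)^{-\nu(l+1)}$, and choosing the split point $n+m=l$ so that the two competing terms $\beta(n)^\theta$ and $(m+1)^{-\alpha(m+1)}$ are of comparable size. First I would write, for a parameter $0<\rho<1$ to be optimized, $n\coloneq \lfloor(1-\rho)(l+1)\rfloor$ and $m\coloneq l-n$, so that $m\sim\rho(l+1)$ and $n\sim(1-\rho)(l+1)$ up to additive $O(1)$ errors. Plugging the hypothesis into \eqref{eq:self-consistency} gives
\[
	\beta(l)\lesssim (n+1)^{-\theta\nu(n+1)} + (m+1)^{-\alpha(m+1)}
	\lesssim \big((1-\rho)(l+1)\big)^{-\theta\nu(1-\rho)(l+1)} + \big(\rho(l+1)\big)^{-\alpha\rho(l+1)} ,
\]
where the implicit constants absorb the $O(1)$ shifts (using that $(s+c)^{-a(s+c)}\lesssim s^{-as}$ up to constants for fixed $a,c$, which is the kind of elementary estimate already used in Lemma \ref{lem:H}). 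Taking logarithms, each term is of the form $\exp(-(l+1)[c_i\ln(l+1) + O(1)])$ with $c_1=\theta\nu(1-\rho)$ and $c_2=\alpha\rho$; hence $\beta(l)\lesssim (l+1)^{-\min(c_1,c_2)(l+1)}$ after again absorbing the lower-order $O(1)\cdot(l+1)$ terms into the constant (they only cost a fixed multiplicative factor since $(l+1)^{O(1)}$ is dominated once the leading logarithmic rate is positive).

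Next I would optimize the rate $\min\big(\theta\nu(1-\rho),\,\alpha\rho\big)$ over $\rho\in(0,1)$: the two expressions are equal when $\theta\nu(1-\rho)=\alpha\rho$, i.e. $\rho=\frac{\theta\nu}{\alpha+\theta\nu}$, giving the balanced value $\frac{\alpha\theta\nu}{\alpha+\theta\nu}$. To get a clean statement avoiding boundary subtleties with the floor function and the $O(1)$ absorptions, I would instead fix any $0<\alpha_1<\alpha$, choose $\rho\coloneq\frac{\theta\nu}{\alpha_1+\theta\nu}\in(0,1)$, and check that with this choice $\theta\nu(1-\rho)=\frac{\alpha_1\theta\nu}{\alpha_1+\theta\nu}=:\tfrac12\nu_1$ and $\alpha\rho = \frac{\alpha\theta\nu}{\alpha_1+\theta\nu}>\frac{\alpha_1\theta\nu}{\alpha_1+\theta\nu}=\tfrac12\nu_1$, so that $\min(c_1,c_2)=\tfrac12\nu_1$ strictly exceeds what is needed and leaves room to swallow the $O(1)$ errors: more precisely, for all $l$ large enough $c_i\ln(l+1)+O(1)\ge \nu_1\ln(l+1)$ once we have the strict inequality and the extra factor $2$, while for the finitely many small $l$ the bound $\beta(l)\lesssim(l+1)^{-\nu_1(l+1)}$ holds trivially by adjusting the constant. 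This yields exactly $\nu_1=\frac{2\theta\alpha_1}{\alpha_1+\theta\nu}\nu$ as claimed.

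The only real care needed — and the step I'd expect to be fiddly rather than genuinely hard — is the bookkeeping of the floor-function shifts and the absorption of the subexponential prefactors: one must make sure that replacing $n=\lfloor(1-\rho)(l+1)\rfloor$ by $(1-\rho)(l+1)$ and $m$ by $\rho(l+1)$ costs only a bounded multiplicative constant in each of the two exponential terms, and that the $-\theta(1-\sigma)\ln(1-\sigma)\,(l+1)$-type linear-in-$(l+1)$ corrections (exactly as handled in the proof of Lemma \ref{lem:H}\ref{lem:H3}) are dominated once the leading coefficient is strictly positive — which is guaranteed by passing from $\alpha$ to $\alpha_1<\alpha$ and carrying the spare factor of $2$. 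No concentration-compactness or functional-analytic input is needed here; it is a one-step bootstrap of an elementary recursive inequality.
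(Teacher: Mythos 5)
Your setup follows the paper's route: split $l=n+m$ with $m\approx\rho(l+1)$, plug the hypothesis into \eqref{eq:self-consistency}, and note that each resulting term is $\exp\bigl(-c_i(l+1)\ln(l+1)+O(l+1)\bigr)$ with $c_1=\theta\nu(1-\rho)$, $c_2=\alpha\rho$. (The correction is an $e^{O(l+1)}$ prefactor rather than $(l+1)^{O(1)}$, but your intended absorption by passing from $\alpha$ to $\alpha_1<\alpha$ does handle it.) This correctly gives $\beta(l)\lesssim(l+1)^{-c(l+1)}$ for every $c<\min(c_1,c_2)$.

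The gap is the final jump. A sum of two super-exponentially decaying terms is controlled by the \emph{larger} one, hence by the \emph{minimum} of the two decay rates; nothing lets you double that to $2\min(c_1,c_2)$. At your balance point $\rho=\frac{\theta\nu}{\alpha_1+\theta\nu}$ one has $\min(c_1,c_2)=c_1=\tfrac12\nu_1$, so your own derivation yields only $\beta(l)\lesssim(l+1)^{-c(l+1)}$ for $c<\tfrac12\nu_1$, not for $c=\nu_1$. Your assertion that ``$\min(c_1,c_2)=\tfrac12\nu_1$ strictly exceeds what is needed'' and the follow-up ``$c_i\ln(l+1)+O(1)\ge\nu_1\ln(l+1)$'' are both false: $c_1\ln(l+1)=\tfrac12\nu_1\ln(l+1)<\nu_1\ln(l+1)$ for large $l$, and a bounded additive error cannot repair a deficit growing like $\ln(l+1)$. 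You should also be aware that the paper's own displayed chain appears to carry the same factor-of-$2$ slip in a different disguise: the step marked ``$=$'' turns the \emph{sum} $\bigl((1-\sigma)(l+1)\bigr)^{-\theta\nu(1-\sigma)(l+1)}+\bigl(\sigma(l+1)\bigr)^{-\alpha\sigma(l+1)}$ into the \emph{product} of the two exponentials, whose exponent is $\theta\nu(1-\sigma)+\alpha\sigma$ rather than $\min(\theta\nu(1-\sigma),\alpha\sigma)$; but the product of two quantities in $(0,1)$ is a lower bound for their sum, not an upper bound. Unless the self-consistency bound can be upgraded to a multiplicative form, what both arguments actually prove is the proposition with $\nu_1$ replaced by $\nu_1/2$, which would bring the consequences in Corollary \ref{cor:super-exp decay rate lower bound} and Theorem \ref{thm:super-exp decay} down to $\nu_{**}\ge\tfrac12-\tfrac1{2\theta}$.
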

\begin{remark}
 $\nu_1>\nu$ is equaivalent to $\nu<\frac{2\theta-1}{\theta}\alpha_1$. So Proposition \ref{prop:boost super-exp decay} allows us to boost the decay rate as long as $\nu<\frac{2\theta-1}{\theta}\alpha$, since $\nu<\frac{2\theta-1}{\theta}\alpha_1$ whenever $\alpha_1$ close enough to $\alpha$.
\end{remark}

\begin{proof}
	The self-consistency bound \eqref{eq:self-consistency} and our assumptions
	on $\beta$ imply
	\begin{align*}
		\beta(l)\lesssim (n+1)^{-\theta\nu(n+1)} + (m+1)^{-\alpha(m+1)}
	\end{align*}
	for all $l,m,n\in\N_0$ with $l=n+m$.
	
	Set $m=\lfloor \sigma(l+1)\rfloor$ for some $0<\sigma<1$, which we choose later. Then $m\le \sigma(l+1)<\sigma(l+1)+1$ and for
	$n= l- m$ we have $(1-\sigma)(l+1)-1\le n< (1-\sigma)(l+1)$, that is,
	$n= \lfloor (1-\sigma)(l+1)-1 \rfloor$.
	Then the self-consistency bound implies
	\begin{align*}
		\beta(l)
			& \lesssim ((1-\sigma)(l+1))^{-\theta\nu(1-\sigma)(l+1)}
						+ (\sigma(l+1))^{-\alpha\sigma(l+1)} \\
			&=
				\exp\left(-\big(\theta\nu(1-\sigma)\ln(1-\sigma) + \alpha\sigma\ln\sigma \big)	(l+1) -\big( \theta\nu(1-\sigma) + \alpha\sigma\big)(l+1)\ln(l+1)
				\right) \\
			&\le
				\exp\left(\big(\theta\nu + \alpha \big)e^{-1}(l+1) -\big( \theta\nu(1-\sigma) + \alpha\sigma\big)(l+1)\ln(l+1)
				\right) \\
			&\lesssim
				\exp\left(-\big( \theta\nu(1-\sigma) + \alpha_1\sigma\big)(l+1)\ln(l+1)
				\right)
	\end{align*}
	for any $0<\alpha_1<\alpha$ and all $l\in\N_0$, where we also used $\sigma\ln\sigma\ge -e^{-1}$ and $(1-\sigma)\ln(1-\sigma)\ge -e^{-1}$ for all $0 < \sigma < 1$ in the third line.
	
	We choose $\sigma$ such that $\theta\nu(1-\sigma) = \alpha_1\sigma$, equivalently
	\begin{align*}
		\sigma = \frac{\theta\nu}{\alpha_1+\theta\nu} .
	\end{align*}
	This yields $0<\sigma<1$ and $\theta\nu(1-\sigma) + \alpha_1\sigma= 2\alpha_1\sigma = \frac{2\theta\alpha_1}{\alpha_1+\theta\nu}\nu=\nu_1$. So
	\begin{align*}
		\beta(l)\lesssim \exp\left( -\nu_1(l+1)\ln(l+1) \right)	
	\end{align*}
	for all $l\in\N_0$, which finished the proof.
\end{proof}

\begin{corollary}\label{cor:super-exp decay rate lower bound}
	Let $\beta:\N_0\to\R$ be a decreasing non-negative function, vanishing at infinity,
	which obeys the self-consistency bound \eqref{eq:self-consistency} of
	Proposition \ref{prop:self-consistency} for some $\theta>1$ and all $0<\alpha<1$. Furthermore, recall $H_{\nu}(l)=(l+1)^{\nu(l+1)}$ for $l\in\N_0$ and $\nu\in\R$.
	Then
	\begin{align*}
		\nu_{**}=\sup\left\{\nu>0|\,  \beta\lesssim H_{-\nu}\right\}
			\ge 1- \frac{1}{2\theta}
	\end{align*}
\end{corollary}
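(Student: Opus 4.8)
The plan is to feed the output of Proposition~\ref{prop:some super-exp decay} into an infinite iteration of the boosting step of Proposition~\ref{prop:boost super-exp decay}, and to identify the limiting decay rate with the (nontrivial) fixed point of the rate map that governs that boost. So the proof is: get \emph{some} super-exponential rate $\nu_0>0$, then keep improving it, and check the improvements converge to $\tfrac{2\theta-1}{2\theta}$.

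First I would record that $\beta$ satisfies all the hypotheses of Proposition~\ref{prop:some super-exp decay} (it is decreasing, non-negative, vanishes at infinity, and obeys \eqref{eq:self-consistency} for $\theta>1$), so there is some $\nu_0>0$ with $\beta(l)\lesssim (l+1)^{-\nu_0(l+1)}=H_{-\nu_0}(l)$ for all $l\in\N_0$; in particular $\nu_{**}\ge\nu_0>0$. Next, fix an arbitrary $\alpha_1\in(0,\tfrac12)$ and put $\alpha\coloneq(\alpha_1+\tfrac12)/2\in(\alpha_1,\tfrac12)$, so that $\beta$ obeys \eqref{eq:self-consistency} for this $\alpha$ and $\alpha_1<\alpha<\tfrac12$. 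Introduce the rate map $\Phi(\nu)\coloneq\frac{2\theta\alpha_1}{\alpha_1+\theta\nu}\,\nu$ furnished by Proposition~\ref{prop:boost super-exp decay}. A one-line computation gives $\Phi'(\nu)=\frac{2\theta\alpha_1^{2}}{(\alpha_1+\theta\nu)^{2}}>0$, so $\Phi$ is strictly increasing on $[0,\infty)$; moreover, as noted in the remark after Proposition~\ref{prop:boost super-exp decay}, $\Phi(\nu)>\nu$ precisely for $0<\nu<\nu^{*}\coloneq\frac{(2\theta-1)\alpha_1}{\theta}$, with $\Phi(\nu^{*})=\nu^{*}$, and hence $\nu<\Phi(\nu)<\nu^{*}$ whenever $0<\nu<\nu^{*}$.

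Now I would run the iteration. If $\nu_0\ge\nu^{*}$ there is nothing to prove, since $\nu_{**}\ge\nu_0\ge\nu^{*}$. Otherwise set $\nu_{k+1}\coloneq\Phi(\nu_k)$; the properties of $\Phi$ make $(\nu_k)$ strictly increasing and bounded above by $\nu^{*}$, hence convergent to some $L\in(\nu_0,\nu^{*}]$, and passing to the limit in $\nu_{k+1}=\Phi(\nu_k)$ together with $L>0$ forces $L=\nu^{*}$. Since $\beta$ is not altered by a boost, its hypotheses persist at every step, so Proposition~\ref{prop:boost super-exp decay} applied repeatedly (with the fixed $\theta,\alpha,\alpha_1$) upgrades $\beta\lesssim H_{-\nu_k}$ to $\beta\lesssim H_{-\nu_{k+1}}$; thus for each $k$ one has $\beta\lesssim H_{-\nu_k}$ (with an implicit constant that may depend on $k$, which is harmless). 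Because $H_{-\mu}(l)\le H_{-\nu}(l)$ for all $l$ whenever $\mu\ge\nu$, given any $\nu<\nu^{*}$ I pick $k$ with $\nu_k>\nu$ and conclude $\beta\lesssim H_{-\nu}$, i.e. $\nu_{**}\ge\nu$; letting $\nu\uparrow\nu^{*}$ yields $\nu_{**}\ge\nu^{*}=\frac{(2\theta-1)\alpha_1}{\theta}$.

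Finally, this lower bound is valid for every $\alpha_1\in(0,\tfrac12)$, so letting $\alpha_1\uparrow\tfrac12$ (equivalently, taking the supremum over $\alpha_1$) gives $\nu_{**}\ge\frac{2\theta-1}{2\theta}=1-\frac1{2\theta}$, which is the claim. I do not expect a genuine obstacle here: the only points requiring care are bookkeeping, namely that the self-consistency bound is always invoked with a fixed admissible $\alpha<\tfrac12$ (its implicit constant degrades as $\alpha\uparrow\tfrac12$, but each fixed choice $\alpha_1<\alpha<\tfrac12$ still yields an honest, finite-constant conclusion), and that the monotone convergence of the rate sequence $(\nu_k)$ to the fixed point $\nu^{*}$ is justified before one reads off $\nu_{**}\ge\nu^{*}$.
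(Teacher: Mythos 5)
Your proof is correct, and the essential idea (feeding an initial rate into Proposition~\ref{prop:boost super-exp decay} and reaching the fixed point $\nu^{*}=\frac{(2\theta-1)\alpha_1}{\theta}$ of the rate map $\Phi(\nu)=\frac{2\theta\alpha_1\nu}{\alpha_1+\theta\nu}$, then letting $\alpha_1\uparrow\frac12$) is the same as the paper's. The execution differs: you construct the monotone iteration $\nu_{k+1}=\Phi(\nu_k)$ explicitly and argue convergence to the positive fixed point, whereas the paper applies the boost exactly once to an arbitrary $\nu<\nu_{**}$ and uses the definition of $\nu_{**}$ as a supremum to deduce the self-referential inequality $\Phi(\nu_{**})\le\nu_{**}$ directly, from which $\nu_{**}\ge\nu^{*}$ follows immediately (since $\Phi(\nu)>\nu$ on $(0,\nu^{*})$). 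The paper's version avoids the bookkeeping of monotone convergence — you don't need to check that the sequence is increasing, bounded, and that its limit must be the nonzero fixed point; you just pass to the supremum and then to $\alpha_1\nearrow\frac12$. Your argument also works and has the pedagogical advantage of making the fixed-point structure visible as an honest dynamical iteration; it costs a couple of extra verifications (strict monotonicity of $\Phi$, uniqueness of the nonzero fixed point, the case split $\nu_0\ge\nu^{*}$), all of which you handle correctly. One small point worth flagging: you write that $\Phi$ has a fixed point at $\nu^{*}$ with $\Phi(\nu)>\nu$ for $0<\nu<\nu^{*}$ "as noted in the remark after Proposition~\ref{prop:boost super-exp decay}"; that remark only states the equivalence $\nu_1>\nu\Leftrightarrow\nu<\frac{2\theta-1}{\theta}\alpha_1$, so the one-line computation of $\Phi'$ and identification of the positive fixed point that you supply is indeed needed and appropriate.
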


\begin{proof}
	From Proposition \ref{prop:some super-exp decay} we know that $\nu_{**}>0$. Let $0<\nu<\nu_{**}$ and $0<\alpha_1<\frac{1}{2}$, then Proposition \ref{prop:boost super-exp decay} shows
	\begin{align*}
		\beta\lesssim H_{-\nu_1}
	\end{align*}
	for $\nu_1\coloneq \frac{2\theta\alpha_1\nu}{\alpha_1+\theta\nu}$. Thus, by the definition of $\nu_{**}$ we have
	\begin{align*}
		\frac{2\theta\alpha_1\nu}{\alpha_1+\theta\nu} \le \nu_{**}
	\end{align*}
	for all $0<\nu<\nu_{**}$ and all $0<\alpha_1<\frac{1}{2}$. Taking first the limit
	$\nu\nearrow\nu_{**}$ and then $\alpha_1\nearrow\frac{1}{2}$ in the above inequality shows
	\begin{align*}
		\frac{\theta\nu_{**}}{\frac{1}{2}+\theta\nu_{**}} \le \nu_{**} .
	\end{align*}
	Since $\nu_{**}>0$ this implies $\nu_{**}\ge 1-\frac{1}{2\theta}$.
\end{proof}
Now we can give the
\begin{proof}[Proof of Theorem \ref{thm:super-exp decay}]
 Let $\theta=\gamma_1-1$ and $\varphi$ be a solution of \eqref{eq:GT-weak}. Then Propsition \ref{prop:self-consistency} shows that 	the tail distribution $\beta$ of $\varphi$ obeys the self-conistency bound \eqref{eq:self-consistency}. Then the claim follows from
 $|\varphi(x)|\le \beta(|x|)$ for all $x\in\Z$ and Corollary \ref{cor:super-exp decay rate lower bound}.
\end{proof}

\appendix
\setcounter{section}{0}
\renewcommand{\thesection}{\Alph{section}}
\renewcommand{\theequation}{\thesection.\arabic{equation}}
\renewcommand{\thetheorem}{\thesection.\arabic{theorem}}
%
\section{Some useful bounds} \label{sec:useful bounds}
We start with
\begin{lemma}\label{lem:useful}
	\itemthm Let $ 1\le p<\infty$ and $f_1,f_2\in l^p(\Z)$, then
		\begin{align}\label{eq:lp-difference}
			\left| \|f_1\|_p^p - \|f_2\|_p^p \right|
			\leq
				p\max(\|f_1\|_p^{p-1}, \|f_2\|_p^{p-1})
				\| f_1-f_2\|_p
		\end{align}
	\itemthm The free time evolution group $T_r= e^{ir\Delta}$ is bounded on $l^p(\Z)$ for all $1\le p\le \infty$ with
		\begin{align}\label{eq:lpbound}
			\|T_rf\|_p\le e^{4|r| |1-\frac{2}{p}|}\|f\|_p.
		\end{align}
	\itemthm The group $T_r= e^{ir\Delta}$ is norm continuous on $l^p$ for any $1\le p\le \infty$ with
		\begin{align}\label{eq:lpcontinuity}
			\|f-T_rf\|_{p}
				\le
					\left(e^{4|r|}-1\right)\|f\|_p .
		\end{align}\\
	\itemthm For the kernel\footnote{We use the physicists' notation $\la x|T_r|y\ra$ for the kernel, for mathematicians, $\la x|T_r|y\ra = \la \delta_x, T_r\delta_y \ra$, where $\delta_x$ is the Kronecker delta at $x\in\Z$. } of $T_r$, one has the bound
		\begin{align}\label{eq:kernel bound}
			|\la x|T_r|y\ra| \le \min\left(1, e^{4|r|}\frac{(4|r|)^{|x-y|}}{|x-y|!}\right) .
		\end{align}
	\itemthm {\rm(}Strong bilinear bound{\rm)}  For any $1\le p\le \infty$
		\begin{align}\label{eq:strong bilinear}
			\sup_{|r|\le B}\|T_rf_1 T_rf_2\|_p
				\le
				\min\left(1,\frac{8e^{16B}(4B)^{\lceil\frac{s}{2}\rceil}}{\lceil\frac{s}{2}\rceil!} \right) \|f_1\|_2 \|f_2\|_2
		\end{align}
		with $s:=\dist(\supp f_1, \supp f_2)$. \\
	\itemthm {\rm(}Twisted strong bilinear bound{\rm)} For any $1\le p\le \infty$ and $B>0$
		\begin{align}\label{eq:twisted strong bilinear}
			\sup_{r\in [-B,B]}\|T_r(e^{F_{\nu,\veps}} h_1) T_r(e^{-F_{\nu\,\veps}}h_2)\|_p
				\le
				4 e^{8B(1+e^\nu)} \min\left(1,\frac{2(4Re^\nu)^{\lceil\frac{s}{2}\rceil}}{\lceil\frac{s}{2}\rceil!} \right) \|h_1\|_2 \|h_2\|_2
		\end{align}
		\emph{uniformly in } $\veps>0$. Here $s:=\dist(\supp h_1, \supp h_2)\ge 0$ and  $F_{\nu,\veps}(x)=\frac{\nu|x|}{1+\veps|x|}$.\\
	\itemthm {\rm(}Exchange of an exponential weight{\rm)} For any $\alpha\ge 1$, $1\le p\le\infty $, and $B>0$
		\begin{align}\label{eq:exchange}
			\sup_{|r|\le B}\|T_r(e^{F_{\nu,\veps}}h_1)|T_r h_2|^\alpha\|_p
			\le (2e^{4B(1+e^\nu)})^{\alpha+1}  \|h_1\|_2 \|e^{F_{\nu/\alpha,\veps}}h_2\|_2^\alpha .
		\end{align}
	\itemthm For any $\nu>0$ and $A>0$, let $f_\nu(x):= Ae^{-\nu|x|}$. Then
		\begin{equation}
			\begin{split}\label{eq:f_nu}
			\|f_\nu\|_\kappa^\kappa
				&=
					A^\kappa \frac{\cosh(\frac{\kappa}{2}\nu)}{\sinh(\frac{\kappa}{2}\nu)}, \\
			\la f_\nu, -\Delta f_\nu \ra
				&= \|D_+f_\nu\|_2^2
					= 4A^2\, \frac{\sinh^2(\nu/2)}{\sinh(\nu)}.
			\end{split}
		\end{equation}
\end{lemma}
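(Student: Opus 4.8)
The whole lemma rests on one computation: a semi-explicit description of the kernel of $T_r=e^{ir\Delta}$. Writing $\Delta=S+S^{\ast}-2$ with $S$ the right shift, the commuting factorization $T_r=e^{-2ir}e^{irS}e^{irS^{\ast}}$ gives $\la x|T_r|y\ra=e^{-2ir}\sum_{m\ge\max(0,x-y)}\frac{(ir)^{m}(ir)^{m+y-x}}{m!\,(m+y-x)!}$; alternatively, since $\Delta^k$ has band width $k$ and $\|\Delta\|_{l^1\to l^1}=\|\Delta\|_{l^\infty\to l^\infty}=4$ by the Schur test on the $(1,-2,1)$ stencil, one has $|\la x|\Delta^k|y\ra|\le 4^k$ and $\la x|\Delta^k|y\ra=0$ for $|x-y|>k$, whence $|\la x|T_r|y\ra|\le\sum_{k\ge|x-y|}\frac{(4|r|)^k}{k!}\le e^{4|r|}\frac{(4|r|)^{|x-y|}}{|x-y|!}$; combined with $|\la x|T_r|y\ra|\le\|\delta_x\|_2\|T_r\delta_y\|_2=1$ (Cauchy--Schwarz and unitarity on $l^2$) this is \eqref{eq:kernel bound}. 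Part \eqref{eq:lpbound} then follows by interpolating $\|T_r\|_{l^2\to l^2}=1$ against $\|T_r\|_{l^1\to l^1}=\|T_r\|_{l^\infty\to l^\infty}\le e^{2|r|}$ (sum the explicit series over one index) via Riesz--Thorin, and \eqref{eq:lpcontinuity} is immediate from $(T_r-\id)f=\sum_{k\ge1}\frac{(ir\Delta)^k}{k!}f$ and $\|\Delta\|_{l^p\to l^p}\le 4$.

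For \eqref{eq:lp-difference} the naive route --- the pointwise bound $\big||a|^p-|b|^p\big|\le p\max(|a|,|b|)^{p-1}|a-b|$ (mean value theorem plus $\big||a|-|b|\big|\le|a-b|$) followed by Hölder --- loses a factor $2^{1-1/p}$, so instead I would run the mean value argument along the segment: set $\Phi(t):=\|(1-t)f_2+tf_1\|_p^p$, which is convex, hence absolutely continuous, on $[0,1]$ with $\Phi(t)\le\big((1-t)\|f_2\|_p+t\|f_1\|_p\big)^p$, and bound $\big|\Phi(1)-\Phi(0)\big|\le\int_0^1|\Phi'(t)|\,dt$ using $|\Phi'(t)|\le p\sum_x|g_t(x)|^{p-1}|f_1(x)-f_2(x)|\le p\|g_t\|_p^{p-1}\|f_1-f_2\|_p\le p\max(\|f_1\|_p,\|f_2\|_p)^{p-1}\|f_1-f_2\|_p$, where $g_t=(1-t)f_2+tf_1$. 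This is the one place with a genuine technical wrinkle: differentiating under the (possibly infinite) sum, which is handled by dominated convergence with majorant $p(|f_1|+|f_2|)^{p-1}|f_1-f_2|\in l^1$.

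The bilinear bounds \eqref{eq:strong bilinear} and \eqref{eq:twisted strong bilinear} both reduce, via $\|g\|_p\le\|g\|_1$ for $p\ge1$, to estimating an $l^1$ norm, and the trivial Cauchy--Schwarz/unitarity bound supplies the ``$\min(1,\cdot)$'' alternatives. For the decay factor put $k:=\lceil s/2\rceil$ and split $\Z$ into $D:=\{x:\dist(x,\supp f_1)\le k-1\}$ and its complement; since $s=\dist(\supp f_1,\supp f_2)\ge 2k-1$, every $x\in D$ has $\dist(x,\supp f_2)\ge k$ and every $x\notin D$ has $\dist(x,\supp f_1)\ge k$, so
\begin{align*}
\|T_rf_1\,T_rf_2\|_1\le\|T_rf_1\|_2\,\|\ind_D T_rf_2\|_2+\|\ind_{D^c}T_rf_1\|_2\,\|T_rf_2\|_2,
\end{align*}
and $\|\ind_{\{\dist(\cdot,\,\supp f_i)\ge k\}}\,T_r\,\ind_{\supp f_i}\|_{2\to 2}$ is controlled by a Schur test against $\sum_{|j|\ge k}e^{4|r|}\frac{(4|r|)^{|j|}}{|j|!}\lesssim e^{8|r|}\frac{(4|r|)^{k}}{k!}$. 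For the twisted version (with $F:=F_{\nu,\veps}$) the decisive observation is the pointwise identities $T_r(e^{F}h_1)(x)=e^{F(x)}\big(e^{-F}T_re^{F}h_1\big)(x)$ and $T_r(e^{-F}h_2)(x)=e^{-F(x)}\big(e^{F}T_re^{-F}h_2\big)(x)$, so the weights $e^{\pm F(x)}$ cancel in the product and every bound becomes uniform in $\veps$; one then repeats the splitting argument with $T_r$ replaced by the conjugated propagators, whose kernels obey $|\la x|e^{\mp F}T_re^{\pm F}|y\ra|\le e^{\nu|x-y|}|\la x|T_r|y\ra|$ because $F_{\nu,\veps}$ is $\nu$-Lipschitz uniformly in $\veps$ --- equivalently $e^{\mp F}T_re^{\pm F}=\exp\!\big(ir\,e^{\mp F}\Delta e^{\pm F}\big)$ with $e^{\mp F}\Delta e^{\pm F}$ of band width $1$ and $l^1,l^\infty$ operator norms $\le 2(1+e^\nu)$.

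The exchange bound \eqref{eq:exchange} exploits the same cancellation: from $T_r(e^{F}h_1)(x)=e^{F(x)}(e^{-F}T_re^{F}h_1)(x)$, writing $e^{F(x)}=\big(e^{F(x)/\alpha}\big)^{\alpha}$ and distributing it over the $\alpha$ factors of $T_rh_2(x)$, then applying Hölder with exponents $2$ and $2\alpha$, gives $\|T_r(e^{F}h_1)|T_rh_2|^{\alpha}\|_p\le\|e^{-F}T_re^{F}h_1\|_2\,\|e^{F/\alpha}T_rh_2\|_{2\alpha}^{\alpha}$; one then uses $\|e^{-F}T_re^{F}\|_{2\to2}\le e^{2|r|(1+e^\nu)}$ and, noting $F/\alpha=F_{\nu/\alpha,\veps}$ and $\|g\|_{2\alpha}\le\|g\|_2$, the bound $\|e^{F/\alpha}T_rh_2\|_{2\alpha}\le\|e^{F/\alpha}T_re^{-F/\alpha}\|_{2\to2}\,\|e^{F_{\nu/\alpha,\veps}}h_2\|_2\le e^{2|r|(1+e^\nu)}\|e^{F_{\nu/\alpha,\veps}}h_2\|_2$. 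Finally \eqref{eq:f_nu} is a direct computation: $\|f_\nu\|_\kappa^\kappa=A^\kappa\big(1+2\sum_{x\ge1}e^{-\kappa\nu x}\big)=A^\kappa\frac{1+e^{-\kappa\nu}}{1-e^{-\kappa\nu}}$, which is $A^\kappa\cosh(\tfrac{\kappa}{2}\nu)/\sinh(\tfrac{\kappa}{2}\nu)$; and using $\|D_+f_\nu\|_2^2=-\la f_\nu,\Delta f_\nu\ra$ (summation by parts) together with $f_\nu(x+1)-f_\nu(x)=Ae^{-\nu|x|}(e^{-\nu}-1)$ for $x\ge0$ and $=Ae^{\nu x}(e^{\nu}-1)$ for $x\le-1$, summing the two resulting geometric series and simplifying via $1-e^{-2\nu}=(1-e^{-\nu})(1+e^{-\nu})$ and $\sinh\nu=2\sinh(\nu/2)\cosh(\nu/2)$ yields $4A^2\sinh^2(\nu/2)/\sinh\nu$.
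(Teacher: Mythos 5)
Your proposal is correct throughout, and in several places it takes a genuinely different, and arguably cleaner, route than the paper.

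For part (i) you correctly identify that the pointwise mean-value-plus-H\"older route loses a factor of $2^{1-1/p}$, and your fix via the convex function $\Phi(t)=\|(1-t)f_2+tf_1\|_p^p$ is sound. But both approaches miss the one-line argument the paper uses: apply the scalar inequality $|a^p-b^p|\le p\max(a,b)^{p-1}|a-b|$ not pointwise but directly to the \emph{norms} $a=\|f_1\|_p$, $b=\|f_2\|_p$, and then bound $\big|\|f_1\|_p-\|f_2\|_p\big|\le\|f_1-f_2\|_p$ by the reverse triangle inequality. That avoids both the constant loss and any differentiation-under-the-sum bookkeeping, and it is worth knowing since it is the standard device here.

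For parts (v)--(vii) you organize the estimates around the \emph{conjugated propagator} $e^{\mp F}T_re^{\pm F}$: cancel the weights pointwise in the product, observe $e^{\mp F}T_re^{\pm F}=\exp(ir\,e^{\mp F}\Delta e^{\pm F})$ with $e^{\mp F}\Delta e^{\pm F}$ a band-width-one operator with Schur norm $\le 2(1+e^\nu)$, and then run a near/far splitting of $\Z$ plus a Schur test on the off-support block. The paper instead works at the level of the quadratic form $\sum_{y_1,y_2}|h_1(y_1)|A_{r,\nu}(y_1,y_2)|h_2(y_2)|$ and bounds $\sup_{y_1}\sum_{y_2}A_{r,\nu}(y_1,y_2)$ directly by a double series. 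Both are Schur tests in disguise; yours packages the weight-cancellation into an operator-norm statement, which also gives you the sharper $l^1$, $l^\infty$ bound $\|T_r\|\le e^{2|r|}$ (from $T_r=e^{-2ir}e^{ir(S+S^*)}$ with $\|S+S^*\|\le 2$) rather than the paper's $e^{4|r|}$, and slightly smaller constants throughout. In (vii) the paper splits H\"older as $1=\tfrac{1}{1}$ with the extra $\alpha-1$ factors taken out in $l^\infty$, whereas you use $\tfrac12+\tfrac12=1$ with $\|g^\alpha\|_2=\|g\|_{2\alpha}^\alpha$ followed by $\|\cdot\|_{2\alpha}\le\|\cdot\|_2$; both land inside the stated constant $(2e^{4B(1+e^\nu)})^{\alpha+1}$. (One typo to note, not yours: the $4Re^\nu$ in \eqref{eq:twisted strong bilinear} should read $4Be^\nu$.)
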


\begin{remark}
  The strong bilinear bound above strengthens the strong bilinear bound from \cite{HuLee 2012-existence, HuLee 2012}, which was proven there only for $p=2$. Moreover,  we will give a proof which is considerably simpler than the one in \cite{HuLee 2012-existence, HuLee 2012}.
  The twisted strong bilinear bound \eqref{eq:twisted strong bilinear} is new and needed in the proof that solutions of \eqref{eq:GT} with $\omega<0$ have some exponential decay for positive average dispersion. It is important that the right hand side of \eqref{eq:twisted strong bilinear} is \emph{independent} of $\veps>0$. The exchange of exponential weights bound \eqref{eq:exchange} is crucial for our strategy of boosting the exponential decay rate to the one given by the physical heuristic. The main feature of \eqref{eq:exchange} is that for $\alpha>1$ its right hand side has an  exponential growth of order $\nu/\alpha$ which is strictly smaller than $\nu$ when $\nu>0$. Thus \eqref{eq:exchange} allows us to  \emph{absorb} some \emph{excess exponential factor} in  the boosting argument of Section \ref{subsec:boost}.  	
\end{remark}
\begin{proof}
	For the first claim, let $f_1,f_2\in l^p(\Z),\ 1 \le p < \infty$ and note that for $a,b\ge 0$ one has
	\begin{align}\label{eq:difference-1}
		|a^p-b^p|\le p \max(a^{p-1}, b^{p-1}) |a-b|
	\end{align}
	since, if $a\le b$, then
	\begin{align*}
		|a^p-b^p|= b^p-a^p =p\int_a^b s^{p-1}\, ds \le p b^{p-1} (b-a)
	\end{align*}
	and the case $a\ge b$ follows by symmetry. Using $a=\|f_1\|_p$ and $b=\|f_2\|_p$ in \eqref{eq:difference-1} shows
	\begin{align*}
		\left| \|f_1\|_p^p - \|f_2\|_p^p\right|
		\le p \max(\|f_1\|_p^{p-1},\|f_2\|_p^{p-1})
			    	\left| \|f_1\|_p - \|f_2\|_p \right|
 	\end{align*}
	which gives \eqref{eq:lp-difference}.

	As a preparation for the proof of the other claims, note that $T_r$ has the norm continuous series expansion
	\begin{align*}
		T_r= \sum_{n=0}^\infty \frac{(ir)^n}{n!} \Delta^n
	\end{align*}
	One easily sees that
        \begin{equation}\label{eq:laplacian}
        \|\Delta f\|_1\le 4\|f\|_1 \text{ and } \|\Delta f\|_\infty\le 4\|f\|_\infty.
        \end{equation}
	Thus  the norm of $\Delta$ on $l^\infty(\Z)$ and $l^1(\Z)$ is bounded by $4$ and from
	the power series for $T_r$ one sees
	\begin{align*}
		\|T_r f \|_p \le \sum_{n=0}^\infty \frac{(4|r|)^n}{n!} \|f\|_p = e^{4|r|} \|f\|_p,\ p=1 \text{ or } \infty.
	\end{align*}
	By self-adjointness of $\Delta$ on $l^2(\Z)$ one has that $T_r$ is unitary on $l^2(\Z)$, so $\|T_rf\|_2= \|f\|_2$ and interpolating this with the bound on $l^1(\Z)$ and $l^\infty(\Z)$ with the help of the  Riesz-Thorin  interpolation theorem proves
	\eqref{eq:lpbound}.

	Moreover, applying Riesz-Thorin interpolation theorem on \eqref{eq:laplacian} yields $\|\Delta f\|_p\le 4\|f\|_p$ for all $1\le p\le \infty$.
	The series expansion for $T_r$ then yields
	\begin{align*}
		\|f-T_rf\|_p \le \sum_{n=1}^\infty \frac{(4|r|)^n}{n!}\|f\|_p = (e^{4|r|}-1) \|f\|_p
	\end{align*}
	which is \eqref{eq:lpcontinuity}. In particular, $T_r$ is norm continuous on $l^p(\Z)$ at $r=0$, which together with the group property of $T_r$ and \eqref{eq:lpbound} shows its continuity for all $r$.

	Because of the norm convergent series expansion for $T_r$, its  `kernel' $\la x|T_r|y\ra$,  for which one has $T_rf(x) = \sum_{y\in\Z}\la x|T_r|y\ra f(y)$, is given by
	\begin{align}\label{eq:kernel}
		\la x|T_r|y\ra = \sum_{n=0}^\infty \frac{(ir)^n}{n!}
		\la x|\Delta^n|y\ra
		= \sum_{n=|x-y|}^\infty \frac{(ir)^n}{n!}
		\la x|\Delta^n|y\ra
	\end{align}
	since $\la x|\Delta^n|y\ra =0$ if $n<|x-y|$. Moreover, $|\la x|\Delta^n|y\ra|\le \|\Delta\|^n =4^n$, so we have the bound
	\begin{align}\label{eq:kernel bound-1}
		|\la x|T_r|y\ra | \le \sum_{n=|x-y|}^\infty \frac{(4|r|)^n}{n!} .
	\end{align}
	By unicity of $T_r$ and the Cauchy-Schwartz inequality one always has
	$|\la x|T_r|y\ra|\le 1$.
	Moreover, by \eqref{eq:kernel bound-1}, we have
	\begin{align}\label{eq:Tr bound}
		|\la x|T_r|y\ra | &\le \sum_{n=|x-y|}^\infty \frac{(4|r|)^n}{n!}
			= (4|r|)^{|x-y|}\sum_{n=0}^\infty \frac{(4|r|)^n}{(|x-y|+n)!} \nonumber\\
			&\le \frac{(4|r|)^{|x-y|}}{|x-y|!} \sum_{n=0}^\infty \frac{(4|r|)^n}{n!}
				=  \frac{e^{4|r|}(4|r|)^{|x-y|}}{|x-y|!}
	\end{align}
	since $(|x-y|+n)!\ge |x-y|! n!$. So \eqref{eq:kernel bound} follows.
	
	To prove the fifth claim, we first note that on the sequence spaces $l^p(\Z)$, the bound
	$\|h\|_p\le \|h\|_1$ holds. Hence $\|T_r f_1 T_rf_2\|_p\le \|T_rf_1 T_rf_2\|_1$, so we only have to prove \eqref{eq:strong bilinear} for $p=1$. Because of the Cauchy Schwarz inequality,
	\begin{align*}
		\|T_r f_1 T_r f_2\|_1
				\le
					\|T_r f_1\|_2  \|T_r f_2\|_2
					= \|f_1\|_2\|f_2\|_2
	\end{align*}
	Now let $s:=\dist(\supp f_1, \supp f_2)>0$. Then with
	\begin{align*}
		A_r(y_1,y_2):= e^{8|r|}\sum_x \frac{(4|r|)^{|x-y_1|}}{|x-y_1|!}
						\frac{(4|r|)^{|x-y_2|}}{|x-y_2|!}
	\end{align*}
	the bound \eqref{eq:kernel bound}, the Cauchy-Schwarz inequality and the symmetry of $A_r$ in $y_1$ and $y_2$ gives
	\begin{align}\label{eq:A_r bound 0}
		\|T_r f_1 T_r f_2\|_1
			&\le
				 \sum_{y_1,y_2} |f(y_1)|A_r(y_1,y_2) |f(y_2)| \nonumber\\
			&\le \bigg( \sum_{\substack{y_1\in\Z\\y_2\in\supp f_2}} |f(y_1)|^2A_r(y_1,y_2)\bigg)^{1/2}
				\bigg(\sum_{\substack{y_2\in\Z\\y_1\in\supp f_1}} A_r(y_1,y_2)|f(y_2)|^2\bigg)^{1/2}  \nonumber\\
			&\le \left(A_{r,1,2} A_{r,2,1}\right)^{1/2}
				\|f_1\|_2 \|f_2\|_2
	\end{align}
	where $A_{r,l,m}= \sup_{y_1\in\supp f_l}\sum_{y_2\in\supp f_m} A_r(y_1,y_2)$.
	
	Fix $y_1\in\supp f_1$, then for all $x\in\Z$ and all $y_2\in\supp f_2$ we have
		$|x-y_1|\ge \frac{s}{2}$ or
		$|x-y_2|\ge \frac{s}{2}$ and since the distance is always an integer, setting $\lceil s\rceil:=\min\{n\in\Z |\, s\le n\}$ and denoting $G_r(y):=\frac{(4|r|)^{|y|}}{|y|!} $,  we get
		\begin{align}\label{eq:A_r bound 1}
			&\sum_{y_2\in\supp f_2} A_r(y_1,y_2)
			  = e^{8|r|}\sum_{y_2\in\supp f_2}\sum_{x\in \Z} G_r(x-y_1)G_r(x-y_2)\nonumber\\
			&\le e^{8|r|}\Big( \sum_{\substack{y_2, x \\ |x-y_1|\ge \lceil\frac{s}{2}\rceil}} G_r(x-y_1)G_r(x-y_2)
				+ \sum_{\substack{y_2,x \\ |x-y_2|\ge \lceil\frac{s}{2}\rceil}}
					G_r(x-y_1)G_r(x-y_2)\Big) \nonumber\\
			& = e^{8|r|}\Big(
				\sum_{|x|\ge \lceil\frac{s}{2}\rceil} G_r(y_1)  \sum_{y_2}G_r(y_2)
			 	+ \sum_{x} G_r(x)\sum_{|y_2|\ge \lceil\frac{s}{2}\rceil}G_r(y_2)\Big) \nonumber\\
			& = 2 e^{8|r|} \sum_{|x|\ge \lceil\frac{s}{2}\rceil} G_r(x)\sum_{y}G_r(y) .
		\end{align}
	A simple calculation gives
	\begin{align}\label{eq:A_r bound 2}
		\sum_{y} G_r(y)
			=
			G_r(0) +2\sum_{y\ge 1} G_r(y)
			=
			1+2 \sum_{y=1}^\infty \frac{(4|r|)^{y}}{y!}
			\le
				2 e^{4|r|}
	\end{align}
	and
	\begin{align}\label{eq:A_r bound 3}
		\sum_{|x|\ge \lceil\frac{s}{2}\rceil} G_r(x)
			&=
				2 \sum_{x= \lceil\frac{s}{2}\rceil}^\infty \frac{(4|r|)^{x}}{x!}
				=
					2(4|r|)^{\lceil\frac{s}{2}\rceil}	\sum_{n= 0}^\infty \frac{(4|r|)^{n}}{(\lceil\frac{s}{2}\rceil+n)!}
				\le \frac{2e^{4|r|}(4|r|)^{\lceil\frac{s}{2}\rceil}}{\lceil\frac{s}{2}\rceil!} .
	\end{align}
 Thus
	\begin{align}\label{eq:A_r bound 4}
		A_{r,1,2} \le \frac{8e^{16|r|}(4|r|)^{\lceil\frac{s}{2}\rceil}}{\lceil\frac{s}{2}\rceil!} .
	\end{align}
	The same argument yields the same bound for $A_{2,1}$ and since
	 \begin{align*}
	 	\|T_r f_1 T_r f_2\|_1\le \min(1, (A_{r,1,2}A_{r,2,1}))^{1/2}) \|f_1\|_2 \|f_2\|_2
	 \end{align*}
	 this  proves \eqref{eq:strong bilinear}.
	
	In order to prove \eqref{eq:twisted strong bilinear}, we can again, without loss of generality, consider the case $p=1$. Fix $\nu\ge 0$ and $\veps>0$ and let $F=F_{\nu,\veps}$. Noting
	\begin{align*}
		\|T_r(e^{F}h_1) &T_r(e^{-F} h_2)\|_1
				=   \|e^{-F}T_r(e^{F} h_1) e^{F}T_r(e^{-F} h_2)\|_1 \\
			&\le   \sum_{x\in\Z} \sum_{y_1,y_2\in\Z} 	
					e^{|F(x)-F(y_1)|} |\la x|T_r|y_1\ra| |h_1(y_1)|
					e^{|F(x)-F(y_2)|} |\la x|T_r|y_2\ra| |h_2(y_2)|
	\end{align*}
	and, because of the reverse triangle inequality for $F$, we have
	$|F(x)-F(y)| \le F(x-y)\le \nu|x-y|$.
	Thus setting $G_{r,\nu}(y):=\frac{(4|r|e^\nu)^{|y|}}{|y|!} $ the bound \eqref{eq:kernel bound} yields
	\begin{align*}
		\|T_r(e^{F}h_1) T_r(e^{-F} h_2)\|_1
			\le   e^{8|r|}\sum_{y_1,y_2\in\Z} |h_1(y_1)| \big(	 \sum_{x\in\Z}
					G_{r,\nu}(x-y_1)
					G_{r,\nu}(x-y_2) \big) |h_2(y_2)|
	\end{align*}
	so denoting $A_{r,\nu}(y_1,y_2):= \sum_{x\in\Z} 	G_{r,\nu}(x-y_1) G_{r,\nu}(x-y_2) $, we can argue as for the bound \eqref{eq:strong bilinear}, except now we cannot simply use unitarity of $T_r$ to get the bound when the supports of $h_1$ and $h_2$ are not separated. Instead, as in \eqref{eq:A_r bound 0}, we use and Cauchy--Schwartz and the symmetry of $A_{r,\nu}(y_1,y_2)$ in $y_1$ and $y_2$  to see
	\begin{align*}
		\sum_{y_1,y_2\in\Z} |h_1(y_1)| A_{r,\nu}(y_1,y_2) |h_2(y_2)|
		\le
			\Big(\sup_{y_1\in\Z} \sum_{y_2\in\Z}A_{r,\nu}(y_1,y_2)\Big) \|h_1\|_2 \|h_2\|_2
	\end{align*}
	By translation invariance,
	\begin{align*}
		\sum_{y_2\in\Z}A_{r,\nu}(y_1,y_2)
			&= \sum_{x}\sum_{y_2} G_{r,\nu}(x-y_1)G_{r,\nu}(x-y_2)
				= \Big(\sum_{y} G_{r,\nu}(y)\Big)^2 \\
			&= \Big( 1+2\sum_{n\in\N}  \frac{(4|r|e^\nu)^{n}}{n!} \Big)^2
				\le 4 e^{8|r|e^\nu}.
	\end{align*}
	Thus
	\begin{align}\label{eq:twisted exponential bound 0}
		\|T_r(e^{F_{\nu,\veps}}h_1) T_r(e^{-F_{\nu,\veps}} h_2)\|_1
			\le 4 e^{8|r|(1+e^\nu)} \|h_1\|_2 \|h_2\|_2
	\end{align}
	and proceeding similarly as in \eqref{eq:A_r bound 0}--\eqref{eq:A_r bound 4}, one sees
	\begin{align}\label{eq:twisted exponential bound separated}
		\|T_r(e^{F_{\nu,\veps}}h_1) T_r(e^{-F_{\nu,\veps}} h_2)\|_1	
		\le 8 e^{8|r|(1+e^\nu)} \frac{(4|r|e^\nu)^{\lceil\frac{s}{2}\rceil}}{\lceil\frac{s}{2}\rceil !}
	\end{align}
	when $s:=\dist(\supp h_1, \supp h_2)\ge 1$. Together, \eqref{eq:twisted exponential bound 0} and \eqref{eq:twisted exponential bound separated} prove \eqref{eq:twisted strong bilinear}.
	
	 To prove \eqref{eq:exchange}, let $\alpha\ge 1$. Again, it is enough to consider the case $p=1$.
	 Then with $\widetilde{h_2}:= e^{F_{\nu/\alpha,\veps}}h_2$
	 \begin{align*}
	 	\left\|T_r(e^{F_{\nu,\veps}} h_1)|T_r(h_2)|^\alpha\right\|_1
	 	 &= \left\| e^{-F_{\nu,\veps}}T_r(e^{F_{\nu,\veps}} h_1) \big|e^{F_{\nu/\alpha,\veps}}T_r(e^{-F_{\nu/\alpha,\veps}} \widetilde{h_2})\big|^\alpha\right\|_1 \\
	 	 &\le \left\| e^{-F_{\nu,\veps}}T_r(e^{F_{\nu,\veps}} h_1) e^{F_{\nu/\alpha,\veps}}T_r(e^{-F_{\nu/\alpha,\veps}} \widetilde{h_2})\right\|_1 \| e^{F_{\nu/\alpha,\veps}}T_r(e^{-F_{\nu/\alpha,\veps}} \widetilde{h_2})\|_\infty^{\alpha-1}.
	 \end{align*}
	Now arguing similarly as in the proof of \eqref{eq:twisted exponential bound 0},
	\begin{align*}
		\big\| e^{-F_{\nu,\veps}}T_r( &e^{F_{\nu,\veps}} h_1) e^{F_{\nu/\alpha,\veps}}T_r(e^{-F_{\nu/\alpha,\veps}} \widetilde{h_2})\big\|_1 \\
			&\le \sum_{x} \sum_{y_1,y_2}
							e^{\nu|x-y_1|}|\la x|T_r|y_1\ra| |h_1(y_1)|
							e^{(\nu/\alpha)|x-y_2|}|\la x|T_r|y_2\ra| |\widetilde{h_2}(y_2)| \\
			&\le 4e^{4|r|(2+e^\nu+e^{\nu/\alpha}) } \|h_1\|_2 \|\widetilde{h_2}\|_2
				\le 4e^{8|r|(1+e^\nu) } \|h_1\|_2 \|\widetilde{h_2}\|_2
	\end{align*}
	and
	\begin{align*}
		\| e^{F_{\nu/\alpha,\veps}}T_r&(e^{-F_{\nu/\alpha,\veps}} \widetilde{h_2})\|_\infty
		\le \| e^{F_{\nu/\alpha,\veps}}T_r(e^{-F_{\nu/\alpha,\veps}} \widetilde{h_2})\|_2
			= \|e^{2F_{\nu/\alpha,\veps}} |T_r(e^{-F_{\nu/\alpha,\veps}} \widetilde{h_2}) |^2\|_1^{1/2} \\
			&\le \left( \sum_{x} \sum_{y_1,y_2}
							e^{(\nu/\alpha)|x-y_1|}|\la x|T_r|y_1\ra| |\widetilde{h_2}(y_1)|
							e^{(\nu/\alpha)|x-y_2|}|\la x|T_r|y_2\ra| |\widetilde{h_2}(y_2)|\right)^{1/2} \\
			&\le 2e^{4|r|(1+e^{\nu/\alpha}) } \|h_1\|_2 \|\widetilde{h}_2\|_2 				\le 2e^{4|r|(1+e^\nu) } \|\widetilde{h_2}\|_2
	\end{align*}		
	for all $\alpha\ge 1$. This proves \eqref{eq:exchange}.

	Lastly, let  $f_\nu(x)=A e^{-\nu|x|}$ with $\nu>0$ and $A>0$, then
	 \begin{align*}
	 	\|f_\nu\|_\kappa^\kappa
	 		= A^\kappa \sum_{x\in \Z} e^{-\kappa\nu|x|}
	 		= A^\kappa (1+ 2\sum_{x=1}^\infty e^{-\kappa\nu x} )
	 		= A^\kappa (\frac{1+ e^{-\kappa\nu x}}{1-e^{-\kappa\nu x}})
	 		= A^\kappa \frac{\cosh(\frac{\kappa}{2}\nu )}{\sinh(\frac{\kappa}{2}\nu)}.
	 \end{align*}
	 Moreover,
	 \begin{align*}
	 	\|D_+f_\nu\|_2^2
	 	&=
	 		A^2 \sum_{x} |e^{-\nu|x+1|}- e^{-\nu|x|}|^2
	 		= A^2 \left( \sum_{x\ge 0} (e^{-\nu}-1)^2 e^{-2\nu x}
	 		  + \sum_{x\leq -1} (e^{\nu}-1)^2 e^{2\nu x}\right) \\
	 	&=
			A^2 \left( (e^{-\nu}-1)^2 \frac{1}{1-e^{-2\nu}}   + (e^{\nu}-1)^2 \frac{e^{-2\nu}}{1-e^{-2\nu}}\right)
			=
				4A^2\, \frac{\sinh^2(\nu/2)}{\sinh(\nu)}
	 \end{align*}
\end{proof}

\section{Boundedness, negativity, and strict subadditivity of the energy} \label{sec:Strict concavity of the ground state energy}
Recall that for $\dav \ge 0$
\begin{align*}
	H(f):= \f{\dav}{2} \la f, -\Delta f \ra - N(f)
\end{align*}
and
\begin{align*}
	E_\lambda^{\dav}:= \inf \left\{ H(f): \|f\|^2=\lambda\right\}.
\end{align*}

In this section we will give an a-priori bound on the ground-state energy which is an essential ingredient in the construction of strongly convergent minimizing sequences.

\begin{lemma} \label{lem:E-boundedness} Assume that assumption \ref{ass:A1} holds.
Then for every $\lambda \ge  0$
  \begin{align*}
  	-\lambda^{\gamma_1/2}-\lambda^{\gamma_2/2}\lesssim E^{\dav}_\lambda \le 0 ,
  \end{align*}
  where the implicit constant in the lower bound depends only on $\mu(\R)$ and the support of $\mu$.
  In particular, the variational problem is well-posed.
\end{lemma}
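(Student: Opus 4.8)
The plan is to establish the two inequalities separately, both following quite directly from material already in the excerpt.

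\medskip

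\noindent\textbf{Upper bound $E^{\dav}_\lambda\le 0$.} The idea is to exhibit a test sequence along which $H$ tends to $0$. This is essentially the computation already carried out in the proof of Theorem~\ref{thm:existence}: take $f_n(x)\coloneq c_n\,\id_{[-n,n]}(x)$ with $c_n=(\lambda/(2n+1))^{1/2}$, so that $\|f_n\|_2^2=\lambda$. Then $\|D_+f_n\|_2^2=2c_n^2=2\lambda/(2n+1)\to 0$, and by the bound \eqref{eq:Vbound} from assumption \ref{ass:A1} together with Lemma~\ref{lem:T_rf estimate} (or \eqref{eq:lpbound}) one has $|N(f_n)|\lesssim \|f_n\|_{\gamma_1}^{\gamma_1}+\|f_n\|_{\gamma_2}^{\gamma_2}$, and for any $\gamma>2$, $\|f_n\|_\gamma^\gamma=(\lambda/(2n+1))^{\gamma/2}(2n+1)\to 0$. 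Hence $H(f_n)=\frac{\dav}{2}\|D_+f_n\|_2^2-N(f_n)\to 0$, so $E^{\dav}_\lambda=\inf\{H(f):\|f\|_2^2=\lambda\}\le 0$. (For $\lambda=0$ the only admissible $f$ is $f=0$, giving $H(0)=0$, so $E^{\dav}_0=0$ and the claim is trivial.)

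\medskip

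\noindent\textbf{Lower bound $-\lambda^{\gamma_1/2}-\lambda^{\gamma_2/2}\lesssim E^{\dav}_\lambda$.} Here the point is that the kinetic term is nonnegative and the nonlinear term is controlled by Proposition~\ref{prop:N-boundedness}. For any $f$ with $\|f\|_2^2=\lambda$,
\begin{align*}
	H(f)=\frac{\dav}{2}\|D_+f\|_2^2-N(f)\ge -N(f)\ge -|N(f)| \gtrsim -\bigl(\|f\|_2^{\gamma_1}+\|f\|_2^{\gamma_2}\bigr)=-\bigl(\lambda^{\gamma_1/2}+\lambda^{\gamma_2/2}\bigr),
\end{align*}
where the implicit constant is exactly the one from Proposition~\ref{prop:N-boundedness}, which depends only on $\mu(\R)$ (and, through the use of \eqref{eq:lpbound} inside that proof when $\gamma_i\ne 2$, on $\supp\mu$). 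Taking the infimum over all such $f$ gives $E^{\dav}_\lambda\ge -C(\lambda^{\gamma_1/2}+\lambda^{\gamma_2/2})$. Finally, $-\infty<E^{\dav}_\lambda\le 0$ shows the infimum is attained over a nonempty set with a finite lower bound, i.e. the variational problem is well-posed.

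\medskip

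\noindent I do not anticipate any real obstacle: both halves are immediate consequences of results stated earlier (Proposition~\ref{prop:N-boundedness} for the lower bound, the explicit test sequence for the upper bound). The only mild care needed is to make sure the constant in the lower bound genuinely depends only on $\mu(\R)$ and $\supp\mu$ — but this is inherited verbatim from Proposition~\ref{prop:N-boundedness} and Lemma~\ref{lem:T_rf estimate}, so nothing new has to be proved.
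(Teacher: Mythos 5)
Your proof is correct and follows essentially the same route as the paper: the lower bound via $H(f)\ge -N(f)$ combined with Proposition~\ref{prop:N-boundedness}, and the upper bound via the spreading test sequence $f_n = c_n\id_{[-n,n]}$ showing $H(f_n)\to 0$. The only tiny wrinkle (shared with the paper itself) is that Proposition~\ref{prop:N-boundedness} is stated as an upper bound on $N(f)$ rather than on $|N(f)|$, but since its proof uses the two-sided estimate \eqref{eq:Vbound}, the absolute-value version you invoke does hold.
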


\begin{proof}
  The lower bound follows immediately from $H(f)\ge -N(f)$ and Proposition \ref{prop:N-boundedness}. For the upper bound we argue similarly as in the beginning of the proof of Theorem \ref{thm:existence}. Note that
  \begin{align*}
  	E^{\dav}_\lambda \le H(f) = \frac{\dav}{2}\|D_+f\|_2^2 -N(f)
  	\le \frac{\dav}{2}\|D_+f\|_2^2 +|N(f)|
  \end{align*}
  To bound the nonlinearity, we use \eqref{eq:Vbound} to see that with $B$ so that $\supp\mu\subset [-B,B]$,
  \begin{align*}
  	|N(f)| \lesssim \sup_{|r|\le B} (\|T_rf\|_{\gamma_1}^{\gamma_1} + \|T_rf\|_{\gamma_2}^{\gamma_2})
  	  \lesssim  \|f\|_{\gamma_1}^{\gamma_1} + \|f\|_{\gamma_2}^{\gamma_2} .
  \end{align*}
  where we also used the bound \eqref{eq:lpbound} from Lemma \ref{lem:useful}. Now define
  $f_n(x)$ as in the proof of Theorem \ref{thm:existence} by
  \begin{align*}
    f_n(x) \coloneq c_n \id_{[-n,n]}(x)
  \end{align*}
 with $c_n= \left(\tfrac{\lambda}{2n+1}\right)^{1/2}$. Then $\|f_n\|_2^2=\lambda$.
 Note that
\begin{align*}
	\|D_+ f_n\|_2^2 = 2c_n^2 \to 0 \quad\text{as } n\to\infty
\end{align*}
and for any $\gamma>2$
\begin{align*}
	\|f_n\|_{\gamma}^\gamma = \left(\frac{\lambda}{2n+1}\right)^{\gamma/2} (2n+1) \to 0
\end{align*}
as $n\to\infty$. So $E^{\dav}_\lambda=\inf_{\|f\|_2^2=\lambda}H(f)\le \lim_{n\to\infty}H (f_n)=0$.
\end{proof}

Similar to \cite{ChoiHuLee2015}, we get the following strict concavity and strict subadditivity of $E_\lambda^\dav$.

 \begin{proposition}[Strict subadditivity]\label{prop:strict-subadditivity}
Under assumptions \ref{ass:A1} and \ref{ass:A2} and
for any  $\lambda>0$, $0<\delta<\lambda/2$, and
$ \lambda_1,\ \lambda_2\geq \delta$ with  $\lambda_1+\lambda_2\leq \lambda$, we have
 \begin{align}\label{eq:strict subadditivity}
 	E_{\lambda_1}^\dav+E_{\lambda_2}^\dav\geq \left[1-(2^{\f{\gamma_0}{2}}-2)\left(\frac{\delta}{\lambda}\right)^{\f{\gamma_0}{2}}\right]E_{\lambda}^\dav
 \end{align}
where $\gamma_0>2$ as in \ref{ass:A2}.
 \end{proposition}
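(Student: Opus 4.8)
The plan is to derive \eqref{eq:strict subadditivity} from a scaling property of the energy, exactly as one does in the continuous case. The key observation is that Assumption \ref{ass:A2} forces a lower bound on how $E^\dav_\lambda$ behaves under rescaling of the power $\lambda$; concretely, I claim that
\begin{align}\label{eq:scaling-claim}
	E^\dav_{\theta\lambda} \le \theta^{\gamma_0/2} E^\dav_\lambda
	\quad\text{for all } \theta\ge 1,\ \lambda>0.
\end{align}
To see this, fix $f$ with $\|f\|_2^2=\lambda$ and consider $g=\sqrt{\theta}f$, so $\|g\|_2^2 = \theta\lambda$. Then $\frac{\dav}{2}\|D_+g\|_2^2 = \theta \frac{\dav}{2}\|D_+f\|_2^2$, while for the nonlinear part the function $A(s):=s^{-2}N(sh)$ from the proof of Lemma \ref{lem:R-lower} satisfies $A'(s)\ge \frac{\gamma_0-2}{s}A(s)$, equivalently $s\mapsto s^{2-\gamma_0}A(s)$ is nondecreasing; applied with $h=f/\|f\|_2$ and the two points $s_1=\|f\|_2$, $s_2=\sqrt\theta\|f\|_2$ this gives $N(\sqrt\theta f) \ge \theta^{\gamma_0/2} N(f)$ whenever $N(f)\ge 0$. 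Since $\theta\ge1$ and $\gamma_0>2$, we get $\theta^{\gamma_0/2}\ge \theta$, and hence $H(g)=\theta\frac{\dav}{2}\|D_+f\|_2^2 - N(\sqrt\theta f) \le \theta^{\gamma_0/2}(\frac{\dav}{2}\|D_+f\|_2^2 - N(f)) = \theta^{\gamma_0/2} H(f)$ \emph{provided} $N(f)\ge 0$. The case $N(f)<0$ is easier still: then $H(f)>0 \ge E^\dav_\lambda$ by Lemma \ref{lem:E-boundedness}, so such $f$ are irrelevant when taking the infimum. Taking the infimum over $f$ yields \eqref{eq:scaling-claim}; one has to be slightly careful that the infimum defining $E^\dav_\lambda$ is approached along $f$ with $H(f)\le 0$, which is fine since $E^\dav_\lambda\le 0$.

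Granting \eqref{eq:scaling-claim}, the subadditivity proof is a short computation. Let $\lambda_1,\lambda_2\ge\delta$ with $\lambda_1+\lambda_2\le\lambda$. Writing $\theta_i := \lambda/\lambda_i \ge 1$, \eqref{eq:scaling-claim} gives $E^\dav_\lambda = E^\dav_{\theta_i\lambda_i} \le \theta_i^{\gamma_0/2} E^\dav_{\lambda_i}$, i.e. $E^\dav_{\lambda_i} \ge (\lambda_i/\lambda)^{\gamma_0/2} E^\dav_\lambda$ (the inequality flips because $E^\dav_\lambda\le 0$). Summing,
\begin{align*}
	E^\dav_{\lambda_1} + E^\dav_{\lambda_2}
	\ge \left[\left(\tfrac{\lambda_1}{\lambda}\right)^{\gamma_0/2} + \left(\tfrac{\lambda_2}{\lambda}\right)^{\gamma_0/2}\right] E^\dav_\lambda ,
\end{align*}
and since $E^\dav_\lambda\le0$ it suffices to show $(\lambda_1/\lambda)^{\gamma_0/2} + (\lambda_2/\lambda)^{\gamma_0/2} \le 1 - (2^{\gamma_0/2}-2)(\delta/\lambda)^{\gamma_0/2}$. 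Set $t_i=\lambda_i/\lambda\in[\delta/\lambda,1/2]$ with $t_1+t_2\le1$; the function $t\mapsto t^{\gamma_0/2}$ is convex, so on the segment where $t_1+t_2$ is fixed the sum $t_1^{\gamma_0/2}+t_2^{\gamma_0/2}$ is maximized at an endpoint, i.e. when one variable equals $\delta/\lambda$ and (using $t_1+t_2\le 1$ and monotonicity) the other equals $1-\delta/\lambda$. Thus $t_1^{\gamma_0/2}+t_2^{\gamma_0/2} \le (\delta/\lambda)^{\gamma_0/2} + (1-\delta/\lambda)^{\gamma_0/2}$, and one checks the elementary inequality $(1-x)^{\gamma_0/2} \le 1 - (2^{\gamma_0/2}-1)x^{\gamma_0/2}$ for $0\le x\le 1/2$ (it holds with equality-type behavior at $x=1/2$ and follows by convexity/comparison of the two sides), which rearranges to exactly the bound needed. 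This is the routine part and I would present it compactly, perhaps just invoking that this is the same elementary lemma used for the continuous case in \cite{ChoiHuLee2015}.

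The main obstacle is the first step, \eqref{eq:scaling-claim}: one must handle the sign of $N(f)$ correctly and be careful about the direction of inequalities when multiplying by the (nonpositive) quantity $E^\dav_\lambda$, and about justifying the differential-inequality argument for $N(sh)$ using Lemma \ref{lem:differentiability} (differentiability of $s\mapsto N(sh)$, and that $DN(sh)[sh]-2N(sh)\ge(\gamma_0-2)N(sh)$ via Assumption \ref{ass:A2}). I would lift this verbatim from the proof of Lemma \ref{lem:R-lower}, where exactly this inequality $\frac{d}{ds}(s^{2-\gamma_0}A(s))\ge0$ is already established, so no new work is required there. The remaining subtlety is purely bookkeeping: restricting to minimizing sequences with $N(f)\ge0$ (equivalently $H(f)\le \frac{\dav}{2}\|D_+f\|_2^2$, and in fact we only care about $f$ with $H(f)\le 0$), which is legitimate precisely because $E^\dav_\lambda\le 0$ by Lemma \ref{lem:E-boundedness}. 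I expect the whole proof to be under a page.
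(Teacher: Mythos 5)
Your proof is correct and takes essentially the same approach as the paper: both derive a scaling inequality equivalent to $s^{\gamma_0}E^{\dav}_\lambda\le E^{\dav}_{s^2\lambda}$ for $0<s\le 1$ from assumption \ref{ass:A2} (the paper via Lemma \ref{lem:V-lower-bound}, you via the differential inequality already established in the proof of Lemma \ref{lem:R-lower}), apply it to $\lambda_1,\lambda_2$, and finish with an elementary bound on $(\lambda_1/\lambda)^{\gamma_0/2}+(\lambda_2/\lambda)^{\gamma_0/2}$. Two minor remarks: your case distinction on the sign of $N(f)$ is unnecessary, since $H(\sqrt\theta f)\le\theta^{\gamma_0/2}H(f)$ holds for every $f$ (both $N(\sqrt\theta f)\ge\theta^{\gamma_0/2}N(f)$ and $\theta\,\tfrac{\dav}{2}\|D_+f\|_2^2\le\theta^{\gamma_0/2}\tfrac{\dav}{2}\|D_+f\|_2^2$ are unconditional for $\theta\ge1$), and the stated range $t_i\in[\delta/\lambda,1/2]$ should read $t_i\in[\delta/\lambda,1-\delta/\lambda]$, though your convexity-and-corner argument still picks out the correct extremal point.
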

\begin{remark}
  In particular, Proposition \ref{prop:strict-subadditivity} shows that for any $\lambda_1, \lambda_2>0$ one has
  \begin{align*}
  	E^{\dav}_{\lambda_1} + E^{\dav}_{\lambda_2} > E^{\dav}_{\lambda_1+\lambda_2}
  \end{align*}	
  as soon as $E^{\dav}_{\lambda_1+\lambda_2}<0$. That is, the map $\lambda\mapsto E^{\dav}_{\lambda}$ is \emph{strictly  subadditive} where it is \emph{strictly negative}.
\end{remark}

In order to prove this, we need a little preparation.
\begin{lemma}\label{lem:V-lower-bound}
	$V$ obeys \ref{ass:A2} if and only if for all $ t \ge 1 $ we have
	\begin{align}\label{eq:V-lower-bound}
		V(ta) \ge t^{\gamma_0} V(a) \quad \text{for all } a>0.
	\end{align}
\end{lemma}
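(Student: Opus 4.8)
The statement is an equivalence, so I would prove the two implications separately, with the ``forward'' direction (\ref{ass:A2} $\Rightarrow$ \eqref{eq:V-lower-bound}) being the substantive one and the converse being a short differentiation argument. For the forward direction, fix $a>0$ and define $g(t):=V(ta)$ for $t>0$. By \ref{ass:A1} (which is part of the standing hypotheses whenever \ref{ass:A2} is invoked, but in fact only differentiability on $(0,\infty)$ and $V(0)=0$ from \ref{ass:A2} itself are needed) the function $g$ is differentiable on $(0,\infty)$ with $g'(t)=a V'(ta)$. Assumption \ref{ass:A2} says $V'(b)b\ge \gamma_0 V(b)$ for all $b>0$; applying this with $b=ta$ gives
\begin{align*}
	t g'(t) = ta\, V'(ta) \ge \gamma_0 V(ta) = \gamma_0 g(t),
\end{align*}
i.e. $g$ satisfies the same differential inequality $g'(t)\ge \tfrac{\gamma_0}{t} g(t)$ for all $t>0$ that was used in the proof of Lemma \ref{lem:R-lower}. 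The same integrating-factor trick applies: $\tfrac{d}{dt}\big(t^{-\gamma_0} g(t)\big) = t^{-\gamma_0-1}\big(t g'(t) - \gamma_0 g(t)\big) \ge 0$, so $t\mapsto t^{-\gamma_0}g(t)$ is nondecreasing on $(0,\infty)$. Hence for $t\ge 1$ we have $t^{-\gamma_0} g(t) \ge 1^{-\gamma_0} g(1) = V(a)$, which rearranges to $V(ta)\ge t^{\gamma_0}V(a)$. This is \eqref{eq:V-lower-bound}.

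For the converse, assume \eqref{eq:V-lower-bound} holds for all $t\ge 1$ and all $a>0$. Fix $a>0$ and consider $\phi(t):=V(ta) - t^{\gamma_0} V(a)$ for $t\ge 1$. Then $\phi(1)=0$ and $\phi(t)\ge 0$ for all $t\ge 1$ by hypothesis, so $\phi$ has a one-sided minimum at $t=1$; since $V$ is differentiable on $(0,\infty)$, $\phi$ is differentiable at $t=1$ and the right derivative satisfies $\phi'(1^+)\ge 0$, i.e. $a V'(a) - \gamma_0 V(a) \ge 0$. As $a>0$ was arbitrary, this is exactly \ref{ass:A2}. (One should note that \ref{ass:A2} also asks for continuity on $\R_+$, differentiability on $(0,\infty)$ and $V(0)=0$; the first two are assumed throughout whenever this lemma is used, and $V(0)=0$ follows from \eqref{eq:V-lower-bound} only in a limiting sense, so in the statement of the equivalence the regularity hypotheses on $V$ are taken as a shared background — I would make this explicit in a sentence rather than belabor it.)

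I do not expect any real obstacle here: the only mild subtlety is the bookkeeping about which parts of \ref{ass:A2} are the ``inequality content'' versus the ``regularity background'' that is common to both sides of the equivalence, and the fact that in the converse direction one only gets a one-sided derivative at $t=1$, which is nonetheless enough because $V$ is genuinely differentiable there. The core computation is identical to the differential-inequality argument already carried out in the proof of Lemma \ref{lem:R-lower}, so I would keep it terse and simply point back to that integrating-factor computation.
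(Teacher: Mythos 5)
Your proof is correct and follows essentially the same route as the paper: the forward direction uses the identical integrating-factor computation showing $t\mapsto t^{-\gamma_0}V(ta)$ is nondecreasing, and the converse differentiates the inequality (which is an equality at $t=1$) at $t=1$ from the right. Your remark about the regularity hypotheses in \ref{ass:A2} being shared background is a fair point of bookkeeping, but it does not change the substance and the paper treats it the same way implicitly.
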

\begin{proof}
	Assume that $V$ obeys \ref{ass:A2}. Then
	\begin{align*}
		\frac{d}{dt} V(ta) = V'(ta)a \ge \frac{\gamma_0}{t} V(ta)
	\end{align*}
	for all $a>0$ and $t>1$. Thus
	\begin{align*}
		\frac{d}{dt} (t^{-\gamma_0} V(ta)) \ge 0	
	\end{align*}
	and integrating this yields \eqref{eq:V-lower-bound}.
	
	Conversely, since \eqref{eq:V-lower-bound} is an equality for $t=1$, we can differentiate it at $t=1$ to get \ref{ass:A2}.
\end{proof}

\begin{proof}[Proof of Proposition \ref{prop:strict-subadditivity}]
	Let $t \ge 1$, then
	\ref{ass:A2} and Lemma \ref{lem:V-lower-bound} imply
	$N(t f)\ge t^{\gamma_0}N(f)$ for any $f\in l^2(\Z)$. Thus also
	\begin{align}\label{eq:subadd0}
		H(tf) \le t^2\dav\la D_+f, D_+f \ra - t^{\gamma_0} N(f)
		\le t^{\gamma_0} H(f)
	\end{align}
	since $t\ge 1$ and  $\gamma_0>2$.  Hence
	\begin{align*}
		s^{\gamma_0} H(f) \le H(s f)
	\end{align*}
	for all $f\in l^2(\Z)$ and all $0\le s\le 1$ and
	\begin{align}\label{eq:subadd1}
		s^{\gamma_0} E^{\dav}_\lambda = s^{\gamma_0}\inf_{\|f\|_2^2=\lambda} H(f)
		\le \inf_{\|f\|_2^2=s^2 \lambda} H(f) = E^{\dav}_{s^2\lambda} \; .
	\end{align}
	For $\lambda_1,\lambda_2>0$ and $\lambda_1+\lambda_2\le \lambda$ choose $0<\mu_j<1$ with $\lambda_j=\mu_j\lambda$ for $j=1,2$. Then
	\begin{align*}
		E^{\dav}_{\lambda_1} + E^{\dav}_{\lambda_2}
			= E^{\dav}_{\mu_1\lambda} +E^{\dav}_{\mu_2\lambda}
			\ge   \big(\mu_1^{\gamma_0/2}  +\mu_2^{\gamma_0/2}\big)
				E^{\dav}_{\lambda}
	\end{align*}
	because of \eqref{eq:subadd1}. Without loss of generality we can assume $\mu_1\ge \mu_2$, otherwise we simply exchange $\lambda_1$ and $\lambda_2$. Then, since $0< \mu_1+\mu_2\le 1$, we have
	\begin{align*}
		\mu_1^{\gamma_0/2} + \mu_2^{\gamma_0/2}
		&\le
			1 -\big( (\mu_1+\mu_2)^{\gamma_0/2} - \mu_1^{\gamma_0/2}- \mu_2^{\gamma_0/2} \big) \\
		&= 1- \mu_2^{\gamma_0/2} \left(
					\left(1+\frac{\mu_1}{\mu_2} \right)^{\gamma_0/2} -\left(\frac{\mu_1}{\mu_2}\right)^{\gamma_0/2}
					- 1
			  \right) \\
		&\le
			1- \mu_2^{\gamma_0/2}
				\left(
					2^{\gamma_0/2} -2
				\right)
	\end{align*}
	where the last inequality follows from the fact that for $\gamma_0>2$ the map
	$0<s\mapsto \left(1+s \right)^{\gamma_0/2} -s^{\gamma_0/2} -1$
	is increasing on $[1, \infty)$.
	Thus, since $E^{\dav}_\lambda\le 0$ by Lemma \ref{lem:E-boundedness}
	and $\mu_2\ge \delta/\lambda$, the inequality
	\eqref{eq:strict subadditivity} follows.
\end{proof}

\begin{lemma}\label{lem:negativity}
	Assume that assumption \ref{ass:A4} holds and $\lambda>0$. Then
	$E^{\dav}_\lambda<0$.
\end{lemma}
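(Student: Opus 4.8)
The goal is to show $E^{\dav}_\lambda<0$ for all $\lambda>0$ and all $\dav\ge 0$ under Assumption \ref{ass:A4}. Since $E^{\dav}_\lambda\le E^0_\lambda$ is false in general (the kinetic term is nonnegative, so $E^{\dav}_\lambda\ge E^0_\lambda$ for $\dav>0$ is what holds), we in fact need to work directly with $\dav>0$; the case $\dav=0$ will then follow a fortiori, or can be handled simultaneously. The plan is to exhibit, for each $\lambda>0$, a test function $f$ with $\|f\|_2^2=\lambda$ and $H(f)<0$, i.e. $\frac{\dav}{2}\|D_+f\|_2^2<N(f)$.

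\textbf{Case $\dav>0$.} First I would reduce to small $\lambda$: by the scaling inequality \eqref{eq:subadd1} (equivalently \eqref{eq:subadd0}), $E^{\dav}_{s^2\lambda}\ge s^{\gamma_0}E^{\dav}_\lambda$ for $0\le s\le 1$, so if $E^{\dav}_{\lambda_0}<0$ for one $\lambda_0$ then $E^{\dav}_\lambda<0$ for all $\lambda\ge \lambda_0$; and conversely using $t\ge1$ one gets the statement for all $\lambda>0$ once it holds for arbitrarily small $\lambda$. Actually the cleanest route: it suffices to prove $R(\lambda)>\dav/2$ for some $\lambda$, but that circular-looking statement is exactly what Lemma \ref{lem:threshold-basic} turns into $E^{\dav}_\lambda<0$; so I will instead just directly build a test function. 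Take the exponentially decaying profile $f_\nu(x)=Ae^{-\nu|x|}$ from \eqref{eq:f_nu} in Lemma \ref{lem:useful}, with $A$ chosen so that $\|f_\nu\|_2^2=A^2\coth(\nu)=\lambda$, i.e. $A^2=\lambda\tanh(\nu)$. Then $\|D_+f_\nu\|_2^2=4A^2\sinh^2(\nu/2)/\sinh(\nu)\sim A^2\nu^2/2\sim \lambda\nu^3/2$ as $\nu\to0$ — crucially of order $\nu^3$ since $A^2\sim\lambda\nu$. For the nonlinearity, using Assumption \ref{ass:A4} ($V(a)\gtrsim a^\kappa$ for $0<a\le\veps$ with $2\le\kappa<6$) and the bound $\|T_rf_\nu\|_\infty\le\|f_\nu\|_2=\sqrt\lambda$, the pointwise values $|T_rf_\nu(x)|$ are small (below $\veps$) once we also localize to a fixed window; more carefully, $N(f_\nu)=\int\sum_x V(|T_rf_\nu(x)|)\mu(dr)$, and I would bound $N(f_\nu)$ from below by restricting the sum to the sites and $r$'s where $|T_rf_\nu(x)|\le\veps$ and is still of size $\gtrsim A\sim\sqrt{\lambda\nu}$, giving $N(f_\nu)\gtrsim A^\kappa\cdot(\text{const})\gtrsim (\lambda\nu)^{\kappa/2}$. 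Comparing: $\frac{\dav}{2}\|D_+f_\nu\|_2^2\sim\dav\lambda\nu^3$ versus $N(f_\nu)\gtrsim\lambda^{\kappa/2}\nu^{\kappa/2}$, and since $\kappa<6$ we have $\nu^{\kappa/2}\gg\nu^3$ as $\nu\to0$, so for $\nu$ small enough $H(f_\nu)<0$; rescaling $f_\nu$ back to norm-squared exactly $\lambda$ is trivial and I've already imposed it. Letting $\nu$ vary and using the scaling remark handles every $\lambda>0$.

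The \textbf{main obstacle} is the lower bound on $N(f_\nu)$: because $T_r$ is not positivity-preserving and $V$ may change sign, $\sum_x V(|T_rf_\nu(x)|)$ is not obviously close to $\sum_x V(|f_\nu(x)|)$. I would handle this by the continuity estimate $\|f-T_rf\|_p\le(e^{4|r|}-1)\|f\|_p$ from \eqref{eq:lpcontinuity}, controlling $\|T_rf_\nu-f_\nu\|_2$ uniformly for $r\in\supp\mu$ (note $\supp\mu$ is \emph{fixed}, independent of $\nu$, so this difference is $O(A)=O(\sqrt{\lambda\nu})$ in $l^2$, hence $o(A)$ relative to the bulk $l^2$-mass only if we're careful — actually it's comparable, so this naive estimate is not enough). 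The real fix, which is presumably what Lemma \ref{lem:negativity}'s actual proof does (and is flagged in the Theorem~\ref{thm:threshold-intro} discussion as ``very much different from the continuous case''), is more delicate: one must use that $\int T_r^*[\,\cdot\,]\mu(dr)$ averaged against a genuinely $\nu$-independent window of sites behaves well, or pick $f$ supported on a finite window of fixed size and let only the \emph{height} $A$ (hence $\lambda$) go to zero while keeping $\nu$ fixed — then $\|T_rf\|_\infty\le\sqrt\lambda\to0<\veps$ automatically, $N(f)\gtrsim\lambda^{\kappa/2}$ times a fixed positive geometric constant coming from $\sum_x\int|T_rf_0(x)|^\kappa\mu(dr)$ with $f_0$ the fixed normalized window profile, while $\frac{\dav}{2}\|D_+f\|_2^2=\frac{\dav}{2}\lambda\|D_+f_0\|_2^2$ is merely $O(\lambda)$; since $\kappa/2<3$ but more to the point $\kappa/2$ can equal $1$, I instead need $\kappa/2<1$?? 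No: here the comparison is $\lambda$ vs $\lambda^{\kappa/2}$ with $\kappa\ge2$, so $\lambda^{\kappa/2}\le\lambda$ and this \emph{fails}. Hence one genuinely cannot keep $\nu$ fixed; the $\nu^3$ versus $\nu^{\kappa/2}$ gain is essential, and so is the lower bound $N(f_\nu)\gtrsim(\lambda\nu)^{\kappa/2}$ with the \emph{correct} power of $\nu$. Establishing that lower bound on the nonlocal $N(f_\nu)$ — showing the free evolution does not destroy more than a constant fraction of $\sum_x|f_\nu(x)|^\kappa$ — is the crux, and I expect it requires a quantitative argument along the lines of Lemma \ref{lem:useful}: split $f_\nu$ into a core and tails, use the bilinear/kernel bounds \eqref{eq:kernel bound} to show the cross terms and the evolved tails contribute only a small fraction, and conclude $N(f_\nu)\ge c(\lambda\nu)^{\kappa/2}$ for a constant $c>0$ depending only on $\mu$ and $\kappa$. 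With that in hand the theorem follows, and the $\dav=0$ case is immediate since then $H(f_\nu)=-N(f_\nu)<0$ directly, with no smallness on $\nu$ needed (using only $V(a)>0$ for $0<a\le\veps$ from the $\dav=0$ part of \ref{ass:A4}).
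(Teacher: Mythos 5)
You correctly pick the exponential test function $f_\nu(x)=Ae^{-\nu|x|}$ with $A^2\sim\lambda\nu$, and you correctly sense where the difficulty lies, but the proposal has a genuine gap at exactly the step you flag as the crux, plus some arithmetic slips.

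First the arithmetic. With $A^2\sim\lambda\nu$, one has $\|D_+f_\nu\|_2^2 = 4A^2\sinh^2(\nu/2)/\sinh(\nu)\sim A^2\nu\sim\lambda\nu^2$, not $\lambda\nu^3/2$; and $\|f_\nu\|_\kappa^\kappa = A^\kappa\coth(\tfrac{\kappa}{2}\nu)\sim A^\kappa/\nu\sim\lambda^{\kappa/2}\nu^{\kappa/2-1}$, not $(\lambda\nu)^{\kappa/2}$, because $f_\nu$ has $\sim\nu^{-1}$ essentially-full-height sites. Your two errors are off by one power of $\nu$ each and cancel in the ratio, so the threshold condition $\kappa<6$ comes out right, but a referee would flag the bookkeeping.

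The real gap is the lower bound $N(f_\nu)\gtrsim\|f_\nu\|_\kappa^\kappa$, which you identify as essential and then leave as a hope. You observe, correctly, that the continuity estimate \eqref{eq:lpcontinuity} applied naively with $r$ ranging over all of $\supp\mu$ gives only a bounded multiplicative factor $e^{4B}-1$, which is $O(1)$ and therefore useless for a lower bound: $T_r$ could in principle move the whole $\ell^\kappa$ mass of $f_\nu$ to zero. Your proposed fix (core/tail splitting plus kernel bounds as in Lemma \ref{lem:useful}) is not carried out and it is not at all clear it would close this gap, because the evolved tails are not what threatens the lower bound — it is the possible destructive interference at $O(1)$ values of $r$. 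The paper resolves this with a genuinely different device: take $g:=T_{-r_0}f_\nu$ for a fixed $r_0\in\supp\mu$ as the test function (note $\|g\|_2=\|f_\nu\|_2$ and $\la g,-\Delta g\ra=\la f_\nu,-\Delta f_\nu\ra$, so the kinetic term is unchanged), so that $T_rg=T_{r-r_0}f_\nu$, and then restrict the $\mu$-integral in $N(g)$ to a small window $|r-r_0|<\delta$. On that window the continuity estimate \eqref{eq:lpcontinuity} together with the difference inequality \eqref{eq:lp-difference} gives $\|T_rg\|_\kappa^\kappa\ge\bigl(1-\kappa\,h_{1,\kappa}(2B)^{\kappa-1}(e^{4\delta}-1)\bigr)\|f_\nu\|_\kappa^\kappa$, which is bounded below by a fixed positive fraction of $\|f_\nu\|_\kappa^\kappa$ once $\delta$ is chosen small. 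Crucially, $\mu\bigl((r_0-\delta,r_0+\delta)\bigr)>0$ \emph{because $r_0\in\supp\mu$}, so the restricted integral still contributes a fixed positive constant, yielding $N(g)\gtrsim\|f_\nu\|_\kappa^\kappa\sim\lambda^{\kappa/2}\nu^{\kappa/2-1}$, and the comparison with the kinetic term $\sim\dav\lambda\nu^2$ then closes for $\nu\to0$ since $\kappa<6$. This conjugation-plus-localization idea is the missing ingredient.

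Finally, for $\dav=0$ you assert that no smallness on $\nu$ is needed, but you do still need $\|T_rf_\nu\|_\infty\le\veps$ for $r\in\supp\mu$ to apply assumption \ref{ass:A4}, which again forces $\nu$ small so that $A_\nu\to0$ dominates the $e^{4B}$ amplification from \eqref{eq:lpbound}. The paper's $\dav=0$ argument is a short proof by contradiction: if $E^0_\lambda=0$ then $N(f_\nu)=0$, but the integrand is nonnegative and vanishes only where $T_rf_\nu=0$, which by unitarity forces $f_\nu=0$, a contradiction. Either version works once the smallness of $\nu$ is acknowledged.
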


\begin{proof} Unlike the continuous case, where Gaussians provide a nice class of initial conditions $f$ for which one can explicitly calculate the time evolution $T_rf$, no such class of functions exists in the discrete case.
	Hence, the proof that $E^\dav_\lambda$ is strictly negative is quite different from the continuous case.
	
 Recall that $E^{\dav}_\lambda$ is defined in \eqref{eq:min}.
 We consider the case $\dav=0$ first.
 Let $h_{1,p}(r):=e^{4|r||1-\frac{2}{p}|}$. Assumption \ref{ass:A4} says that
 if  $\dav=0$, then  there exists $\veps>0$ such that
 $V(a)>0$ for all $0<a\le \veps$.  Let $B>0$ such that $\supp\mu\subset[-B,B]$ and for any $\nu>0$ take $f_\nu(x)=A_\nu e^{-\nu|x|}$ with
 \begin{align}\label{eq:A_nu}
 	A_\nu:= \lambda^{1/2} \left(\frac{\sinh(\nu)}{\cosh(\nu)}\right)^{1/2} .
 \end{align}
 Then \eqref{eq:f_nu} from Lemma \ref{lem:useful} shows $\|f_\nu\|_2^2=\lambda$, i.e., $f_\nu$ is a valid test function.
 Moreover, $A_\nu$ is increasing in $\nu$ with $A_\nu\to 0$ as $\nu\to 0+$ so $\|f_\nu\|_\infty=A_\nu\le \veps/h_{1,\infty}(B)$ for all small enough $\nu>0$, hence, because of \eqref{eq:lpbound}, there exists $\nu_1>0$ such that $\|T_rf_\nu\|_\infty\le\veps$ for all $|r|\le B$ and $0<\nu\le\nu_1$. In this case, by assumption \ref{ass:A4},
 \begin{align}\label{eq:positivity}
 	V(|T_rf_\nu(x)|)\ge 0 \quad \text{for all }x\in\Z,\ |r| \le B,\ 0<\nu\le\nu_1,
 \end{align}
 hence $N(f_\nu)\ge 0$.
 If $E^0_\lambda = 0$, we would have $0=E^0_\lambda \le -N(f_\nu)\le 0$, so
 \begin{align*}
   	N(f_\nu)= \int_\R \sum_{x\in\Z} V(|T_rf_\nu(x)|) \,\mu(dr) = 0.
 \end{align*}
 Because of \eqref{eq:positivity}, this implies for all $0<\nu\le\nu_1$,
 \begin{align*}
 	V(|T_rf_\nu(x)|) = 0 \quad \text{for } \mu\text{-almost all }r \text{ and all }x\in\Z.
 \end{align*}
 and since $0\le |T_rf_\nu(x)|\le\veps$, the only way this can be is if
 \begin{align*}
 	T_rf_\nu =0 \quad  \text{for } \mu\text{-almost all }r
 \end{align*}
 and since $T_r$ is unitary on $l^2(\Z)$, this implies $f_\nu=0$ for all small enough $\nu$, which is a contradiction. Thus $E^0_\lambda<0$ if
 $\lambda>0$.
 \smallskip

 In the case $\dav>0$, \ref{ass:A4} shows that there exist $\veps>0$ and $2\le\kappa<6$ such that $V(a)\gtrsim a^\kappa$ for all $0\le a\le \veps$. Again let $B>0$ such that
 $\supp\mu\subset[-B,B]$ and choose $f_\nu(x):= A_\nu e^{-\nu|x|}$ with $A_\nu$ given by \eqref{eq:A_nu} and $\nu_2>0$ such that $\|f_\nu\|_\infty= A_\nu\le \veps/h_{1,\infty}(2B)$ for all $0<\nu\le \nu_2$. Then the second part of Lemma \ref{lem:useful} guarantees $\|T_{r-r_0}f_\nu\|_\infty\le\veps$ for all $r_0,r\in\supp\mu$ and $0<\nu\le\nu_2$.

 Set $g:= T_{-r_0}f_\nu$, then $0\le |T_rg|\le \veps$ for all $r\in\supp\mu$, hence
 \begin{align*}
 	N(g) &= \int_\R \sum_{x\in\Z} V(|T_rg(x)|) \,\mu(dr)
 		\gtrsim  \int_\R \sum_{x\in\Z} |T_rg(x)|^\kappa \,\mu(dr)
 		\ge \int_{r_0-\delta}^{r_0+\delta}  \|T_rg\|_\kappa^\kappa \,\mu(dr)
 \end{align*}
 for all $r_0\in\supp\mu$ and any $\delta>0$. Define $h_2(r):= e^{4|r|}-1$. Then  the bounds from Lemma \ref{lem:useful} give
 \begin{align*}
 	\|T_rg\|_\kappa^\kappa
 		&\ge
 			\|f_\nu\|_\kappa^\kappa -\left| \|f_\nu\|_\kappa^\kappa
 				- \|T_{r-r_0}f_\nu\|_\kappa^\kappa \right| \\
 		&\ge \|f_\nu\|_\kappa^\kappa -\kappa \max(\|f_\nu\|_\kappa^{\kappa-1},\|T_{r-r_0}f_\nu\|_\kappa^{\kappa-1} )
 			\|f_\nu- T_{r-r_0}f_\nu\|_\kappa \\
 		&\ge \left( 1-\kappa (h_{1,\kappa}(r-r_0))^{\kappa-1} h_2(r-r_0) \right) \|f_\nu\|_\kappa^\kappa .
 \end{align*}
 Thus
 \begin{align*}
 	N(g) \gtrsim \left( 1-\kappa (h_{1,\kappa}(2B))^{\kappa-1} h_2(\delta) \right) \mu((r_0-\delta,r_0+\delta)) \|f_\nu\|_\kappa^\kappa
 \end{align*}
Since the (complement of the) support of the measure $\mu$ is given by
 \begin{align*}
 	(\supp\mu)^c = \{r_0\in \R|\, \exists \delta>0\text{ such that } \mu((r_0-\delta,r_0+\delta))=0\}
 \end{align*}
 one has, for any $r_0\in\supp\mu$,
 \begin{align*}
 	\mu((r_0-\delta,r_0+\delta))>0 \quad\text{for all }\delta>0.
 \end{align*}
 So choosing any $r_0\in\supp\mu$ and $\delta>0$ small enough that
 $1-\kappa (h_{1,\kappa}(2B))^{\kappa-1} h_2(\delta)>0$ yields
 \begin{align*}
 	N(g) \gtrsim  \|f_\nu\|_\kappa^\kappa = \lambda^{\kappa/2} \left(\frac{\sinh(\nu)}{\cosh(\nu)}\right)^{\kappa/2}\frac{\cosh(\frac{\kappa}{2}\nu)}{\sinh(\frac{\kappa}{2}\nu)}
 \end{align*}
 where the implicit constant depends only on $\delta>0$ and the constant in the lower bound on $V$ from assumption \ref{ass:A3}, in particular, it \emph{does not} depend on $0<\nu\le \nu_2$.

 Since $\Delta$ and $T_r$ commute, by \eqref{eq:f_nu},
 \begin{align*}
 	\la g, -\Delta g\ra= \la f_\nu, -\Delta f_\nu\ra =
 	4\lambda \, \frac{\sinh^2(\nu/2)}{\cosh(\nu)}
 \end{align*}
 and choosing $g:= T_{-r_0}f_\nu$ as a test function in the energy $H$ shows
 \begin{align*}
 	E_\lambda &\le H(g) = \frac{\dav}{2}\la g,-\Delta g\ra -N(g) \\
 		&\le 2\dav \lambda \frac{\sinh^2(\nu/2)}{\cosh(\nu)}
 			-C \lambda^{\kappa/2} \left(\frac{\sinh(\nu)}{\cosh(\nu)}\right)^{\kappa/2}\frac{\cosh(\frac{\kappa}{2}\nu)}{\sinh(\frac{\kappa}{2}\nu)} \\
 		&= \sinh(\nu/2)^2
 			\left(
 				 \frac{\dav \lambda}{\cosh(\nu)}
 				 -C \lambda^{\kappa/2} \frac{\cosh(\frac{\kappa}{2}\nu)}{\cosh^{\kappa/2}(\nu)}\frac{\sinh^{\kappa/2}(\nu) }{\sinh(\frac{\kappa}{2}\nu)\sinh^2(\nu/2)}
 			\right)
 \end{align*}
	As $\nu\to 0$ $	\sinh(s \nu ) = \Oh(\nu)$ and $\cosh(s\nu)= \Oh(1)$
	for any fixed $s \neq 0$. So
	\begin{align*}
		E_\lambda &\le \Oh(\nu^2)\left(1- \Oh(\nu^{\frac{\kappa}{2}-3})\right) <0
	\end{align*}
	for small enough $\nu>0$, since $\kappa<6$. This shows that $E_\lambda<0$ for all $\lambda>0$.

\end{proof}

\begin{lemma}\label{lem:N-positivity}
	Assume that assumptions \ref{ass:A1} through \ref{ass:A3} hold. Then there exists
	$f\in l^2(\Z)$ such that
	\begin{align*}
		N(f) = \int_\R \sum_{x\in\Z} V(|T_rf(x)|)\, \mu(dr) >0 \, .
	\end{align*}
\end{lemma}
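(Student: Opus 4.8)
The plan is to test the functional $N$ against a tall and wide rectangular plateau $f=c\,\ind_{[-L,L]}$ and to let the amplitude $c$ be large. The positive contribution of $V$ comes from the ``bulk'' of $T_rf$, where $|T_rf(x)|$ is of order $c$; by the superhomogeneity of $V$ from Lemma~\ref{lem:V-lower-bound} this contribution is of order $c^{\gamma_0}$ per lattice site. The \emph{total} negative contribution of $V$, on the other hand, will turn out to be only of order $c^2$ per unit of $l^2$-mass; since $\gamma_0>2$, for $c$ large the positive part dominates and $N(f)>0$.

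For the preparations, fix $a_0>0$ with $V(a_0)>0$ as in \ref{ass:A3}. By Lemma~\ref{lem:V-lower-bound} (which uses \ref{ass:A2}) we have, for every $a\ge a_0$,
\[
   V(a)=V\!\Big(\tfrac{a}{a_0}\,a_0\Big)\ge\Big(\tfrac{a}{a_0}\Big)^{\gamma_0}V(a_0)>0 ,
\]
so $V>0$ on $[a_0,\infty)$; in particular the negative part $V(a)_-:=\max(-V(a),0)$ vanishes for $a\ge a_0$. Combining this with $|V(a)|\lesssim a^{\gamma_1}+a^{\gamma_2}$ from \eqref{eq:Vbound} and with $\gamma_1,\gamma_2>2$ gives, for all $a\ge 0$,
\[
   V(a)_-\le |V(a)|\,\ind_{[0,a_0)}(a)\lesssim (a^{\gamma_1}+a^{\gamma_2})\,\ind_{[0,a_0)}(a)\le C\,a^2 ,
\]
with $C$ depending only on $a_0,\gamma_1,\gamma_2$ and on the constant in \eqref{eq:Vbound}. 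Since $T_r$ is unitary on $l^2(\Z)$, this yields the key a priori bound
\[
   \int_\R\sum_{x\in\Z}V(|T_rf(x)|)_-\,\mu(dr)\le C\,\mu(\R)\,\|f\|_2^2\qquad\text{for all }f\in l^2(\Z).
\]

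Now fix $B>0$ with $\supp\mu\subset[-B,B]$. Using the kernel bound \eqref{eq:kernel bound} and the identity $\sum_{y\in\Z}\la x|T_r|y\ra=1$ (valid since $T_r$ fixes the constant sequence, as $\Delta\id=0$), one checks that for $f=c\,\ind_{[-L,L]}$ and any $x$ in the bulk $\{|x|\le L-M\}$,
\[
   |T_rf(x)|=c\,\Big|1-\sum_{|y|>L}\la x|T_r|y\ra\Big|\ge c\Big(1-2e^{4B}\!\!\sum_{d>M}\frac{(4B)^d}{d!}\Big)\ge \frac c2
\]
for all $|r|\le B$, once $M=M(B)\in\N$ is fixed large enough; importantly $M$ depends on $B$ only, not on $c$ or $L$. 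Hence, provided $c\ge 2a_0$, we have $|T_rf(x)|\ge c/2\ge a_0$ on the bulk, so $V(|T_rf(x)|)\ge (c/(2a_0))^{\gamma_0}V(a_0)$ there. Keeping only the bulk sites in the positive part of $N(f)$ and using the negative-part bound above,
\[
   N(f)\ \ge\ \mu(\R)\,(2(L-M)+1)\,\Big(\frac{c}{2a_0}\Big)^{\gamma_0}V(a_0)\ -\ C\,\mu(\R)\,c^2\,(2L+1).
\]
Choosing $L=2M$ (so that $2(L-M)+1\ge\tfrac12(2L+1)$) and then $c$ large enough that $\tfrac12(c/(2a_0))^{\gamma_0}V(a_0)>Cc^2$ — possible because $\gamma_0>2$ and $\mu\neq0$ so $\mu(\R)>0$ — the right-hand side is strictly positive, and $f=c\,\ind_{[-L,L]}\in l^2(\Z)$ is the desired function.

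I expect the main obstacle to be the control of the negative contribution of $V$: a priori $V$ may be large and negative on an entire interval near the origin, and each $T_rf$ has infinitely many sites where $|T_rf(x)|$ is small, so these terms can neither be discarded nor bounded by a crude pointwise estimate. The resolution is the observation that near the origin $V(a)_-\lesssim a^2$ — a consequence of $\gamma_1>2$ together with the positivity of $V$ at infinity forced by \ref{ass:A2}--\ref{ass:A3} — which, via the $l^2$-isometry of $T_r$, caps the whole negative part at $O(\|f\|_2^2)=O(c^2L)$, while the positive bulk contribution is of the strictly larger order $c^{\gamma_0}L$.
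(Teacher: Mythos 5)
Your proof is correct, and it takes a genuinely cleaner route than the paper's. The key difference is how the negative part of $V$ is controlled. The paper uses the lower bound $V(a)\gtrsim -a^{\gamma}\id_{[0,a_0)}(a)+a^{\gamma}\id_{[a_0,\infty)}(a)$ with $\gamma=\min(\gamma_0,\gamma_1)$ and then splits the sum over $\Z$ into three spatial regions (bulk $|x|\le l$, shell $l<|x|\le3l$, tail $|x|>3l$), needing a separate pointwise decay estimate $|T_r\id_{[-2l,2l]}(x)|\lesssim l\,e^{-c(|x|-2l)}$ on the tail and ultimately sending $\alpha=l\to\infty$. You instead notice that the negative part is supported in $[0,a_0)$, where the upper bound \eqref{eq:Vbound} with $\gamma_1,\gamma_2>2$ already forces $V(a)_-\lesssim a^2$; the unitarity of $T_r$ on $l^2(\Z)$ then caps the entire negative contribution by $O(\|f\|_2^2)$, with no tail analysis needed. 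This also simplifies the parameter dependence: your bulk width $M$ depends only on $\supp\mu$, you fix $L=2M$ once and for all, and only the amplitude $c$ is sent to infinity. Your derivation of the bulk bound, from $T_r\id=\id$ and the kernel estimate \eqref{eq:kernel bound}, is also a slicker replacement for the paper's support counting of $\Delta^n\id_{[-2l,2l]}$. Both proofs are valid; yours is more economical.
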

\begin{proof}
  Let $l\in\N$ and set $u_l(r,\cdot)\coloneq T_r\id_{[-2l,2l]} $. Since $\mu$ is a finite measure with compact support there exits $0<B<\infty$ with $\supp\mu\subset [-B,B]$.
  We claim that for some constant $c>0$ and all large enough $l\in\N$ the bounds
  \begin{align}
  	|u_l(r,x)| -1 \gtrsim -e^{-cl}  &\quad\text{for all } |x|\le l, |r|\le B  \label{eq:nearly constant initial condition-1} \\
  	|u_l(r,x)|  \lesssim l e^{-c(|x|-2l)}  &\quad\text{for all } |x|\ge 3l, |r|\le B
  	\label{eq:nearly constant initial condition 2}
  \end{align}
	hold. We will prove them later. Assumptions \ref{ass:A2} and \ref{ass:A3}, together with Lemma \ref{lem:V-lower-bound} show that there exists $a_0>0$ such that $V(a)\gtrsim a^{\gamma_0}$ for all $a\ge a_0$ and using assumption \ref{ass:A1}, we have $V(a)\gtrsim - a^{\gamma_1}$ for
	$0\le a<a_0 $. Thus, with $\gamma\coloneq \min(\gamma_0,\gamma_1)$, we see that the lower bound	
	\begin{align}\label{eq:V pointwise-lower-bound}
		V(a) \gtrsim  -a^{\gamma}\id_{[0,a_0)}(a) + a^\gamma \id_{[a_0,\infty)}(a)
	\end{align}
	holds and $V$ is bounded from below.
	
	By \eqref{eq:nearly constant initial condition-1} we can choose $l$ and $\alpha$
	large enough such that
	$\alpha|u_l(r,x)|\ge \tfrac{\alpha}{2}\ge a_0$ for all $|x|\le l, |r|\le B$. Then \eqref{eq:V pointwise-lower-bound} yields
	\begin{align*}
		I \coloneq \sum_{|x|\le l} V(\alpha|u_l(r,x)|) \gtrsim  l \alpha^\gamma .
	\end{align*}
	Since $V$ is bounded from below, we also have
	\begin{align*}
		II\coloneq \sum_{l<|x|\le 3l} V(\alpha|u_l(r,x)|) \gtrsim - l ,
	\end{align*}
	and \eqref{eq:V pointwise-lower-bound} together with
	\eqref{eq:nearly constant initial condition 2} gives
	\begin{align*}
		III\coloneq \sum_{|x|> 3l} V(\alpha|u_l(r,x)|)
		\gtrsim -  (\alpha l)^\gamma \sum_{|x|>3l} e^{-c\gamma(|x|-2l)}
		\gtrsim - (\alpha l)^\gamma e^{-c\gamma(l+1)}
	\end{align*}
	for all $|r|\le B$. Thus, since $\mu$ is a finite measure with support in
	$[-B,B]$, this gives the lower bound
	\begin{align*}
		N(\alpha\id_{[-2l,2l]}) \gtrsim I + II + III
		\gtrsim l \alpha^\gamma - l -  (\alpha l)^\gamma e^{-c\gamma (l+1)}
	\end{align*}
	for all large enough $\alpha$ and $l$. Setting $\alpha=l$ shows
	$\lim_{l\to\infty}N(l \id_{[-2l,2l]})=\infty$, in particular, $N(f)>0$ for some $f\in l^2(\Z)$.
	
	It remains to prove \eqref{eq:nearly constant initial condition-1} and \eqref{eq:nearly constant initial condition 2}. From  Lemma \ref{lem:useful}, more precisely, \eqref{eq:kernel bound}, we have the bound
	$|\la x|T_r|y\ra|\le  \min\big(1,e^{4|r|}\tfrac{(4|r|)^{|x-y|}}{|x-y|!} \big)$
	for any $r\in\R $. Thus, for all $|r|\le B $
	\begin{align*}
		|u_l(r,x)| \le  \sum_{|y|\le 2l} |\la x|T_r|y\ra|
			\lesssim  \sum_{|y|\le 2l} \frac{(4B)^{|x-y|}}{(|x-y|)!},
	\end{align*}
	The map $n\mapsto \frac{(4B)^{n}}{n!}$ is decreasing for all $n\ge 4B-1$.
	 For $|x|\ge 3l$ and all $|y|\le 2l$ we will have $n=|x-y| \ge  |x|-2l \ge 4B-1$
	 for all large enough $l$, hence we can replace $|x-y|$ above by $|x|-2l$ and use
	 $n!\ge e^{n\ln n -n}$ to arrive at
	 to see
	\begin{align*}
		|u_l(r,x)|  \lesssim  l \frac{(4B)^{|x|-2l|}}{(|x|-2l)!}
		\le l e^{(1+\ln(4B)-\ln(|x|-2l))(|x|-2l)}
		\lesssim l e^{-c(|x|-2l)}
	\end{align*}
	for some constant $c>0$ and all  $|x|\ge 3l$ with $l$ large enough.  This proves \eqref{eq:nearly constant initial condition 2}.
	
	 For any initial condition $f_0$, the time evolution $u(r,\cdot)= T_rf_0$ is given by the convergent series
	 \begin{align*}
	 u(r,x) &= \la \delta_x, T_rf_0\ra
	 	= f_0 + \sum_{n=1}^\infty \frac{(ir)^n}{n!} \la \delta_x, \Delta^n f_0\ra
	 \end{align*}
	 If $f_0 = \id_{[-2l,2l]}$, then
	 $\Delta f_0 = -\delta_{2l+1} + \delta_{2l} -\delta_{-(2l+1)} +\delta_{-2l}$, where
	 $\delta_y$ is the Kronecker delta at $y\in\Z$. Moreover, since $\Delta $ increases
	 the  support by at most one, that is, $\min(\supp \Delta f)\ge \min(\supp f)-1$ and
	 $\max(\supp \Delta f)\leq \max(\supp f)+1$, and $\|\Delta f\|_\infty \le 4\|f\|_\infty$, we see that for any $1\le n\le 2l$ there exists $g_{n,l}:\Z\to \R $ with $\|g_{n,l}\|_\infty\le 1$, $\supp g_{n,l}\subset [2l-n+1, 2l+n]$ and
	 \begin{align*}
	 	\la \delta_x,\Delta^n \id_{[-2l,2l]}\ra = \big(\Delta^n \id_{[-2l,2l]}\big)(x) = 4^n\big( g_{n,l}(x) + g_{n,l}(-x) \big)
	 \end{align*}
	 for all $x\in\Z$.
	 In particular, $\la \delta_x,\Delta^n \id_{[-2l,2l]}\ra = 0$ for all $|x|\le l$ and all $1\le n\le l$. So the series for $u_l(r,x)$ gives
	 \begin{align*}
	 	u_l(r,x) = \id_{[-2l,2l]}(x) +  \sum_{n\ge l+1}  \frac{(ir)^n}{n!} \la \delta_x,\Delta^n \id_{[-2l,2l]}\ra
	 	\quad \text{for all } |x|\le l \, ,
	 \end{align*}
	 hence by the same calculation as for \eqref{eq:Tr bound}
	 \begin{align*}
	 	|u_l(r,x)| \ge  1-  \sum_{n\ge l+1}  \frac{(4|r|)^n}{n!}
	 	\ge 1- \frac{e^{4|r|}|4r|^{l+1}}{(l+1)!}
	 	\quad \text{for all } |x|\le l\, .
	 \end{align*}
	 Bounding $(l+1)! \ge e^{(l+1)\ln(l+1)-(l+1)}$ shows that \eqref{eq:nearly constant initial condition-1} is true for some $c>0$ and all large enough $l\in\N$.
\end{proof}

\section{The discrete IMS localization formula}\label{sec:IMS}
Here we give a simple bound which is useful for localizing the discrete kinetic energy.
\begin{lemma}\label{lem:IMS}
	Let $f\in l^2(\Z)$ and  $\xi:\Z\to\R$ be a bounded function. Then,
	\begin{align}\label{eq:IMS}
		\re(\la\xi^2 f, -\Delta f\ra)
		 =
			\la \xi f, -\Delta (\xi f)\ra
				- \sum_{x\in\Z} |D_+\xi(x)|^2\re(\ol{f(x)}f(x+1)) .
	\end{align}
 In particular, the lower bound
	\begin{align}\label{eq:IMS-lower-1}
		\re(\la\xi^2 f, -\Delta f\ra)
		\ge
			\la \xi f, -\Delta (\xi f)\ra
				- \frac{1}{2}\la f,(|D_+\xi|^2+|D_-\xi|^2)f\ra
	\end{align}
	holds and if $\xi_j:\Z\to\R$, $j=1,\ldots,n$ are finitely many bounded functions with $\sum_{j=1}^n \xi_j^2=1$, then
	\begin{align}\label{eq:IMS-lower-2}
		\la f, -\Delta f\ra
			\ge
				\sum_{j=1}^n\la \xi_j f, -\Delta (\xi_j f)\ra
				- \frac{1}{2}\sum_{j=1}^n\la f,(|D_+\xi_j|^2+|D_-\xi_j|^2)f\ra  .
	\end{align}
	Here $D_+\xi(x):= \xi(x+1)-\xi(x)$ and $D_-\xi(x):= \xi(x)-\xi(x-1)$ the forward and backward differences.
\end{lemma}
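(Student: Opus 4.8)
The plan is to reduce everything to a single pointwise algebraic identity. First, recall that summation by parts (the unnumbered identity used just before \eqref{eq:GT-weak}) gives, for bounded $\xi$ and $f\in l^2(\Z)$,
\begin{align*}
	\la \xi^2 f, -\Delta f\ra = \la D_+(\xi^2 f), D_+ f\ra, \qquad
	\la \xi f, -\Delta(\xi f)\ra = \|D_+(\xi f)\|_2^2 ,
\end{align*}
and all the series involved converge absolutely because $\xi$ and $D_\pm\xi$ are bounded and $f\in l^2(\Z)$. So it suffices to prove an identity for the summands. Fix $x\in\Z$ and abbreviate $a=\xi(x)$, $b=\xi(x+1)$, $u=f(x)$, $v=f(x+1)$, so that $(D_+(\xi^2 f))(x)=b^2v-a^2u$, $(D_+f)(x)=v-u$, $(D_+(\xi f))(x)=bv-au$, and $D_+\xi(x)=b-a$. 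A direct expansion shows both
\begin{align*}
	\re\bigl[\ol{(b^2v-a^2u)}(v-u)\bigr] \quad\text{and}\quad |bv-au|^2-(b-a)^2\re(\ol u v)
\end{align*}
equal $b^2|v|^2+a^2|u|^2-(a^2+b^2)\re(\ol u v)$, using $\re(\ol v u)=\re(\ol u v)$. Summing this term-by-term identity over $x\in\Z$ yields \eqref{eq:IMS}.

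For the lower bound \eqref{eq:IMS-lower-1}, I would estimate the error term in \eqref{eq:IMS} by $|\re(\ol{f(x)}f(x+1))|\le |f(x)|\,|f(x+1)|\le \tfrac12(|f(x)|^2+|f(x+1)|^2)$, so that
\begin{align*}
	\Bigl|\sum_{x\in\Z}|D_+\xi(x)|^2\re(\ol{f(x)}f(x+1))\Bigr|
	\le \tfrac12\sum_{x\in\Z}|D_+\xi(x)|^2\bigl(|f(x)|^2+|f(x+1)|^2\bigr).
\end{align*}
In the second sum, re-index $x\mapsto x-1$ and note $D_+\xi(x-1)=\xi(x)-\xi(x-1)=D_-\xi(x)$, which turns it into $\sum_x|D_-\xi(x)|^2|f(x)|^2$; combining the two pieces gives exactly $\tfrac12\la f,(|D_+\xi|^2+|D_-\xi|^2)f\ra$, and hence \eqref{eq:IMS-lower-1}.

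Finally, for the partition-of-unity version \eqref{eq:IMS-lower-2}, since $\sum_{j=1}^n\xi_j^2=1$ we have $\sum_{j=1}^n\xi_j^2 f=f$, and since $\la f,-\Delta f\ra=\|D_+f\|_2^2$ is real,
\begin{align*}
	\la f,-\Delta f\ra=\sum_{j=1}^n\re\la\xi_j^2 f,-\Delta f\ra
	\ge \sum_{j=1}^n\Bigl(\la\xi_j f,-\Delta(\xi_j f)\ra-\tfrac12\la f,(|D_+\xi_j|^2+|D_-\xi_j|^2)f\ra\Bigr),
\end{align*}
applying \eqref{eq:IMS-lower-1} to each $j$. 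There is no genuine obstacle here: the whole argument is elementary, and the only points requiring a little care are the absolute convergence of the rearranged sums and the bookkeeping of the index shift that produces the $D_-\xi$ term; getting the term-by-term algebraic identity right is the computational heart of the matter.
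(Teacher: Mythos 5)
Your proof is correct, and it takes a genuinely different route from the paper. The paper proves \eqref{eq:IMS} via the double-commutator identity $\xi^2\Delta - 2\xi\Delta\xi + \Delta\xi^2 = [[\Delta,\xi],\xi] = |D_+\xi|^2 S_+ + |D_-\xi|^2 S_-$ (with $S_\pm$ the shifts), which is the discrete analogue of the standard continuous IMS derivation; the matrix elements of $S_\pm$ against $f$ then produce the $\re(\ol{f(x)}f(x+1))$ terms, and an index shift using $D_-\xi(x+1)=D_+\xi(x)$ merges the two sums. You instead use $-\Delta=D_+^*D_+$ to rewrite both inner products as sums over $x$ and then verify the summand-level algebraic identity
\[
\re\bigl[\ol{(b^2v-a^2u)}(v-u)\bigr] = |bv-au|^2 - (b-a)^2\re(\ol u v),
\]
which I checked expands on both sides to $b^2|v|^2 + a^2|u|^2 - (a^2+b^2)\re(\ol u v)$. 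Both approaches are elementary and of comparable length; the paper's commutator route is more structural and makes the connection to the classical IMS formula transparent, whereas your pointwise computation avoids any operator algebra and makes the cancellation completely explicit. Your derivations of \eqref{eq:IMS-lower-1} (Cauchy--Schwarz plus the index shift that turns $|D_+\xi(x)|^2|f(x+1)|^2$ into $|D_-\xi(x)|^2|f(x)|^2$) and of \eqref{eq:IMS-lower-2} (using $\sum_j\xi_j^2=1$ and the reality of $\la f,-\Delta f\ra$) coincide with the paper's. One small remark: in \eqref{eq:IMS-lower-1} you bound the absolute value of the error term, which is slightly stronger than needed; the paper bounds the signed term $\re(\ol{f(x)}f(x+1))\le\tfrac12(|f(x)|^2+|f(x+1)|^2)$ directly, but of course your estimate implies the same conclusion.
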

\begin{proof}
	A simple commutator calculation shows
	\begin{align*}
		[\Delta, \xi] = (D_+\xi) S_+ - (D_-\xi) S_-
	\end{align*}
	where $(S_+f)(x):= f(x+1)$ is the left shift and $(S_-f)(x):= f(x-1)$ is the right                                  shift. Another calculation shows
		\begin{align*}
		\left[[\Delta, \xi],\xi\right] = |D_+\xi|^2 S_+ + |D_-\xi|^2 S_-
	\end{align*}
	and expanding the commutator gives
	\begin{align*}
		\xi^2\Delta -2\xi \Delta\xi +\Delta \xi^2 = \left[[\Delta, \xi],\xi\right].
	\end{align*}
	Thus
	\begin{align*}
		2\re(\la\xi^2 f, &-\Delta f\ra)
		=
			\la f, -(\xi^2\Delta + \Delta \xi^2) f\ra
			=
			2\la\xi f, -\Delta (\xi f)\ra - \la f, \left[[\Delta, \xi],\xi\right] f\ra \\
		&=  2\la\xi f, -\Delta (\xi f)\ra
			-\sum_{x\in\Z} \left( |D_+\xi(x)|^2 \ol{f(x)}f(x+1)+ |D_-\xi(x)|^2f(x-1)\ol{f(x)}\right) \\
		&=
			2\la\xi f, -\Delta (\xi f)\ra
			-\sum_{x\in\Z} |D_+\xi(x)|^2 2\re(\ol{f(x)}f(x+1)),
	\end{align*}
	since $D_-\xi(x+1)= D_+\xi(x)$,
	which proves \eqref{eq:IMS}. The bound \eqref{eq:IMS-lower-1} follows from \eqref{eq:IMS} since
	\begin{align*}
		\sum_{x\in\Z} |D_+\xi(x)|^2 \re(\ol{f(x)}f(x+1))
			&\le
				\frac{1}{2}\sum_{x\in\Z} |D_+\xi(x)|^2 (|f(x)|^2 + |f(x+1)|^2)\\
			&=
				\frac{1}{2}\sum_{x\in\Z} (|D_+\xi(x)|^2 +|D_-\xi(x)|^2)|f(x)|^2
	\end{align*}
  Moreover, if $\sum_j\xi_j^2=1$, then
	\begin{align*}
		\la f, -\Delta f\ra = \re(\la f, -\Delta f\ra)
			= \sum_{j=1}^n \re(\la \xi_j^2 f, -\Delta f\ra)
	\end{align*}
	so \eqref{eq:IMS-lower-2} follows from \eqref{eq:IMS-lower-1}.
\end{proof}

\section{The connection with nonlinear optics}\label{sec:connection-nonlinear-optics}

Our main motivation for studying \eqref{eq:GT} and the related minimization problems \eqref{eq:min} comes from the fact that the solutions are related to
breather-type solutions of the diffraction managed discrete nonlinear Schr\"odinger equation
 \beq\label{eq:discreteNLS}
      i \partial_t u = -d(t) \Delta u - P(u),
 \eeq
where $\Delta$ is the nearest neighbour discrete Laplacian, $t$ the distance along the waveguide, $x\in\Z$ the location of the waveguide, $d(t)$ the local diffraction along the waveguide, and $P(u)$ is an on site nonlinear interaction. This equation describes, for example, an array of coupled nonlinear waveguides \cite{AART,AALR, ESMBA98, MPAES, WY}, but it also models a wide range of effects ranging from molecular crystals
\cite{BS75, TKS88} to biophysical systems \cite{Davydov73, ELS85}.
By symmetry, one assumes that $P$ is odd and $P(0)=0$ can always be enforced by adding a constant term. Most often one makes a Taylor series expansion, keeping just the lowest order nontrivial term leads to $P(u)\simeq |u|^2u$, the Kerr nonlinearity, but we will not make this approximation.
The study of bound states of the discrete nonlinear Schr\"odinger equation \eqref{eq:discreteNLS} has attraction a lot of attention, see, for example,
\cite{JW} and the references therein.

The idea to periodically alter the diffraction along the waveguide
by creating a zigzag geometry of the waveguides, similar to what has been done in dispersion management cables, see, for example,
\cite{GT96b, Tetal03, zharnitsky2001} and the references therein,
was probably first conceived in \cite{ESMA00} in order to create low power stable discrete pulses.
In this case, the total diffraction
$d(t)$ along the waveguide is given by
 \beq\label{eq:local-diffraction}
    d(t) = \veps^{-1} d_0(t/\veps) + \dav  .
 \eeq
Here $\dav$ is the average component of the diffraction and $d_0$ its periodic
mean zero part with period $L$.

A technical complication is the fact that \eqref{eq:discreteNLS} is a non-autonomous equation. We seek to rewrite \eqref{eq:discreteNLS} into a more convenient form in order to find breather type solutions.
In the region of \emph{strong diffraction management} $\veps$ is a small positive
parameter.
In this parameter region an average equation which describes the evolution of the slow
part of solutions of \eqref{eq:discreteNLS} was derived in Fourier
space in \cite{AM01,AM02,AM03}, using the same general method as in the
continuum case, see, e.g., \cite{zharnitsky2001}. The numerical studies of \cite{AM01,AM02,AM03} showed that this average equation possesses
stable solutions which evolve nearly periodically when used as initial data in the
diffraction managed non-linear discrete Schr\"odinger equation.
To derive this equation in our notation, let $T_r\coloneq e^{-ir\Delta}$ be the free discrete Schr\"odinger evolution, set
\begin{align*}
	D(s)\coloneq \int_0^s d_0(\zeta)\,d\zeta,
\end{align*}
 and make the ansatz
\begin{align*}
	u(t,x) = T_{D(\frac{t}{\veps})} v
\end{align*}
for some function $v$. Then, since $\partial_t T_{D(\frac{t}{\veps})}= \frac{1}{\veps}d_0(\frac{t}{\veps})\Delta T_{D(\frac{t}{\veps})}$, we get from \eqref{eq:discreteNLS} and \eqref{eq:local-diffraction} that $v$ solves
\begin{align}\label{eq:discreteNLS2}
	i \partial_t v(t,x) = -\dav \Delta v(t,x) - T_{D(\frac{t}{\veps})}^{-1}\left[ P(T_{D(\frac{t}{\veps})}v(t,\cdot)) \right](x)
\end{align}
for $t\ge 0$ and $x\in\Z$, which is equivalent to \eqref{eq:discreteNLS} and still a non-autonomous equation. Since $d_0$ has average zero and period $L$, $D$ is periodic with the same period $ L$ and thus for small $\veps>0$ the function $t\mapsto D(\frac{t}{\veps})$ is highly oscillatory with period $\veps L$. Similar to Kapitza's treatment of the stabilization of the unstable pendulum by high frequency oscillations of the pivot, see  \cite{LandauLifshitz}, the evolution of $v$ should evolve on two different scales, a slow one plus a high frequency one with a small amplitude. The evolution of the slow part $v_{\mathrm{slow} }$ is described by an averaged equation, where one averages over the fast oscillating terms,
\begin{align*}
	i \partial_t v_{\mathrm{slow}}(t,x)
		&=
			-\dav \Delta v_{\mathrm{slow}}(t,x)
			- \frac{1}{\veps L}\int_0^{\veps L}
					T_{D(\frac{s}{\veps})}^{-1}\left[
						P(T_{D(\frac{s}{\veps})}v_{\mathrm{slow}}(t,\cdot))
					\right](x)\, ds \\
		&=
			-\dav \Delta v_{\mathrm{slow}}(t,x)
				- \frac{1}{L}\int_0^{L}
						T_{D(s)}^{-1}\left[
							P(T_{D(s)}v_{\mathrm{slow}}(t,\cdot))
						\right](x)\, ds .
\end{align*}
Making the substitution $r=D(s)$ and introducing the probability measure $\mu$ on $\R $ defined by
\begin{align*}
	\int_{\R } F(r)\, \mu(dr) \coloneq \frac{1}{L}\int_0^L F(D(s))\, ds
\end{align*}
for any nonnegative (Borel) measurable functions $F$, one has
\begin{align}\label{eq:GT-time}
		i \partial_t v_{\mathrm{slow}}(t,x)
		&=
			-\dav \Delta v_{\mathrm{slow}}(t,x)
				- \int_\R
						T_{r}^{-1}\left[
							P(T_{r}v_{\mathrm{slow}}(t,\cdot))
						\right](x)\, \mu(dr)
\end{align}
which is the time dependent version of \eqref{eq:GT}. To derive \eqref{eq:GT} from it, one  simply makes the ansatz $v_{\mathrm{slow}}(t,x)= e^{i\omega t} \varphi(x)$, to see that this solves \eqref{eq:GT-time} if and only if $\varphi$ solves \eqref{eq:GT}.

Physically it makes sense to assume that the diffraction profile $d_0$ is bounded, or
even piecewise constant along the waveguide, but one might envision much more
complicated scenarios.  The simplest case of dispersion management, $L=2$, $d_0=1$ on $[0,1)$ and $d_0=-1$ on $[1,2)$, i.e., $d_0= \id_{[0,1)}-\id_{[1,2)}$, which is the case most studied in the literature, correspond to a very simple zigzag geometry of the waveguides, \cite{AM01, AM02, AM03}. In this case,  the measure $\mu$ is very simple, having density $\id_{[0,1]}$, the uniform distribution on $[0,1]$, with respect to Lebesgue measure.
This assumption was made in \cite{Moeser05,Panayotaros05,Stanislavova07}, where equation \eqref{eq:GT} was studied for the Kerr type  nonlinearity  $P(a)= |a|^2a$ and also some pure power type modifications thereof in \cite{Moeser05}.

For our results, which also hold for a much larger class of nonlinearities $P$, we need only to assume the much weaker condition that the probability measure
$\mu$ has \emph{bounded support}, i.e., there exists $B>0$ such that
 \beq\label{eq:support}
    \mu([-B,B]^c) = \mu((-\infty,-B)) + \mu((B,\infty)) = 0.
 \eeq
The support condition  \eqref{eq:support} is guaranteed if  $d_0$ is
locally integrable, in which case one take
 \beq\label{eq:D-bounded}
    B:= \sup_{r\in[0,L]}|D(r)|
    \le \int_0^L |d_0(\xi)|\, d\xi<\infty \, .
 \eeq
Clearly, this is a very weak assumption on the diffraction profile $d_0$ and it has to be assumed in order to even make sense out of equation \eqref{eq:discreteNLS}. Thus our results cover the most general physically allowed local diffraction profiles $d_0$, the singular case $d_0=0$ leading to the usual discrete NLS which is even local, and cover a large class of nonlinearities $P$.

\bigskip

\noindent
\textbf{Acknowledgements: } Mi-Ran Choi and Young-Ran~Lee thank the Department of Mathematics at KIT and Dirk Hundertmark thanks the Department of Mathematics at Sogang University for their warm hospitality. Dirk Hundertmark gratefully acknowledges financial support by the Deutsche Forschungsgemeinschaft (DFG) through CRC 1173. He also thanks the Alfried Krupp von Bohlen und Halbach Foundation for financial support. Young-Ran Lee thanks the National Research Foundation of Korea(NRF) for financial support funded by the Korea government(MOE) under grant No.~2014R1A1A2058848.
We would also like to thank an eclectic array of coffee shops in Seoul and Karlsruhe for providing us with much needed undisturbed time and lots of coffee.

\renewcommand{\thesection}{\arabic{chapter}.\arabic{section}}
\renewcommand{\theequation}{\arabic{chapter}.\arabic{section}.\arabic{equation}}
\renewcommand{\thetheorem}{\arabic{chapter}.\arabic{section}.\arabic{theorem}}

\def\cprime{$'$}

\end{document}